\DeclareMathOperator{\st}{s.t.}
\newtheorem{theorem}{Theorem}
\newtheorem{lemma}{Lemma}
\newtheorem{proposition}{Proposition}
\newtheorem{assumption}{Assumption \!\!}
\newtheorem{remark}{Remark}
\newcommand{\Ep}{{\mathbb{E}}}
\newcommand{\U}{{\mathcal{U}}}
\newcommand{\K}{{\mathcal{K}}}
\newcommand{\F}{{\mathcal{F}}}
\newcommand{\CP}{{\mathcal{CP}}}
\newcommand{\COP}{{\mathcal{COP}}}
\newcommand{\Y}{{\mathcal{Y}}}
\newcommand{\D}{{\mathcal{D}}}
\newcommand{\SYM}{{\Bbb S}}
\newcommand{\PP}{\Bbb P}
\newcommand{\QQ}{\Bbb Q}
\newcommand\iid{i.i.d.}
\def\RR{ {\Bbb{R}}}
\newcommand\NS{\mbox{\tiny NS}}
\title{Data-Driven Distributionally Robust Appointment \\
Scheduling over Wasserstein Balls}
\author{%
Ruiwei Jiang \thanks{Department of Industrial and Operations Engineering,
University of Michigan, Ann Arbor, MI, 48109, USA. Email: {\tt ruiwei@umich.edu}.}%
\ \ \ \ \ \ \
Minseok Ryu \thanks{Department of Industrial and Operations Engineering,
University of Michigan, Ann Arbor, MI, 48109, USA. Email: {\tt msryu@umich.edu}.}%
\ \ \ \ \ \ \
Guanglin Xu\thanks{Institute for Mathematics and its Applications, University of Minnesota,
Minneapolis, MN, 55455, USA. Email: {\tt gxu@umn.edu}.}%
}
\date{}
\begin{document}
\maketitle

\begin{abstract}

\noindent We study a single-server appointment scheduling problem
with a fixed sequence of appointments, for which we must determine
the arrival time for each appointment.
We specifically examine two stochastic models. In the first model,
we assume that all appointees show up at the scheduled arrival times
yet their service durations are random. In the second model, we assume that
appointees have random no-show behaviors and their service
durations are random given that they show up at the appointments.
In both models, we assume that the probability distribution of the uncertain
parameters is unknown but can be partially observed via a set
of historical data, which we view as independent samples drawn from the unknown distribution. In view of the distributional ambiguity, we
propose a data-driven distributionally robust optimization (DRO) approach to determine
an appointment schedule such that the worst-case (i.e., maximum) expectation of
the system total cost is minimized. A key feature of this approach is that the optimal
value and the set of optimal schedules thus obtained provably converge to
those of the ``true'' model, i.e., the stochastic appointment scheduling model
with regard to the true probability distribution of the uncertain parameters.
While our DRO models are computationally intractable in general, we reformulate them to
copositive programs, which are amenable to tractable semidefinite programming
problems with high-quality approximations. Furthermore, under some mild conditions,
we recast these models as polynomial-sized linear programs. Through an extensive numerical study, we
demonstrate that our approach yields better out-of-sample performance than
two state-of-the-art methods.

\mbox{}

\noindent Keywords: Appointment scheduling; random no-shows;
distributionally robust optimization; ambiguity set; Wasserstein metric; copositive programming
\end{abstract}

\begin{onehalfspace}

\section{Introduction}

Since the pioneering work of Bailey~\cite{Bailey.1952}, appointment systems have been extensively
studied in many customer service industries with the aim of increasing resource utilization, matching
supply and demand, and smoothing customer flows. The core operational activity in many
appointment systems is to schedule arrival times for appointments to minimize the system total cost associated with
appointment waiting, as well as the server's idleness and overtime. 
Serving as a central modeling component in a wide range of applications, appointment scheduling (AS) has been applied in
outpatient scheduling~\cite{Fetter.Thompson.1966, Ho.Lau.1992,
Kaandorp.Koole.2007, Kong.Lee.Teo.Zheng.2013}, surgery planning~\cite{Denton.Gupta.2003}, call-center
staffing~\cite{Gurvich.2010}, and cloud computing server operations~\cite{Shen.Wang.2014}.
In this paper, we study a single-server AS problem where the number and sequence of the appointments
are fixed. The main decision in this problem is to schedule the arrival time for each appointment, which
helps design the scheduling template of the appointment system.

There are several sources of variability that make AS problems challenging to solve, including
random service durations, random appointee no-shows, unpunctuality, and emergency
interruptions. In this study, we focus on random service durations and random appointee no-shows.
Due to random service durations, an appointment can complete before or after the scheduled
starting time of the subsequent appointment and result in the server's idleness or the waiting of
the subsequent appointment, respectively. In addition, if the last appointment is completed
after the pre-determined time limit, then the server has to work overtime, which is often costly
and unpleasant for service providers. Random appointee no-shows arise in many appointment
systems, e.g., outpatient clinics. No-shows often results in idleness of resources (e.g., equipment and
personnel) and so the loss of opportunities for serving other appointments. In fact, it has been reported that random no-shows have more impact on the performance
of an appointment system than random durations (see, e.g.,~\cite{Ho.Lau.1992}). Hence, it is recommended that we adapt AS models to the anticipated
no-show behaviors~\cite{Cayirli.Veral.2003}.


In the literature, {\em stochastic programming} (SP)~\cite{Kall.1994, Shapiro.2014} approaches are
often proposed to tackle AS problems given that the true distribution of the service durations and no-shows
is fully known~\cite{Berg.Denton.Erdogan.Rohleder.Huschka.2014, Erdogan.Denton.2013, Gupta.Denton.2008}.
While SP exhibits superior modeling power, it has two inherent shortcomings.
First, SP suffers from the notorious curse of dimensionality. Indeed, the computation of expectations necessitates
the evaluation of multi-dimensional integrals, which is in general intractable. Second, it is challenging and sometimes
impossible to accurately estimate the true distribution. For example, the raw data of the
uncertain parameters can typically be explained by many strikingly different distributions.
{\color{black}Using a biased estimate of the distribution, the SP approach can yield overfitted decisions, which display an optimistic bias and can lead to post-decision disappointment.} For example, if one simply uses the empirical distribution based on the raw data, the obtained solution often results in unpleasant out-of-sample performance.

In view of the distributional ambiguity, one can construct a so-called {\em ambiguity set} to contain all possible
distributions that may govern the generation of the observed data samples. With the ambiguity set, one can formulate
a distributionally robust optimization (DRO) problem with the goal of minimizing the worst-case (i.e., maximum)
expected system total cost of the appointment system, where the expectations are taken with respect to the distributions
from the ambiguity set. The majority of DRO approaches considered in the literature
are based on moment ambiguity sets, which consist of all distributions sharing certain moments,
e.g., the first and second moments.
Although a moment ambiguity set often leads to a tractable optimization problem~\cite{Bertsimas.Popescu.2015, Jiang.Shen.Zhang.2017, Kong.Lee.Teo.Zheng.2013, Mak.Rong.Zhang.2015}, it typically does not converge to the true distribution even in the situation where more data can be obtained.
This gives rise to the central questions of this study:~Is there an alternative DRO approach that extracts more information of the underlying true distribution from available (possibly small-size) historical data? If such an approach exists, is the resulting AS model solvable in polynomial time?


In this paper, we endeavor to give affirmative answers to these two research questions.
In particular, we propose to construct the ambiguity set using a Wasserstein ball centered
at the empirical distribution based on the historical data~\cite{ Esfahani.Kuhn.2017, Pflug.Wozabal.2007, Wozabal.2012}. Accordingly, we consider two Wasserstein-based distributionally
robust appointment scheduling (W-DRAS) models. In the first model, we examine the situation
where all appointees show up at their scheduled starting times but their service durations are random. In the second model, we incorporate both random no-shows and random service durations of the appointees (if they show up). Results of the modern measure concentration theory guarantee that the Wasserstein ball has asymptotic consistency, which ensures that the optimal value as well as the optimal solutions of the W-DRAS models converge to their SP counterparts with regard to the true distribution, as the data size tends to infinity.
While the resulting optimization problems are, in general, intractable,
we reformulate them to copositive programs, which admit tractable semidefinite programs that have high-quality approximations. Furthermore, under some mild conditions, we reformulate the W-DRAS problems
to polynomial-sized linear programs, which can be efficiently solved by many off-the-shelf optimization solvers.

\subsection{Literature review}

Various methodologies have been applied to formulate and solve the AS problem, including queueing theory (see, e.g.,~\cite{Brahimi.Worthington.1991}), approximation algorithm (see, e.g.,~\cite{Kaandorp.Koole.2007}), and optimization. We refer the readers to~\cite{Ahmadi-Javid.Jalali.Klassen.2017, Cardoen.Demeulemeester.Belien.2010, Cayirli.Veral.2003, Gupta.Denton.2008} for comprehensive summaries of these studies. In this section, we conduct a brief literature review on the most relevant literature to the proposed approach.

The SP models of the AS problem assume that the probability distribution of the uncertain parameters is fully known and seek a schedule to minimize the expectation of the system total cost. Begen and Queyranne~\cite{Begen.Queyranne.2011} show for the first time that the AS problem, where the random service
durations follow a joint discrete probability distribution, could be solved in polynomial time under some mild conditions. Ge et al.~\cite{Ge.Wan.Wang.Zhang.2013} extend the result
of~\cite{Begen.Queyranne.2011} to the case where the cost is modeled
as piecewise linear convex functions of the waiting time and idleness.

When it comes to general probability distributions,
exact calculation of the multi-dimensional integral poses computational challenges.
To this end, sample average approximation (SAA) methods are often used to
approximately solve AS problems. Denton and Gupta~\cite{Denton.Gupta.2003}
formulate the AS problem as a two-stage stochastic linear program and propose
 a sequential bounding approach to determine upper
bounds on the optimal value. In a more recent work, Begen~et.~al.~\cite{Begen.Levi.Queyranne.2012} propose a
sampling-based approach, whereby one can construct
an empirical distribution over a set of historical data and quantify the related computation complexity to obtain a
near-optimal solution in terms of sample size. However,
SAA approaches often lead to optimistic bias, which motivates us to consider a DRO approach in this paper.

Modeling the no-show behavior in appointment scheduling systems is even more challenging due to the discrete nature of the no-show parameters. For example, the no-show of an appointee is often modeled by a Bernoulli random variable which equals one if the appointee shows up and zero if the appointee does not show up.
Ho and Lau~\cite{Ho.Lau.1992} take a first step to develop a heuristic approach that suggests to double book the first two appointments and subsequently schedule the remaining appointments. Following the
work in~\cite{Ho.Lau.1992}, a number of more advanced yet more sophisticated approximation and
heuristic approaches have been proposed to address the random no-show issue in various
settings~\cite{Chen.Robinson.2014, Denton.Gupta.2003, Erdogan.Denton.2013,
Hassin.Mendel.2008, LaGanga.Lawrence.2007}.

In reality, it is often difficult to fit the true distribution of the service durations and no-shows
for various reasons, e.g., lack of sufficient data~\cite{Macario.2009} and the existence of correlations~\cite{Cayirli.Veral.2003}. Robust approaches have been proposed to address this challenge based on partial information of the distribution. In particular, the classical robust optimization approach (see, e.g.,~\cite{Mittal.Schulz.Stiller.2014, Rowse.2015}) models the uncertain parameters based only on an uncertainty set (e.g., the support or a confidence set of the uncertainty). Differently, the DRO approach employs an ambiguity set that incorporates a family of distributions.
We refer the readers to the classical works~\cite{Bertsimas.Gupta.Kallus.2017,
Delage.Ye.2010, Esfahani.Kuhn.2017, Goh.Sim.2010, Scarf.1958} and references therein
for general DRO models and solution approaches.
Closely related to this research are the following papers. Kong et~el.~\cite{Kong.Lee.Teo.Zheng.2013}
propose a DRO model over a cross-moment ambiguity set consisting of all distributions with common mean and covariance of the random service durations.
They recast this model as a copositive program and solve its
semidefinite approximation to obtain upper bounds on the optimal value.
Differently, Mak et~el.~\cite{Mak.Rong.Zhang.2015} consider a marginal-moment ambiguity set consisting of all distributions with common marginal moments up to a finite order. They recast the corresponding DRO model as a semidefinite program for general marginal-moment ambiguity
sets and, in particular, a second-order cone program for the mean-variance ambiguity set and a linear program for
the mean-support ambiguity set. Recently, Jiang et~el.~\cite{Jiang.Shen.Zhang.2017} study a mean-support ambiguity set for both random no-shows and service durations. As the no-show parameters are discrete, they propose an integer
programming reformulation and develop a decomposition algorithm to solve the resulting mixed integer
programs.
Kong et~el.~\cite{Kong.Li.Liu.Teo.Yan.2015} consider a cross-moment ambiguity set with decision-dependent no-shows, i.e., the first and second moments of the no-shows depend on the appointment arrival times.
They propose a copositive programming reformulation for the corresponding DRO model and develop an algorithm to search for an optimal schedule by iteratively solving a series of semidefinite programs. In contrast to the aforementioned work that consider moment ambiguity sets, we propose a Wasserstein-based ambiguity set that enjoys asymptotic consistency (see Section~\ref{sec:numerical} for a demonstration based on a \emph{finite} data size).

The sequence of the appointments is assumed
to be fixed in this paper. In the literature,
determining the best sequence of appointments for a set of heterogeneous appointments is an interesting topic; see~\cite{Denton.Viapiano.Vogl.2007, Gupta.Denton.2008, He.Sim.Zhang.2015,
Mak.Rong.Zhang.2014, Mak.Rong.Zhang.2015}. In some situations, appointment systems are allowed to overbook a time slot with multiple appointments. To this end,~\cite{LaGanga.Lawrence.2012, Liu.Ziya.2014, Zacharias.Pinedo.2014}
propose various approaches to provide optimal overbooking policies. Multiple-server AS has also been
considered in the literature; see for example~\cite{Zacharias.Pinedo.2017}.

\subsection{Our contributions}

We highlight the main contributions of this paper as follows.
\vspace{-0.05in}
\begin{enumerate}[1.]
\item We propose a data-driven DRO approach for the AS problem with random service durations and random no-shows over Wasserstein balls. To the best of our knowledge, this is the first DRO approach applied to the AS problem that enjoys the asymptotic consistency.
\item We make technical contribution to the DRO literature. The W-DRAS model with random service durations is a two-stage DRO problem with random right-hand sides. When we further incorporate the no-shows, randomness arises from both the right-hand sides and the objective coefficients. While two-stage DRO problems are in general NP-hard (see~\cite{Bertsimas.Doan.Natarajan.Teo.2010}), we recast both W-DRAS models as copositive programs, which are amenable to tractable semidefinite programming approximations. More importantly, under mild conditions, we recast both W-DRAS models as convex programs that are solvable in polynomial time. In particular, if the Wasserstein ball is characterized by an $\ell_p$-norm, we show that W-DRAS admits a linear programming reformulation when $p = 1$ and a second-order conic reformulation when $p > 1$ and $p$ is rational.
\item We derive probability distributions that attain the worst-case expected total cost in the W-DRAS models. These distributions can be applied to stress test an appointment schedule generated from any decision-making processes. In addition, when we interpret W-DRAS as a two-person game between the AS scheduler and the nature, these distributions provide interesting insights on how the nature plays against a given appointment schedule.
\item We demonstrate the effectiveness of our approach over diverse test instances. In particular, our approach yields (i) near-optimal appointment schedules even with a small data size and (ii) better out-of-sample performance than two state-of-the-art methods, even when the distribution is misspecified. This demonstrates that the W-DRAS approach is effective in AS systems (i) with scarce data and/or (ii) in a quickly varying environment.
\end{enumerate}


\subsection{Paper structure and notation} \label{sec:wass}

The remainder of the paper is organized as follows. We study the W-DRAS problem with random service durations in Section~\ref{sec:duration_model} and extend the model to incorporate both random no-shows and random service durations in Section~\ref{sec:no_show}. We test the effectiveness of our approach
over diverse test instances in Section~\ref{sec:numerical}. Finally, we conclude and discuss future research directions in Section~\ref{sec:conclusions}. We relegate all technical proofs to the Appendix.

{\noindent \bf Notation:} For any $m \in \Bbb{N}$, we define $[m]$ as the set of running indices
$\{1, \ldots, m\}$. 
For $i, j \in \Bbb{N}$, we let $[i,j]_{\Bbb{Z}}$ represent the set of
running indices $\{i, i+1, \ldots, j\}$. We use boldface notation to denote vectors.
In particular, we denote by $\mathbf{e}$ the vector of all ones
and by $\mathbf{e}_i$ the $i$-th standard basis vector.
Finally, we denote by $\SYM^n$ the set of all symmetric matrices in $\mathbb R^{n\times n}$.

\section{Random Service Durations} \label{sec:duration_model}

In this section, we study the W-DRAS model with random service durations. We describe the model, including the Wasserstein ambiguity set, in Section \ref{sec:dras-duation} and reformulate it as a copositive program in Section \ref{sec:dras-duation-cop}. In Section \ref{sec:dras-duration-convex}, we recast the model as tractable convex programs under a rectangularity condition.

\subsection{\ref{equ:wdro} model and Wasserstein ambiguity set} \label{sec:dras-duation}

We consider $n$ appointments and determine a time allowance $s_i$, or equivalently an arrival time, for each appointment $i \in [n]$. We require that all appointments are scheduled to arrive by a fixed end-of-the-day time limit $T$, which gives rise to the following feasible region for $\bm s := (s_1, \ldots, s_n)^{\top}$:
\[
{\cal S} := \left\{ \bm s \in \RR^n : \bm s \geq 0, \ \sum\limits_{i=1}^n s_i \leq T \right\}.
\]
As each appointment $i$ takes up a random service duration $u_i$, one or multiple of the following three scenarios can happen: (i) an appointment cannot start on time due to a delay of completion of the previous appointment, (ii) the server is idle and waiting for the next appointment due to an early completion of the current appointment, and (iii) the server cannot finish serving all the appointments by $T$. Specifically, if we let $w_i$ represent the waiting time of appointment $i$, $v_i$ represent the server's idleness after serving appointment $i$, and $w_{n+1}$ represent the server's overtime beyond $T$, then we can recursively express the waiting times, as well as the server's idleness and overtime by $w_1 = 0$ and
\begin{equation}\label{equ:recursions}
\begin{array}{l}
w_i = \max \Big\{ 0, \  u_{i-1} + w_{i-1} - s_{i-1} \Big\},  \ \ \
v_{i-1} = \max \Big\{ 0, \ s_{i-1} - u_{i-1} - w_{i-1} \Big\}
\end{array}
\end{equation}
for all $i \in [2, n+1]_{\mathbb{Z}}$. Note that $w_1 = 0$ because the first appointment always starts on time. To evaluate the performance of the appointment schedule $\bm s$, we denote the unit costs of waiting, idleness, and overtime by $\bm c \in \RR_+^n$, $\bm d \in \RR_+^n$, and $C \in \RR_+$ respectively. In addition, we assume that $d_{i+1} - d_i \leq c_{i+1}$ for all $i \in [n-1]$. This assumption is standard in the literature (see, e.g.,~\cite{Denton.Gupta.2003, Ge.Wan.Wang.Zhang.2013, Kong.Lee.Teo.Zheng.2013, Mak.Rong.Zhang.2015}). If an appointment system fails to satisfy this assumption, the server would (intentionally) keep idle and make appointee $i+1$ wait after completing all previous appointments, which is unrealistic in practice. Under this assumption, $\bm w := (w_1, \ldots, w_{n+1})^{\top}$ and $\bm v := (v_1, \ldots, v_n)^{\top}$ can be obtained from the following linear program:
\begin{equation} \label{equ:qx}
\begin{array}{rl}
f(\bm s, \bm u) :=  \min\limits_{\bm w, \bm v} & \sum \limits_{i=1}^n (c_i w_i + d_i v_i) + Cw_{n+1} \\
                                                    \st & w_i - v_{i-1} = u_{i-1} + w_{i-1} - s_{i-1} \ \ \ \forall \, i \in [2, n+1]_{\mathbb{Z}} \\
                                                        & \bm w \geq 0, \ w_1 = 0, \ \bm v \geq 0,
\end{array}
\end{equation}
where $\bm u := (u_1, \ldots, u_n)^{\top}$ and $f(\bm s, \bm u)$ represents the system total cost, which is evaluated by a weighted sum of the waiting times, idleness, and overtime under schedule $\bm s$ and service durations $\bm u$. If the probability distribution of $\bm{u}$, denoted by $\PP_{\bm{u}}$, is fully known, classical stochastic AS approaches seek a schedule that minimizes the expected system total cost, i.e.,
\begin{equation} \label{equ:sas}
Z^{\star} := \min \limits_{\bm s \in {\cal S}} \Ep_{\PP_{\bm{ u}}}\big [f(\bm s, \bm{u})\big].
\end{equation}
In this paper, we assume that $\PP_{\bm{u}}$ is unknown and belongs to a Wasserstein ambiguity set. Specifically, suppose that two probability distributions $\QQ_1$ and $\QQ_2$ are defined over a common support set $\U \subseteq \mathbb{R}^n$ and, with $p\geq 1$, $\|\cdot\|_p$ represents the $p$-norm on $\mathbb{R}^n$. Then, the Wasserstein distance $d_p(\QQ_1,\QQ_2)$ between $\QQ_1$ and $\QQ_2$ is the minimum transportation cost of moving from $\QQ_1$ to $\QQ_2$, under the premise that the cost of moving from $\bm u_1$ to $\bm u_2$ amounts to $\|\bm u_1 - \bm u_2\|_p$. Mathematically,
\[
d_p(\QQ_1,\QQ_2) := \left( \inf \limits_{\Pi \in {\cal P}(\QQ_1,\QQ_2)} \Ep_{\Pi} \Big[ \|\bm u_1 - \bm u_2\|_p^p\Big] \right)^{1/p}
\]
where random vectors $\bm u_1$ and $\bm u_2$ follow $\QQ_1$ and $\QQ_2$ respectively, and ${\cal P}(\QQ_1,\QQ_2)$ represents the set of all joint distributions of $(\bm u_1, \bm u_2)$ with marginals $\QQ_1$ and $\QQ_2$.

In addition, we assume that $\PP_{\bm u}$ is only observed via a finite set of $N$ \iid~samples, denoted as $\{\widehat{\bm{u}}^1, \ldots, \widehat{\bm{u}}^N\}$. For example, these samples can come from the historical data of the service durations $\bm u$. Then, we consider the following $p$-Wasserstein ambiguity set
\[
\D_p(\widehat \PP^N_{\bm u}, \, \epsilon) := \left\{ \QQ_{\bm u} \in \mathcal{P}(\mathcal{U}) : \  d_p(\QQ_{\bm u}, \, \widehat \PP^N_{\bm u}) \le \epsilon \right\},
\]
where $\cal P(\U)$ represents the set of all probability distributions on $\U$, $\widehat \PP^N_{\bm u}$ represents the empirical distribution of $\bm u$ based on the $N$ \iid~samples, i.e., $\widehat \PP^N_{\bm u} = \frac1N \sum_{j=1}^N \delta_{\bm{\widehat u}^j}$, and $\epsilon > 0$ represents the radius of the ambiguity set. We seek a schedule that minimizes the expected total cost with regard to the worst-case distribution in the $p$-Wasserstein ambiguity set, i.e., we solve the following DRO problem:
\begin{equation} \tag{W-DRAS} \label{equ:wdro}
\widehat Z (N, \,\epsilon) := \min_{\bm s \in \cal S} \sup_{\QQ_{\bm u} \in \D_p(\widehat \PP^N_{\bm u}, \, \epsilon)} \Ep_{\QQ_{\bm u}} \big[f(\bm s, \, \bm u) \big].
\end{equation}
Many data-driven applications desire asymptotic consistency. In particular, an appointment scheduler may desire that, as the sample size $N$ increases to infinity, $\widehat Z(N, \,\epsilon)$ tends to $Z^{\star}$, which is the optimal value of the ``true'' model \eqref{equ:sas}, i.e., the model with perfect knowledge of $\mathbb{P}_{\bm u}$. Accordingly, an optimal appointment schedule obtained from~\eqref{equ:wdro} tends to the optimal schedule obtained from \eqref{equ:sas}. In addition, if $\widehat Z(N, \,\epsilon) \geq Z^{\star}$ almost surely, then \eqref{equ:wdro} provides a safe (upper bound) guarantee on the expected total cost with any \emph{finite} data size $N$. We close this section by formally establishing the asymptotic consistency and the finite-data guarantee of \eqref{equ:wdro}. To this end, we make the following assumption on the support set $\mathcal{U}$.
\begin{assumption} \label{ass:u_set}
The support set $\U$ is nonempty, compact, and convex.
\end{assumption}
This assumption is mild and it holds in most practical situations. The following concentration inequality on the $p$-Wasserstein ambiguity set is adapted from~\cite{fournier2015rate}.
\begin{lemma}[Adapted from Theorem 2 in~\cite{fournier2015rate}] \label{lem:concentration}
Suppose that Assumption \ref{ass:u_set} holds. Then there exist nonnegative constants $c_1$ and $c_2$ such that, for all $N \geq 1$ and $\beta \in (0, \min\{1, c_1\})$,
$$
\mathbb{P}^N_{\bm u}\left\{d_p(\mathbb{P}_{\bm u}, \widehat \PP^N_{\bm u}) \leq \epsilon_N(\beta)\right\} \geq 1 - \beta,
$$
where $\mathbb{P}^N_{\bm u}$ represents the product measure of $N$ copies of $\mathbb{P}_{\bm u}$ and
$$
\epsilon_N(\beta) \ = \ \left[\frac{\log(c_1 \beta^{-1})}{c_2 N}\right]^{\frac{1}{\max\{n, 3p\}}}.
$$
\end{lemma}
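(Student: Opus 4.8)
The plan is to obtain this statement as a direct corollary of the measure-concentration bound of Fournier and Guillin~\cite{fournier2015rate}: first I would check that Assumption~\ref{ass:u_set} supplies the hypotheses of their Theorem~2, and then I would invert the resulting exponential tail bound to solve for the confidence radius $\epsilon_N(\beta)$. The role of the present ``adaptation'' is to package their piecewise, dimension-dependent estimate into the single clean exponent $\max\{n, 3p\}$.

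The compactness in Assumption~\ref{ass:u_set} enters in two ways. It gives a finite diameter $D := \sup\{\|\bm u_1 - \bm u_2\|_p : \bm u_1, \bm u_2 \in \U\}$, and since both $\PP_{\bm u}$ and $\widehat\PP^N_{\bm u}$ are supported on $\U$, every coupling moves mass a distance at most $D$, so that $d_p(\PP_{\bm u}, \widehat\PP^N_{\bm u}) \le D$ almost surely. Moreover, a compactly supported distribution trivially satisfies the exponential-moment condition $\int_{\U} e^{\gamma \|\bm u\|_p^{\alpha}} \, \PP_{\bm u}(d\bm u) < \infty$ required by~\cite{fournier2015rate} for any $\alpha, \gamma > 0$, so their Theorem~2 applies to $\PP_{\bm u}$ in dimension $n$. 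After rescaling the ground metric so that $D \le 1$ (a change of scale that is absorbed into the constants $c_1, c_2$), I would invoke that theorem to obtain nonnegative $c_1, c_2$ depending only on $n$, $p$, and $\U$, together with an exponent $b \le \max\{n, 3p\}$, for which
\[
\PP^N_{\bm u}\big\{ d_p(\PP_{\bm u}, \widehat\PP^N_{\bm u}) > x \big\} \ \le \ c_1 \exp\!\big(-c_2 N x^{b}\big) \qquad \text{for all } x > 0 .
\]

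The next step is to replace $b$ by the larger exponent $a := \max\{n, 3p\}$ and argue this is conservative. Writing $t := \log(c_1 \beta^{-1})/(c_2 N)$, a short computation shows that the tail bound evaluated at $x = t^{1/a}$ does not exceed $\beta$ precisely when $t \le 1$; since $a \ge b$, raising a base $t \le 1$ to the smaller power $1/a$ only enlarges the radius and hence only strengthens the containment event. In the complementary regime $t > 1$ one has $t^{1/a} > 1 \ge D$, so the almost-sure bound $d_p \le D$ makes the event $\{d_p \le t^{1/a}\}$ hold with probability one. The boundary logarithmic correction of~\cite{fournier2015rate} (present when $n = 2p$) is swept into $c_1, c_2$ in the same fashion.

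Finally I would invert. Setting the tail bound equal to $\beta$ and solving, namely $c_1 \exp(-c_2 N \epsilon_N^{a}) = \beta$, yields exactly
\[
\epsilon_N(\beta) \ = \ \left[\frac{\log(c_1 \beta^{-1})}{c_2 N}\right]^{1/\max\{n, 3p\}},
\]
and the restriction $\beta \in (0, \min\{1, c_1\})$ guarantees $\log(c_1 \beta^{-1}) > 0$, so that $\epsilon_N(\beta)$ is well defined and positive, while $\beta < 1$ keeps the confidence level $1 - \beta$ nontrivial. Rearranging the tail bound then delivers $\PP^N_{\bm u}\{d_p(\PP_{\bm u}, \widehat\PP^N_{\bm u}) \le \epsilon_N(\beta)\} \ge 1 - \beta$. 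The hard part will be the bookkeeping of the two paragraphs above: faithfully extracting from~\cite{fournier2015rate} a single-exponent tail valid for \emph{all} $N \ge 1$, since their estimate is piecewise in the dimension and splits the tail into separate regimes, and then verifying that the conservative exponent $\max\{n, 3p\}$, the diameter normalization $D \le 1$, and the a~priori bound $d_p \le D$ together cover every $(N, \beta)$ pair under a common pair of constants $c_1, c_2$.
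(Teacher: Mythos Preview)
Your proposal is correct and follows the paper's approach: verify the exponential-moment hypothesis of~\cite{fournier2015rate} via compactness, invoke their Theorem~2, consolidate the dimension-dependent exponents into the single $\max\{n,3p\}$, and invert. The one tactical difference is in the large-radius regime: the paper uses the $b(N,\epsilon)$ term of Fournier--Guillin with the specific choice $\alpha=\max\{3p,n\}$, whereas you bypass that term entirely via the deterministic diameter bound $d_p\le D$ after normalizing $D\le 1$; both routes work, and yours leans a bit more directly on compactness. One point to sharpen: the logarithmic correction at $n=2p$ cannot literally be ``swept into $c_1,c_2$'' while keeping the exponent at $2p$, since $[\log(2+1/\epsilon)]^2$ is unbounded as $\epsilon\downarrow 0$. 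The paper handles this by proving $(\epsilon/\log(2+1/\epsilon))^2\ge \epsilon^3/[\log 3]^2$ on $(0,1]$, trading the log for a bump in exponent from $2p$ to $3p$---precisely the slack your target $\max\{n,3p\}$ already accommodates, so your monotonicity argument closes once that inequality is in place.
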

Lemma \ref{lem:concentration} assures that the $p$-Wasserstein ambiguity set incorporates the true distribution $\PP_{\bm u}$ with high confidence. This leads to the following two theorems, whose proofs rely on~\cite{Esfahani.Kuhn.2017} and are provided in the Appendix for completeness.
\begin{theorem}[Asymptotic consistency, adapted from Theorem 3.6 in~\cite{Esfahani.Kuhn.2017}] \label{thm:a-c}
Suppose that Assumption \ref{ass:u_set} holds. Consider a sequence of confidence levels $\{\beta_N\}_{N \in \mathbb{N}}$ such that $\sum_{N=1}^{\infty} \beta_N < \infty$ and $\lim_{N \rightarrow \infty} \epsilon_N(\beta_N) = 0$\footnote{For example, we can set $\beta_N = \exp\{-\sqrt{N}\}$.}, and let $\widehat{\bm s}(N, \,\epsilon_N(\beta_N))$ represent an optimal solution to \eqref{equ:wdro} with the ambiguity set $\D_p(\widehat \PP^N_{\bm u}, \, \epsilon_N(\beta_N))$. Then, $\mathbb{P}^{\infty}_{\bm u}$-almost surely we have $\widehat Z(N, \,\epsilon_N(\beta_N)) \rightarrow Z^{\star}$ as $N\rightarrow \infty$. In addition, any accumulation point of $\{\widehat{\bm s}(N, \,\epsilon_N(\beta_N))\}_{N \in \mathbb{N}}$ is an optimal solution of \eqref{equ:sas} $\mathbb{P}^{\infty}_{\bm u}$-almost surely.
\end{theorem}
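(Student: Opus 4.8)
The plan is to combine the Borel--Cantelli lemma with the triangle inequality for the Wasserstein metric and the Lipschitz continuity of the cost function $f(\bm s, \cdot)$ on the compact support $\U$. First I would apply Lemma~\ref{lem:concentration} with $\beta = \beta_N$ to obtain $\PP^N_{\bm u}\{d_p(\PP_{\bm u}, \widehat\PP^N_{\bm u}) > \epsilon_N(\beta_N)\} \le \beta_N$; since $\sum_N \beta_N < \infty$, the Borel--Cantelli lemma shows that $\PP^{\infty}_{\bm u}$-almost surely the event $\{\PP_{\bm u} \in \D_p(\widehat\PP^N_{\bm u}, \epsilon_N(\beta_N))\}$ holds for all sufficiently large $N$. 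On this almost-sure event, for every $\bm s \in \S$ the true distribution is feasible for the inner supremum, so $\sup_{\QQ_{\bm u}} \Ep_{\QQ_{\bm u}}[f(\bm s,\bm u)] \ge \Ep_{\PP_{\bm u}}[f(\bm s,\bm u)]$; minimizing over $\bm s \in \S$ yields the finite-data guarantee $\widehat Z(N, \epsilon_N(\beta_N)) \ge Z^\star$, whence $\liminf_N \widehat Z(N, \epsilon_N(\beta_N)) \ge Z^\star$ almost surely.

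For the matching upper bound I would fix an optimal schedule $\bm s^\star$ of \eqref{equ:sas} (which exists since $\S$ is compact and the objective is continuous), giving $\widehat Z(N,\epsilon_N(\beta_N)) \le \sup_{\QQ_{\bm u}} \Ep_{\QQ_{\bm u}}[f(\bm s^\star, \bm u)]$. On the same almost-sure event, the triangle inequality for $d_p$ shows that every $\QQ_{\bm u} \in \D_p(\widehat\PP^N_{\bm u}, \epsilon_N(\beta_N))$ satisfies $d_p(\QQ_{\bm u}, \PP_{\bm u}) \le 2\epsilon_N(\beta_N)$. Because $f(\bm s^\star, \cdot)$ is the optimal value of a parametric linear program with affinely perturbed right-hand side, it is finite, piecewise linear, and convex in $\bm u$, hence Lipschitz on the compact set $\U$ with some constant $L$. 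Combining Jensen's inequality ($d_1 \le d_p$) with the Kantorovich coupling bound then gives $|\Ep_{\QQ_{\bm u}}[f(\bm s^\star, \bm u)] - \Ep_{\PP_{\bm u}}[f(\bm s^\star, \bm u)]| \le L\, d_p(\QQ_{\bm u}, \PP_{\bm u}) \le 2L\,\epsilon_N(\beta_N)$, so that $\widehat Z(N,\epsilon_N(\beta_N)) \le Z^\star + 2L\,\epsilon_N(\beta_N)$. Since $\epsilon_N(\beta_N) \to 0$, we obtain $\limsup_N \widehat Z \le Z^\star$ almost surely, and together with the previous paragraph this establishes $\widehat Z(N, \epsilon_N(\beta_N)) \to Z^\star$.

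For the claim on optimal schedules, let $\bar{\bm s}$ be any accumulation point of $\{\widehat{\bm s}(N,\epsilon_N(\beta_N))\}$ along a subsequence $\{N_k\}$; compactness of $\S$ gives $\bar{\bm s} \in \S$. The map $\bm s \mapsto \Ep_{\PP_{\bm u}}[f(\bm s, \bm u)]$ is continuous on $\S$ (as $f$ is jointly continuous and bounded on the compact set $\S \times \U$, permitting dominated convergence), so on the almost-sure event $\Ep_{\PP_{\bm u}}[f(\bar{\bm s}, \bm u)] = \lim_k \Ep_{\PP_{\bm u}}[f(\widehat{\bm s}(N_k), \bm u)] \le \lim_k \widehat Z(N_k, \epsilon_{N_k}(\beta_{N_k})) = Z^\star$, where the inequality again uses feasibility of $\PP_{\bm u}$ in the inner supremum at each $N_k$. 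Since $Z^\star$ is the minimum of $\Ep_{\PP_{\bm u}}[f(\cdot,\bm u)]$ over $\S$, equality must hold, so $\bar{\bm s}$ is optimal for \eqref{equ:sas}.

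The main obstacle I anticipate is the upper-bound step, specifically establishing the Lipschitz continuity of $f(\bm s^\star, \cdot)$ with a finite constant and justifying the Lipschitz-to-Wasserstein transfer $|\Ep_{\QQ_{\bm u}}[g] - \Ep_{\PP_{\bm u}}[g]| \le L\,d_p(\QQ_{\bm u},\PP_{\bm u})$. This is precisely where the compactness of $\U$ in Assumption~\ref{ass:u_set} is essential: it guarantees that the piecewise-linear convex value function has a finite Lipschitz constant on $\U$, and it ensures the moment/light-tail growth condition underlying Lemma~\ref{lem:concentration} is met automatically. The remaining measure-theoretic bookkeeping (Borel--Cantelli, compactness of $\S$, dominated convergence) is routine.
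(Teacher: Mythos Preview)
Your proposal is correct and follows essentially the same route as the paper: the paper verifies boundedness, continuity, and Lipschitz continuity of $f$ (Lemmas~\ref{lem:a-c-1}--\ref{lem:a-c-2}) together with the Borel--Cantelli/triangle-inequality/Jensen argument controlling $d_1$ by $d_p$ (Lemma~\ref{lem:a-c-3}), and then invokes Theorem~3.6 of~\cite{Esfahani.Kuhn.2017} as a black box, whereas you unpack that theorem directly. The ingredients you identify---feasibility of $\PP_{\bm u}$ for the lower bound, and Lipschitz-plus-Kantorovich for the upper bound---are exactly those underlying the paper's lemmas.
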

\begin{theorem}[Finite-data guarantee, adapted from Theorem 3.5 in~\cite{Esfahani.Kuhn.2017}] \label{thm:f-d}
For any $\beta \in (0, 1)$, let $\widehat{\bm s}(N, \,\epsilon_N(\beta))$ represent an optimal solution of~\eqref{equ:wdro} with the ambiguity set $\D_p(\widehat \PP^N_{\bm u}, \, \epsilon_N(\beta))$. Then, $\mathbb{P}^N_{\bm u}\{\mathbb{E}_{\mathbb{P}_{\bm u}}[f(\widehat{\bm s}(N, \,\epsilon_N(\beta)), \, \bm u)] \leq \widehat Z(N, \,\epsilon_N(\beta))\} \geq 1 - \beta$.
\end{theorem}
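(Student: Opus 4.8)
The plan is to read this statement as an almost immediate corollary of the measure-concentration result in Lemma~\ref{lem:concentration}: the radius $\epsilon_N(\beta)$ is calibrated precisely so that the $p$-Wasserstein ball of that radius around $\widehat\PP^N_{\bm u}$ captures the true distribution with confidence at least $1-\beta$. Once the truth sits inside the ambiguity set, the guarantee follows purely from the definition of the worst-case expectation in~\eqref{equ:wdro}, with no further probabilistic input.

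Concretely, abbreviate $\widehat{\bm s} := \widehat{\bm s}(N,\epsilon_N(\beta))$ and $\widehat Z := \widehat Z(N,\epsilon_N(\beta))$. First I would invoke Lemma~\ref{lem:concentration} with the stated radius to obtain
$$\PP^N_{\bm u}\big\{d_p(\PP_{\bm u}, \widehat\PP^N_{\bm u}) \le \epsilon_N(\beta)\big\} \ \ge \ 1-\beta,$$
and denote by $\mathcal E$ the event inside the braces. By the very definition of $\D_p(\widehat\PP^N_{\bm u}, \epsilon_N(\beta))$ as the collection of distributions within $p$-Wasserstein distance $\epsilon_N(\beta)$ of $\widehat\PP^N_{\bm u}$, on the event $\mathcal E$ we have $\PP_{\bm u} \in \D_p(\widehat\PP^N_{\bm u}, \epsilon_N(\beta))$. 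Since $\PP_{\bm u}$ is then a feasible member of the ambiguity set, its expectation is dominated by the supremum over that set, and optimality of $\widehat{\bm s}$ for~\eqref{equ:wdro} identifies this supremum with $\widehat Z$:
$$\Ep_{\PP_{\bm u}}\big[f(\widehat{\bm s}, \bm u)\big] \ \le \ \sup_{\QQ_{\bm u} \in \D_p(\widehat\PP^N_{\bm u}, \epsilon_N(\beta))} \Ep_{\QQ_{\bm u}}\big[f(\widehat{\bm s}, \bm u)\big] \ = \ \widehat Z.$$
Hence $\mathcal E$ is contained in the event $\{\Ep_{\PP_{\bm u}}[f(\widehat{\bm s}, \bm u)] \le \widehat Z\}$, and monotonicity of the product measure $\PP^N_{\bm u}$ transfers the lower bound $1-\beta$ to the latter, as claimed.

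The chain of inequalities is deterministic once we condition on $\mathcal E$, so the only genuine issue is measurability: both $\widehat{\bm s}$ and $\widehat Z$ are functions of the random sample $\{\widehat{\bm u}^1,\ldots,\widehat{\bm u}^N\}$, and I would need the map from the sample to $\Ep_{\PP_{\bm u}}[f(\widehat{\bm s}, \bm u)] - \widehat Z$ to be measurable so that the event in the statement is well defined under $\PP^N_{\bm u}$. The hard part is therefore not the estimate itself but justifying that an optimal schedule $\widehat{\bm s}$ exists — guaranteed by compactness of $\S$ (Assumption~\ref{ass:u_set} and the definition of $\S$) together with continuity of the worst-case objective — and can be selected as a measurable function of the sample. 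This is a routine measurable-selection argument that parallels the one underlying Theorem~3.5 of~\cite{Esfahani.Kuhn.2017}, and I would cite it rather than reprove it.
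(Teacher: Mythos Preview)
Your argument is correct and is precisely the reasoning behind Theorem~3.5 of~\cite{Esfahani.Kuhn.2017}, which the paper simply cites: Assumption~\ref{ass:u_set} makes Lemma~\ref{lem:concentration} applicable, and from there the inclusion $\PP_{\bm u}\in\D_p(\widehat\PP^N_{\bm u},\epsilon_N(\beta))$ on the high-probability event yields the bound exactly as you wrote. The paper's proof is a one-line citation, so your proposal is in fact more explicit than the original while following the same route.
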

The radius $\epsilon_N(\beta)$ suggested in the above theoretical results is often conservative, i.e., the actual radius $\epsilon$ that achieves the $(1 - \beta)$ confidence of $\D_p(\widehat \PP^N_{\bm u}, \, \epsilon)$ may be much smaller than $\epsilon_N(\beta)$. In this paper, we calibrate the radius of the $p$-Wasserstein ambiguity set by using the cross validation method. This yields encouraging convergence results and out-of-sample performance of \eqref{equ:wdro} (see the detail in Section \ref{sec:numerical}).

Problem \eqref{equ:wdro} is generically difficult to solve as it involves infinitely many probability distributions. In the following two subsections, we shall recast \eqref{equ:wdro} as a (deterministic and finite) copositive program and identify conditions under which we have more tractable reformulations.

\subsection{Copositive programming reformulations} \label{sec:dras-duation-cop}

In this section, we propose a copositive reformulation for~\eqref{equ:wdro}. As the first step, we represent $f(\bm s, \bm u)$ by the following dual problem of \eqref{equ:qx}:
\begin{equation} \label{equ:qx-max}
\begin{array}{rll}
f(\bm s, \bm u) = \displaystyle\max_{y \in \Y} & \sum \limits_{i=1}^n (u_i - s_i) y_i,
\end{array}
\end{equation}
where dual variables $\bm y$ are associated with the first constraint in \eqref{equ:qx} and the polyhedral feasible set $\Y$ is described as
\begin{equation}
\Y := \left\{ \bm y \in \RR^n :
\begin{array}{ll}
-y_i \leq d_i &   \ \ \ \forall \, i \in [n]   \\
y_{i-1} - y_i \leq c_i &   \ \ \ \forall \, i \in [2,n]_{\Bbb Z} \\
y_n \leq C &
\end{array}
\right\}. \label{equ:y-def}
\end{equation}
The strong duality between \eqref{equ:qx} and \eqref{equ:qx-max} holds because $\Y$ is nonempty and compact, as summarized in the following lemma.
\begin{lemma} \label{lem:y_set}
$\Y$ is nonempty and compact.
\end{lemma}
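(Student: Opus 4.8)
The plan is to verify the two defining properties—nonemptiness and compactness—separately, treating $\Y$ as a polyhedron in $\RR^n$. Since $\Y$ is the solution set of finitely many (non-strict) linear inequalities, it is automatically closed as a finite intersection of closed half-spaces, so by the Heine--Borel theorem it suffices to establish boundedness and to exhibit a single feasible point.

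For nonemptiness, I would propose the candidate $\bm y = -\bm d$, i.e.\ $y_i = -d_i$ for every $i \in [n]$, and check the three families of constraints. The bound $-y_i = d_i \le d_i$ holds with equality; the terminal bound $y_n = -d_n \le 0 \le C$ holds because $d_n \ge 0$ and $C \ge 0$; and the difference constraint $y_{i-1} - y_i = d_i - d_{i-1} \le c_i$ for $i \in [2,n]_{\Bbb Z}$ is \emph{exactly} the standing assumption $d_{i+1} - d_i \le c_{i+1}$ reindexed. This last observation is the crux of nonemptiness: without the assumption $d_{i+1}-d_i \le c_{i+1}$, the natural lower-corner point $-\bm d$ need not be feasible.

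For boundedness, I would derive explicit coordinatewise bounds. The constraints $-y_i \le d_i$ give the lower bounds $y_i \ge -d_i$ directly. For the upper bounds I would chain the difference constraints, rewritten as $y_{i-1} \le y_i + c_i$, backwards from the terminal inequality $y_n \le C$, obtaining by a short induction (telescoping) that $y_i \le C + \sum_{k=i+1}^n c_k$ for every $i \in [n]$. Hence each coordinate lies in the bounded interval $[-d_i, \, C + \sum_{k=i+1}^n c_k]$, so $\Y$ is contained in a box and is therefore bounded.

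I do not anticipate a genuine obstacle here; the only steps requiring care are the backward telescoping for the upper bounds and the recognition that the nonemptiness certificate $-\bm d$ relies precisely on the monotonicity-type assumption $d_{i+1}-d_i \le c_{i+1}$. Combining closedness with the box bound yields compactness, completing the argument.
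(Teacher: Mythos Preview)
Your argument is correct, and the boundedness half matches the paper's proof exactly: both of you chain $y_{i-1} \le y_i + c_i$ backward from $y_n \le C$ to obtain $y_i \le C + \sum_{k=i+1}^n c_k$, then combine with $y_i \ge -d_i$ for the box bound. The only difference is in the nonemptiness certificate: the paper simply takes $\bm y = \bm 0$, which satisfies all three constraint families using nothing more than $\bm c \ge \bm 0$, $\bm d \ge \bm 0$, $C \ge 0$. Your choice $\bm y = -\bm d$ also works, but it forces you to invoke the structural assumption $d_{i+1} - d_i \le c_{i+1}$ to handle the difference constraints. So your route is sound but strictly less economical---the lemma holds without that assumption, and the paper's choice of the origin makes this transparent.
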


Second, to obtain a copositive reformulation, we consider the inner maximization problem of \eqref{equ:wdro}
\begin{equation}
\sup_{\QQ_{\bm u} \in \D_p(\widehat \PP_{\bm u}^N, \, \epsilon)} \Ep_{\QQ_{\bm u}} \big[ f(\bm s, \bm u)\big] \label{equ:bound}
\end{equation}
for fixed $\bm s \in \cal S$. In the following proposition, we present an equivalent dual formulation of~\eqref{equ:bound} via duality theory.
\begin{proposition} \label{prop:ROFormat}
The optimal value of formulation \eqref{equ:bound} equals
that of the following formulation:
\begin{align}
\inf_{\rho, \, \bm{\theta}} \ & \ \epsilon^p \rho + \dfrac1N \sum\limits_{j=1}^{N} \theta_j \nonumber \\
\st \ & \ \sup \limits_{\bm u \in \mathcal{U}} \ \left\{f(\bm s, \, \bm u) - \rho \|\bm u - \bm{\widehat u}^j\|_p^p\right\} \le \theta_j \ \ \ \forall \, j \in [N] \label{equ:vdp_nonconvex} \\
& \ \rho \ge 0, \ \bm \theta \in \RR^N. \nonumber
\end{align}
\end{proposition}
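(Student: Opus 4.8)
The plan is to recognize \eqref{equ:bound} as an instance of the standard Wasserstein dual reformulation and to adapt the argument of~\cite{Esfahani.Kuhn.2017} to our setting. First I would unfold the Wasserstein constraint using the definition of $d_p$. Since the reference measure $\widehat \PP^N_{\bm u} = \frac1N\sum_{j=1}^N \delta_{\widehat{\bm u}^j}$ is discrete, every transport plan $\Pi$ with marginals $\QQ_{\bm u}$ and $\widehat \PP^N_{\bm u}$ is completely determined by its conditional distributions $\QQ^j$ of $\bm u_1$ given $\bm u_2 = \widehat{\bm u}^j$, so that $\QQ_{\bm u} = \frac1N\sum_{j=1}^N \QQ^j$ and
\[
d_p(\QQ_{\bm u}, \widehat \PP^N_{\bm u})^p = \frac1N\sum_{j=1}^N \Ep_{\QQ^j}\big[\|\bm u - \widehat{\bm u}^j\|_p^p\big].
\]
This recasts \eqref{equ:bound} as a generalized moment problem: maximize $\frac1N\sum_{j} \Ep_{\QQ^j}[f(\bm s,\bm u)]$ over probability measures $\QQ^1,\dots,\QQ^N \in \mathcal P(\U)$ subject to the single budget constraint $\frac1N\sum_{j} \Ep_{\QQ^j}[\|\bm u-\widehat{\bm u}^j\|_p^p]\le\epsilon^p$.

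Next I would dualize the budget constraint. Introducing a multiplier $\rho\ge 0$ and forming the Lagrangian, the inner maximization over the measures decouples across $j$; because the objective is linear in each $\QQ^j$, the supremum of $\Ep_{\QQ^j}[f(\bm s,\bm u)-\rho\|\bm u-\widehat{\bm u}^j\|_p^p]$ over $\mathcal P(\U)$ is attained by point masses and therefore equals the pointwise value $\sup_{\bm u\in\U}\{f(\bm s,\bm u)-\rho\|\bm u-\widehat{\bm u}^j\|_p^p\}$. This yields the dual objective $\rho\epsilon^p + \frac1N\sum_{j} \sup_{\bm u\in\U}\{f(\bm s,\bm u)-\rho\|\bm u-\widehat{\bm u}^j\|_p^p\}$, which I would then lift to the epigraph form in \eqref{equ:vdp_nonconvex} by introducing the auxiliary variables $\theta_j$ upper bounding each inner supremum.

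The main obstacle is establishing strong duality, i.e., verifying that passing from the supremum over $\QQ_{\bm u}$ to the infimum over $(\rho,\bm\theta)$ incurs no duality gap. Here I would invoke a conic/moment duality theorem in the spirit of~\cite{Esfahani.Kuhn.2017}. The requisite regularity is available in our setting: by Lemma~\ref{lem:y_set} and Assumption~\ref{ass:u_set}, $f(\bm s,\cdot)$ is the maximum of finitely many linear functions over the compact set $\Y$, hence a finite, convex, continuous function on the nonempty compact set $\U$, so all the integrals and suprema above are finite and attained; moreover, since $\epsilon>0$, the empirical distribution $\widehat \PP^N_{\bm u}$ itself lies strictly inside the Wasserstein ball (with $d_p=0<\epsilon$), furnishing the Slater point that closes the gap. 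Finiteness of $\rho$ at optimality is guaranteed by the compactness of $\U$, which bounds the transportation term uniformly, so the resulting $(\rho,\bm\theta)$ is feasible for \eqref{equ:vdp_nonconvex} and matches the primal value.
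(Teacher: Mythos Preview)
Your proposal is correct and follows essentially the same route as the paper: decompose the Wasserstein constraint via conditional distributions $\QQ^j$ indexed by the atoms of $\widehat\PP^N_{\bm u}$, dualize the single budget constraint with multiplier $\rho\ge 0$, reduce each inner supremum over $\mathcal P(\U)$ to a pointwise supremum via Dirac masses, and lift to epigraph form. The paper justifies strong duality by citing moment-problem duality results (Shapiro~\cite{Shapiro.2001}, Hanasusanto--Kuhn~\cite{Hanasusanto.Kuhn.2018}, Hanasusanto et~al.~\cite{Hanasusanto.Roitch.Kuhn.Wiesemann.2017}), whereas you spell out the Slater argument explicitly; these are the same mechanism.
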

While this dual formulation is in the format of a classical robust optimization problem, it is
inherently difficult. Indeed, reformulating the semi-infinite constraint~\eqref{equ:vdp_nonconvex}
entails solving $N$ non-convex optimization problems (note that both $f(\bm s, \bm u)$ and $\|\bm u - \bm{\widehat u}^j\|_p^p$ are convex in $\bm u$), which are in general computationally intractable.
The presence of the norm $\|\cdot\|_p^p$ with a general $p \geq 1$ introduces additional computational difficulties.

In this section, we propose to use copositive programming reformulation techniques to tackle the intractable constraint for the cases of $p=1,2$.
For ease of exposition, we define sets ${\cal F}^j_1$ for all $j \in [N]$ and ${\cal F}_2$ by using $\U$ and $\Y$:
\[
\mathcal{F}^j_1 := \left \{
\begin{array}{l}
(\bm{u^+}, \bm{u^-}, \bm{y}) \in \RR_+^n \times \RR_+^n \times \RR^n : \bm{u^+} - \bm{u^-} +  \bm{\widehat u}^j \in \U, \, \bm{y} \in \Y
\end{array}
\right \},
\]
\[
\mathcal{F}_2 := \left\{ (\bm u, \bm y) \in \RR^n \times \RR^n :
\begin{array}{l}
 \bm u \in {\cal U}, \ \bm y \in {\cal Y}
\end{array}
 \right\},
\]
and then construct their perspective sets $\K_1^j$ and $\K_2$ respectively:
\begin{equation}
\K_1^j  :=  \text{closure} \Big( \Big \{ (t, \bm u^+, \bm u^-, \bm y) : (\bm u^+/t, \bm u^-/t, \bm{y} /t) \in \F_1^j, \ t > 0 \Big \} \Big ),
\end{equation}
\begin{equation}
\K_2  :=  \textup{closure} \Big ( \Big\{ (t, \bm u, \bm y)  :   (\bm u/t, \bm y/t)  \in \mathcal{F}_2, \ t > 0 \Big\} \Big ),
\end{equation}
where $\textup{closure}(\K)$ denotes the closure of the set $\K$.
In fact, $\K_1^j$ and $\K_2$ are closed and convex cones by Assumption~\ref{ass:u_set}.
Furthermore, for a closed and convex cone $\K$, we define $\COP(\K)$ as
the set of all copositive matrices with respect to $\K$, i.e.,
$\COP(\K) := \left\{  \bm{M} \in \SYM^n : \bm{x}^\top \bm{M}\bm{x} \geq 0  \  \forall \, \bm{x} \in \K \right\}$. The dual cone of $\COP(\K)$ is denoted by $\CP(\K)$, which is the set of all completely positive matrices with respect to $\K$.
We refer the reader to~\cite{Burer.2009, Burer.2011, Burer.2015, Dur.2010}
for a thorough discussion about copositive programming.

Now, we are ready to present the copositive programming reformulation of~\eqref{equ:wdro} for the cases of $p = 1,2$ in the definition of $\D_p(\widehat \PP^N_{\bm u}, \epsilon)$.
For fixed $\rho$ and $\bm s$, define \[
\bm{H}_j^1(\rho, \, \bm s) :=
\begin{bmatrix}
0 & -\frac{1}{2}\rho \bm{e}^\top & -\frac{1}{2}\rho \bm{e}^\top & \frac{1}{2}(\bm{\widehat u}^j - \bm s)^\top  \\
 -\frac{1}{2}\rho \bm{e} & 0 & 0 & \frac{1}{2} \bm{I} \\
 -\frac{1}{2}\rho \bm{e} & 0 & 0 & - \frac{1}{2} \bm{I} \\
\frac{1}{2}(\bm{\widehat u}^j - \bm s) & \frac{1}{2} \bm{I} & -\frac{1}{2} \bm{I} & 0
\end{bmatrix} \ \ \ \forall \, j \in [N],
\]
\[
\bm H_j^2(\rho, \, \bm s) :=
\begin{bmatrix}
-\rho \|\bm{\widehat u}^j\|^2 & \rho (\bm{\widehat u}^j)^\top & - \frac12 \bm s^\top \\
\rho \bm{\widehat u}^j & -\rho \bm I & \frac12\bm I \\
-\frac12 \bm s & \frac12 \bm I & 0
\end{bmatrix}  \ \ \ \forall \, j \in [N],
\]
where $\bm{I} \in \SYM^n$ and $\bm{e} \in \RR^n$ denote by the identity matrix and all-ones vector respectively.
\begin{theorem} \label{thm:dro-cop}
Suppose that Assumption~\ref{ass:u_set} holds. Then, when $p=1$, \eqref{equ:wdro} yields the same optimal value and the same set of optimal solutions as the following linear conic program:
\begin{equation}\label{equ:dro-cop-p1}
\begin{array}{rl}
\widehat Z(N, \, \epsilon) =  \inf  & \epsilon \rho + \dfrac1N \sum\limits_{j=1}^{N} \beta_j  \\
\st &  \rho \in \RR, \ \bm \beta \in \RR^N \\
& \beta_j \, \bm{e}_1 \bm{e}_1^\top - \bm{H}_j^1(\rho,\, \bm s) \in \COP(\K_1^j)  \ \ \ \forall \, j \in [N] \\
& \rho \ge 0, \ \bm s \in {\cal S},
\end{array}
\end{equation}
where $\bm e_1$ represents the first standard basis vector in $\RR^{3n+1}$.

In addition, when $p=2$, \eqref{equ:wdro} yields the same optimal value and the same set of optimal solutions as the following linear conic program:
\begin{equation}\label{equ:dro-cop-p2}
\begin{array}{rl}
\widehat Z(N, \, \epsilon) =  \inf  & \epsilon^2 \rho + \dfrac1N \sum\limits_{j=1}^{N} \beta_j \\
\st &  \rho \in \RR, \ \bm \beta \in \RR^N \\
& \beta_j \, \bm{e}_1 \bm{e}_1^\top - \bm{H}^2_j(\rho,\, \bm s) \in \COP(\K_2)  \ \ \ \forall \, j \in [N] \\
& \rho \ge 0, \ \bm s \in {\cal S},
\end{array}
\end{equation}
where $\bm e_1$ represents the first standard basis in $\RR^{2n+1}$.
\end{theorem}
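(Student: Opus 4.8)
The plan is to begin from Proposition~\ref{prop:ROFormat}, which already reduces the inner supremum in \eqref{equ:wdro} to the semi-infinite robust constraint \eqref{equ:vdp_nonconvex}, and then, for each $j \in [N]$, to convert the worst-case subproblem $\sup_{\bm u \in \U}\{f(\bm s,\bm u) - \rho\|\bm u - \widehat{\bm u}^j\|_p^p\} \le \theta_j$ into an \emph{exact} copositivity condition. The first step is to linearize $f$: substituting the dual representation \eqref{equ:qx-max}, namely $f(\bm s,\bm u) = \max_{\bm y \in \Y}\sum_i (u_i - s_i) y_i$, and merging the two maximizations, the subproblem becomes the single bilinear program $\sup_{\bm u \in \U,\,\bm y \in \Y}\{(\bm u - \bm s)^\top \bm y - \rho\|\bm u - \widehat{\bm u}^j\|_p^p\} \le \theta_j$. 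The remaining work is to recast this constraint, whose left-hand side is a (generally nonconcave) quadratic maximization, as copositivity of the explicit matrix $\bm H_j^1$ or $\bm H_j^2$.

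For $p=1$ I would introduce the splitting $\bm u - \widehat{\bm u}^j = \bm u^+ - \bm u^-$ with $\bm u^+,\bm u^- \ge 0$, so that $(\bm u^+,\bm u^-,\bm y)$ ranges over $\F_1^j$. The key observation is that at optimality one may assume the representation is minimal, i.e.\ $u_i^+ u_i^- = 0$ for every $i$: if both were positive, subtracting $\min\{u_i^+, u_i^-\}$ from each leaves $\bm u$ (hence the bilinear term and feasibility in $\U$) unchanged while strictly decreasing $\bm e^\top(\bm u^+ + \bm u^-)$, thereby increasing the objective since $\rho \ge 0$. Consequently the penalty $\rho\|\bm u - \widehat{\bm u}^j\|_1$ may be replaced by the linear term $\rho\,\bm e^\top(\bm u^+ + \bm u^-)$, and a direct expansion shows the resulting objective equals $\bm x^\top \bm H_j^1(\rho,\bm s)\,\bm x$ with $\bm x = (1,\bm u^+,\bm u^-,\bm y)^\top \in \RR^{3n+1}$. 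For $p=2$ no splitting is needed: $\|\bm u - \widehat{\bm u}^j\|_2^2$ is already quadratic in $\bm u$, and expanding $(\bm u - \bm s)^\top \bm y - \rho\|\bm u - \widehat{\bm u}^j\|_2^2$ shows it equals $\bm x^\top \bm H_j^2(\rho,\bm s)\,\bm x$ with $\bm x = (1,\bm u,\bm y)^\top \in \RR^{2n+1}$ over $\F_2$.

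The crux is then a homogenization argument (cf.~\cite{Burer.2009}). Writing the feasible set as the slice $\{x_1 = 1\}$ of the perspective cone $\K_1^j$ (resp.\ $\K_2$) and relabelling the bound $\theta_j$ as $\beta_j$, the constraint $\sup\{\bm x^\top \bm H \bm x : \bm x \in \K,\ x_1 = 1\} \le \beta_j$ is equivalent, by scaling and by passing to the closure, to $\beta_j\,\bm e_1 \bm e_1^\top - \bm H \in \COP(\K)$: indeed this membership means $\beta_j\, x_1^2 \ge \bm x^\top \bm H \bm x$ for all $\bm x \in \K$, which on $x_1 = 1$ is exactly the desired bound and on $x_1 > 0$ follows by homogeneity. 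I expect this equivalence to be the main obstacle, for two reasons. First, when $p=1$ the set $\F_1^j$ is \emph{unbounded} (one may inflate $\bm u^+ = \bm u^-$ arbitrarily), so I must check that the degree-two form induced by $\bm H_j^1$ is nonpositive — in fact identically zero — along the recession directions $(0,\bm\delta,\bm\delta,\bm 0)$ of $\K_1^j$, since there the bilinear and penalty cross-terms (all coupled to $x_1$) vanish; this, together with the strict decay of the true objective as $\bm u^+ = \bm u^- \to \infty$ on the $t=1$ slice, guarantees the supremum is finite and that the $t=0$ face of the closed cone introduces no duality gap. Second, I must invoke the \emph{exactness} of the copositive representation so that the reformulation preserves the optimal value rather than merely bounding it; here Assumption~\ref{ass:u_set} and Lemma~\ref{lem:y_set}, which render $\U$ and $\Y$ closed and convex (with $\Y$ compact), ensure $\K_1^j$ and $\K_2$ are closed convex cones and that the homogenization is valid. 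Finally, since the decision $\bm s$ and the outer minimization over $\S$ are untouched by all of these per-$j$ manipulations, for every fixed $\bm s$ the inner optimal value of \eqref{equ:wdro} equals that of \eqref{equ:dro-cop-p1} (resp.\ \eqref{equ:dro-cop-p2}); minimizing over $\bm s \in \S$ then yields equality of both the optimal values and the sets of optimal schedules.
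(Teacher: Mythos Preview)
Your approach is correct and actually more direct than the paper's. The paper proceeds in two steps: it first lifts the inner quadratic program to a completely positive program over $\CP(\K)$ and invokes Burer's exactness result (Lemma~\ref{lem:burer}) to equate their values, and then applies a Slater-point argument (Lemma~\ref{lem:cpp_cop}) to obtain strong duality with the copositive dual. Both lemmas are stated under the hypothesis that the slice $\mathcal{L} = \{\bm x \in \K : \bm e_1^\top \bm x = 1\}$ is bounded. You bypass the CP layer entirely and argue the equivalence $\sup\{\bm x^\top \bm H\,\bm x : \bm x \in \K,\ x_1 = 1\} \le \beta_j \Longleftrightarrow \beta_j\,\bm e_1\bm e_1^\top - \bm H \in \COP(\K)$ directly, handling $x_1 > 0$ by homogeneity and $x_1 = 0$ by inspecting the recession cone.

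This buys you something real in the $p=1$ case. As you correctly note, $\F_1^j$ is unbounded (send $\bm u^+ = \bm u^- = M\bm e$ with $M\to\infty$), so the slice $\{\bm x \in \K_1^j : x_1 = 1\}$ is unbounded too; the paper's assertion to the contrary, and hence its appeal to Lemmas~\ref{lem:burer} and~\ref{lem:cpp_cop}, is outside their stated hypotheses. Your identification of the $t=0$ face of $\K_1^j$ as $\{(0,\bm\delta,\bm\delta,\bm 0):\bm\delta\ge 0\}$ and the check that $\bm x^\top \bm H_j^1\,\bm x = (\bm u^+ - \bm u^-)^\top \bm y = 0$ there closes this cleanly. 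For $p=2$ the two routes are equivalent in difficulty, since $\F_2 = \U\times\Y$ is compact and the boundedness hypothesis is genuinely available.
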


\begin{remark}
The copositive programming reformulations in \eqref{equ:dro-cop-p1} and \eqref{equ:dro-cop-p2} are amenable to semidefinite programming solution schemes. Specifically, there exists a hierarchy of increasingly tight semidefinite-based inner approximations that converge to $\COP(\K)$ for a general closed and convex cone $\K$~\cite{Bomze.Klerk.2002, Klerk.Pasechnik.2002, Lasserre.2009, Parrilo.2000, Xu.Hanasusanto.2018}. Replacing the cone $\COP(\K)$ with these inner approximations gives rise to conservative semidefinite programs that can be solved using standard off-the-shelf solvers (such as MOSEK~\cite{Mosek} and SDPT3~\cite{toh1999sdpt3}).
\end{remark}

\subsection{Tractable reformulations} \label{sec:dras-duration-convex}

In this subsection, we consider a setting that admits tractable reformulations of~\eqref{equ:wdro} for general
$p \geq 1$. In particular, we recast problem~\eqref{equ:wdro} as a linear program when $p=1$ and as a second-order cone program when $p > 1$ and $p$ is rational. To this end, we make the following assumption on the support set $\mathcal{U}$.
\begin{assumption}[Rectangular support of service durations] \label{ass:tractable-support}
Assume that $\mathcal{U}$ is defined as
\[
\mathcal{U} := \Big\{ \bm u \in \RR^n : \bm u^{\mbox{\tiny L}} \leq \bm u \leq  \bm u^{\mbox{\tiny U}}  \Big\},
\]
for $0 \leq u^{\mbox{\tiny L}}_i  <   u^{\mbox{\tiny U}}_i < \infty$ for all $i \in [n]$.
\end{assumption}
Assumption \ref{ass:tractable-support} is mild because any compact $\mathcal{U} \subset \mathbb{R}^n_+$ can be relaxed to be rectangular. In addition, the asymptotic consistency and the finite-data guarantee of \eqref{equ:wdro} (i.e., Theorems \ref{thm:a-c}--\ref{thm:f-d}) hold valid under Assumption \ref{ass:tractable-support}. We note that rectangular $\mathcal{U}$ does not imply that $\{u_i\}_{i\in[n]}$ are probabilistically independent. First, following Proposition \ref{prop:ROFormat}, we rewrite~\eqref{equ:wdro} as
\begin{equation} \label{equ:tractable-ref}
\begin{array}{lll}
\widehat Z(N, \, \epsilon) = & \displaystyle \inf_{\rho, \, \bm s} & \epsilon^p \rho  + \dfrac1N \sum\limits_{j=1}^{N} \sup \limits_{\bm u \in \mathcal{U}} \left\{f( \bm s, \, \bm u) - \rho \|\bm u - \bm{\widehat u}^j\|^p_p\right\} \\
                & \st & \rho \ge 0, \, \bm s \in {\cal S}.
\end{array}
\end{equation}
Recall that formulation \eqref{equ:tractable-ref} is potentially prohibitive to compute because it entails solving $N$ non-convex optimization problems, in which $f( \bm s, \, \bm u) - \rho \|\bm u - \bm{\widehat u}^j\|^p_p$ is neither convex nor concave in $\bm u$. Fortunately, Assumption \ref{ass:tractable-support} enables us to recast these problems as linear programs for fixed $\rho$ and $\bm s$, as summarized in the following proposition.
\begin{proposition} \label{prop:tractable}
Suppose that Assumption~\ref{ass:tractable-support} holds. Given $p \geq 1$, $\rho \geq 0$, and $\bm s \in {\cal S}$, we denote
\[
\omega_j(\rho, \bm s) := \sup \limits_{\bm u \in \mathcal{U}} \Big \{ f( \bm s, \, \bm u) - \rho \|\bm u -\bm{\widehat u}^j\|^p_p\Big \}.
\]
Then, we have
\begin{subequations}
\begin{align}
\omega_j(\rho, \bm s) \ = \ \max_t \ & \ \sum_{k=1}^{n} \sum_{\ell=k}^{n+1} \left( \sum_{i=k}^{\min\{\ell, n\}} z_{i\ell j} \right) t_{k\ell} \label{equ:lp-obj} \\
\mbox{s.t.} \ & \ \sum_{k=1}^{i} \sum_{\ell=i}^{n+1} t_{k\ell} = 1 \ \ \ \forall \, i \in [n] \label{equ:lp-con} \\
                  \ & \ t_{k\ell} \geq 0 \ \ \ \forall \, k \in [n], \ \forall \, \ell \in [k, n+1]_{\Bbb Z}. \label{equ:lp-con-nonnegative}
\end{align}
In formulation \eqref{equ:lp-obj}--\eqref{equ:lp-con-nonnegative}, $z_{i\ell j}$ is defined as $z_{i\ell j} = - s_i\pi_{i\ell} + \sup_{u^{\mbox{\tiny L}}_i \leq u_i \leq u^{\mbox{\tiny U}}_i} \{\pi_{i\ell} u_i - \rho |u_i - \widehat{u}^j_i|^p \}$ for all $j \in [N]$, $i \in [n]$, and $\ell \in [i, n+1]_{\mathbb{Z}}$, where
\begin{equation}
\pi_{i\ell} := \left\{\begin{array}{ll} -d_{\ell} + \sum_{q=i+1}^{\ell} c_q & \mbox{if $i \in [n]$ and $\ell \in [i, n]_{\mathbb{Z}}$} \\
C + \sum_{q=i+1}^{n} c_q & \mbox{if $i \in [n]$ and $\ell = n+1$.}
\end{array}\right. \label{equ:h-pi} \\[0.25cm]
\end{equation}
\end{subequations}
\end{proposition}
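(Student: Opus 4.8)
The plan is to combine the dual representation of the cost function with the separability induced by the rectangular support and an interval-partition description of the vertices of $\Y$. First I would invoke the dual formula \eqref{equ:qx-max}, $f(\bm s, \bm u) = \max_{\bm y \in \Y} \sum_{i=1}^n (u_i - s_i) y_i$, which maximizes a linear objective over the nonempty compact polytope $\Y$ (Lemma \ref{lem:y_set}) and is therefore attained at a vertex of $\Y$. The first---and I expect the most delicate---step is to characterize these vertices explicitly. I claim that the vertices of $\Y$ are in one-to-one correspondence with the partitions of $[n]$ into consecutive blocks of the form $[k, \min\{\ell, n\}]$ carrying a label $\ell \in [k, n+1]_{\Bbb Z}$, where at such a block every coordinate $y_i$ with $k \le i \le \min\{\ell, n\}$ takes the value $\pi_{i\ell}$ defined in \eqref{equ:h-pi}. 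The idea is that a vertex is pinned down by choosing, for each index, a ``binding'' constraint: either its own lower bound $y_i = -d_i$ (label $\ell = i$), or a chain of tight difference constraints $y_{q-1} - y_q = c_q$ reaching the lower bound $y_\ell = -d_\ell$ of a later index $\ell \le n$ (giving $y_i = -d_\ell + \sum_{q=i+1}^\ell c_q$), or such a chain reaching the upper bound $y_n = C$ (label $\ell = n+1$, giving $y_i = C + \sum_{q=i+1}^n c_q$); these are exactly the values $\pi_{i\ell}$. Verifying that every such candidate point actually lies in $\Y$ is where the maintained inequality $d_{i+1} - d_i \le c_{i+1}$ enters: it is precisely what is needed to ensure that the lower-bound constraint $y_i \ge -d_i$ holds at a block-start whose block is anchored at the lower bound of a later index.

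Granting this characterization, I would substitute the vertex description into $\omega_j$. Writing $\Theta$ for the set of admissible block-partitions, we have $f(\bm s, \bm u) = \max_{\theta \in \Theta} \sum_{[k,\min\{\ell,n\}] \in \theta} \sum_{i=k}^{\min\{\ell,n\}} (u_i - s_i)\pi_{i\ell}$, so that $\omega_j(\rho, \bm s) = \sup_{\bm u \in \mathcal{U}} \max_{\theta \in \Theta} \big[ \sum_{[k,\min\{\ell,n\}]\in\theta} \sum_{i=k}^{\min\{\ell,n\}} (u_i - s_i)\pi_{i\ell} - \rho \|\bm u - \bm{\widehat u}^j\|_p^p \big]$. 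Because the penalty $\rho \|\bm u - \bm{\widehat u}^j\|_p^p$ is independent of $\theta$, I can interchange $\sup_{\bm u}$ with the finite maximum $\max_{\theta}$ at no cost, using the elementary identity $\sup_{x} \max_k g_k(x) = \max_k \sup_x g_k(x)$. Once $\theta$ is fixed, each index $i$ carries a fixed label $\ell$ and the bracketed objective separates across coordinates; since Assumption \ref{ass:tractable-support} makes $\mathcal{U}$ a box, the supremum over $\bm u$ splits into $n$ univariate suprema over $u_i \in [u_i^{\mbox{\tiny L}}, u_i^{\mbox{\tiny U}}]$, each equal to $z_{i\ell j} = -s_i \pi_{i\ell} + \sup_{u_i^{\mbox{\tiny L}} \le u_i \le u_i^{\mbox{\tiny U}}}\{\pi_{i\ell} u_i - \rho|u_i - \widehat{u}^j_i|^p\}$. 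This yields the purely combinatorial identity $\omega_j(\rho, \bm s) = \max_{\theta \in \Theta} \sum_{[k,\min\{\ell,n\}] \in \theta} \sum_{i=k}^{\min\{\ell,n\}} z_{i\ell j}$.

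Finally, I would recognize this block-partition problem as the integral version of the linear program \eqref{equ:lp-obj}--\eqref{equ:lp-con-nonnegative}. Encoding a partition by indicators $t_{k\ell} \in \{0,1\}$, where $t_{k\ell} = 1$ marks selection of the block $[k, \min\{\ell,n\}]$ with label $\ell$, the requirement that the selected blocks partition $[n]$---each $i$ covered by exactly one block, namely one with $k \le i$ and $\ell \ge i$---is precisely \eqref{equ:lp-con}, and the objective coefficient $\sum_{i=k}^{\min\{\ell,n\}} z_{i\ell j}$ of $t_{k\ell}$ matches \eqref{equ:lp-obj}. It then remains to argue that relaxing $t_{k\ell} \in \{0,1\}$ to $t_{k\ell} \ge 0$ does not change the optimal value. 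This follows because each column of the coefficient matrix of \eqref{equ:lp-con} has its ones in the contiguous set of rows $\{k, \ldots, \min\{\ell,n\}\}$, so the matrix has the consecutive-ones property and is totally unimodular; hence every vertex of the relaxed feasible region is integral, the LP attains its optimum at the indicator of a genuine partition, and its value coincides with the combinatorial maximum above, i.e., with $\omega_j(\rho, \bm s)$. The main obstacle throughout is the vertex characterization of $\Y$ in the first paragraph---establishing both that every vertex has the stated $\pi_{i\ell}$ form and that, under $d_{i+1}-d_i \le c_{i+1}$, every block-partition candidate is feasible---after which the interchange-of-supremum and total-unimodularity steps are routine.
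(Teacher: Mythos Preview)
Your proposal is correct and follows essentially the same route as the paper: dual representation \eqref{equ:qx-max}, reduction to the extreme points of $\Y$ via the interval-partition characterization (with $y_i = \pi_{i\ell}$), coordinate-wise separation over the box $\mathcal{U}$ to produce the $z_{i\ell j}$, and finally the total-unimodularity (consecutive-ones) argument to pass from the binary partition encoding to the LP \eqref{equ:lp-obj}--\eqref{equ:lp-con-nonnegative}. The only cosmetic difference is the order of operations: the paper first swaps $\sup_{\bm u}$ and $\sup_{\bm y}$, separates over $\bm u$, and then invokes convexity in $\bm y$ to reduce to extreme points, whereas you first reduce $f(\bm s,\bm u)$ to a finite maximum over vertices (linearity in $\bm y$) and then swap $\sup_{\bm u}$ with that finite $\max$; both orderings are valid and yield the same result.
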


Proposition \ref{prop:tractable} contrasts with the general computational tractability of two-stage DRO with right-hand side uncertainty (note that random variables $\bm u$ appear in the right-hand side of the linear program \eqref{equ:qx} that defines \eqref{equ:wdro}). In general, evaluating the objective function value of \eqref{equ:wdro} with a \emph{fixed} appointment schedule $\bm s$ entails solving an exponential-size convex program (see Remark 5.5 in~\cite{Esfahani.Kuhn.2017}). In contrast, Proposition \ref{prop:tractable} indicates that~\eqref{equ:wdro} can be solved in polynomial time to find an \emph{optimal} $\bm s$. Indeed,~\eqref{equ:wdro} is a convex program because the function $\omega_j(\rho, \bm s)$ is jointly convex in $\rho$ and $\bm s$. Additionally, Proposition \ref{prop:tractable} implies that the epigraph of $\omega_j(\rho, \bm s)$, denoted as $E := \{(\theta, \rho, \bm s): \theta \geq \omega_j(\rho, \bm s)\}$, can be separated in polynomial time. That is, given a point $(\bar{\theta}, \bar{\rho}, \bm{\bar{s}}) \in \mathbb{R}^{n+2}$, one can either verify that $(\bar{\theta}, \bar{\rho}, \bm{\bar{s}}) \in E$ or generate a hyperplane that separates $(\bar{\theta}, \bar{\rho}, \bm{\bar{s}})$ from $E$ in polynomial time. Then, following the equivalence between separation and convex optimization established in the seminal work~\cite{grotschel1981ellipsoid}, we conclude that~\eqref{equ:wdro} can be solved in polynomial time. This contrast in computational tractability arises because we study a particular DRO model in appointment scheduling and we take advantage of the structure of~\eqref{equ:wdro}.

We proceed to consider special $p$ values. In particular, the following theorem recasts~\eqref{equ:wdro} as a linear program and a second-order cone program for $p = 1$ and $p=2$, respectively. In both cases, the solution of~\eqref{equ:wdro} does not reply on specialized algorithms (such as separation) and can be obtained via off-the-shelf solvers (such as Gurobi and CPLEX).
\begin{theorem} \label{thm:tractable}
Suppose that Assumption~\ref{ass:tractable-support} holds.
Then, when $p = 1$, \eqref{equ:wdro} yields the same optimal value and the same
set of optimal solutions as the following linear program:
\begin{equation} \label{equ:lpdro}
\begin{array} {lll}
\displaystyle\min_{\rho, \bm{s}, \bm{\gamma}, \bm{z}} \ & \ \epsilon \rho + \dfrac{1}{N} \sum\limits_{j=1}^N \sum\limits_{i=1}^{n} \gamma_{ij} & \\
\mbox{s.t.} \ & \ \sum\limits_{k=i}^{\min\{\ell, \, n\}} \gamma_{kj} \geq \sum\limits_{k=i}^{\min\{\ell, \, n\}} z_{k\ell j} &   \forall \, i \in [n], \ \forall \, \ell \in [i, n+1]_{\Bbb Z}, \ \forall \, j \in [N] \\
& \ z_{i\ell j} + \pi_{i\ell} s_i + |u^{\mbox{\tiny L}}_i - \widehat{u}^j_i| \rho \geq \pi_{i\ell} u^{\mbox{\tiny L}}_i &  \forall  \,  i \in [n],  \ \forall \,  \ell \in [i, n+1]_{\Bbb Z}, \  \forall \, j \in [N] \\
& \ z_{i\ell j} + \pi_{i\ell} s_i \geq \pi_{i\ell} \widehat{u}^j_i  & \forall  \, i \in [n], \ \forall \,  \ell \in [i, n+1]_{\Bbb Z}, \  \forall \, j \in [N] \\
& \ z_{i\ell j} + \pi_{i\ell} s_i + |u^{\mbox{\tiny U}}_i - \widehat{u}^j_i| \rho \geq \pi_{i\ell} u^{\mbox{\tiny U}}_i &  \forall  \, i \in [n], \ \forall \,  \ell \in [i, n+1]_{\Bbb Z}, \ \forall \,  j \in [N] \\
& \ \rho \geq 0, \ \bm s \in {\cal S}.
\end{array}
\end{equation}
In addition, when $p = 2$, \eqref{equ:wdro} yields the same optimal value and the same set of optimal solutions as the following second-order cone program:
\begin{equation} \label{equ:socpdro}
\begin{array} {lll}
\displaystyle\min_{\rho, \bm{s}, \bm{\gamma}, \bm{z}, \bm{\beta}, \bm{r}} \ & \ \epsilon^2 \rho + \dfrac{1}{N} \sum\limits_{j=1}^N \sum\limits_{i=1}^{n} \gamma_{ij} & \\
\mbox{s.t.} \ & \ \sum\limits_{k=i}^{\min\{\ell, \, n\}} \gamma_{kj} \geq \sum\limits_{k=i}^{\min\{\ell, \, n\}} z_{k\ell j} &   \forall \, i \in [n], \ \forall \, \ell \in [i, n+1]_{\Bbb Z}, \ \forall \, j \in [N] \\
& \ z_{i\ell j} + \pi_{i\ell} s_i - (u^{\mbox{\tiny U}}_i - u^{\mbox{\tiny L}}_i) \beta^{\mbox{\tiny U}}_{i\ell j} & \\
& \ - (\widehat{u}^j_i - u^{\mbox{\tiny L}}_i) \beta^{\mbox{\tiny L}}_{i\ell j} - r_{i\ell j} \geq \pi_{i\ell} \widehat{u}^j_i & \forall  \,  i \in [n],  \ \forall \,  \ell \in [i, n+1]_{\Bbb Z}, \  \forall \, j \in [N] \\[0.25cm]
& \ \Biggl\|\begin{bmatrix} \pi_{i\ell} + \beta^{\mbox{\tiny L}}_{i\ell j} - \beta^{\mbox{\tiny U}}_{i\ell j} \\[0.1cm] r_{i\ell j} - \rho \end{bmatrix}\Biggr\|_2 \leq r_{i\ell j} + \rho  & \forall  \, i \in [n], \ \forall \,  \ell \in [i, n+1]_{\Bbb Z}, \  \forall \, j \in [N] \\[0.25cm]
& \ \rho \geq 0, \ \bm s \in {\cal S}. &
\end{array}
\end{equation}
\end{theorem}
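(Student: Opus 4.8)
The plan is to build \eqref{equ:lpdro} and \eqref{equ:socpdro} by feeding the inner-maximization value $\omega_j(\rho,\bm s)$ from Proposition~\ref{prop:tractable} into the outer reformulation \eqref{equ:tractable-ref} and then dualizing twice: once to turn the inner transportation-type LP \eqref{equ:lp-obj}--\eqref{equ:lp-con-nonnegative} into a minimization that can be merged with the outer infimum, and once (treating $p=1$ and $p=2$ separately) to eliminate the single-variable supremum hidden inside the coefficients $z_{i\ell j}$. First I would fix $\rho\ge 0$ and $\bm s\in\S$ and take the linear-programming dual of the maximization that defines $\omega_j(\rho,\bm s)$. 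Assigning a free multiplier $\gamma_{ij}$ to each equality \eqref{equ:lp-con} and using that each variable $t_{k\ell}$ occurs exactly in the constraints indexed by $i\in[k,\min\{\ell,n\}]$, strong LP duality (the primal is feasible and bounded) gives
\[
\omega_j(\rho,\bm s)=\min_{\bm\gamma}\Big\{\textstyle\sum_{i=1}^n\gamma_{ij}:\ \sum_{k=i}^{\min\{\ell,n\}}\gamma_{kj}\ge\sum_{k=i}^{\min\{\ell,n\}}z_{k\ell j}\ \ \forall\,\ell\in[i,n+1]_{\Bbb Z},\ \forall\,i\in[n]\Big\}.
\]
Substituting this into \eqref{equ:tractable-ref} and absorbing the inner minimizations into the outer infimum collapses everything into a single minimization over $(\rho,\bm s,\bm\gamma,\bm z)$ with objective $\epsilon^p\rho+\frac1N\sum_{j}\sum_i\gamma_{ij}$, which is precisely the objective and the first constraint family of both \eqref{equ:lpdro} and \eqref{equ:socpdro}. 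It then remains to replace each $z_{i\ell j}$, which still carries the one-dimensional supremum $g_{i\ell j}:=\sup_{u_i^{\mbox{\tiny L}}\le u_i\le u_i^{\mbox{\tiny U}}}\{\pi_{i\ell}u_i-\rho|u_i-\widehat u_i^j|^p\}$ via $z_{i\ell j}=-s_i\pi_{i\ell}+g_{i\ell j}$, by finitely many linear or conic constraints; here I would treat $z_{i\ell j}$ as a free variable and note that, since $\bm z$ enters only on the larger side of the $\bm\gamma$-constraints while the objective drives $\sum\gamma$ (hence $\bm z$) down, it suffices to impose lower bounds on $z_{i\ell j}$ that are tight at the true value $-s_i\pi_{i\ell}+g_{i\ell j}$.

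For $p=1$ the maximand $\pi_{i\ell}u_i-\rho|u_i-\widehat u_i^j|$ is concave and piecewise linear in $u_i$, so its maximum over $[u_i^{\mbox{\tiny L}},u_i^{\mbox{\tiny U}}]$ is attained at one of the two endpoints or at the kink $\widehat u_i^j$. Evaluating at these three points (and using $|u_i^{\mbox{\tiny L}}-\widehat u_i^j|=\widehat u_i^j-u_i^{\mbox{\tiny L}}$ and $|u_i^{\mbox{\tiny U}}-\widehat u_i^j|=u_i^{\mbox{\tiny U}}-\widehat u_i^j$) reproduces exactly the three linear inequalities $z_{i\ell j}+\pi_{i\ell}s_i+|u_i^{\mbox{\tiny L}}-\widehat u_i^j|\rho\ge\pi_{i\ell}u_i^{\mbox{\tiny L}}$, $z_{i\ell j}+\pi_{i\ell}s_i\ge\pi_{i\ell}\widehat u_i^j$, and $z_{i\ell j}+\pi_{i\ell}s_i+|u_i^{\mbox{\tiny U}}-\widehat u_i^j|\rho\ge\pi_{i\ell}u_i^{\mbox{\tiny U}}$ of \eqref{equ:lpdro}; together they force $z_{i\ell j}+\pi_{i\ell}s_i\ge g_{i\ell j}$, with equality attainable, settling the $p=1$ case.

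For $p=2$ the maximand $\pi_{i\ell}u_i-\rho(u_i-\widehat u_i^j)^2$ is concave quadratic, and since $u_i^{\mbox{\tiny L}}<u_i^{\mbox{\tiny U}}$ provides a Slater point, strong duality applies to this box-constrained program. Attaching multipliers $\beta_{i\ell j}^{\mbox{\tiny L}},\beta_{i\ell j}^{\mbox{\tiny U}}\ge0$ to the two box constraints and maximizing the resulting concave quadratic in $u_i$ in closed form yields a dual objective made of linear terms in $\beta^{\mbox{\tiny L}},\beta^{\mbox{\tiny U}}$ plus the quadratic-over-linear term $(\pi_{i\ell}+\beta^{\mbox{\tiny L}}_{i\ell j}-\beta^{\mbox{\tiny U}}_{i\ell j})^2/(4\rho)$. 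Introducing $r_{i\ell j}$ to bound this last term from above and rewriting $r_{i\ell j}\ge(\pi_{i\ell}+\beta^{\mbox{\tiny L}}_{i\ell j}-\beta^{\mbox{\tiny U}}_{i\ell j})^2/(4\rho)$ as the rotated second-order cone inequality $\|(\pi_{i\ell}+\beta^{\mbox{\tiny L}}_{i\ell j}-\beta^{\mbox{\tiny U}}_{i\ell j},\,r_{i\ell j}-\rho)\|_2\le r_{i\ell j}+\rho$ (which also degrades gracefully to $\pi_{i\ell}+\beta^{\mbox{\tiny L}}_{i\ell j}-\beta^{\mbox{\tiny U}}_{i\ell j}=0$ as $\rho\to0$) recovers the linear inequality and the conic constraint of \eqref{equ:socpdro}.

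The main obstacle I anticipate is the bookkeeping in the first dualization: verifying exactly which $\gamma$-constraints each transportation variable $t_{k\ell}$ feeds into, so that the dual inequalities carry the correct summation ranges $[i,\min\{\ell,n\}]$. Coupled to this is the care needed to justify that treating $\bm z$ (and, for $p=2$, $\bm r$ and $\bm\beta$) as free auxiliary variables subject only to one-sided bounds does not enlarge the optimal value; this monotonicity argument, together with the limiting behavior of the second-order cone at $\rho=0$, are the only non-mechanical points, the remainder being a direct transcription of closed-form single-variable optimizations.
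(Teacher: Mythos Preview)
Your proposal is correct and follows essentially the same route as the paper: dualize the transportation LP from Proposition~\ref{prop:tractable} to obtain the $\bm\gamma$-constraints, merge the resulting minimizations into \eqref{equ:tractable-ref}, and then replace each one-dimensional supremum defining $z_{i\ell j}$ by its three-point evaluation (for $p=1$) or by Lagrangian duality followed by the rotated-cone lift (for $p=2$). Your added remarks on the monotonicity justification for relaxing $\bm z$ to one-sided bounds and on the $\rho\to 0$ degeneration of the cone constraint make explicit two points the paper leaves implicit, but the argument is otherwise identical.
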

We note that both reformulations \eqref{equ:lpdro} and \eqref{equ:socpdro} involve $\mathcal{O}(Nn^2)$ decision variables and $\mathcal{O}(Nn^2)$ constraints. More generally, we show that~\eqref{equ:wdro} admits a second-order conic reformulation for all $p > 1$, as long as $p$ is rational. But as $p$ typically takes integer values (e.g., $p = 1, 2$) in real-world applications, we relegate this more general result to Theorem~\ref{thm:p_rational} in the Appendix.

In addition to tractable computation of an optimal appointment schedule, Theorem \ref{thm:tractable} suggests an approach to stress testing \emph{any} appointment schedule $\bar{s} \in \mathcal{S}$. Specifically, the following theorem derives a worst-case probability distribution $\mathbb{Q}^\star_{\bm u}$ of the random service durations $\bm u$ that attains $\sup_{\mathbb{Q}_{\bm u} \in \mathcal{D}_1(\widehat{\mathbb{P}}^N_{\bm u}, \epsilon)} \mathbb{E}_{\mathbb{Q}_{\bm u}}[f(\bar{\bm{s}}, \bm u)]$. This distribution can be applied, for example, to assess the quality of an appointment schedule generated by any decision-making processes.\footnote{Although the derivation of worst-case distributions in Theorem~\ref{thm:wc_dist} is based on the formulation when $p=1$, similar conclusions can be obtained for the case when $p>1$ and $p$ is rational.}
\begin{theorem} \label{thm:wc_dist}
For fixed $\bar{\bm s} \in \mathcal{S}$ and $\epsilon \geq 0$, $\displaystyle\sup_{\mathbb{Q}_{\bm u} \in \mathcal{D}_1(\widehat{\mathbb{P}}^N_{\bm u}, \epsilon)} \mathbb{E}_{\mathbb{Q}_{\bm u}}[f(\bar{\bm s}, \bm u)]$ equals the optimal value of the following linear program:
\begin{equation} \label{equ:lpdro-x-wc}
\begin{array}{ll}
\max\limits_{\bm p, \bm q, \bm r} & \dfrac1N  \sum\limits_{j=1}^N \sum\limits_{i=1}^{n}\sum\limits_{\ell=i}^{n+1}  \pi_{i\ell}  \left [ (u_i^{\tiny L} - \widehat u^j_i) q_{i\ell j} + (u_i^{\tiny U} - \widehat u^j_i)r_{i\ell j} + \left(\sum\limits_{k=1}^i p_{k\ell j}\right)(\widehat u^j_i - \bar{s}_i) \right] \\
\st & \dfrac1N  \sum\limits_{j=1}^N \sum\limits_{i=1}^{n}\sum\limits_{\ell=i}^{n+1}  \left[  (\widehat u^j_i - u_i^{\tiny L}) q_{i\ell j} + (u_i^{\tiny U} - \widehat u^j_i) r_{i\ell j}\right] \leq \epsilon \\
 & \sum \limits_{\ell = i}^{n+1} \sum\limits_{k=1}^i p_{k\ell j} = 1  \ \ \ \forall \,  i \in [n], \  \forall \,  j \in  [N] \\
 &  \sum\limits_{k=1}^i p_{k\ell j}  - q_{i\ell j} - r_{i\ell j} \geq 0\ \ \  \forall \, i \in [n], \  \forall \,  \ell \in [i, n+1]_{\Bbb Z}, \ \forall \,   j \in  [N]  \\
 & p_{i\ell j} \geq 0, \ q_{i\ell j} \geq 0, \ r_{i\ell j} \geq 0 \ \  \  \forall \,  i \in [n], \  \forall \,  \ell \in [i, n+1]_{\Bbb Z}, \ \forall \,   j \in  [N].
\end{array}
\end{equation}
Let $\{p^\star_{k\ell j}, q^\star_{i\ell j}, r^\star_{i\ell j}\}$ be an optimal solution of \eqref{equ:lpdro-x-wc} and define
\[
\mathcal{T} = \left\{\bm{t}\in \{0, 1\}^{(n+1)(n+2)/2}: \ \sum_{k=1}^i \sum_{\ell=i}^{n+1} t_{k\ell} = 1, \ \forall i \in [n+1] \right\}.
\]
Then, there exists a distribution $\mathbb{P}^j_{\bm t}$ on $\mathcal{T}$ such that $\mathbb{P}^j_{\bm t}\{t_{k\ell} = 1\} = p^\star_{k\ell j}$ for all $j \in [N]$, $k \in [n+1]$, and $\ell \in [k, n+1]_{\mathbb{Z}}$. Furthermore, define the probability distribution
$$
\mathbb{Q}^\star_{\bm u} = \frac{1}{N}\sum_{j=1}^N \sum_{\bm{\tau} \in \mathcal{T}} \mathbb{P}^j_{\bm t}\{\bm t = \bm \tau\} \delta_{\bm{u}^j(\bm \tau)},
$$
where, for all $i \in [n]$ and $j \in [N]$,
$$
u_i^j(\bm \tau) = \sum_{\ell=i}^{n+1} \left( \sum_{k=1}^i \tau_{k\ell} \right) u_{i\ell j} \ \ \mbox{and} \ \ u_{i\ell j} = \widehat u^j_i + \frac{q^\star_{i\ell j}(u_i^{\tiny L} - \widehat u^j_i )} {\sum_{k=1}^i p^\star_{k\ell j}} + \frac{r^\star_{i\ell j}(u_i^{\tiny U} - \widehat u^j_i )} {\sum_{k=1}^ip^\star_{k\ell j}},
$$
where we adopt an extended arithmetic given by $0/0 = 0$. Then, $\mathbb{Q}^\star_{\bm u}$ belongs to the Wasserstein ambiguity set $\mathcal{D}_1(\widehat{\mathbb{P}}^N_{\bm u}, \epsilon)$ and $\mathbb{E}_{\mathbb{Q}^\star_{\bm u}}[f(\bar{\bm s}, \bm u)] = \displaystyle\sup_{\mathbb{Q}_{\bm u} \in \mathcal{D}_1(\widehat{\mathbb{P}}^N_{\bm u}, \epsilon)} \mathbb{E}_{\mathbb{Q}_{\bm u}}[f(\bar{\bm s}, \bm u)]$.
\end{theorem}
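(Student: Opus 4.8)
The plan is to derive the first (value) claim by linear programming duality and then to read the worst-case distribution directly off a dual optimal solution. For fixed $\bar{\bm s}\in\mathcal{S}$, Proposition~\ref{prop:ROFormat} together with the $p=1$ case of Theorem~\ref{thm:tractable} shows that $\sup_{\mathbb{Q}_{\bm u}\in\mathcal{D}_1(\widehat{\mathbb{P}}^N_{\bm u},\epsilon)}\mathbb{E}_{\mathbb{Q}_{\bm u}}[f(\bar{\bm s},\bm u)]$ equals the optimal value of the linear program~\eqref{equ:lpdro} after fixing $\bm s=\bar{\bm s}$ and minimizing only over $(\rho,\bm\gamma,\bm z)$. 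First I would take the LP dual of this finite program. Assigning nonnegative multipliers to the four constraint families, the stationarity conditions for the free variables $\bm z$ eliminate the multiplier of the ``$\widehat u^j_i$'' constraint through the relation $\sum_{k=1}^i p_{k\ell j}-q_{i\ell j}-r_{i\ell j}\ge 0$, the stationarity for $\bm\gamma$ produces $\sum_{k=1}^i\sum_{\ell=i}^{n+1}p_{k\ell j}=1$, and the sign constraint on $\rho$ produces the transport budget $\tfrac1N\sum_{i,\ell,j}[(\widehat u^j_i-u^{\mbox{\tiny L}}_i)q_{i\ell j}+(u^{\mbox{\tiny U}}_i-\widehat u^j_i)r_{i\ell j}]\le\epsilon$. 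Substituting the eliminated multiplier back into the dual objective reproduces~\eqref{equ:lpdro-x-wc} verbatim; since both programs are feasible and bounded, strong LP duality delivers the first assertion.

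Next I would construct the random-partition distribution $\mathbb{P}^j_{\bm t}$. The crucial observation is that, for each $j$, the vector $(p^\star_{k\ell j})$ lies in the polytope $P:=\{\bm t\ge 0: \sum_{k=1}^i\sum_{\ell=i}^{n+1}t_{k\ell}=1\ \forall i\}$, whose constraint matrix has the consecutive-ones property down its columns (the column indexed by $(k,\ell)$ carries ones exactly in rows $k,\dots,\ell$) and is therefore totally unimodular. Hence $P$ is integral and its vertices are precisely the points of $\mathcal{T}$, i.e.\ the consecutive-interval partitions. Writing $\bm p^\star$ as a convex combination of these vertices (Carath\'eodory) yields a distribution $\mathbb{P}^j_{\bm t}$ on $\mathcal{T}$ with interval-inclusion marginals $\mathbb{P}^j_{\bm t}\{t_{k\ell}=1\}=p^\star_{k\ell j}$, as required. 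The support points $\bm u^j(\bm\tau)$ are then well defined, and each coordinate value $u_{i\ell j}$ is a convex combination of $u^{\mbox{\tiny L}}_i$, $\widehat u^j_i$, and $u^{\mbox{\tiny U}}_i$ with weights $q^\star_{i\ell j}/\sum_k p^\star_{k\ell j}$, $r^\star_{i\ell j}/\sum_k p^\star_{k\ell j}$, and a residual weight that is nonnegative by the third constraint of~\eqref{equ:lpdro-x-wc}; consequently $\bm u^j(\bm\tau)\in\mathcal{U}$.

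It then remains to verify that $\mathbb{Q}^\star_{\bm u}$ is both feasible and optimal. For feasibility I would use the explicit coupling that moves mass $\tfrac1N\mathbb{P}^j_{\bm t}\{\bm t=\bm\tau\}$ from $\widehat{\bm u}^j$ to $\bm u^j(\bm\tau)$. Using $u^{\mbox{\tiny L}}_i\le\widehat u^j_i\le u^{\mbox{\tiny U}}_i$ to remove absolute values and then summing over $\bm\tau$ via the marginal identity $\sum_{k=1}^i\mathbb{P}^j_{\bm t}\{t_{k\ell}=1\}=\sum_{k=1}^i p^\star_{k\ell j}$, the denominators $\sum_k p^\star_{k\ell j}$ cancel and the total transport cost collapses to $\tfrac1N\sum_{i,\ell,j}[(\widehat u^j_i-u^{\mbox{\tiny L}}_i)q^\star_{i\ell j}+(u^{\mbox{\tiny U}}_i-\widehat u^j_i)r^\star_{i\ell j}]\le\epsilon$, so $d_1(\mathbb{Q}^\star_{\bm u},\widehat{\mathbb{P}}^N_{\bm u})\le\epsilon$. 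For optimality I would use the dual representation~\eqref{equ:qx-max}: for each partition $\bm\tau$ the staircase vector $\bm y(\bm\tau)$ with $y_i=\pi_{i\ell}$ ($[k,\ell]$ being the block containing $i$) is feasible for $\mathcal{Y}$, whence $f(\bar{\bm s},\bm u^j(\bm\tau))\ge\sum_i\pi_{i\ell}(u^j_i(\bm\tau)-\bar s_i)$. Taking expectations, using the same marginal identity, and substituting the definition of $u_{i\ell j}$ so that $(\sum_k p^\star)(u_{i\ell j}-\bar s_i)=(\sum_k p^\star)(\widehat u^j_i-\bar s_i)+q^\star(u^{\mbox{\tiny L}}_i-\widehat u^j_i)+r^\star(u^{\mbox{\tiny U}}_i-\widehat u^j_i)$, the right-hand side reduces exactly to the objective of~\eqref{equ:lpdro-x-wc}. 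This gives $\mathbb{E}_{\mathbb{Q}^\star_{\bm u}}[f(\bar{\bm s},\bm u)]\ge$ the optimal value of~\eqref{equ:lpdro-x-wc}, and feasibility supplies the reverse inequality, closing the argument.

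The step I expect to be the main obstacle is the realizability argument of the second paragraph: showing that the fractional weights $p^\star_{k\ell j}$ arise as interval-inclusion probabilities of a genuine distribution over consecutive-interval partitions. Total unimodularity of the consecutive-ones matrix is what makes this work, and some care is needed to reconcile index ranges, since the constraints of~\eqref{equ:lpdro-x-wc} run over $i\in[n]$ while $\mathcal{T}$ is defined over $i\in[n+1]$. A secondary point requiring verification is that the staircase $\bm y(\bm\tau)$ genuinely lies in $\mathcal{Y}$ for every partition $\bm\tau$; this follows from the definition~\eqref{equ:h-pi} of $\pi_{i\ell}$ but must be checked block by block.
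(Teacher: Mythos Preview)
Your proposal is correct and follows essentially the same route as the paper: LP duality of~\eqref{equ:lpdro} (with the auxiliary multiplier $w_{i\ell j}$ eliminated via $w_{i\ell j}=\sum_k p_{k\ell j}-q_{i\ell j}-r_{i\ell j}$) yields~\eqref{equ:lpdro-x-wc}; total unimodularity of the interval-matrix gives $\bm p^\star_j\in\mathrm{conv}(\mathcal{T})$ and hence the mixing distribution $\mathbb{P}^j_{\bm t}$; the natural coupling plus the triangle inequality bounds $d_1$ by the budget constraint; and the staircase $\bm y(\bm\tau)\in\mathcal{Y}$ certifies the matching lower bound on $\mathbb{E}_{\mathbb{Q}^\star_{\bm u}}[f(\bar{\bm s},\bm u)]$. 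The two obstacles you flag are exactly the ones the paper handles: the index mismatch is resolved by dropping the redundant variable $t_{(n+1)(n+1)}$ (cf.\ the proof of Proposition~\ref{prop:tractable}), and the feasibility of the staircase vector is the extreme-point characterization of $\mathcal{Y}$ established there as well.
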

\begin{remark}
Intuitively, the \eqref{equ:wdro} model can be viewed as a two-person game between the AS scheduler and the nature, who picks a $\mathbb{Q}^\star_{\bm u}$ that maximizes the expected total cost after the schedule $\bm s$ is determined. Theorem \ref{thm:wc_dist} gives us a clear picture on how the nature makes its pick. Specifically, $\mathbb{Q}^\star_{\bm u}$ is a mixture of $N$ distributions, with each pertaining to a data sample. That is, $\mathbb{Q}^\star_{\bm u} = (1/N)\sum_{j=1}^N \mathbb{Q}^j_{\bm u}$, where $\mathbb{Q}^j_{\bm u} := \sum_{\bm{\tau} \in \mathcal{T}} \mathbb{P}^j_{\bm t}\{\bm t = \bm \tau\} \delta_{\bm{u}^j(\bm \tau)}$ for all $j \in [N]$. Note that $\mathcal{T}$ is the collection of all possible partitions of the set $[n+1]$ into sub-intervals. Hence, what the nature does under $\mathbb{Q}^j_{\bm u}$ is to: (i) randomly pick a partition $\bm \tau \in \mathcal{T}$ following distribution $\mathbb{P}^j_{\bm t}$ and (ii) for each $i \in [n]$, if $i$ belongs to the sub-interval $[k, \ell]_{\mathbb{Z}}$ (i.e., if $\tau_{k\ell} = 1$ with $k \leq i \leq \ell$) then the $i^{\mbox{\tiny th}}$ appointment has a service duration $u_{i\ell j}$.
\end{remark}
\begin{remark}
In the situation where the radius of the Wasserstein ball is set to zero, the worst-case distribution $\mathbb{Q}^\star_{\bm u}$ reduces to the empirical distribution. Indeed, setting $\epsilon =0$ enforces all $q_{i\ell j}$ and $r_{i \ell j}$ to be zero in \eqref{equ:lpdro-x-wc}. It follows that $u_{i\ell j} = \widehat{u}^j_i$ and so $u^j_i(\bm \tau) = \widehat{u}^j_i$ for all $\bm \tau \in \mathcal{T}$. Therefore, we have $\mathbb{Q}^\star_{\bm u} = \frac{1}{N}\sum_{j=1}^N \delta_{\widehat{\bm u}^j}$.
\end{remark}

\section{Random No-Shows and Service Durations} \label{sec:no_show}

In many AS systems, appointments have random no-shows, i.e., the appointee cancels her appointment too late such that the scheduler
cannot make a substitute. In such a situation, the approach studied in
Section~\ref{sec:duration_model} is no longer applicable. In Section \ref{sec:w-ns}, we extend \eqref{equ:wdro} and the Wasserstein ambiguity set to incorporate both random no-shows and service durations. We reformulate this extended model as copositive programs in Section \ref{sec:ns-cop} and as tractable convex programs under a mild condition in Section \ref{sec:ns-tractable}.

\subsection{Extended model for random no-shows} \label{sec:w-ns}
We model the random no-show of appointment $i$ by using a Bernoulli random variable $\lambda_i$ such that $\lambda_i=1$ if appointee $i$ shows up and $\lambda_i=0$ otherwise. We denote the support of $\bm \lambda$ by $\Lambda \subseteq \{0, 1\}^n$. If $\Lambda = \{0, 1\}^n$ then it includes all possible scenarios of no-shows. Unfortunately, it has been observed (e.g., in~\cite{Jiang.Shen.Zhang.2017}) that such a $\Lambda$ often results in poor out-of-sample performance. Intuitively, this is because the set $\{0, 1\}^n$ includes many unlikely scenarios (e.g., a majority of appointees do not show up), rendering the resulting schedule over-conservative. In this paper, we propose to consider a less conservative, budget-constrained support set
\[
\Lambda := \left\{ \bm \lambda \in \{0,1\}^n : \sum\limits_{i=1}^n (1-\lambda_i) \leq K \right\},
\]
where the integer parameter $K \in [n]$ denotes the ``budget'' of no-shows and controls the conservativeness of $\Lambda$. Intuitively, $\Lambda$ contains only scenarios with no more than $K$ no-shows out of the $n$ appointments. For example, if $K=0$ then all $n$ appointees show up for their appointments, yielding the least conservative support set; and if $K= n$ then $\Lambda = \{0, 1\}^n$, yielding the most conservative case. The parameter $K$ can be determined based on the scheduler's knowledge, risk attitude, and/or the historical data of no-shows (see~\eqref{equ:setK} below).

Observing that an AS system spends no time on a no-show appointment, we let $\bm \mu := (\mu_1, \ldots, \mu_n)^\top$ represent the \emph{actual} service durations of appointments and, for ease of exposition, let $\bm \xi := (\bm{\mu}^\top, \bm{\lambda}^\top)^\top$. Then, under a rectangularity condition, we define the support set of $\bm \xi$ as
\[
\Xi := \Big\{ (\bm \mu, \bm \lambda) \in \RR^n \times \Lambda:
\ u_i^{\mbox{\tiny L}} \lambda_i \leq \mu_i \leq u_i^{\mbox{\tiny U}} \lambda_i \ \ \forall \, i \in [n] \Big\}.
\]
We note that the constraint $u_i^{\mbox{\tiny L}} \lambda_i \leq \mu_i \leq u_i^{\mbox{\tiny U}} \lambda_i$ ensures that (i) if $\lambda_i = 1$ (i.e., if appointee $i$ shows up) then $\mu_i \in [u_i^{\mbox{\tiny L}}, u_i^{\mbox{\tiny U}}]$ and (ii) if $\lambda_i = 0$ (i.e., if appointee $i$ does not show up) then $\mu_i = 0$ (i.e., the actual service duration is zero). Under the standard assumption that $d_{i+1} - d_i \leq c_{i+1}$ for all $i \in [n-1]$ (see Section \ref{sec:dras-duation} for elaboration of this assumption), the total cost of the appointment system for given $\bm s$ and $\bm \xi$ can be obtained from the following linear program:
\begin{equation} \label{equ:qxsl}
\begin{array}{rl}
g(\bm s, \bm \xi) :=  \min\limits_{\bm w, \bm v} & \sum \limits_{i=1}^n (c_i\lambda_i w_i + d_iv_i) + Cw_{n+1} \\
                                                    \st & w_i - v_{i-1} = \mu_{i-1} + w_{i-1} - s_{i-1} \ \ \ \forall \, i \in [2, n+1]_{\mathbb{Z}} \\
                                                         & \bm w \geq 0, \ w_1 = 0, \ \bm v \geq 0.
\end{array}
\end{equation}
Here, the cost of waiting time $c_i\lambda_iw_i$ is modeled from the perspective of appointments, i.e., this cost is waived if appointee $i$ does not show up. In addition, we note that the variables $\bm u$ do not explicitly appear in the above definitions of $\Xi$ and $g(\bm s, \bm \xi)$. In this section, we interpret $u_i$ as the service duration \emph{if appointee $i$ shows up}, i.e., $\bm u$ are conditional random variables depending on $\bm \lambda$. As a result, $\bm u$ may not even be observable when no-shows take place, while $\bm \xi$ are always observable, for example, from the historical data of service durations and no-shows. Similar to Section \ref{sec:duration_model}, we assume that we observe a finite set of $N$ i.i.d. samples of $\bm \xi$, denoted as $\{\widehat{\bm{\xi}}^1, \ldots, \widehat{\bm{\xi}}^N\}$. Then, we consider the following $p$-Wasserstein ambiguity set
\[
\D_p(\widehat \PP^N_{\bm \xi}, \, \epsilon) := \left\{ \QQ_{\bm \xi} \in \mathcal{P}(\Xi) : \ d_p(\QQ_{\bm \xi}, \, \widehat \PP^N_{\bm \xi}) \le \epsilon \right\},
\]
where $\widehat \PP^N_{\bm \xi}$ represents the empirical distribution of $\bm \xi$ based on the $N$ \iid~samples, i.e., $\widehat \PP^N_{\bm \xi} = \frac1N \sum_{j=1}^N \delta_{\bm{\widehat \xi}^j}$. Additionally, we can determine $K$, the budget of no-shows in $\Lambda$, based on these samples. For example, we can set
\begin{equation}\label{equ:setK}
K := \max_{j \in [N]} \left\{\sum_{i=1}^n (1-\widehat{\lambda}^j_i)\right\}.
\end{equation}

With the above Wasserstein ambiguity set, we formulate the following DRO model to seek a schedule that minimizes the expected total cost with regard to the worst-case distribution in $\D_p(\widehat \PP^N_{\bm \xi}, \, \epsilon)$:
\begin{equation} \tag{W-NS} \label{equ:dras-ns}
\widehat{Z}_{\mbox{\tiny NS}}(N, \,\epsilon) := \min \limits_{\bm s \in {\cal S}} \sup\limits_{\QQ_{\bm \xi} \in \D_p(\widehat \PP^N_{\bm \xi}, \, \epsilon)} \Ep_{\QQ_{\bm \xi}} [g( \bm s, \, \bm \xi)].
\end{equation}
We close this section by noting that similar asymptotic consistency and finite-data guarantee as in Section \ref{sec:duration_model} (see Theorems \ref{thm:a-c}--\ref{thm:f-d}) also hold for \eqref{equ:dras-ns}. In particular, as the data size $N$ increases to infinity, $\widehat{Z}_{\mbox{\tiny NS}}(N, \,\epsilon)$ converges to the optimal value of the stochastic AS model, where perfect information of the probability distribution of $\bm \xi$ is known. Accordingly, a \eqref{equ:dras-ns} optimal appointment schedule converges to the optimal schedule obtained from this stochastic model. In addition, with high confidence, \eqref{equ:dras-ns} provides an upper bound on the optimal value of the stochastic model with any finite data size $N$. We skip the formal statement of these two results to avoid repetition.

\subsection{Copositive programming reformulations} \label{sec:ns-cop}
In this section, we propose a copositive reformulation for~\eqref{equ:dras-ns}. As the first step, we represent $g(\bm s, \bm \xi)$ by the following dual linear program of \eqref{equ:qxsl}:
\begin{equation} \label{equ:qxsl-max}
\begin{array}{rl}
g(\bm s, \bm \xi) \ = \ \displaystyle\max_{\bm y \in \mathcal{Y}(\bm \lambda)} & \sum \limits_{i=1}^n (\mu_i - s_i) y_i \\
\end{array}
\end{equation}
where dual variables $\bm y$ are associated with the first constraint in \eqref{equ:qxsl} and the polyhedral feasible set $\mathcal{Y}(\bm \lambda)$ is described as
\[
\Y(\bm \lambda) := \left\{ \bm y \in \RR^n :
\begin{array}{l}
-y_i \leq d_i     \ \ \ \forall \, i \in [n]  \\
y_{i-1} - y_i \leq c_i\lambda_i  \ \ \ \forall \, i \in [2, n]_{\mathbb{Z}}  \\
y_n \leq C
\end{array}
\right\}.
\]
The strong duality between \eqref{equ:qxsl} and \eqref{equ:qxsl-max} holds because $\mathcal{Y}(\bm \lambda)$ is nonempty and compact for any $\bm \lambda \in \Lambda$. We formally state this fact in the following lemma and omit its proof due to its similarity to that of Lemma \ref{lem:y_set}.
\begin{lemma} \label{lem:ylambda}
For any $\bm \lambda \in \Lambda$, $\Y(\bm \lambda)$ is nonempty, compact, and convex.
\end{lemma}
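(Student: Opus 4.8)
The plan is to verify the three properties directly, following the template of Lemma~\ref{lem:y_set} with $c_i$ replaced by $c_i\lambda_i$. Convexity is immediate: $\Y(\bm\lambda)$ is the solution set of finitely many linear inequalities in $\bm y$, hence a polyhedron and in particular convex.

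For nonemptiness, I would exhibit an explicit feasible point, and the natural candidate is $\bm y = \bm 0$. Since $\bm d \in \RR_+^n$, we have $-0 = 0 \le d_i$ for all $i$; since $\bm c \in \RR_+^n$ and $\lambda_i \ge 0$, we have $0 - 0 = 0 \le c_i\lambda_i$ for all $i \in [2,n]_{\mathbb{Z}}$; and since $C \in \RR_+$, we have $0 \le C$. Thus $\bm 0 \in \Y(\bm\lambda)$, so the set is nonempty for every $\bm\lambda \in \Lambda$.

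For compactness, note first that $\Y(\bm\lambda)$ is closed, being a finite intersection of closed half-spaces, so it remains to establish boundedness. The constraints $-y_i \le d_i$ give the lower bounds $y_i \ge -d_i$ for all $i$. For the upper bounds I would argue by backward induction on $i$: the constraint $y_n \le C$ handles $i = n$, and the consecutive-difference inequalities $y_{i-1} - y_i \le c_i\lambda_i$ telescope to yield $y_i \le C + \sum_{q=i+1}^n c_q\lambda_q \le C + \sum_{q=i+1}^n c_q$ for every $i \in [n]$. Hence each coordinate $y_i$ lies in the bounded interval $[-d_i,\, C + \sum_{q=i+1}^n c_q]$, so $\Y(\bm\lambda)$ is bounded; together with closedness this gives compactness.

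There is no real obstacle here; the only step requiring slight care is the boundedness argument, where one must chain the consecutive-difference constraints together with the terminal bound $y_n \le C$. I note additionally that the resulting bounds are uniform in $\bm\lambda \in \Lambda$, since $0 \le c_i\lambda_i \le c_i$, which is convenient because the same dual feasible set must later be controlled across all no-show scenarios.
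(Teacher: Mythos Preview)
Your proposal is correct and follows exactly the approach the paper indicates: the paper omits the proof of Lemma~\ref{lem:ylambda} entirely, noting only that it is similar to that of Lemma~\ref{lem:y_set}, and you have faithfully replayed that argument with $c_i$ replaced by $c_i\lambda_i$, adding the (trivial) convexity observation that the new lemma also asserts. Your additional remark that the bounds are uniform in $\bm\lambda$ is a nice observation, though not strictly needed for the lemma as stated.
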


Second, we derive a deterministic formulation to compute the worst-case expectation in \eqref{equ:dras-ns}, $\sup_{\QQ_{\bm \xi} \in \D_p(\widehat \PP^N_{\bm \xi}, \, \epsilon)} \Ep_{\QQ_{\bm \xi}} [g( \bm s, \, \bm \xi)]$, for fixed $\bm s \in \mathcal{S}$. We state this formulation in the following proposition and omit the proof due to the similarity to that of Proposition \ref{prop:ROFormat}.
\begin{proposition} \label{prop:wc-dras-ns}
For fixed $\bm s \in \mathcal{S}$, $\displaystyle\sup_{\QQ_{\bm \xi} \in \D_p(\widehat \PP^N_{\bm \xi}, \, \epsilon)} \Ep_{\QQ_{\bm \xi}} [g( \bm s, \, \bm \xi)]$ equals the optimal value of the following formulation:
\begin{align}
\inf \ & \ \epsilon^p \rho + \dfrac1N \sum\limits_{j=1}^{N} \sup_{\bm \xi \in \Xi} \left\{g(\bm s, \, \bm \xi) - \rho \|\bm \xi - \widehat{\bm{\xi}}^j\|_p^p \right\} \label{equ:wc-dras-ns-nonconvex} \\
\st \ & \ \rho \ge 0. \nonumber
\end{align}
\end{proposition}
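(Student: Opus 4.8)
The plan is to mirror the duality argument already used for Proposition \ref{prop:ROFormat}, since the structure of \eqref{equ:dras-ns} is formally identical to that of \eqref{equ:bound}: the only changes are that the cost function $f(\bm s, \bm u)$ is replaced by $g(\bm s, \bm \xi)$, the random vector $\bm u$ is replaced by the augmented vector $\bm \xi = (\bm \mu^\top, \bm \lambda^\top)^\top$, and the support set $\U$ is replaced by $\Xi$. First I would fix $\bm s \in \mathcal{S}$ and write the inner worst-case expectation $\sup_{\QQ_{\bm \xi} \in \D_p(\widehat \PP^N_{\bm \xi}, \, \epsilon)} \Ep_{\QQ_{\bm \xi}} [g( \bm s, \, \bm \xi)]$ as a single infinite-dimensional moment problem over the transport plan $\Pi \in \mathcal{P}(\QQ_{\bm \xi}, \widehat \PP^N_{\bm \xi})$, using the definition of the $p$-Wasserstein distance. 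Because $\widehat \PP^N_{\bm \xi}$ is the empirical distribution $\frac1N\sum_{j=1}^N \delta_{\widehat{\bm \xi}^j}$, the transport plan decomposes into $N$ conditional measures, one anchored at each sample $\widehat{\bm \xi}^j$, and the Wasserstein-ball constraint $d_p(\QQ_{\bm \xi}, \widehat \PP^N_{\bm \xi}) \le \epsilon$ becomes the single budget constraint $\frac1N\sum_{j=1}^N \Ep[\|\bm \xi - \widehat{\bm \xi}^j\|_p^p] \le \epsilon^p$ on these measures.

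Next I would dualize the moment problem. Introducing a multiplier $\rho \ge 0$ for the transportation-budget constraint, the Lagrangian separates across the $N$ samples, and each inner supremum over the conditional measure reduces to a pointwise supremum $\sup_{\bm \xi \in \Xi}\{g(\bm s, \bm \xi) - \rho\|\bm \xi - \widehat{\bm \xi}^j\|_p^p\}$; averaging and adding the $\epsilon^p\rho$ penalty term yields exactly the objective in \eqref{equ:wc-dras-ns-nonconvex}. The remaining task is to justify strong duality, i.e., that the dual infimum over $\rho \ge 0$ equals the primal supremum with no gap. Here I would invoke the same machinery cited for Proposition \ref{prop:ROFormat}, namely the strong-duality result for Wasserstein distributionally robust problems (see Theorem 4.2 of \cite{Esfahani.Kuhn.2017}); its hypotheses require that the cost $g(\bm s, \cdot)$ be upper semicontinuous and that the support set be such that the interchange of the supremum and the expectation is valid. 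Upper semicontinuity of $g(\bm s, \bm \xi)$ follows from its representation \eqref{equ:qxsl-max} as a finite maximum of continuous (affine) functions over the compact set $\Y(\bm \lambda)$, which is well-defined for every $\bm \lambda \in \Lambda$ by Lemma \ref{lem:ylambda}.

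The main obstacle I anticipate is the discrete component of the support: unlike the purely continuous $\U$ of Section \ref{sec:duration_model}, the set $\Xi$ couples the continuous durations $\bm \mu$ with the binary no-show indicators $\bm \lambda \in \Lambda \subseteq \{0,1\}^n$, so $\Xi$ is neither convex nor connected. I would handle this by noting that the strong-duality result of \cite{Esfahani.Kuhn.2017} requires only that $\Xi$ be closed (which it is, being a finite union over $\bm\lambda\in\Lambda$ of the compact rectangles $\{\bm\mu : u_i^{\text{L}}\lambda_i \le \mu_i \le u_i^{\text{U}}\lambda_i\}$) rather than convex, and that the interchange argument goes through for any measurable cost on a Polish support. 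Consequently the mixed continuous-discrete nature of $\bm \xi$ does not obstruct the duality; it merely means that the inner supremum $\sup_{\bm \xi \in \Xi}$ is itself a combinatorial-continuous optimization, which is left intact in \eqref{equ:wc-dras-ns-nonconvex} and deferred to the subsequent copositive reformulation. With these observations the proof reduces to checking the regularity hypotheses and citing the duality theorem, exactly as in Proposition \ref{prop:ROFormat}, which is why the authors omit the details.
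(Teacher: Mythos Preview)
Your proposal is correct and follows essentially the same route as the paper: decompose the transport plan against the empirical distribution into $N$ conditional measures, dualize the single transportation-budget constraint with a multiplier $\rho\ge 0$, and reduce each inner supremum over conditional measures to a pointwise supremum over $\Xi$. The paper in fact omits the proof entirely, stating only that it parallels Proposition~\ref{prop:ROFormat}; your write-up supplies the missing verification that the mixed discrete--continuous (hence non-convex) support $\Xi$ does not obstruct strong duality, which is a worthwhile point since Assumption~\ref{ass:u_set} (convexity of $\U$) no longer applies. The only cosmetic difference is that the paper's proof of Proposition~\ref{prop:ROFormat} cites the moment-problem duality results of \cite{Shapiro.2001,Hanasusanto.Kuhn.2018,Hanasusanto.Roitch.Kuhn.Wiesemann.2017} rather than Theorem~4.2 of \cite{Esfahani.Kuhn.2017}, but either reference suffices here since $\Xi$ is closed and bounded and $g(\bm s,\cdot)$ is continuous.
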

The deterministic formulation in Proposition \ref{prop:wc-dras-ns} is computationally intractable particularly due to the maximization problem in the objective function \eqref{equ:wc-dras-ns-nonconvex}. As compared to its counterpart in Section \ref{sec:duration_model} (see \eqref{equ:vdp_nonconvex}), the problem in \eqref{equ:wc-dras-ns-nonconvex} involves both binary decision variables $\bm \lambda$ and continuous decision variables $\bm \mu$, which significantly increases the computational difficulty. Nevertheless, in the following theorem we derive a copositive reformulation of \eqref{equ:dras-ns} for $p \in \{1,2\}$. Similarly, we define the sets
${\cal F}_{\NS,1}^j$ for all $j \in [N]$ and ${\cal F}_{\NS,2}$:

\[
{\cal F}_{\NS,1}^j  := \left\{ (\bm \mu^+, \bm \mu^-,  \bm \lambda^+,   \bm \lambda^-,  \bm y) \in \RR^{5n} :
\begin{array}{l}
\bm \mu^+ \in \RR_+^n, \, \bm \mu^- \in \RR_+^n, \,  \bm \lambda^+ \in \RR_+^n, \, \bm \lambda^- \in \RR_+^n, \, \bm y \in \RR^n \\
 0 \le \bm{\lambda}^+ -\bm{\lambda}^- + \widehat{\bm \lambda}^j \le \bm e, \ \bm \lambda^+ \le \bm e, \ \bm \lambda^- \le \bm e   \\
  \sum\limits_{i=1}^n \left[1- (\lambda_i^+ - \lambda_i^- + \widehat{\lambda}_i^j) \right] \leq K  \\
 u_i^{\mbox{\tiny L}} (\lambda_i^+ - \lambda_i^- + \widehat{\lambda}_i^j) \leq \mu_i^+ - \mu_i^- +\widehat{\mu}_i^j   \ \ \forall \, i \in [n]  \\
  \mu_i^+ - \mu_i^- +\widehat{\mu}_i^j  \leq u_i^{\mbox{\tiny U}}(\lambda_i^+ - \lambda_i^- + \widehat{\lambda}_i^j) \ \ \forall \, i \in [n] \\
y_n \leq C, \ -y_i \leq d_i   \ \ \ \forall \, i \in [n]  \\
y_{i-1} - y_i \leq c_i (\lambda_i^+ - \lambda_i^- + \widehat{\lambda}_i^j)  \ \ \ \forall \, i \in [2, n]_{\mathbb{Z}}
\end{array}
\right\},
\]
\[
{\cal F}_{\NS,2} := \left\{ (\bm \mu, \bm \lambda, \bm y) \in \RR^{3n} :
\begin{array}{l}
\bm \mu \in \RR^n, \ \bm \lambda \in \RR^n, \ \bm y \in {\cal Y}(\bm \lambda) \\
 \sum\limits_{i=1}^n (1-\lambda_i) \leq K, \  0 \le \bm \lambda \le \bm e \\
 u_i^{\mbox{\tiny L}} \lambda_i \leq \mu_i \leq u_i^{\mbox{\tiny U}} \lambda_i \ \ \forall i \in [n]
\end{array}
\right\},
\]
and then construct their perspective sets $\K_{\NS,1}^j$ and $\K_{\NS,2}$ respectively:
\begin{equation*}
\K_{\NS,1}^j  :=  \text{closure} \Big( \Big \{ (t, \bm \mu^+, \bm \mu^-, \bm \lambda^+, \bm \lambda^-, \bm y) : (\bm \mu^+/t, \bm \mu^-/t, \bm \lambda^+/t, \bm\lambda^-/t, \bm{y} /t) \in \F_{\NS,1}^j, \ t > 0 \Big \} \Big ),
\end{equation*}
\begin{equation*}
\K_{\NS,2}  :=  \textup{closure} \Big ( \Big\{ (t, \bm \mu, \bm \lambda, \bm y)  :   (\bm \mu/t, \bm \lambda/t, \bm y/t)  \in \mathcal{F}_{\NS,2}, \ t > 0 \Big\} \Big ).
\end{equation*}

Now, we are ready to present copositive programming reformulations of \eqref{equ:dras-ns} for the cases of $p = 1, 2$. For ease of exposition, we define

\[
{\bm G}^1_j(\rho, \bm s) :=
\begin{bmatrix}
0  & -\frac12\rho\bm e^\top & -\frac12\rho\bm e^\top & -\frac12\rho\bm e^\top & -\frac12\rho\bm e^\top & \frac12(\widehat{\bm \mu}^j - \bm s)^\top \\
-\frac12\rho\bm e & 0 & 0 & 0 & 0 & \frac12 \bm I \\
-\frac12\rho\bm e & 0 & 0 & 0 & 0 & -\frac12 \bm I \\
-\frac12\rho\bm e & 0 & 0 & 0 & 0 & 0 \\
-\frac12\rho\bm e & 0 & 0 & 0 & 0 & 0 \\
\frac12(\widehat{\bm \mu}^j - \bm s) & \frac12 \bm I & -\frac12 \bm I & 0 & 0 & 0
\end{bmatrix},
\]
\[
{\bm J}_i := \begin{bmatrix} 0 \\ 0  \\  0 \\ \bm e_i \\  0 \\ 0  \end{bmatrix} \begin{bmatrix}0 \\ 0  \\  0 \\ \bm e_i \\  0 \\ 0 \end{bmatrix}^\top
- \dfrac12 \begin{bmatrix} 0 \\ 0  \\  0 \\ \bm e_i \\  0 \\ 0 \end{bmatrix}\begin{bmatrix} 1 \\  0  \\  0 \\ 0 \\  0 \\ 0 \end{bmatrix}^\top
- \dfrac12 \begin{bmatrix} 1 \\ 0  \\  0 \\ 0 \\  0 \\ 0 \end{bmatrix} \begin{bmatrix} 0 \\ 0  \\  0 \\ \bm e_i \\  0 \\ 0 \end{bmatrix}^\top,
{\bm M}_i := \begin{bmatrix} 0 \\ 0 \\ 0  \\  0 \\ \bm e_i \\  0   \end{bmatrix} \begin{bmatrix} 0 \\ 0 \\ 0  \\  0 \\ \bm e_i \\  0  \end{bmatrix}^\top
- \dfrac12 \begin{bmatrix} 0 \\ 0 \\ 0  \\  0 \\ \bm e_i \\  0  \end{bmatrix}\begin{bmatrix} 1 \\  0 \\ 0  \\  0 \\ 0 \\  0  \end{bmatrix}^\top
- \dfrac12 \begin{bmatrix} 1 \\ 0 \\ 0  \\  0 \\ 0 \\  0  \end{bmatrix} \begin{bmatrix} 0 \\ 0 \\ 0  \\  0 \\ \bm e_i \\  0  \end{bmatrix}^\top,
\]
\[
{\bm G}^2_j(\rho, \bm s) :=
\begin{bmatrix}
-\rho (\|\bm{\widehat \mu}_j\|^2 + \|\bm{\widehat \lambda}_j\|^2) & \rho \bm{\widehat \mu}_j^\top & \rho \bm{\widehat \lambda}_j^\top  &  - \frac12 \bm s^\top \\
\rho \bm{\widehat \mu}_j & -\rho \bm I &  0  &  \frac12\bm I \\
\rho \bm{\widehat \lambda}_j &  0 & -\rho \bm I  &  0 \\
-\frac12 \bm s & \frac12 \bm I &  0 &  0
\end{bmatrix},
\]
\[
{\bm N}_i := \begin{bmatrix} 0 \\  0 \\ \bm e_i \\  0 \end{bmatrix} \begin{bmatrix}0 \\   0 \\ \bm  e_i  \\  0 \end{bmatrix}^\top
- \dfrac12 \begin{bmatrix} 0 \\  0 \\ \bm e_i  \\  0 \end{bmatrix}\begin{bmatrix} 1 \\   0 \\  0 \\  0 \end{bmatrix}^\top
- \dfrac12 \begin{bmatrix} 1 \\  0 \\  0 \\  0 \end{bmatrix} \begin{bmatrix} 0 \\  0 \\ \bm e_i \\  0 \end{bmatrix}^\top.
\]

\begin{theorem} \label{thm:dras-ns}
When $p=1$, ~\eqref{equ:dras-ns} yields the same optimal value and the same optimal solutions for the following copositive program:
\begin{equation} \label{equ:dro_cop2_p1}
\begin{array}{lll}
\widehat{Z}_{\mbox{\tiny NS}}(N, \,\epsilon) = & \displaystyle\inf & \epsilon \rho + \dfrac1N \sum\limits_{j=1}^{N} \ \beta_j \\
& \st & \ \rho \in \RR, \  \bm \psi \in \RR^{n \times N}, \bm \phi \in \RR^{n \times N} \\
& &  \beta_j \bm e_1 \bm e_1^\top + \sum\limits_{i=1}^n \psi_{ij} {\bm M}_i + \sum\limits_{i=1}^n \phi_{ij} {\bm J}_i - {\bm G}^1_j(\rho, \, \bm  s) \in \COP(\mathcal{K}_{\NS,1}^j)  \ \ \ \forall \, j \in [N] \\
& & \rho \ge 0, \  \bm s \in {\cal S}.
\end{array}
\end{equation}
In addition, when $p=2$, \eqref{equ:dras-ns} yields the same optimal value and the same optimal solutions for the following copositive program:
\begin{equation} \label{equ:dro_cop2}
\begin{array}{lll}
\widehat{Z}_{\mbox{\tiny NS}}(N, \,\epsilon) = & \displaystyle\inf & \epsilon^2 \rho + \dfrac1N \sum\limits_{j=1}^{N} \ \beta_j \\
& \st & \rho \in \RR, \ \bm \beta \in \RR^N, \ \bm \psi \in \RR^{n \times N} \\
& & \beta_j \bm e_1 \bm e_1^\top + \sum\limits_{i=1}^n \psi_{ij} {\bm N}_i - {\bm G}^2_j(\rho, \, \bm  s) \in \COP(\mathcal{K}_{\NS,2})  \ \ \ \forall \, j \in [N] \\
& & \rho \ge 0, \  \bm s \in {\cal S}.
\end{array}
\end{equation}
\end{theorem}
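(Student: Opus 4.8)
The plan is to mirror the argument behind Theorem~\ref{thm:dro-cop}, adapting it to accommodate the additional binary no-show variables. Starting from the deterministic reformulation in Proposition~\ref{prop:wc-dras-ns}, for each fixed $\bm s\in\S$ and $\rho\ge0$ I would first eliminate the inner $g(\bm s,\bm\xi)$ by invoking the strong duality of Lemma~\ref{lem:ylambda} and replacing $g$ with its dual representation \eqref{equ:qxsl-max}. Since both $g(\bm s,\bm\xi)$ and the supremum defining the $j$-th summand are maximizations, they merge into a single joint maximization over $(\bm\mu,\bm\lambda)\in\Xi$ and $\bm y\in\Y(\bm\lambda)$. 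The crucial observation is that the coupling $\bm y\in\Y(\bm\lambda)$ is merely the \emph{linear} constraint $y_{i-1}-y_i\le c_i\lambda_i$ (the $c_i$ are constants), so after substituting $\lambda_i$ the joint feasible region is described by linear (in)equalities together with the binary requirement $\bm\lambda\in\{0,1\}^n$, while the objective $\sum_i(\mu_i-s_i)y_i-\rho\|\bm\xi-\widehat{\bm\xi}^j\|_p^p$ is a (nonconcave) quadratic whose only bilinear coupling is $\sum_i\mu_i y_i$. The task thus reduces to a nonconvex quadratic program with binary variables.

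For $p=1$ I would split the $\ell_1$-distance coordinatewise, writing $\bm\mu-\widehat{\bm\mu}^j=\bm\mu^+-\bm\mu^-$ and $\bm\lambda-\widehat{\bm\lambda}^j=\bm\lambda^+-\bm\lambda^-$ with all parts nonnegative; because $\rho\ge0$, the penalty $-\rho\,\bm e^\top(\bm\mu^++\bm\mu^-+\bm\lambda^++\bm\lambda^-)$ forces complementarity at any maximizer, so $\|\bm\xi-\widehat{\bm\xi}^j\|_1$ is reproduced exactly. Collecting all constraints in the lifted variables recovers precisely $\F_{\NS,1}^j$, and homogenizing through the scalar $t$ yields the closed convex cone $\K_{\NS,1}^j$. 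A block-by-block expansion then verifies that the homogenized objective equals $\bm x^\top\bm G^1_j(\rho,\bm s)\bm x$ with $\bm x=(t,\bm\mu^+,\bm\mu^-,\bm\lambda^+,\bm\lambda^-,\bm y)$, and that the binarity conditions $\lambda_i^\pm\in\{0,1\}$ translate into the homogeneous identities $\bm x^\top\bm J_i\bm x=0$ and $\bm x^\top\bm M_i\bm x=0$ (each encoding $(\lambda_i^\pm)^2=t\lambda_i^\pm$). Applying a cone-generalized Burer completely positive reformulation~\cite{Burer.2009, Xu.Hanasusanto.2018} recasts the nonconvex program as $\max\{\langle\bm G^1_j,\bm X\rangle:\langle\bm e_1\bm e_1^\top,\bm X\rangle=1,\ \langle\bm J_i,\bm X\rangle=\langle\bm M_i,\bm X\rangle=0,\ \bm X\in\CP(\K_{\NS,1}^j)\}$; taking its conic dual, with $\beta_j$ dual to the normalization and free multipliers $\phi_{ij},\psi_{ij}$ dual to the binary identities, produces exactly the copositive constraint in \eqref{equ:dro_cop2_p1}. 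Re-attaching the outer $\inf_{\rho\ge0,\bm s\in\S}$ together with $\epsilon\rho+\frac1N\sum_j\beta_j$ finishes the $p=1$ case, and since the reformulation is exact pointwise in $(\rho,\bm s)$, the optimal values and optimal-$\bm s$ sets of the two problems coincide.

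The $p=2$ case is structurally identical but simpler, as $\|\bm\xi-\widehat{\bm\xi}^j\|_2^2$ is already quadratic and requires no positive/negative splitting. I would work directly with $(\bm\mu,\bm\lambda,\bm y)$, whose homogenization is $\K_{\NS,2}$, check that the homogenized objective equals $\bm x^\top\bm G^2_j(\rho,\bm s)\bm x$ for $\bm x=(t,\bm\mu,\bm\lambda,\bm y)$, encode the single family $\lambda_i\in\{0,1\}$ as $\bm x^\top\bm N_i\bm x=0$, and invoke the same completely positive reformulation and conic duality to obtain \eqref{equ:dro_cop2}.

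I expect the main obstacle to be establishing the \emph{exactness} of the completely positive reformulation here, together with the absence of a duality gap between it and its copositive dual. Two features make this delicate. First, $\bm y$ is a free variable, so the relevant cone is not the nonnegative orthant and one must use the version of Burer's theorem valid for a general closed convex cone $\K$; its tightness hinges on a boundedness/recession condition, which in our setting follows from the compactness of $\Xi$ and of $\Y(\bm\lambda)$ (Lemma~\ref{lem:ylambda}). Second, one must confirm that imposing binarity on the \emph{split} variables $\lambda_i^\pm$ (rather than on $\lambda_i$ itself) faithfully encodes $\lambda_i\in\{0,1\}$; this rests on the penalty-induced complementarity $\lambda_i^+\lambda_i^-=0$ and the box constraints built into $\F_{\NS,1}^j$, and is the one genuinely new verification relative to the duration-only model of Theorem~\ref{thm:dro-cop}.
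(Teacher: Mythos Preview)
Your proposal is correct and follows essentially the same route as the paper's proof: start from Proposition~\ref{prop:wc-dras-ns}, replace $g(\bm s,\bm\xi)$ by its dual \eqref{equ:qxsl-max}, split the $\ell_1$-terms into nonnegative parts (for $p=1$), homogenize over the cones $\K_{\NS,1}^j$ or $\K_{\NS,2}$, encode the binary requirements as quadratic equalities via $\bm J_i,\bm M_i,\bm N_i$, and then invoke the cone-version of Burer's completely positive reformulation (the paper's Lemma~\ref{lem:burer}) together with strong conic duality (Lemma~\ref{lem:cpp_cop}, which supplies the Slater point you anticipate needing). Your additional remarks on the penalty-forced complementarity of the split variables and on why binarity of $\lambda_i^\pm$ suffices to recover $\lambda_i\in\{0,1\}$ are exactly the verifications the paper leaves implicit.
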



\subsection{Tractable reformulations} \label{sec:ns-tractable}
In this subsection, we consider a setting that admits a tractable reformulation of \eqref{equ:dras-ns} for general $p \geq 1$. Different from Section \ref{sec:duration_model} that considers random service durations only, \eqref{equ:dras-ns} incorporates no-shows and the accompanying Bernoulli random variables. To address the amplified computational challenge, we adopt a different approach based on dynamic programming and network flow techniques. This leads to a (polynomial-size) linear programming reformulation of \eqref{equ:dras-ns} when $p = 1$ and a (polynomial-size) second-order cone programming reformulation when $p > 1$ and $p$ is rational. To this end, we make the following assumption on the costs of waiting and idleness of the AS system.
\begin{assumption}[Homogeneous Costs] \label{ass:no-show-tractable-homogeneous-costs}
The costs of appointment waiting and server idleness are homogeneous, i.e.,
$c_0 := c_1 = c_2 = \cdots c_n$ and $d_0 := d_1 = d_2 = \cdots = d_n$.
\end{assumption}
Assumption~\ref{ass:no-show-tractable-homogeneous-costs} is non-stringent because (1) the server idleness is always associated with the same server, and (2) although the waiting times are associated with different appointments, the scheduler should consider a homogeneous cost to ensure fairness among all appointments. Under this assumption, we can assume $c_0 = 1$ without loss of generality.

We first recast and identify an optimality condition (OC) of the maximization problem $\sup_{\bm \xi \in \Xi} \ \{g(\bm s, \, \bm \xi) - \rho \|\bm \xi - \bm{\widehat{\xi}}^j\|^p_p\}$ in formulation \eqref{equ:wc-dras-ns-nonconvex}.
\begin{proposition} \label{prop:ns-oc}
Denote $\omega'_j(\rho, \bm s) = \sup_{\bm \xi \in \Xi} \ \{g(\bm s, \, \bm \xi) - \rho \|\bm \xi - \bm{\widehat{\xi}}^j\|^p_p\}$ and suppose that Assumptions \ref{ass:tractable-support}--\ref{ass:no-show-tractable-homogeneous-costs} hold. Then, for all $p \geq 1$, $j \in [N]$, $\rho \geq 0$, and $\bm s \in \mathcal{S}$,
\begin{equation}\label{equ:ns-ref}
\omega'_j(\rho, \bm s) \ = \ \sup_{\bm \lambda \in \Lambda, \ \bm y \in \mathcal{Y}(\bm \lambda)} \ \left\{\sum_{i=1}^n f_{ij}(\lambda_i, y_i)\right\},
\end{equation}
where
\begin{equation}\label{equ:z-def}
f_{ij}(\lambda_i, y_i) := \displaystyle\sup_{u^{\mbox{\tiny L}}_i \lambda_i \leq \mu_i \leq u^{\mbox{\tiny U}}_i \lambda_i} \left\{y_i(\mu_i - s_i) - \rho|\mu_i - \widehat{\mu}^j_i|^p - \rho|\lambda_i - \widehat{\lambda}^j_i|^p\right\}
\end{equation}
for all $i \in [n]$. In addition, without loss of optimality, variables $\bm \lambda$ and $\bm y$ satisfy the following optimality condition:
\begin{equation}\tag{OC}\label{equ:oc}
\left\{\begin{array}{ll}
y_n = C \mbox{ or } y_n = -d_0 & \\
y_i = -d_0 \mbox{ or } y_i = y_{i+1} + \lambda_{i+1} & \forall i \in [n-1] \\
y_i \in \mathcal{Y}_i & \forall i \in [n],
\end{array}
\right.
\end{equation}
where $\mathcal{Y}_i := [-d_0, -d_0 + n - i]_{\mathbb{Z}} \cup [C, C + n - i]_{\mathbb{Z}}$.
\end{proposition}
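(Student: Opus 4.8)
The plan is to prove the two assertions of the proposition separately: first the decomposition \eqref{equ:ns-ref}, then the optimality condition \eqref{equ:oc}.

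For \eqref{equ:ns-ref} I would start from the definition of $\omega'_j(\rho,\bm s)$, substitute the dual representation $g(\bm s,\bm\xi)=\max_{\bm y\in\mathcal{Y}(\bm\lambda)}\sum_i(\mu_i-s_i)y_i$ furnished by Lemma~\ref{lem:ylambda} (strong duality, valid since $\mathcal{Y}(\bm\lambda)$ is nonempty and compact), and split the transport penalty as $\|\bm\xi-\widehat{\bm\xi}^j\|_p^p=\sum_i|\mu_i-\widehat\mu^j_i|^p+\sum_i|\lambda_i-\widehat\lambda^j_i|^p$. The key observation is that, under Assumption~\ref{ass:tractable-support}, once $\bm\lambda$ is fixed the feasible set for $\bm\mu$ is the product box $\prod_i[u^{\mbox{\tiny L}}_i\lambda_i,u^{\mbox{\tiny U}}_i\lambda_i]$ while $\mathcal{Y}(\bm\lambda)$ depends only on $\bm\lambda$. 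Hence $\sup_{\bm\lambda}$, $\sup_{\bm\mu}$, and $\max_{\bm y}$ are maximizations over mutually independent domains, so $\sup_{\bm\mu}$ and $\max_{\bm y}$ interchange freely, the $\bm\mu$-maximization separates coordinatewise, and the $\bm\lambda$-penalty (constant in $\bm\mu,\bm y$) distributes into the per-coordinate terms. Collecting the $i$-th terms produces exactly $f_{ij}(\lambda_i,y_i)$, giving \eqref{equ:ns-ref}. This stage is mechanical and uses the assumptions only through compactness of $\mathcal{Y}(\bm\lambda)$ and the rectangular $\bm\mu$-box.

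For \eqref{equ:oc} I would first reduce the inner problem to a vertex. Fix $\bm\lambda$ and note that for each fixed $\lambda_i$ the map $y_i\mapsto f_{ij}(\lambda_i,y_i)$ is a supremum of affine functions of $y_i$, hence convex, so $\sum_i f_{ij}(\lambda_i,y_i)$ is convex in $\bm y$. Under Assumption~\ref{ass:no-show-tractable-homogeneous-costs} (with $c_0=1$) the set $\mathcal{Y}(\bm\lambda)=\{\bm y:y_i\ge-d_0,\ y_{i-1}-y_i\le\lambda_i,\ y_n\le C\}$ is a nonempty compact polytope by Lemma~\ref{lem:ylambda}, and a convex function attains its maximum over such a set at a vertex. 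Thus it suffices to show every vertex satisfies \eqref{equ:oc}, whence the ``without loss of optimality'' claim follows.

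The crux is the vertex characterization, which I would establish by contraposition via a two-sided block perturbation. Suppose $\bm y\in\mathcal{Y}(\bm\lambda)$ violates \eqref{equ:oc} and let $i^*$ be the largest failing index; then $y_{i^*}>-d_0$ and the upper constraint above $i^*$ is slack (i.e. $y_{i^*}<y_{i^*+1}+\lambda_{i^*+1}$ if $i^*<n$, or $y_n<C$ if $i^*=n$). Take the maximal block $[a,i^*]$ obtained by descending from $i^*$ while the coupling $y_{k-1}-y_k\le\lambda_k$ stays tight, and consider shifting the whole block by $\pm\delta\sum_{k=a}^{i^*}\bm e_k$. Every internal (tight) coupling is preserved, the coupling just below $a$ (slack by maximality) and the slack upper constraint above $i^*$ tolerate small shifts, so the only possible obstruction is an active lower bound $y_k=-d_0$ inside the block. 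The decisive point is that tightness gives $y_{k-1}=y_k+\lambda_k\ge y_k$ since $\lambda_k\ge0$, so $y$ is non-increasing in the index across the block and $y_k\ge y_{i^*}>-d_0$ for all $k\in[a,i^*]$; no lower bound is active. Hence both shifts are feasible for small $\delta>0$, producing a segment through $\bm y$ and contradicting vertexhood, so $y_n\in\{-d_0,C\}$ and $y_i\in\{-d_0,\ y_{i+1}+\lambda_{i+1}\}$ hold at every vertex. Finally, a backward induction on $i$ using this recursion and $\lambda_{i+1}\in\{0,1\}$ (each step either resets to $-d_0$ or adds $0$ or $1$) yields $y_i\in[-d_0,-d_0+n-i]_{\mathbb{Z}}\cup[C,C+n-i]_{\mathbb{Z}}=\mathcal{Y}_i$, completing \eqref{equ:oc}. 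I expect this block-perturbation argument — specifically ruling out an interior active lower bound, where the monotonicity forced by $\lambda_k\ge0$ is what makes it work — to be the main obstacle, with the decomposition and the range computation being comparatively routine.
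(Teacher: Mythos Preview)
Your proposal is correct and follows essentially the same approach as the paper: the decomposition \eqref{equ:ns-ref} via the dual representation and coordinatewise separability, then convexity of $\sum_i f_{ij}(\lambda_i,y_i)$ in $\bm y$ to reduce to an extreme point of $\mathcal{Y}(\bm\lambda)$, followed by the backward recursion to get $y_i\in\mathcal{Y}_i$. The only difference is that the paper simply asserts the extreme-point characterization (the first two lines of \eqref{equ:oc}) without argument, whereas you supply the block-perturbation proof; your monotonicity observation $y_{k-1}=y_k+\lambda_k\ge y_k$ to rule out an active lower bound inside the block is exactly the right ingredient, so the extra detail is sound and welcome.
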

\eqref{equ:oc} shrinks the search space of problem \eqref{equ:ns-ref} from a union of polytope to a finite set of points. Specifically, for each $i \in [n]$, variable $y_i$ has $2(n-i+1)$ possible choices because $\mathcal{Y}_i$ consists of $2(n-i+1)$ elements. More importantly, given the value of $y_{i+1}$, $y_i$ can take only two values: $-d_0$ or $y_{i+1} + \lambda_{i+1}$. This allows us to recast $\omega'_j(\rho, \bm s)$ as a dynamic program (DP), that is, we solve \eqref{equ:ns-ref} by sequentially determining $(\lambda_1, y_1)$, $(\lambda_2, y_2)$, and so on. To this end, for each $i \in [n]$, we define the state of this DP as $(\bar{\lambda}_i, y_i) \in [0, K]_{\mathbb{Z}} \times \mathcal{Y}_i$, where $\bar{\lambda}_i := \sum_{k=1}^i (1 - \lambda_k)$ records the total number of no-shows among the first $i$ appointments\footnote{Note that $\lambda_1 = 1 - \bar{\lambda}_1$ and $\lambda_i = \bar{\lambda}_{i-1} - \bar{\lambda}_i + 1$ for all $i \in [2, n]_{\mathbb{Z}}$. In addition, although $\bar{\lambda}_i \in [0, \min\{i, K\}]_{\mathbb{Z}}$, we consider $\bar{\lambda}_i \in [0, K]_{\mathbb{Z}}$ in this DP for notational brevity.}, and define the value function of this DP through
\begin{align*}
& V_{nj}(\bar{\lambda}_n, y_n) \ = \ 0 \ \ \forall \, (\bar{\lambda}_n, y_n) \in [0, K]_{\mathbb{Z}} \times \mathcal{Y}_n, \\
& V_{(i-1)j}(\bar{\lambda}_{i-1}, y_{i-1}) \ = \ \sup_{\bar{\lambda}_i, y_i} \ \ f_{ij}(\bar{\lambda}_{i-1} - \bar{\lambda}_i + 1, y_i) + V_{ij}(\bar{\lambda}_i, y_i) \\
& \hspace{3.8cm} \mbox{s.t.} \ \ \bar{\lambda}_i \in \{\bar{\lambda}_{i-1}, \bar{\lambda}_{i-1} + 1\}, \ \ y_i \in \{- d_0, \ y_{i-1} - (\bar{\lambda}_{i-1} - \bar{\lambda}_i + 1)\} \\
& \forall \, i \in [2, n]_{\mathbb{Z}}, \ \ \forall \, (\bar{\lambda}_{i-1}, y_{i-1}) \in [0, K]_{\mathbb{Z}} \times \mathcal{Y}_{i-1}.
\end{align*}
It follows that $\omega'_j(\rho, \bm s) = \sup_{(\bar{\lambda}_1, y_1) \in \{0, 1\}\times \mathcal{Y}_1}\{ f_{1j}(1 - \bar{\lambda}_1, y_1) + V_{1j}(\bar{\lambda}_1, y_1) \}$.

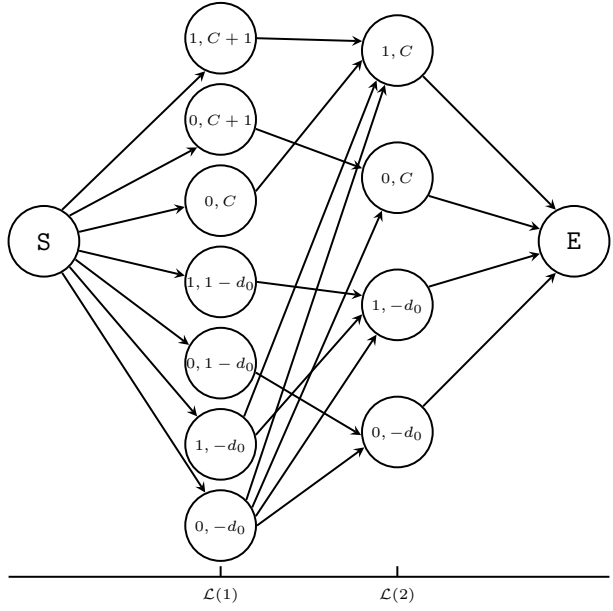
\begin{figure}[bth]
\begin{center}
\resizebox{0.5\textwidth}{!}{
\begin{tikzpicture}[scale=0.5, roundnode/.style={circle, draw=black, fill=white, thick, minimum size=1cm}]
    \def \discrepancy {0.3}
    \def \radius {1}
    \node[roundnode] (start) at (-5, 0) {\texttt{S}};
    \node[roundnode] (1C1) at (0, 5 * \radius + 5 * \discrepancy / 2) {};
    \draw (0, 5 * \radius + 5 * \discrepancy / 2 - 0.5) node[anchor=south] {\tiny $1, C+1$};
    \node[roundnode] (0C1) at (0, 3 * \radius + 3 * \discrepancy / 2) {};
    \draw (0, 3 * \radius + 3 * \discrepancy / 2 - 0.5) node[anchor=south] {\tiny $0, C+1$};
    \node[roundnode] (0C0) at (0, 1 * \radius + 1 * \discrepancy / 2) {};
    \draw (0, 1 * \radius + 1 * \discrepancy / 2 - 0.5) node[anchor=south] {\tiny $0, C$};
    \node[roundnode] (1d1) at (0, - 1 * \radius - 1 * \discrepancy / 2) {};
    \draw (0, - 1 * \radius - 1 * \discrepancy / 2 - 0.5) node[anchor=south] {\tiny $1, 1-d_0$};
    \node[roundnode] (0d1) at (0, - 3 * \radius - 3 * \discrepancy / 2) {};
    \draw (0, - 3 * \radius - 3 * \discrepancy / 2 - 0.5) node[anchor=south] {\tiny $0, 1-d_0$};
    \node[roundnode] (1d0) at (0, - 5 * \radius - 5 * \discrepancy / 2) {};
    \draw (0, - 5 * \radius - 5 * \discrepancy / 2 - 0.5) node[anchor=south] {\tiny $1, -d_0$};
    \node[roundnode] (0d0) at (0, - 7 * \radius - 7 * \discrepancy / 2) {};
    \draw (0, - 7 * \radius - 7 * \discrepancy / 2 - 0.5) node[anchor=south] {\tiny $0, -d_0$};
    \node[roundnode] (1C) at (5, 4.5 * \radius + 3 * \discrepancy) {};
    \draw (5, 4.5 * \radius + 3 * \discrepancy - 0.5) node[anchor=south] {\tiny $1, C$};
    \node[roundnode] (0C) at (5, 1.5 * \radius + 1 * \discrepancy) {};
    \draw (5, 1.5 * \radius + 1 * \discrepancy - 0.5) node[anchor=south] {\tiny $0, C$};
    \node[roundnode] (1d) at (5, - 1.5 * \radius - 1 * \discrepancy) {};
    \draw (5, - 1.5 * \radius - 1 * \discrepancy - 0.5) node[anchor=south] {\tiny $1, -d_0$};
    \node[roundnode] (0d) at (5, - 4.5 * \radius - 3 * \discrepancy) {};
    \draw (5, - 4.5 * \radius - 3 * \discrepancy - 0.5) node[anchor=south] {\tiny $0, -d_0$};
    \node[roundnode] (tail) at (10, 0) {};
    \draw (10, -0.5) node[anchor=south] {\texttt{E}};
    \draw[thick, -stealth] (start.60) -- (1C1.245);
    \draw[thick, -stealth] (start.45) -- (0C1.230);
    \draw[thick, -stealth] (start.15) -- (0C0.190);
    \draw[thick, -stealth] (start.345) -- (1d1.170);
    \draw[thick, -stealth] (start.330) -- (0d1.150);
    \draw[thick, -stealth] (start.315) -- (1d0.130);
    \draw[thick, -stealth] (start.300) -- (0d0.115);
    \draw[thick, -stealth] (1C1.0) -- (1C.165);
    \draw[thick, -stealth] (0C1.345) -- (0C.165);
    \draw[thick, -stealth] (0C0.15) -- (1C.195);
    \draw[thick, -stealth] (1d1.0) -- (1d.165);
    \draw[thick, -stealth] (0d1.345) -- (0d.185);
    \draw[thick, -stealth] (1d0.15) -- (1d.195);
    \draw[thick, -stealth] (0d0.0) -- (0d.205);
    \draw[thick, -stealth] (1d0.50) -- (1C.235);
    \draw[thick, -stealth] (0d0.45) -- (1C.250);
    \draw[thick, -stealth] (0d0.30) -- (0C.245);
    \draw[thick, -stealth] (0d0.15) -- (1d.235);
    \draw[thick, -stealth] (1C.315) -- (tail.120);
    \draw[thick, -stealth] (0C.330) -- (tail.160);
    \draw[thick, -stealth] (1d.30) -- (tail.200);
    \draw[thick, -stealth] (0d.45) -- (tail.240);
    \draw[thick] (-6, - 8 * \radius - 5 * \discrepancy) -- (11, - 8 * \radius - 5 * \discrepancy);
    \draw[thick] (0, - 8 * \radius - 5 * \discrepancy) -- (0, - 8 * \radius - 5 * \discrepancy + 0.3);
    \draw (0, - 8 * \radius - 5 * \discrepancy - 1) node[anchor=south] {\tiny ${\cal L}(1)$};
    \draw[thick] (5, - 8 * \radius - 5 * \discrepancy) -- (5, - 8 * \radius - 5 * \discrepancy + 0.3);
    \draw (5, - 8 * \radius - 5 * \discrepancy - 1) node[anchor=south] {\tiny ${\cal L}(2)$};
\end{tikzpicture}
}
\caption[]{An Example of the Network $(\mathcal{G}, \mathcal{E})$ with $n = 2$ and $K=1$.
The nodes in layers 1 and 2 are arranged along the coordinates ${\cal L}(1)$ and
${\cal L}(2)$, respectively.} \label{fig:network-2}
\end{center}
\end{figure}

To provide more intuition, we map the states and state transitions of this DP onto an $n$-layer network. In layer $i$, $\forall \, i \in [n]$, we construct a set ${\cal L}(i)$ of nodes with ${\cal L}(i) := \left\{(\bar{\lambda}_i, y_i): \bar{\lambda}_i \in [0, K]_{\mathbb{Z}}, \ y_i \in \mathcal{Y}_i \right\}$. In addition, we construct directed arcs between two consecutive layers in the network. For $i \in [2, n]_{\mathbb{Z}}$, there is an arc from node $(\bar{\lambda}_{i-1}, y_{i-1}) \in {\cal L}({i-1})$ to node $(\bar{\lambda}_i, y_i) \in {\cal L}(i)$ if $\bar{\lambda}_i \in \{\bar{\lambda}_{i-1}, \bar{\lambda}_{i-1} + 1\}$ and $y_i \in \{-d_0, \ y_{i-1} - (\bar{\lambda}_{i-1} - \bar{\lambda}_i + 1)\}$. Finally, we construct a (dummy) starting node \texttt{S} and a (dummy) ending node \texttt{E}, as well as arcs from \texttt{S} to all nodes $(\bar{\lambda}_1, y_1) \in {\cal L}(1)$ with $\bar{\lambda}_1 \in \{0, 1\}$ and from all nodes in ${\cal L}(n)$ to \texttt{E}. We depict an example of this network in Figure~\ref{fig:network-2}. It can be observed that each feasible solution of the DP corresponds to an \texttt{S}-\texttt{E} path in the network, and vice versa. This indicates that, by associating appropriate length to each arc, solving the longest-path problem in the network produces an optimal solution to the DP. We summarize this observation in the following proposition.
\begin{proposition} \label{prop:tractable-no-show-bounded-support}
Suppose that Assumptions \ref{ass:tractable-support}--\ref{ass:no-show-tractable-homogeneous-costs} hold and consider a directed network $(\mathcal{G}, \mathcal{E})$ such that $\mathcal{G} = \bigcup_{i=1}^n {\cal L}(i) \cup \{\texttt{S}, \texttt{E}\}$ and $\mathcal{E} = \{( (\bar{\lambda}_{i-1}, y_{i-1}),
\ (\bar{\lambda}_i, y_i) ) \in {\cal L}({i-1}) \times {\cal L}(i): \ \bar{\lambda}_i \in \{\bar{\lambda}_{i-1}, \bar{\lambda}_{i-1} + 1\}, \ y_i \in \{- d_0, \ y_{i-1} - (\bar{\lambda}_{i-1} - \bar{\lambda}_i + 1)\}, \ i \in [2, n]_{\mathbb{Z}}\}
\cup \{(\texttt{S}, \ (\bar{\lambda}_1, y_1)): \ \bar{\lambda}_1 \in \{0, 1\}, \ (\bar{\lambda}_1, y_1) \in {\cal L}(1)\}
\cup \{((\bar{\lambda}_n, y_n), \ \texttt{E}): \ (\bar{\lambda}_n, y_n) \in {\cal L}(n)\}$.
For each arc $(k, \ell) \in \mathcal{E}$, let $g_{k\ell j}$ represent its length such that
$$
g_{k\ell j} = \left\{\begin{array}{ll} f_{1j}(1 - \bar{\lambda}_1, \ y_1) & \mbox{if $k = \texttt{S}$ and $\ell = (\bar{\lambda}_1, y_1)$} \\
f_{ij}(\bar{\lambda}_{i-1} - \bar{\lambda}_i + 1, \ y_i) & \mbox{if $k = (\bar{\lambda}_{i-1}, y_{i-1})$ and $\ell = (\bar{\lambda}_i, y_i)$ for $i \in [2, n]_{\mathbb{Z}}$} \\
0 & \mbox{if $\ell = \texttt{E}$,}
\end{array}\right.
$$
where $f_{ij}(\cdot, \cdot)$ is defined in \eqref{equ:z-def}. Then, we have
\begin{subequations}
\begin{align}
\omega'_j(\rho, \bm s) \ = \ \max_{\bm z} \ & \ \sum_{(k, \ell) \in \mathcal{E}} g_{k\ell j} z_{k\ell} \label{equ:lp-no-show-bounded-obj} \\
\mbox{s.t.} \ & \ \sum_{\ell: (k, \ell) \in \mathcal{E}} z_{k\ell} - \sum_{\ell: (\ell, k) \in \mathcal{E}} z_{\ell k} = \left\{\begin{array}{ll} 1 & \mbox{if $k = \texttt{S}$} \\
0 & \mbox{if $k \neq \texttt{S}, \texttt{E}$} \\
-1 & \mbox{if $k = \texttt{E}$}
\end{array}\right. \ \ \forall \, k \in \mathcal{G}   \label{equ:lp-no-show-bounded-con-1} \\
& \ z_{k\ell} \geq 0 \ \ \ \forall \, (k, \ell) \in {\cal E}. \label{equ:lp-no-show-bounded-con-2}
\end{align}
\end{subequations}
\end{proposition}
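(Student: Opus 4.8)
The plan is to recognize that the network $(\mathcal{G}, \mathcal{E})$ in the statement is, by construction, exactly the state--transition graph of the dynamic program whose value function $V_{ij}$ was introduced above, and then to invoke the standard equivalence between the longest-path problem on an acyclic network and its arc-flow linear program. Since the identity $\omega'_j(\rho,\bm s) = \sup_{(\bar\lambda_1,y_1)}\{f_{1j}(1-\bar\lambda_1,y_1)+V_{1j}(\bar\lambda_1,y_1)\}$ has already been established, the only work is to pass from the DP value to the longest-path value and from there to the linear program.

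First I would verify that the network faithfully encodes the DP. By construction the node set $\mathcal{L}(i)$ is precisely the stage-$i$ state space $[0,K]_{\mathbb{Z}}\times\mathcal{Y}_i$, the interior arcs reproduce the admissible transitions $\bar\lambda_i\in\{\bar\lambda_{i-1},\bar\lambda_{i-1}+1\}$ and $y_i\in\{-d_0,\ y_{i-1}-(\bar\lambda_{i-1}-\bar\lambda_i+1)\}$, the arcs out of $\texttt{S}$ enumerate the admissible initial states $\bar\lambda_1\in\{0,1\}$, and the arcs into $\texttt{E}$ carry zero length. Because the arc lengths $g_{k\ell j}$ are set equal to the corresponding stage rewards $f_{ij}(\cdot,\cdot)$, any $\texttt{S}$--$\texttt{E}$ path visits exactly one node per layer, corresponds bijectively to a feasible DP trajectory, and has total length equal to $\sum_{i=1}^n f_{ij}(\lambda_i,y_i)$ evaluated along that trajectory. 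As the graph is layered with arcs only between consecutive layers, it is acyclic, and unrolling the Bellman recursion defining $V_{ij}$ shows that the length of a longest $\texttt{S}$--$\texttt{E}$ path equals $\sup_{(\bar\lambda_1,y_1)}\{f_{1j}(1-\bar\lambda_1,y_1)+V_{1j}(\bar\lambda_1,y_1)\}=\omega'_j(\rho,\bm s)$.

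It then remains to argue that \eqref{equ:lp-no-show-bounded-obj}--\eqref{equ:lp-no-show-bounded-con-2} computes this longest-path length. This linear program is the canonical arc-flow formulation routing one unit of flow from $\texttt{S}$ to $\texttt{E}$: the equalities \eqref{equ:lp-no-show-bounded-con-1} are flow-conservation constraints with source supply $1$ and sink demand $1$, while \eqref{equ:lp-no-show-bounded-obj} accumulates arc lengths. Its constraint matrix is the node--arc incidence matrix of a directed graph, hence totally unimodular, and the right-hand side is integral; so the program attains its optimum at an integral vertex $\bm z^\star\in\{0,1\}^{|\mathcal{E}|}$. Because the network is acyclic, any integral unit $\texttt{S}$--$\texttt{E}$ flow decomposes into a single $\texttt{S}$--$\texttt{E}$ path (no flow can be diverted onto a cycle), so $\bm z^\star$ is the indicator of one path and its objective value is the length of a longest path; acyclicity also rules out positive-length cycles and guarantees the program is bounded. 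Chaining the three equalities---LP optimum $=$ longest-path length $=$ DP value $=\omega'_j(\rho,\bm s)$---delivers the claim.

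The main obstacle I anticipate is making the path $\leftrightarrow$ trajectory correspondence and the length bookkeeping airtight, in particular checking that every $\texttt{S}$--$\texttt{E}$ path induces budget-feasible choices: the restriction $\bar\lambda_i\le K$ built into $\mathcal{L}(i)$ is what enforces $\bm\lambda\in\Lambda$, and once $\bar\lambda_{i-1}=K$ the transition to $\bar\lambda_i=K+1$ is simply absent, forcing $\lambda_i=1$. One must also confirm that the unreachable states flagged in the footnote (those with $\bar\lambda_i$ exceeding $\min\{i,K\}$, e.g.\ nodes with $\bar\lambda_1>1$) carry no flow and therefore cannot alter the optimum. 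The remaining ingredients---total unimodularity of the incidence matrix and flow decomposition on a directed acyclic graph---are standard and require only the absence of directed cycles, which is immediate from the layered structure.
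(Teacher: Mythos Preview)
Your proposal is correct and follows essentially the same approach as the paper: establish a bijection between $\texttt{S}$--$\texttt{E}$ paths and the feasible $(\bm\lambda,\bm y)$ satisfying \eqref{equ:oc}, match path lengths to $\sum_i f_{ij}(\lambda_i,y_i)$, and identify the LP \eqref{equ:lp-no-show-bounded-obj}--\eqref{equ:lp-no-show-bounded-con-2} with the longest-path problem. The paper's proof is considerably terser---it simply asserts the bijection ``by construction'' and the LP--longest-path equivalence without spelling out total unimodularity or acyclicity---so your version is a more detailed rendering of the same argument rather than a different one.
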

We note that $|\mathcal{G}| = \mathcal{O}(Kn^2)$ and $|\mathcal{E}| = \mathcal{O}(Kn^2)$. This indicates that $\omega'_j(\rho, \bm s)$ can be computed in polynomial time by solving the linear program \eqref{equ:lp-no-show-bounded-obj}--\eqref{equ:lp-no-show-bounded-con-2}. Following the equivalence between separation and convex optimization (see~\cite{grotschel1981ellipsoid}), Proposition \ref{prop:tractable-no-show-bounded-support} implies that~\eqref{equ:dras-ns} can be solved in polynomial time to find an optimal appointment schedule under both random no-shows and service durations.

As a generalization of Theorem \ref{thm:tractable}, the following theorem recasts~\eqref{equ:dras-ns} as a linear program and a second-order cone program for $p = 1$ and $p=2$, respectively. Similar second-order conic reformulations can be obtained for any rational $p \geq 1$.
\begin{theorem} \label{thm:tractable-ns-bounded}
Suppose that Assumptions \ref{ass:tractable-support}--\ref{ass:no-show-tractable-homogeneous-costs} hold and define
\begin{align*}
& \mathcal{E}^0_1 := \left\{(\texttt{S}, \ell) \in \mathcal{E}: \ \ell = (1, y_i) \right\}, \ \ \mathcal{E}^1_1 := \left\{(\texttt{S}, \ell) \in \mathcal{E}: \ \ell = (0, y_i)\right\}, \\
& \mathcal{E}^0_i := \{(k, \ell) \in \mathcal{E}: \ k = (\bar{\lambda}_{i-1}, y_{i-1}), \ell = (\bar{\lambda}_i, y_i), \mbox{ and } \bar{\lambda}_i = \bar{\lambda}_{i-1} + 1 \} \ \ \forall \, i \in [2, n]_{\mathbb{Z}}, \\
& \mathcal{E}^1_i := \{(k, \ell) \in \mathcal{E}: \ k = (\bar{\lambda}_{i-1}, y_{i-1}), \ell = (\bar{\lambda}_i, y_i), \mbox{ and } \bar{\lambda}_i = \bar{\lambda}_{i-1} \} \ \ \forall \, i \in [2, n]_{\mathbb{Z}}, \\
& \mbox{and} \ \ \mathcal{E}_{\texttt{E}} := \{(k, \ell) \in \mathcal{E}: \ \ell = \texttt{E}\}.
\end{align*}
Then, when $p = 1$, \eqref{equ:dras-ns} yields the same optimal value and the same set of optimal appointment schedules as the following linear program:
\begin{align} \label{equ:ns-ref-1}
\begin{split}
\min_{\rho, \bm s, \bm \alpha} \ & \ \epsilon \rho + \frac1N \sum_{j=1}^N \left( \alpha^j_{\texttt{S}} - \alpha^j_{\texttt{E}} \right) \\
\mbox{s.t.} \ & \ \alpha^j_k - \alpha^j_{\ell} \geq 0 \ \ \forall \, (k, \ell) \in \mathcal{E}_{\texttt{E}} \ \ \forall \, j \in [N] \\
& \ \alpha^j_k - \alpha^j_{\ell} + y_i s_i + (\widehat{\mu}^j_i + \widehat{\lambda}^j_i) \rho \geq 0 \ \ \forall \, i \in [n], \ \forall \, (k, \ell) \in \mathcal{E}^0_i, \ \forall \,  j \in [N] \\
& \hspace{-0.15cm} \left.\begin{array}{l}
\alpha^j_k - \alpha^j_{\ell} + y_i s_i + (1 - \widehat{\lambda}^j_i + \widehat{\mu}^j_i - u^{\mbox{\tiny L}}_i) \rho \geq u^{\mbox{\tiny L}}_i y_i \\[0.2cm]
\alpha^j_k - \alpha^j_{\ell} + y_i s_i + (1 - \widehat{\lambda}^j_i) \rho \geq \widehat{\mu}^j_i y_i \\[0.2cm]
\alpha^j_k - \alpha^j_{\ell} + y_i s_i + (1 - \widehat{\lambda}^j_i + u^{\mbox{\tiny U}}_i - \widehat{\mu}^j_i) \rho \geq u^{\mbox{\tiny U}}_i y_i
\end{array}\right\}
\begin{array}{l}
\forall \, i \in [n], \ \forall \, (k, \ell) \in \mathcal{E}^1_i \\
\forall \,  j \in [N]
\end{array} \\
& \ \rho \geq 0, \ \bm s \in {\cal S}.
\end{split}
\end{align}
In addition, when $p=2$, \eqref{equ:dras-ns} yields the same optimal value and the same set of optimal appointment schedules as the following second-order cone program:
\begin{align*}
\min_{\substack{\rho, \bm s, \bm \alpha\\ \bm \beta, \bm \varphi}} \ & \ \epsilon^2 \rho + \frac1N \sum_{j=1}^N \left( \alpha^j_{\texttt{S}} - \alpha^j_{\texttt{E}} \right) \\
\mbox{s.t.} \ & \ \alpha^j_k - \alpha^j_{\ell} \geq 0 \ \ \forall \, (k, \ell) \in \mathcal{E}_{\texttt{E}} \ \ \forall \, j \in [N] \\
& \ \alpha^j_k - \alpha^j_{\ell} + y_i s_i + \left((\widehat{\mu}^j_i)^2 + \widehat{\lambda}^j_i\right) \rho \geq 0 \ \ \forall \, i \in [n], \ \forall \, (k, \ell) \in \mathcal{E}^0_i, \ \forall \,  j \in [N] \\
& \ \alpha^j_k - \alpha^j_{\ell} + y_i s_i + (1 - \widehat{\lambda}^j_i)\rho - \varphi_{k\ell j} - (\widehat{\mu}^j_i - u^{\mbox{\tiny L}}_i) \beta^{\mbox{\tiny L}}_{k\ell j} - (u^{\mbox{\tiny U}}_i - \widehat{\mu}^j_i) \beta^{\mbox{\tiny U}}_{k\ell j} \geq \widehat{\mu}^j_i y_i \\
& \ \forall \, i \in [n], \ \forall \, (k, \ell) \in \mathcal{E}^1_i, \ \forall \,  j \in [N] \\
& \ \left\|\begin{bmatrix} \beta^{\mbox{\tiny L}}_{k\ell j} - \beta^{\mbox{\tiny U}}_{k\ell j} + y_i \\ \varphi_{k\ell j} - \rho \end{bmatrix}\right\|_2 \leq \varphi_{k\ell j} + \rho \ \ \forall \, i \in [n], \ \forall \, (k, \ell) \in \mathcal{E}^1_i, \ \forall \,  j \in [N] \\
& \ \rho \geq 0, \ \bm s \in {\cal S}.
\end{align*}
\end{theorem}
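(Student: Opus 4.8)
The plan is to chain together the three preceding propositions and then apply linear-programming duality to the longest-path subproblem. First I would combine the outer $\min_{\bm s \in \mathcal S}$ of \eqref{equ:dras-ns} with Proposition~\ref{prop:wc-dras-ns} to write $\widehat{Z}_{\mbox{\tiny NS}}(N,\epsilon) = \inf_{\rho \geq 0,\, \bm s \in \mathcal S}\{\epsilon^p \rho + \tfrac1N\sum_{j=1}^N \omega'_j(\rho,\bm s)\}$, and then substitute the network-flow characterization of $\omega'_j(\rho,\bm s)$ from Proposition~\ref{prop:tractable-no-show-bounded-support}, namely the longest-path linear program \eqref{equ:lp-no-show-bounded-obj}--\eqref{equ:lp-no-show-bounded-con-2} on $(\mathcal G,\mathcal E)$ with arc lengths $g_{k\ell j}$.

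Next I would dualize this longest-path program. Assigning a free multiplier $\alpha^j_k$ to the flow-conservation constraint \eqref{equ:lp-no-show-bounded-con-1} at each node $k\in\mathcal G$, standard LP duality yields the node-potential dual $\min\{\alpha^j_{\texttt{S}}-\alpha^j_{\texttt{E}} : \alpha^j_k - \alpha^j_\ell \geq g_{k\ell j}\ \forall (k,\ell)\in\mathcal E\}$. Because each $j$-th network is finite and contains an $\texttt{S}$--$\texttt{E}$ path, both primal and dual are feasible and bounded, so strong duality gives $\omega'_j(\rho,\bm s) = \min_{\alpha^j}\{\alpha^j_{\texttt{S}}-\alpha^j_{\texttt{E}}\}$ over the dual-feasible region. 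Since the constraints on $\alpha^j$ decouple across $j$ and the outer problem also minimizes over $(\rho,\bm s)$, I would merge all minimizations into a single program in the variables $(\rho,\bm s,\bm\alpha)$; this preserves the objective value for every fixed $\bm s$, which is exactly what delivers the claimed equality of both optimal values and optimal-schedule sets.

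It then remains to make the arc-length constraints $\alpha^j_k-\alpha^j_\ell\geq g_{k\ell j}$ explicit using the definition of $g_{k\ell j}$ together with $f_{ij}$ from \eqref{equ:z-def}. The arcs into $\texttt{E}$ have length $0$, giving $\alpha^j_k-\alpha^j_\ell\geq 0$. For an arc in $\mathcal E^0_i$ the appointee does not show ($\lambda_i=0$), forcing $\mu_i=0$, so $f_{ij}(0,y_i)=-y_i s_i-\rho|\widehat\mu^j_i|^p-\rho|\widehat\lambda^j_i|^p$; using $\widehat\lambda^j_i\in\{0,1\}$ and $\widehat\mu^j_i\geq 0$ this reduces to the stated $\mathcal E^0_i$ constraint (with the coefficient $\widehat\mu^j_i+\widehat\lambda^j_i$ when $p=1$ and $(\widehat\mu^j_i)^2+\widehat\lambda^j_i$ when $p=2$). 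For an arc in $\mathcal E^1_i$ the appointee shows ($\lambda_i=1$), and $f_{ij}(1,y_i)=-y_is_i-\rho(1-\widehat\lambda^j_i)+h_{i\ell j}$ with $h_{i\ell j}:=\sup_{u^{\mbox{\tiny L}}_i\leq\mu_i\leq u^{\mbox{\tiny U}}_i}\{y_i\mu_i-\rho|\mu_i-\widehat\mu^j_i|^p\}$. When $p=1$ the integrand is concave and piecewise linear, so $h_{i\ell j}$ is attained at $\mu_i\in\{u^{\mbox{\tiny L}}_i,\widehat\mu^j_i,u^{\mbox{\tiny U}}_i\}$; requiring $\alpha^j_k-\alpha^j_\ell\geq f_{ij}(1,y_i)$, where $f_{ij}(1,y_i)$ is now a maximum of three affine expressions, splits into the three $\mathcal E^1_i$ inequalities of \eqref{equ:ns-ref-1}.

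For $p=2$ the only new ingredient is a conic treatment of $h_{i\ell j}=\sup_{u^{\mbox{\tiny L}}_i\leq\mu_i\leq u^{\mbox{\tiny U}}_i}\{y_i\mu_i-\rho(\mu_i-\widehat\mu^j_i)^2\}$. I would dualize the box constraint with multipliers $\beta^{\mbox{\tiny L}}_{k\ell j},\beta^{\mbox{\tiny U}}_{k\ell j}\geq 0$, reducing the inner problem to an unconstrained concave quadratic whose value is $y_i\widehat\mu^j_i + (y_i+\beta^{\mbox{\tiny L}}_{k\ell j}-\beta^{\mbox{\tiny U}}_{k\ell j})^2/(4\rho) + (\widehat\mu^j_i-u^{\mbox{\tiny L}}_i)\beta^{\mbox{\tiny L}}_{k\ell j} + (u^{\mbox{\tiny U}}_i-\widehat\mu^j_i)\beta^{\mbox{\tiny U}}_{k\ell j}$; since $h_{i\ell j}$ is then an infimum over the $\beta$'s, the constraint $\alpha^j_k-\alpha^j_\ell\geq f_{ij}(1,y_i)$ is equivalent to the existence of feasible $\beta$'s satisfying the corresponding inequality. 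Introducing the epigraph variable $\varphi_{k\ell j}\geq (y_i+\beta^{\mbox{\tiny L}}_{k\ell j}-\beta^{\mbox{\tiny U}}_{k\ell j})^2/(4\rho)$ and writing this as the rotated second-order cone $\|(\beta^{\mbox{\tiny L}}_{k\ell j}-\beta^{\mbox{\tiny U}}_{k\ell j}+y_i,\ \varphi_{k\ell j}-\rho)\|_2\leq\varphi_{k\ell j}+\rho$ reproduces the stated SOCP. The main obstacle is this last step for $p=2$: correctly carrying out the inner duality (including the sign bookkeeping of the box multipliers around $\widehat\mu^j_i$) and verifying that the rotated-cone epigraph reformulation is exact whenever $\rho>0$, with the degenerate case $\rho=0$ handled by continuity.
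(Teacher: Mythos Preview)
Your proposal is correct and follows essentially the same route as the paper: combine Proposition~\ref{prop:wc-dras-ns} with the longest-path representation of $\omega'_j(\rho,\bm s)$ from Proposition~\ref{prop:tractable-no-show-bounded-support}, take the node-potential LP dual of \eqref{equ:lp-no-show-bounded-obj}--\eqref{equ:lp-no-show-bounded-con-2}, and then unpack the arc-length inequalities $\alpha^j_k-\alpha^j_\ell\geq g_{k\ell j}$ using the closed forms of $f_{ij}(0,y_i)$ and $f_{ij}(1,y_i)$ (the latter via the three-point maximum when $p=1$ and via Lagrangian duality on the box plus the rotated-cone epigraph when $p=2$). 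The paper's proof is organized in exactly this way, including the same Lagrangian computation and SOC lifting; your extra remark on the $\rho=0$ boundary case is a reasonable caution the paper leaves implicit.
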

We note that both reformulations in Theorem \ref{thm:tractable-ns-bounded} involve $\mathcal{O}(KNn^2)$ decision variables and $\mathcal{O}(KNn^2)$ constraints. We further derive a worst-case probability distribution $\mathbb{Q}^\star_{\bm \xi}$ of the random no-shows and service durations that attains $\sup_{\mathbb{Q}_{\bm \xi} \in \mathcal{D}_1(\widehat{\mathbb{P}}^N_{\bm \xi}, \epsilon)} \mathbb{E}_{\mathbb{Q}_{\bm \xi}}[g(\bar{\bm{s}}, \bm \xi)]$. This distribution can be applied to stress test an appointment schedule generated from any decision-making processes.
\begin{theorem} \label{thm:no-show-wc-dist}
For fixed $\bar{\bm s} \in \mathcal{S}$ and $\epsilon \geq 0$, $\displaystyle\sup_{\mathbb{Q}_{\bm \xi} \in \mathcal{D}_1(\widehat{\mathbb{P}}^N_{\bm \xi}, \epsilon)} \mathbb{E}_{\mathbb{Q}_{\bm \xi}}[g(\bar{\bm s}, \bm \xi)]$ equals the optimal value of the following linear program:
\begin{subequations} \label{equ:no-show-wc-lp}
\begin{align}
\max\limits_{\bm o, \bm p, \bm q, \bm w, \bm r} \ & \ \dfrac1N\sum\limits_{j=1}^N \sum\limits_{i=1}^{n} y_i \left[ \sum\limits_{(k, \ell) \in \mathcal{E}^1_i} \left( (u_i^{\tiny L} - \bar{s}_i) q_{k\ell j} + (\widehat{\mu}^j_i - \bar{s}_i) w_{k\ell j} + (u_i^{\tiny U} - \bar{s}_i) r_{i\ell j} \right) - \sum\limits_{(k, \ell) \in \mathcal{E}^0_i} \bar{s}_i p_{k\ell j} \right] \label{equ:no-show-wc-lp-obj} \\
\st \ & \ \dfrac1N  \sum\limits_{j=1}^N \sum\limits_{i=1}^{n} \Biggl[ \sum\limits_{(k, \ell) \in \mathcal{E}^0_i} (\widehat{\mu}^j_i + \widehat{\lambda}^j_i) p_{k\ell j} + \sum\limits_{(k, \ell) \in \mathcal{E}^1_i} \Biggl( \bigl(1 - \widehat{\lambda}^j_i + \widehat{\mu}^j_i - u^{\mbox{\tiny L}}_i \bigr) q_{k\ell j} \nonumber \\
& + \bigl(1 - \widehat{\lambda}^j_i\bigr) w_{k\ell j} + \bigl(1 - \widehat{\lambda}^j_i + u_i^{\tiny U} - \widehat{\mu}^j_i\bigr) r_{k\ell j} \Biggr) \Biggr] \leq \epsilon \label{equ:no-show-wc-lp-con-1} \\
& \sum\limits_{\ell:(k, \ell) \in \mathcal{E}^0} p_{k\ell j} + \sum\limits_{\ell:(k, \ell) \in \mathcal{E}^1} (q_{k\ell j} + w_{k\ell j} + r_{k\ell j}) - \sum\limits_{\ell:(\ell, k) \in \mathcal{E}^0} p_{\ell kj} \nonumber \\
& - \sum\limits_{\ell:(\ell, k) \in \mathcal{E}^1} (q_{\ell kj} + w_{\ell kj} + r_{\ell kj}) = \left\{ \begin{array}{ll} 1 & \mbox{if $k = \texttt{S}$}\\ 0 & \mbox{if $k \neq \texttt{S}$} \end{array} \right. \ \ \forall \, k \in \mathcal{E}\setminus\bigl(\mathcal{L}(n)\cup\texttt{E}\bigr), \ \forall \, j \in [N] \label{equ:no-show-wc-lp-con-2} \\
& o_{k\texttt{E} j} - \sum\limits_{\ell:(\ell, k) \in \mathcal{E}^0} p_{\ell kj} - \sum\limits_{\ell:(\ell, k) \in \mathcal{E}^1} (q_{\ell kj} + w_{\ell kj} + r_{\ell kj}) = 0 \ \ \forall \, k \in \mathcal{L}(n), \ \forall \, j \in [N] \label{equ:no-show-wc-lp-con-3} \\
& \sum\limits_{k:(k, \texttt{E}) \in \mathcal{E}_{\texttt{E}}} o_{k\texttt{E} j} = 1 \ \ \forall \, j \in [N] \label{equ:no-show-wc-lp-con-4} \\
& o_{k\ell j}, \ p_{k\ell j}, \ q_{k\ell j}, \ w_{k\ell j}, \ r_{k\ell j} \geq 0 \ \ \forall \, (k, \ell) \in \mathcal{E}, \ \forall \, j \in [N], \label{equ:no-show-wc-lp-con-5}
\end{align}
\end{subequations}
where $\mathcal{E}^0 := \cup_{i=1}^n \mathcal{E}^0_i$ and $\mathcal{E}^1 := \cup_{i=1}^n \mathcal{E}^1_i$. Let $\{o^\star_{k\ell j}, p^\star_{k\ell j}, q^\star_{k\ell j}, w^\star_{k\ell j}, r^\star_{k\ell j}\}$ be an optimal solution of the above linear program and define $\mathcal{P} = \{\bm{z}\in \{0, 1\}^{|\mathcal{E}|}: \ \mbox{\eqref{equ:lp-no-show-bounded-con-1}}\}$. Then, for all $j \in [N]$, there exists a distribution $\mathbb{P}^j_{\bm z}$ on $\mathcal{P}$ such that (i) $\mathbb{P}^j_{\bm z}\{z_{k\ell} = 1\} = p^\star_{k\ell j}$ for all $(k, \ell) \in \mathcal{E}^0$ and (ii) $\mathbb{P}^j_{\bm z}\{z_{k\ell} = 1\} = q^\star_{k\ell j} + w^\star_{k\ell j} + r^\star_{k\ell j}$ for all $(k, \ell) \in \mathcal{E}^1$. Furthermore, define the probability distribution
$$
\mathbb{Q}^\star_{\bm \xi} = \frac{1}{N}\sum_{j=1}^N \sum_{\bm{\zeta} \in \mathcal{P}} \mathbb{P}^j_{\bm z}\{\bm z = \bm \zeta\} \delta_{\bm{\xi}^j(\bm \zeta)},
$$
where, for all $i \in [n]$ and $j \in [N]$,
$$
\xi_i^j(\bm \zeta) = \sum_{(k, \ell) \in \mathcal{E}^0_i} \zeta_{k\ell} (0, 0) + \sum_{(k, \ell) \in \mathcal{E}^1_i} \zeta_{k\ell} (\mu_{k\ell j}, 1) \ \ \mbox{and} \ \ \mu_{k\ell j} = \widehat{\mu}^j_i + \frac{q^\star_{k\ell j}(u_i^{\tiny L} - \widehat{\mu}^j_i )} {q^\star_{k\ell j} + w^\star_{k\ell j} + r^\star_{k\ell j}} + \frac{r^\star_{k\ell j}(u_i^{\tiny U} - \widehat{\mu}^j_i )} {q^\star_{k\ell j} + w^\star_{k\ell j} + r^\star_{k\ell j}}.
$$
Here we adopt an extended arithmetic given by $0/0 = 0$. Then, $\mathbb{Q}^\star_{\bm \xi}$ belongs to the Wasserstein ambiguity set $\mathcal{D}_1(\widehat{\mathbb{P}}^N_{\bm \xi}, \epsilon)$ and $\mathbb{E}_{\mathbb{Q}^\star_{\bm \xi}}[g(\bar{\bm s}, \bm \xi)] = \displaystyle\sup_{\mathbb{Q}_{\bm \xi} \in \mathcal{D}_1(\widehat{\mathbb{P}}^N_{\bm \xi}, \epsilon)} \mathbb{E}_{\mathbb{Q}_{\bm \xi}}[g(\bar{\bm s}, \bm \xi)]$.
\end{theorem}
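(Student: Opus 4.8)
The plan is to obtain the value equality by linear-programming duality and then to read off the worst-case distribution from the dual optimal solution, verifying feasibility and tightness by a sandwich argument. First I would fix $\bar{\bm s}$ and invoke Theorem~\ref{thm:tractable-ns-bounded} with $p=1$: holding $\bm s$ at $\bar{\bm s}$, the quantity $\sup_{\mathbb{Q}_{\bm\xi}\in\mathcal{D}_1(\widehat{\mathbb{P}}^N_{\bm\xi},\epsilon)}\mathbb{E}_{\mathbb{Q}_{\bm\xi}}[g(\bar{\bm s},\bm\xi)]$ equals the optimal value of the minimization linear program~\eqref{equ:ns-ref-1} in the variables $(\rho,\bm\alpha)$. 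I would then form the LP dual of this program. The free node-potential variables $\bm\alpha$ produce the flow-balance equalities \eqref{equ:no-show-wc-lp-con-2}--\eqref{equ:no-show-wc-lp-con-4} (with the $o$ variables accounting for flow into the sink $\texttt{E}$ and \eqref{equ:no-show-wc-lp-con-4} normalizing each per-sample flow to one unit); the sign-constrained variable $\rho\ge 0$ produces the single coupling budget constraint \eqref{equ:no-show-wc-lp-con-1} (its coefficient column is exactly the $\rho$-coefficients appearing in \eqref{equ:ns-ref-1}); and the right-hand sides $u^{\mbox{\tiny L}}_iy_i,\ \widehat\mu^j_iy_i,\ u^{\mbox{\tiny U}}_iy_i$ together with the $y_i\bar s_i$ terms assemble into the objective \eqref{equ:no-show-wc-lp-obj}. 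Thus the dual is precisely \eqref{equ:no-show-wc-lp}, and strong LP duality gives the claimed equality of optimal values.

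Next I would construct $\mathbb{Q}^\star_{\bm\xi}$ from an optimal dual solution $\{o^\star,p^\star,q^\star,w^\star,r^\star\}$ by flow decomposition. For each $j$, the aggregate arc values---$p^\star_{k\ell j}$ on arcs of $\mathcal{E}^0$ and $q^\star_{k\ell j}+w^\star_{k\ell j}+r^\star_{k\ell j}$ on arcs of $\mathcal{E}^1$---constitute a unit $\texttt{S}$--$\texttt{E}$ flow by \eqref{equ:no-show-wc-lp-con-2}--\eqref{equ:no-show-wc-lp-con-4}. Applying the standard flow-decomposition theorem on the layered (hence acyclic) network $(\mathcal{G},\mathcal{E})$ of Proposition~\ref{prop:tractable-no-show-bounded-support} yields the distribution $\mathbb{P}^j_{\bm z}$ on the path set $\mathcal{P}$ whose arc-usage marginals match these aggregates, as stated. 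Each path $\bm\zeta\in\mathcal{P}$ encodes a pair $(\bm\lambda(\bm\zeta),\bm y(\bm\zeta))$ consistent with \eqref{equ:oc}, hence with $\bm y(\bm\zeta)\in\mathcal{Y}(\bm\lambda(\bm\zeta))$; the realization $\bm\xi^j(\bm\zeta)$ sets $(\mu_i,\lambda_i)=(0,0)$ on $\mathcal{E}^0_i$ arcs and $\lambda_i=1$, $\mu_i=\mu_{k\ell j}$ on $\mathcal{E}^1_i$ arcs, where $\mu_{k\ell j}$ is the conditional mean of the three breakpoints $u^{\mbox{\tiny L}}_i,\widehat\mu^j_i,u^{\mbox{\tiny U}}_i$ with weights proportional to $q^\star_{k\ell j},w^\star_{k\ell j},r^\star_{k\ell j}$.

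Finally I would close a sandwich. For feasibility $\mathbb{Q}^\star_{\bm\xi}\in\mathcal{D}_1(\widehat{\mathbb{P}}^N_{\bm\xi},\epsilon)$, I would use the coupling that transports each $\tfrac1N\delta_{\widehat{\bm\xi}^j}$ to $\tfrac1N\mathbb{Q}^j_{\bm\xi}$ along the paths; on an $\mathcal{E}^0_i$ arc the per-unit cost is $\widehat\mu^j_i+\widehat\lambda^j_i$, while on an $\mathcal{E}^1_i$ arc the $\lambda$-cost is $1-\widehat\lambda^j_i$ and, because $u^{\mbox{\tiny L}}_i\le\widehat\mu^j_i\le u^{\mbox{\tiny U}}_i$, the triangle inequality gives $(q^\star+w^\star+r^\star)\,|\mu_{k\ell j}-\widehat\mu^j_i|\le q^\star(\widehat\mu^j_i-u^{\mbox{\tiny L}}_i)+r^\star(u^{\mbox{\tiny U}}_i-\widehat\mu^j_i)$; summing reproduces exactly the left-hand side of \eqref{equ:no-show-wc-lp-con-1}, which is at most $\epsilon$, so $d_1(\mathbb{Q}^\star_{\bm\xi},\widehat{\mathbb{P}}^N_{\bm\xi})\le\epsilon$. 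This already yields $\mathbb{E}_{\mathbb{Q}^\star_{\bm\xi}}[g(\bar{\bm s},\bm\xi)]\le\sup_{\mathbb{Q}_{\bm\xi}\in\mathcal{D}_1}\mathbb{E}_{\mathbb{Q}_{\bm\xi}}[g(\bar{\bm s},\bm\xi)]$, which equals the LP value. For the reverse inequality, since $\bm y(\bm\zeta)$ is feasible for the inner maximization \eqref{equ:qxsl-max}, we have $g(\bar{\bm s},\bm\xi^j(\bm\zeta))\ge\sum_i(\mu^j_i(\bm\zeta)-\bar s_i)y_i(\bm\zeta)$; taking expectation and invoking the identity $(q^\star+w^\star+r^\star)\mu_{k\ell j}=q^\star u^{\mbox{\tiny L}}_i+w^\star\widehat\mu^j_i+r^\star u^{\mbox{\tiny U}}_i$ (so the averaged duration reproduces the per-breakpoint objective terms), the right-hand side equals exactly the objective \eqref{equ:no-show-wc-lp-obj} at the optimal dual solution, i.e.\ the LP value. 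The two inequalities force equality. The main obstacle is the simultaneous bookkeeping in this last step: decomposing the flow that carries three distinct breakpoint weights on each $\mathcal{E}^1$ arc, while ensuring that the \emph{averaged} duration $\mu_{k\ell j}$ both reproduces the linear objective (an equality, via the identity above) and keeps the transport cost within budget (an inequality, via the triangle inequality)---and that the path's $\bm y(\bm\zeta)$, inherited from the \eqref{equ:oc} network, is genuinely dual-feasible so that the lower bound on $g$ is valid.
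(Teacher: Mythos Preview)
Your proposal is correct and follows essentially the same route as the paper: take the LP dual of \eqref{equ:ns-ref-1} (with $\bm s$ fixed) to obtain \eqref{equ:no-show-wc-lp}, decompose the resulting unit $\texttt{S}$--$\texttt{E}$ flow into paths to build $\mathbb{P}^j_{\bm z}$, verify $\mathbb{Q}^\star_{\bm\xi}\in\mathcal{D}_1$ via the explicit coupling and the triangle inequality on the $\mu$-cost, and close the sandwich by lower-bounding $g$ with the path-encoded dual vector $\bm y(\bm\zeta)$. The only cosmetic difference is that you invoke the flow-decomposition theorem on an acyclic network, whereas the paper phrases the same step via total unimodularity of the network constraints to conclude the aggregate flow lies in $\mathrm{conv}(\mathcal{P})$; these are equivalent.
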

\begin{remark}
Once again, we can view the \eqref{equ:dras-ns} model as a two-person game between the AS scheduler and the nature. Theorem \ref{thm:no-show-wc-dist} provides an intuitive interpretation of how the nature picks the worst-case distribution $\mathbb{Q}^\star_{\bm \xi}$. Specifically, $\mathbb{Q}^\star_{\bm \xi}$ is a mixture of $N$ distributions, i.e., $\mathbb{Q}^\star_{\bm \xi} = (1/N)\sum_{j=1}^N \mathbb{Q}^j_{\bm \xi}$, where each $\mathbb{Q}^j_{\bm \xi} := \sum_{\bm{\zeta} \in \mathcal{P}} \mathbb{P}^j_{\bm z}\{\bm z = \bm \zeta\} \delta_{\bm{\xi}^j(\bm \zeta)}$ pertains to the $j^{\mbox{\tiny th}}$ data sample. Note that $\mathcal{P}$ consists of all the \texttt{S}--\texttt{E} paths in the network $(\mathcal{G}, \mathcal{E})$. Hence, what the nature does under $\mathbb{Q}^j_{\bm \xi}$ is to: (i) randomly pick an \texttt{S}--\texttt{E} path $\bm \zeta \in \mathcal{P}$ following distribution $\mathbb{P}^j_{\bm z}$ and (ii) for each $i \in [n]$, if the $i^{\mbox{\tiny th}}$ arc of the path belongs to $\mathcal{E}^0_i$ (i.e., if $(k, \ell) \in \mathcal{E}^0_i$) then appointment $i$ does not show up and accordingly the service duration equals zero; and if this arc belongs to $\mathcal{E}^1_i$ then appointment $i$ shows up and lasts for $\mu_{k\ell j}$ long.
\end{remark}

\section{Numerical Experiments} \label{sec:numerical}
In this section, we demonstrate the effectiveness of the proposed approach via numerical experiments. Throughout these experiments, we adopt $1$-Wasserstein ambiguity sets, i.e., $p = 1$. All computations are conducted on an 8-core 2.3 GHz Linux PC with 16 GB RAM. We implement the experiments in Python 2.7.12 and solve the linear programming problems by using Gurobi 8.0.1.

\subsection{Random service durations} \label{sec:exp_rs}
We begin with the \eqref{equ:wdro} model discussed in Section~\ref{sec:duration_model}, where all appointments show up and the service durations are random. We consider $n=10$ appointments and the unit waiting, idleness, and overtime costs are set to be $c_1 =\cdots = c_{10} = 2$, $d_1 = \cdots = d_{10} =1$, and $C=20$, respectively. Additionally, we consider each of the following three distributions to generate the data in our experiments:
\begin{itemize}
\item[LN:] The service duration $u_i$ of each appointment $i$ independently follows a lognormal distribution with mean $\mu_i$ and standard deviation $\sigma_i$, which are uniformly sampled from the intervals $[0.9, 1.1]$ and $[0.1, 0.9]$, respectively.
\vspace{-0.1in}
\item[UB:] Each appointment $i$ has a random service duration $u_i = 2\beta_i$, where $\beta_i$ independently follows the (U-shaped) beta distribution $B(0.5,0.5)$.
\vspace{-0.1in}
\item[NG:] Each appointment $i$ has a random service duration $u_i = \phi + \gamma_i$, where $\phi \geq 0$ follows a truncated normal distribution $N(1,0.5^2)$ and
$\gamma_i$ independently follows the gamma distribution $\Gamma(\alpha, 1/\alpha)$ with $\alpha$ uniformly sampled from the interval $[0.5,1]$. This is to mimic a situation in which the service duration is influenced by both the service provider (modeled by the shared random variable $\phi$) and the appointment characteristics (modeled by each individual $\gamma_i$).
\end{itemize}
It can be observed that the mean value of the service duration equals $1$ in LN and UB, and is larger than $2$ in NG. Accordingly, we set the time limit $T = 15$ in the LN and UB instances, and set $T = 30$ in the NG instances. Finally, we set ${\bm u}^{\tiny \mbox{L}}$ and ${\bm u}^{\tiny \mbox{U}}$ such that for all $i \in [n]$:
\[
u_i^{\tiny \mbox{L}} := \min_{j \in [N]}\{ \widehat{u}_i^j\} \ \ \ \text{and}  \ \ \ u_i^{\tiny \mbox{U}} := \max_{j \in [N]}\{ \widehat{u}_i^j\},
\]
where $\{\widehat{\bm u}^j\}_{j=1}^N$ are the generated data sets.

\subsubsection{Calibration of the Wasserstein ball radius}
Our first experiment is to investigate the impact of the Wasserstein ball radius $\epsilon$ on the out-of-sample performance of the \eqref{equ:wdro} optimal solution, denoted by $\bm{\widehat s}(\epsilon, N)$, with respect to the data size $N$. Specifically, we evaluate the out-of-sample performance by computing
\begin{equation*}
\Ep_{\PP_{\text{approx}}}\Big[f(\bm{\widehat s}(\epsilon, N), \bm u)\Big],
\end{equation*}
where $\PP_{\text{approx}}$ represents an empirical distribution over a set of $100,000$ samples independently drawn from the true distribution $\mathbb{P}_{\bm u}$. In addition, we consider a discrete set of values $\Omega :=\big\{ 0.01, 0.02, \cdots, 0.1, \cdots, 1, \cdots, 10 \big\}$ for the selection of $\epsilon$. For each $\epsilon \in \Omega$, we randomly sample $30$ data sets of size $N \in \{5, 50, 500\}$ from each of the three distributions LN, UB, and NG. We solve an instance of the model \eqref{equ:wdro} via its LP reformulation~\eqref{equ:lpdro} for each of the generated data sets and each of the candidate Wasserstein radius $\epsilon$.

Figure~\ref{fig:epsilon} visualizes the impact of $\epsilon$ on the out-of-sample performance of $\bm{\widehat s}(\epsilon, N)$, which are derived over the data sets generated from distribution LN. Specifically, Figure~\ref{fig:epsilon} illustrates the tubes between the 20th and 80th percentiles (shaded areas) and the mean values (solid lines) of the out-of-sample performance $Z(\bm{\widehat s}(\epsilon,N))$ as a function of $\epsilon$. The percentiles and mean values are estimated over the 30 independent simulation runs. We observe that the out-of-sample performance improves up to a critical $\epsilon$ value and then deteriorates. Hence, there exists a Wasserstein radius $\epsilon_{\text{best}}$ such that the corresponding optimal distributionally robust solutions have the lowest (i.e., best) out-of-sample performance. We note that same trends are observed from the data sets generated by distributions UB and NG.

In practice, however, a large data set is usually unavailable to construct $\PP_{\text{approx}}$ and thus seeking $\epsilon_{\text{best}}$ by computing the out-of-sample performance is not viable. In this paper, we implement a cross validation method that mimics the above out-of-sample evaluation procedure to approximate $\epsilon_{\text{best}}$ based on the $N$ in-sample data. More specifically, we randomly partition the data $\{\widehat{\bm u}^j\}_{j=1}^N$ into two parts: a training data set consisting of $0.8N$ data and a validation data set consisting of the remaining $0.2N$ data. Using only the training data, we solve \eqref{equ:wdro} to obtain optimal solutions $\bm{\widehat s}(\epsilon, 0.8N)$ for each $\epsilon \in \Omega$. Then, we evaluate these solutions by computing $\Ep_{\PP_{0.2N}}[f(\bm{\widehat s}(\epsilon, 0.8N), \bm u)]$, where $\PP_{0.2N}$ is the empirical distribution based only on the validation data, and we set $\widehat{\epsilon}^N_{\text{best}}$ to any $\epsilon$ that minimizes this quantity, i.e., $\widehat{\epsilon}^N_{\text{best}} \in \displaystyle\mbox{arg min}_{\epsilon \in \Omega}\{\Ep_{\PP_{0.2N}}[f(\bm{\widehat s}(\epsilon, 0.8N), \bm u)]\}$. Finally, we repeat this procedure for 30 random partitions and set $\epsilon$ to the average of the $\widehat{\epsilon}^N_{\text{best}}$ obtained from these 30 partitions.


\begin{figure} [t]
	\begin{center}
		\includegraphics[width=0.32\linewidth]{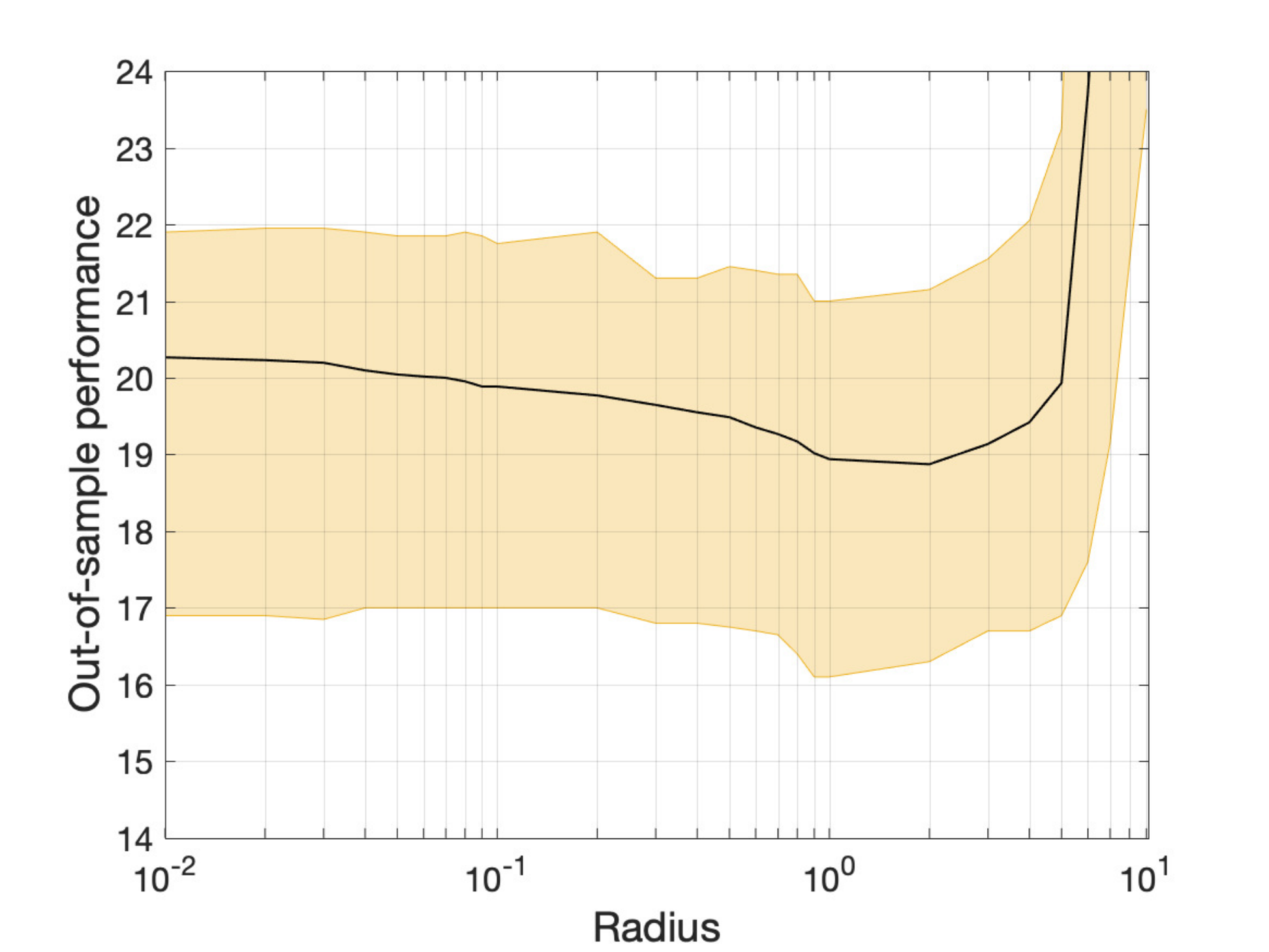}
		\includegraphics[width=0.32\linewidth]{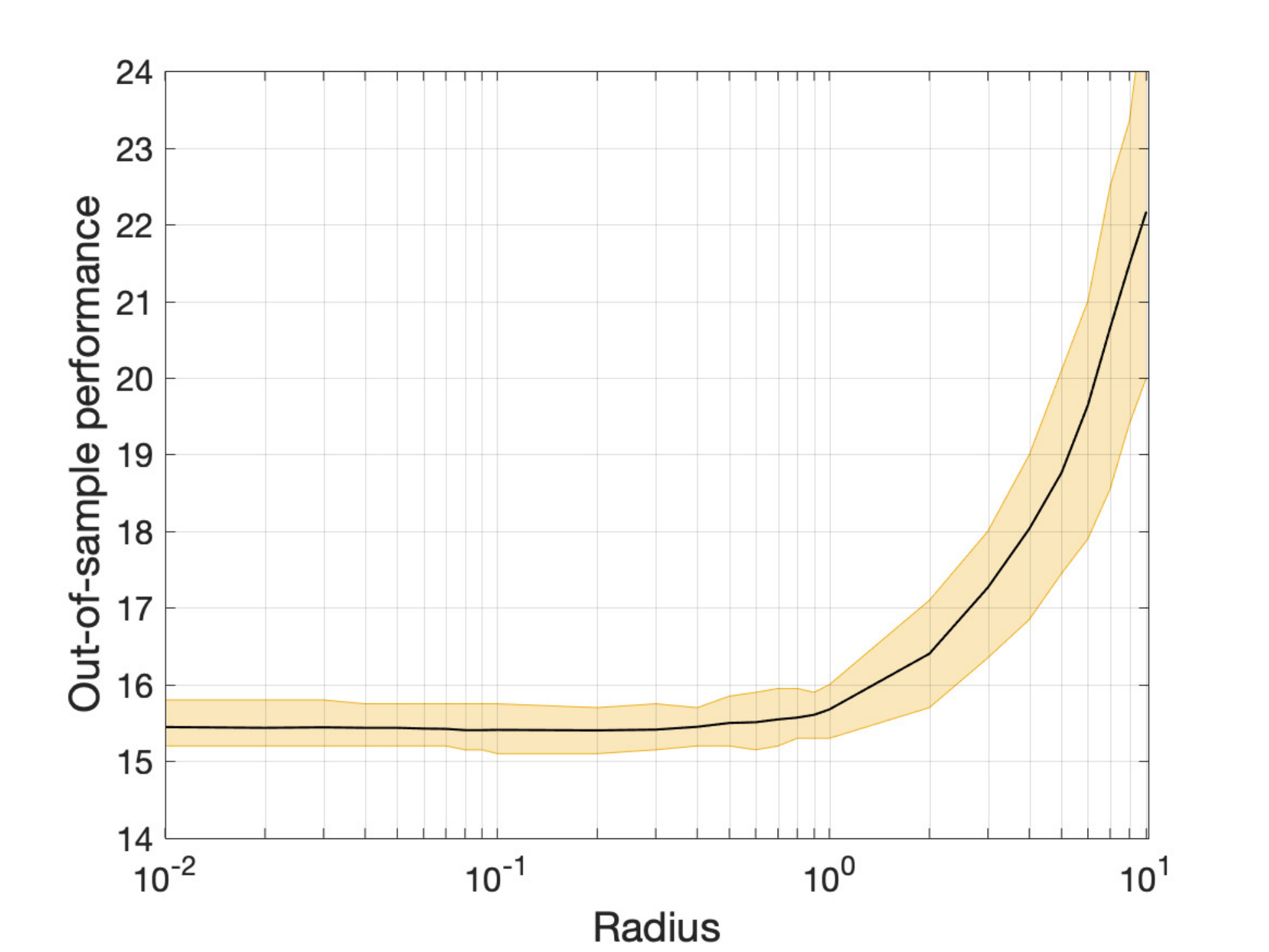}
		\includegraphics[width=0.32\linewidth]{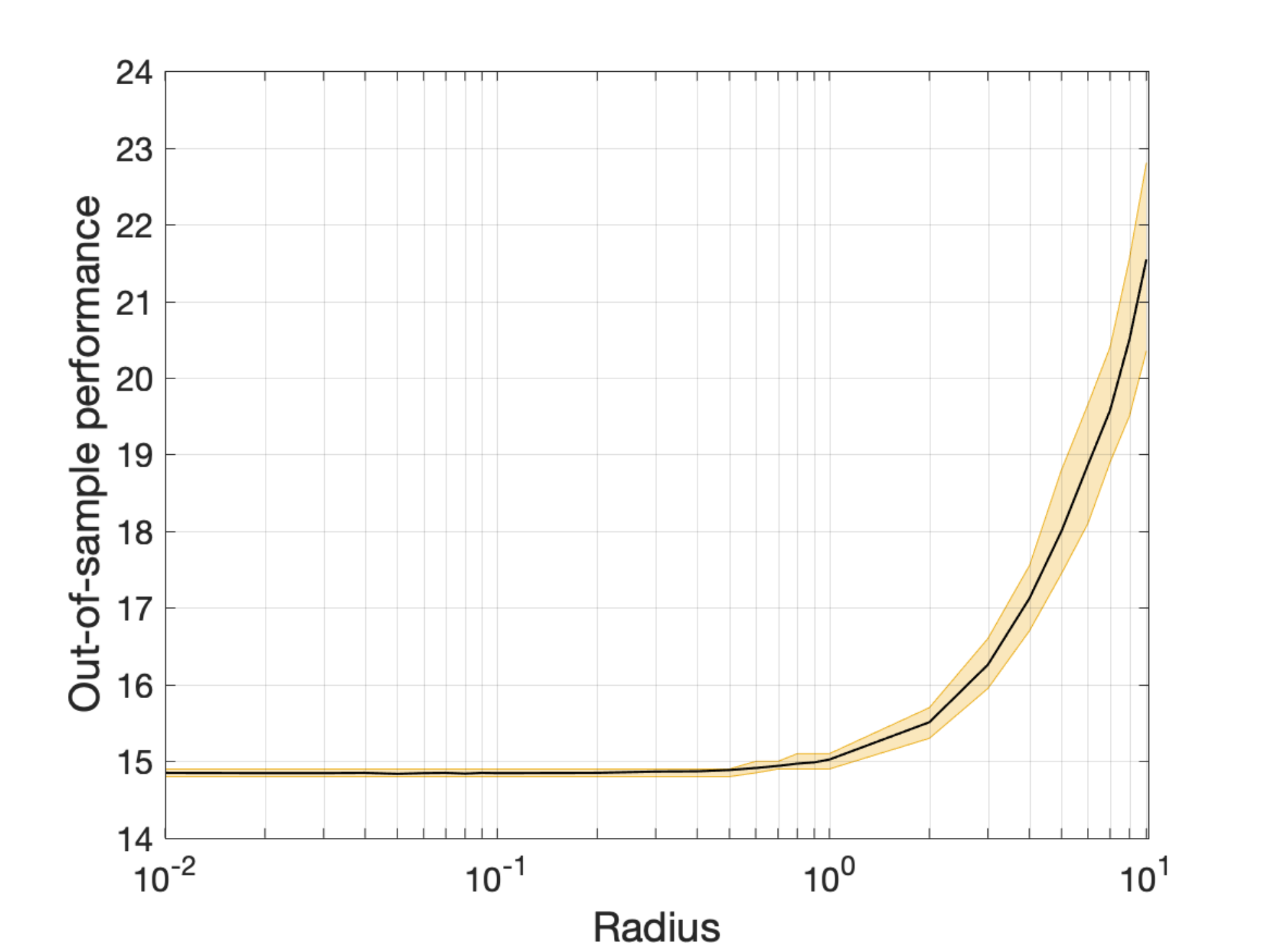}
		\caption{Out-of-sample performance as a function of the Wasserstein radius $\epsilon$.
		The data sets are generated from distribution LN. Sample size: $N=5$ (left), $N=50$ (middle), and $N=500$ (right).}
						\label{fig:epsilon}
	\end{center}
\end{figure}

\subsubsection{Out-of-sample performance}
We compare the out-of-sample performance of the \eqref{equ:wdro} approach with that of a cross-moment (CM) distributionally robust approach, in which the ambiguity set is characterized by the mean, variance, and correlation information~\cite{Kong.Lee.Teo.Zheng.2013}. This moment information is estimated from the data samples $\{\widehat{\bm u}^j\}_{j=1}^N$. We obtain optimal appointment schedules of the CM approach by solving the semidefinite programming approximations; see details in~\cite{Kong.Lee.Teo.Zheng.2013}. In addition, we compare with a sample average approximation (SAA) approach that solves model \eqref{equ:sas} with $\mathbb{P}_{\bm u}$ replaced by the empirical distribution $(1/N)\sum_{j=1}^N \delta_{\widehat{u}^j}$. We generate data sets of size $N \in \{5, 10, 50, 100, 500, 1000\}$ from each of the distributions LN, UB, and NG. For each of the generated data sets, we solve an instance of \eqref{equ:wdro} with an $\epsilon$ set in the cross validation method, an instance of CM approach, and an instance of SAA approach.

	\begin{figure}[t]
		\begin{subfigure}[t]{0.32\textwidth}
			\includegraphics[width=\linewidth]{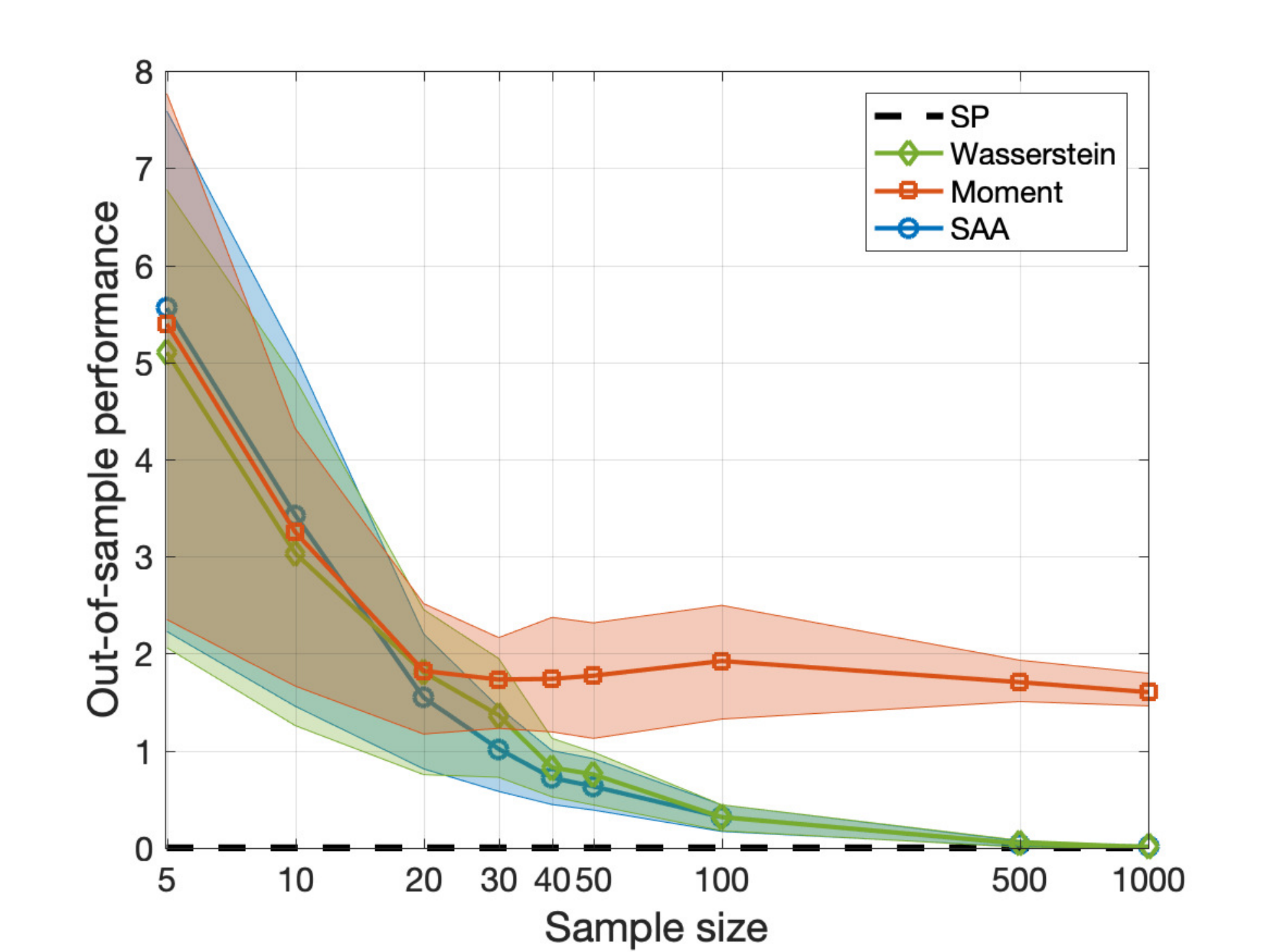}
			\caption{LN}
			\label{fig:lnosp}
		\end{subfigure}%
		\begin{subfigure}[t]{0.32\textwidth}
			\includegraphics[width=\linewidth]{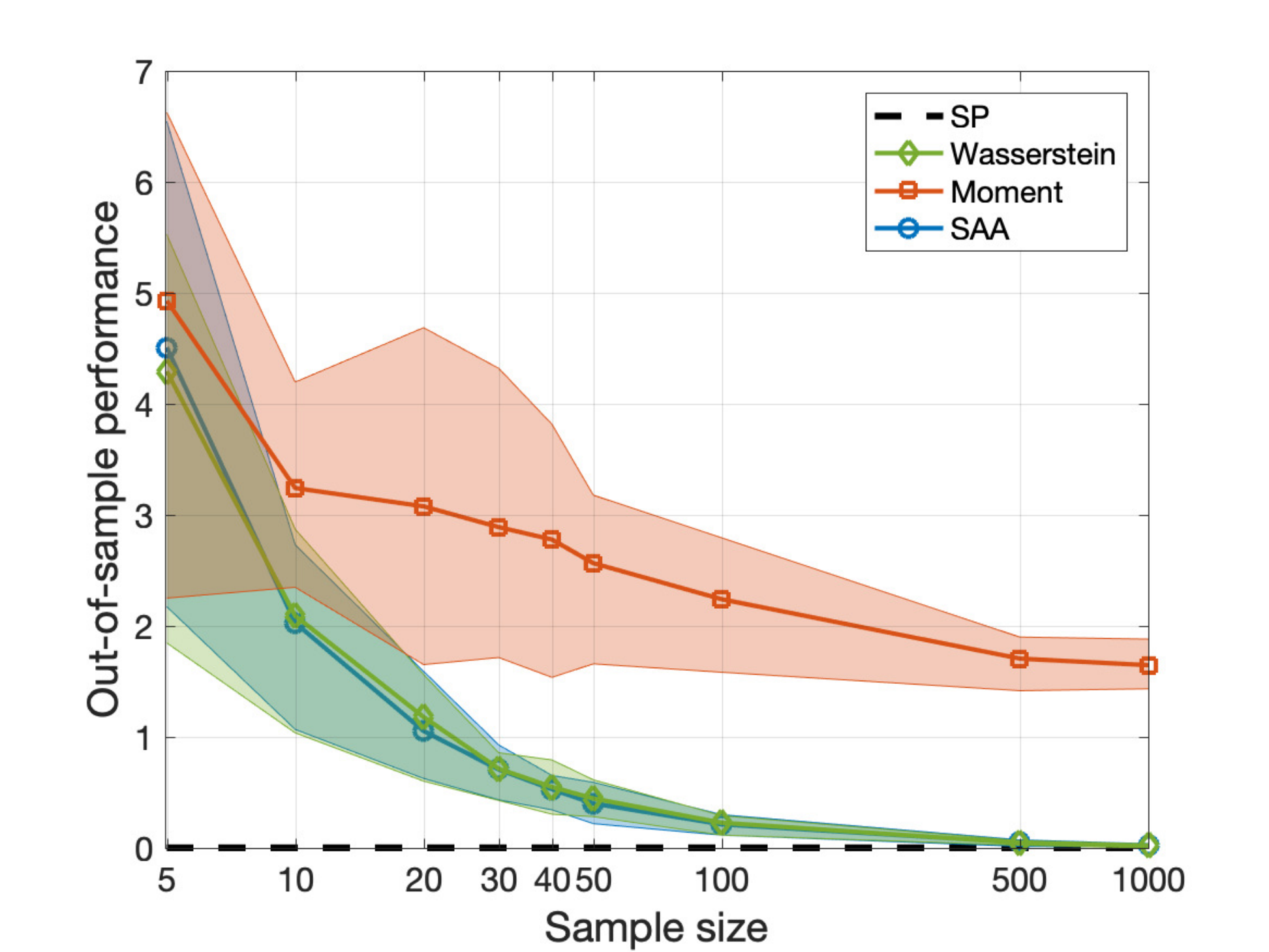}
			\caption{UB}
			\label{fig:btosp}
		\end{subfigure}%
		\begin{subfigure}[t]{0.32\textwidth}
			\includegraphics[width=\linewidth]{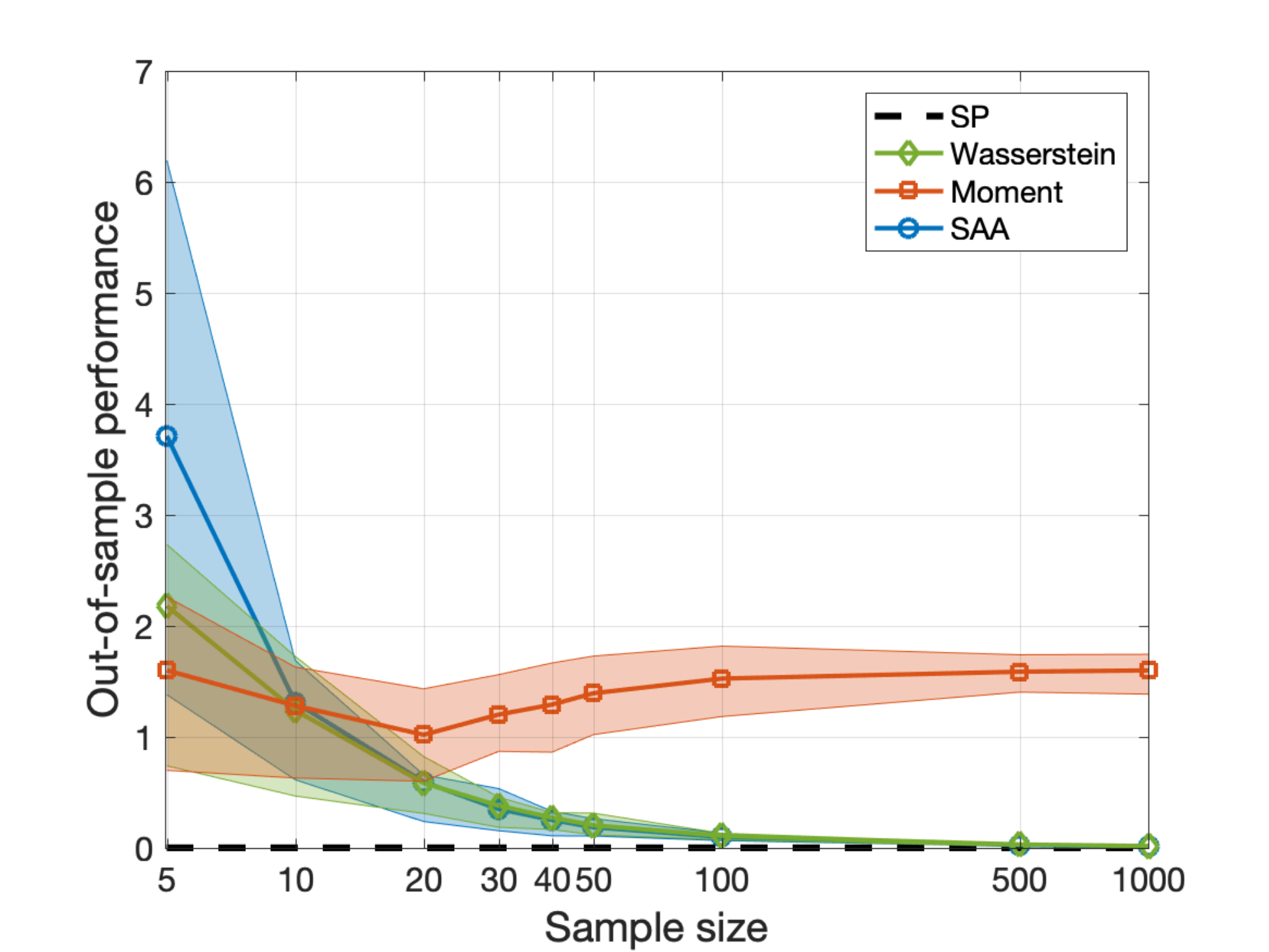}
			\caption{NG}
			\label{fig:gnosp}
		\end{subfigure}%
		\caption{Out-of-sample performance of optimal W-DRAS, CM, and SAA appointment schedules as a function of data size $N$}
		\label{fig:osp}
	\end{figure}
Figure~\ref{fig:osp} displays the tubes between the 20th and 80th percentiles (shaded areas) and the mean values (solid lines) of the out-of-sample performance as a function of the sample size $N$. The percentiles and mean values are estimated over $30$ independent simulation runs. Note that the out-of-sample performance presented in Figure~\ref{fig:osp} are estimated by using the optimal solutions that minimize the W-DRAS, CM, and SAA problems, respectively. The horizontal dashed line represents $Z^\star$, the optimal value of the stochastic appointment scheduling model \eqref{equ:sas}, in which $\mathbb{P}_{\bm u}$ is replaced with an empirical distribution based on $10,000$ scenarios.\footnote{Note that, for the convenience of making comparisons, we shifted the vertical coordinate of all points downwards by $Z^\star$ in Figure~\ref{fig:osp}. As a consequence, the horizontal dashed line for $Z^\star$ appears with zero vertical coordinate, and a point with vertical coordinate $1$, for example, represents an out-of-sample average cost higher than $Z^\star$ by $1$ unit. We applied similar shifting operation in Figures~\ref{fig:osp-small},~\ref{fig:mis_osp},~\ref{fig:lnosp}--\ref{fig:ngosp-small}, and~\ref{fig:ns_mis_osp}.} From Figure~\ref{fig:osp}, we observe that the out-of-sample performance of W-DRAS and SAA converge to $Z^\star$, while that of CM does not. This is consistent with the theoretical results in Theorem \ref{thm:a-c}, confirming that the \eqref{equ:wdro} approach enjoys the asymptotic consistency. In contrast, the CM approach relies on the first two moments of the service durations and so the asymptotic consistency cannot be guaranteed in general.

	\begin{figure}[htb]
		\begin{subfigure}[t]{0.32\textwidth}
			\includegraphics[width=\linewidth]{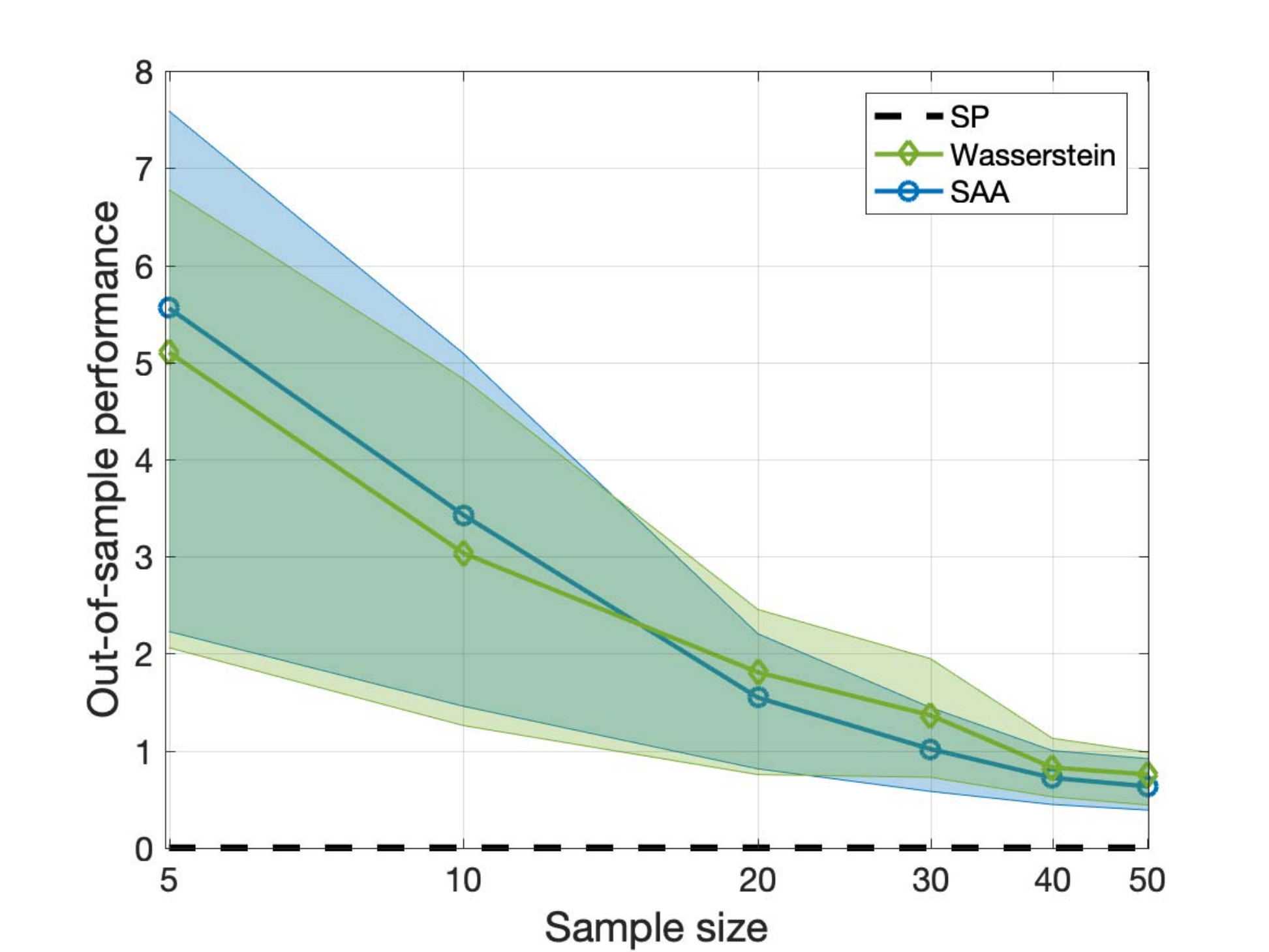}
			\caption{LN}
			\label{fig:lnosp-small}
		\end{subfigure}%
		\begin{subfigure}[t]{0.32\textwidth}
			\includegraphics[width=\linewidth]{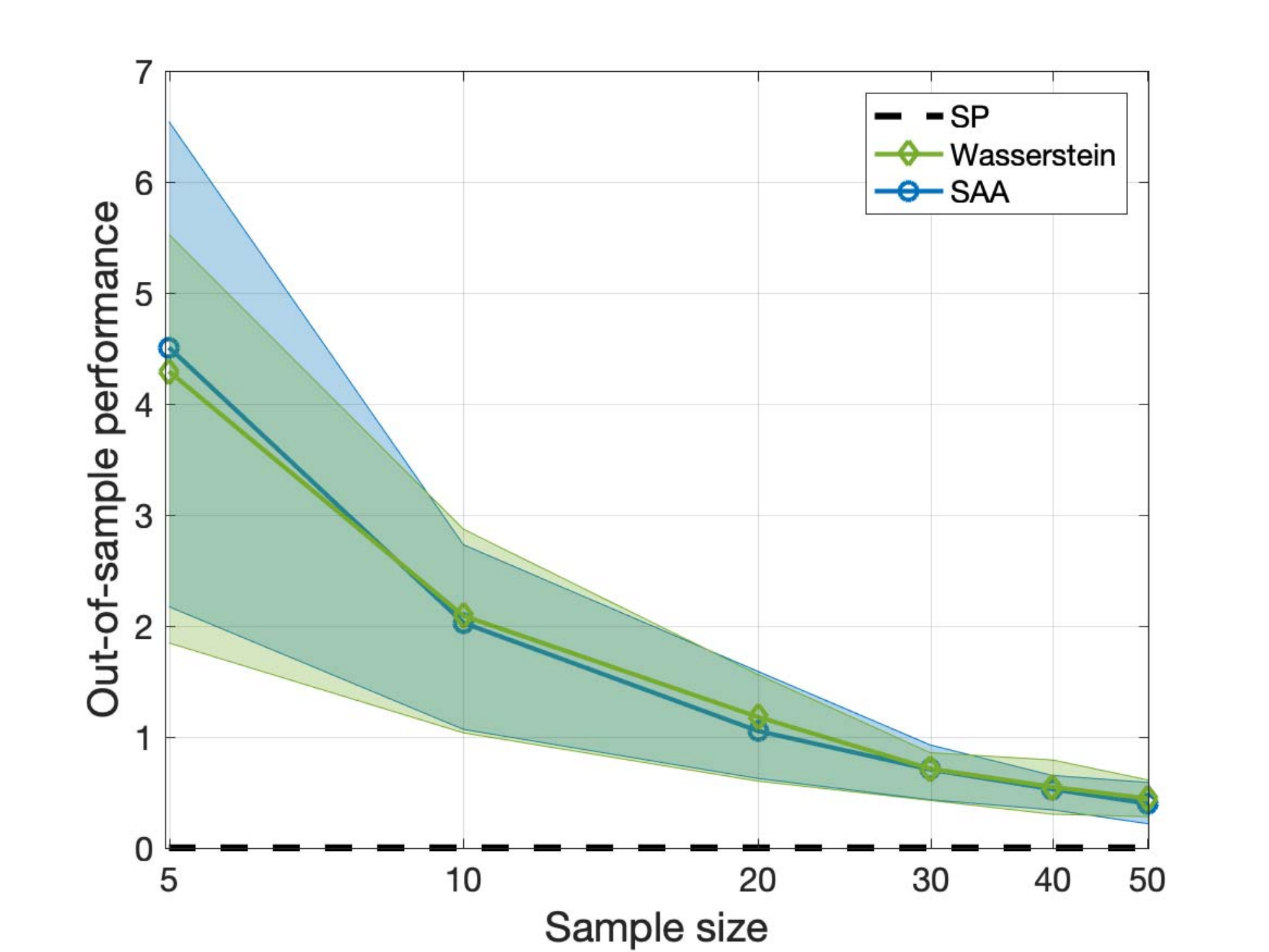}
			\caption{UB}
			\label{fig:btosp-small}
		\end{subfigure}%
		\begin{subfigure}[t]{0.32\textwidth}
			\includegraphics[width=\linewidth]{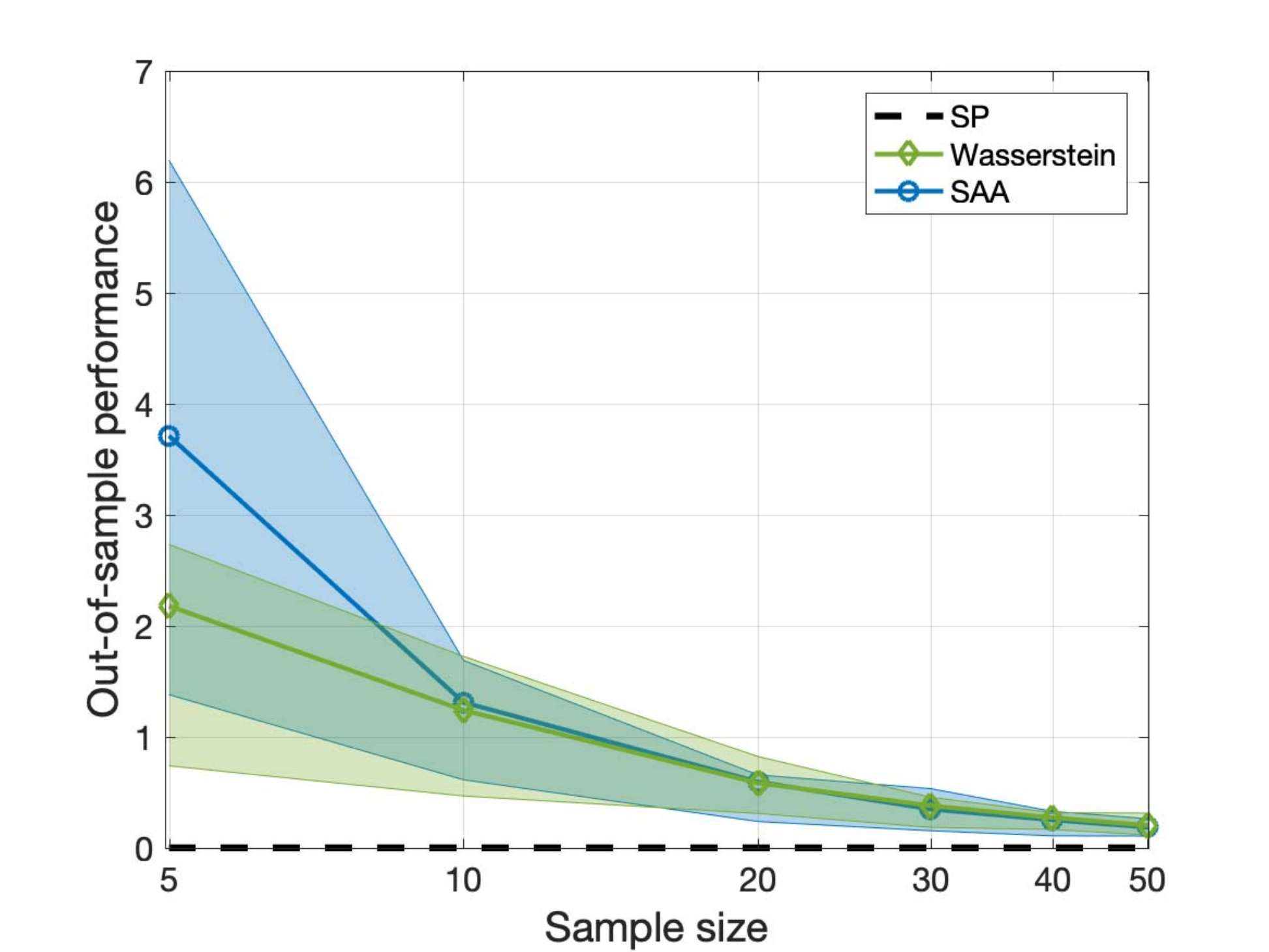}
			\caption{NG}
			\label{fig:gnosp-small}
		\end{subfigure}%
		\caption{Out-of-sample performance of the W-DRAS and SAA optimal appointment schedules with small data sizes}
		\label{fig:osp-small}
	\end{figure}
In addition, we compare the out-of-sample performance of the W-DRAS and SAA approaches in Figure~\ref{fig:osp-small}. From this figure, we observe that W-DRAS (slightly) outperforms SAA. Intuitively, this demonstrates that W-DRAS is capable to effectively learn distributional information even from a very limited amount of data (e.g., when $N = 5$ or $10$). As a consequence, the proposed W-DRAS approach is particularly effective in AS systems with scarce service duration data.

Figure~\ref{fig:reliability} displays the reliability of the three approaches, which is the empirical probability of the event that the optimal values of W-DRAS, CM, and SAA exceed the out-of-sample performance of the corresponding optimal solutions. The empirical probability is estimated over 30 independent simulation runs. From Figure~\ref{fig:reliability}, we observe that the reliability of W-DRAS and CM is consistently higher than that of SAA under all tested data sizes and across all tested generating distributions. For example, the reliability of CM increases to 100\% once $N$ exceeds 30, and that of W-DRAS is higher than 70\% in most instances. In contrast, the reliability of SAA is generally lower than 50\%, unless when $N$ becomes large (e.g., $N \geq 500$). This is consistent with the theoretical results in Theorem \ref{thm:f-d}, confirming that \eqref{equ:wdro} can provide a safe (upper bound) guarantee on the expected total cost even with a small data size. 

	\begin{figure}
		\begin{subfigure}[t]{0.32\textwidth}
			\includegraphics[width=\linewidth]{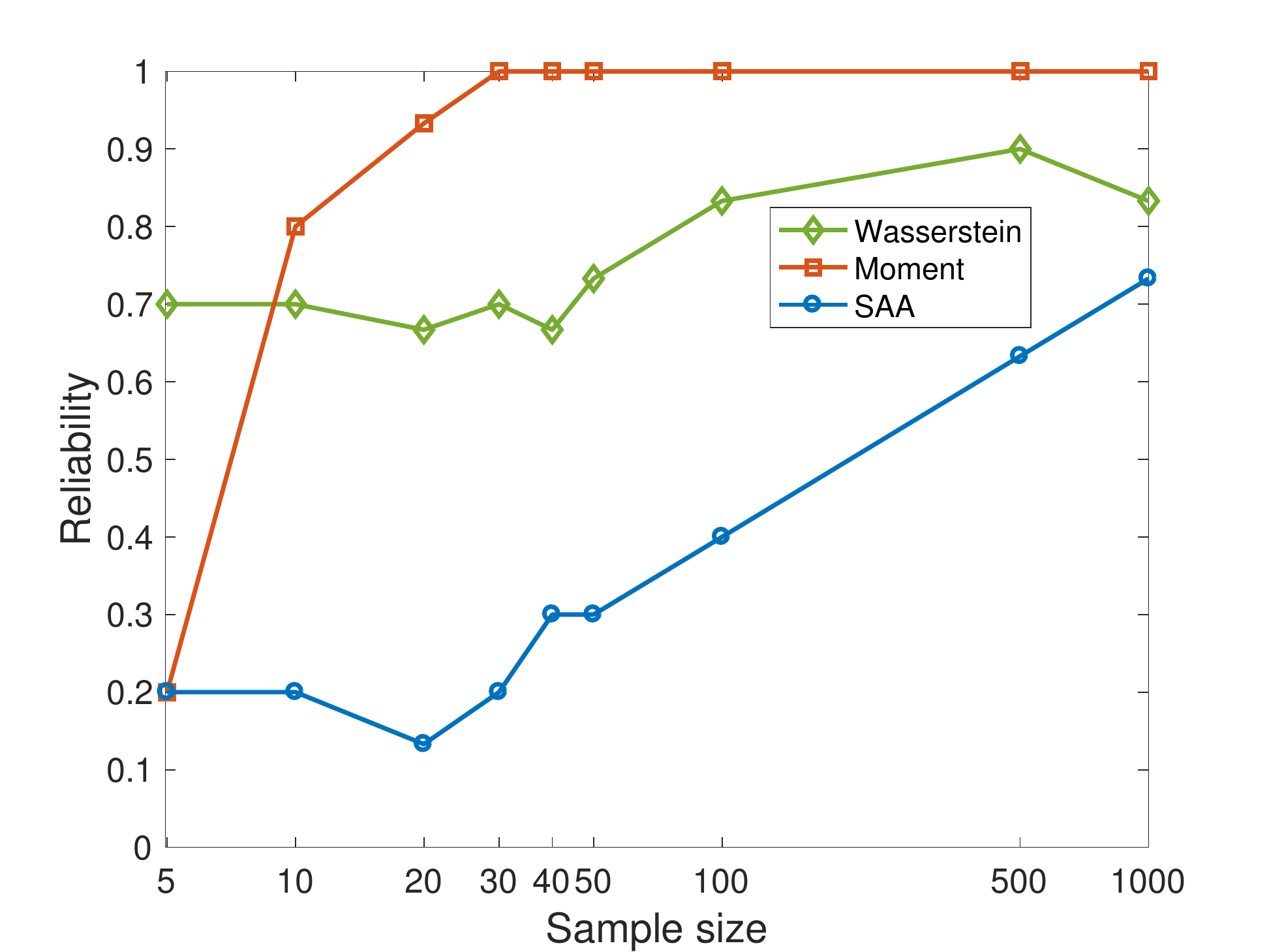}
			\caption{LN}
			\label{fig:lnrel}
		\end{subfigure}%
		\begin{subfigure}[t]{0.32\textwidth}
			\includegraphics[width=\linewidth]{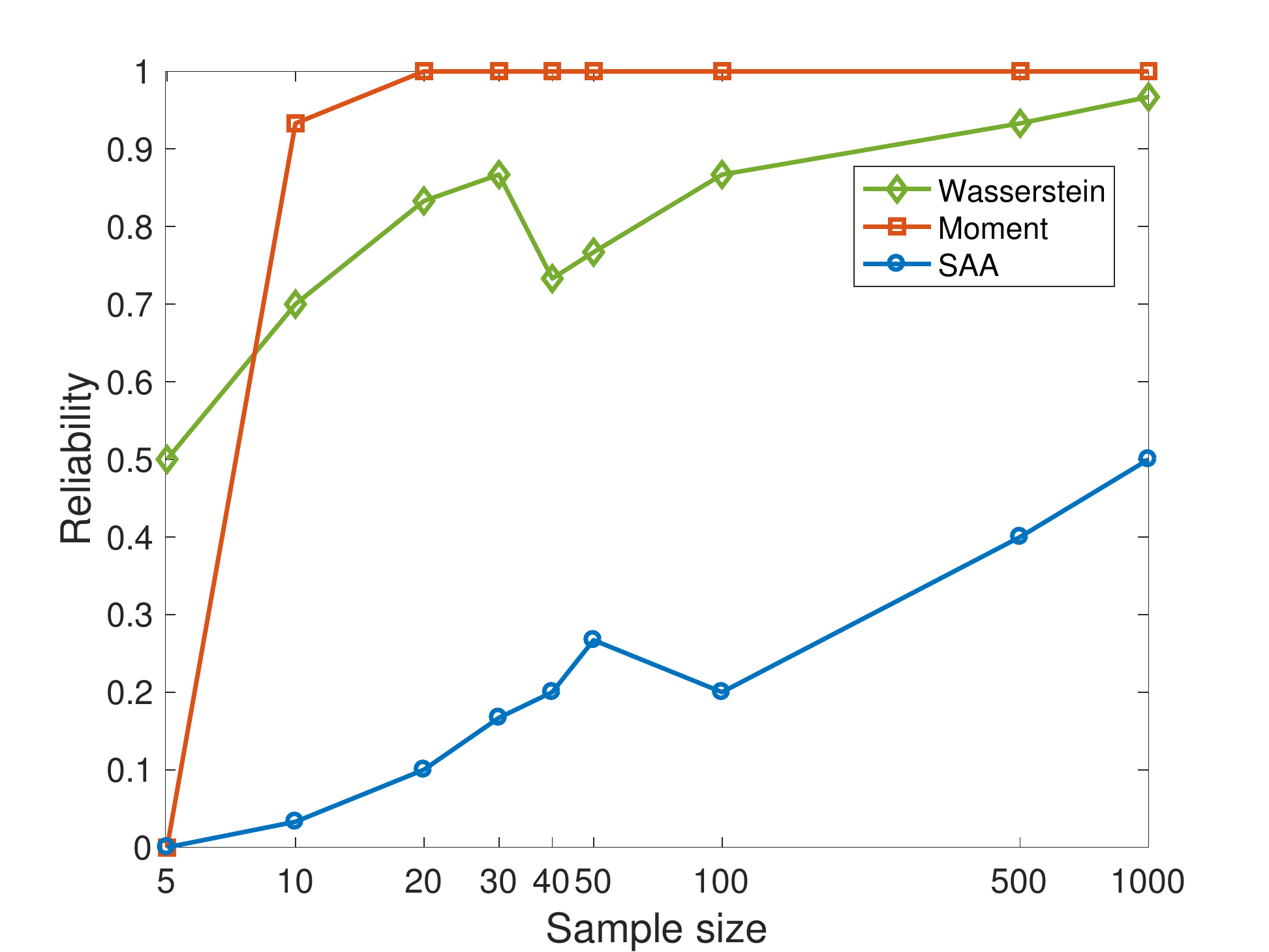}
			\caption{UB}
			\label{fig:btrel}
		\end{subfigure}%
		\begin{subfigure}[t]{0.32\textwidth}
			\includegraphics[width=\linewidth]{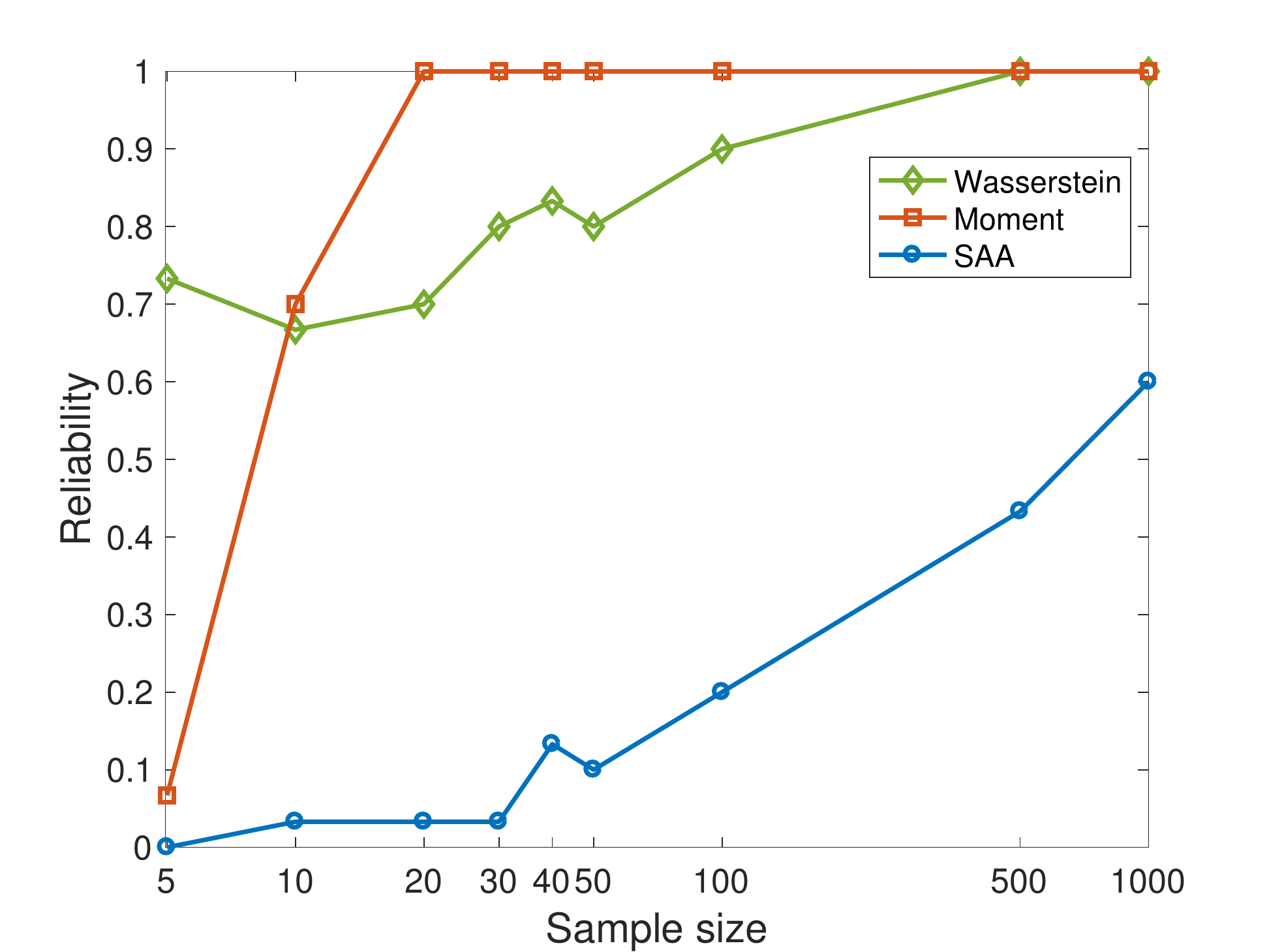}
			\caption{NG}
			\label{fig:gnrel}
		\end{subfigure}%
		\caption{Reliability of W-DRAS, CM, and SAA as a function of data size $N$}
		\label{fig:reliability}
	\end{figure}

\subsubsection{Misspecified distributions}
Another situation of interest is when the distribution of service durations in an AS system quickly varies due to, e.g., changes in service provider and/or appointee mix. As a consequence, the data we rely on to produce the appointment schedule may follow a misspecified distribution, i.e., one that is different from the true distribution. We conduct an experiment to examine the performance of the optimal W-DRAS, CM, and SAA appointment schedules by using data generated from a different distribution. Specifically, we use the same types of distributions as those generating the $N$ in-sample data, but increase or decrease their parameters by $\sigma\%$ with $\sigma$ uniformly sampled from $[5,10]$. Figure~\ref{fig:mis_osp} shows the performance of these appointment schedules under misspecified distributions. We observe that the W-DRAS and SAA approaches still outperform the CM approach even under misspecified distributions. In addition, the W-DRAS approach still (slightly) outperforms the SAA approach when the data size is small. This demonstrates that the proposed W-DRAS approach is particularly effective in AS systems in quickly varying environments.

	\begin{figure}
		\begin{subfigure}[b]{0.32\textwidth}
			\includegraphics[width=\linewidth]{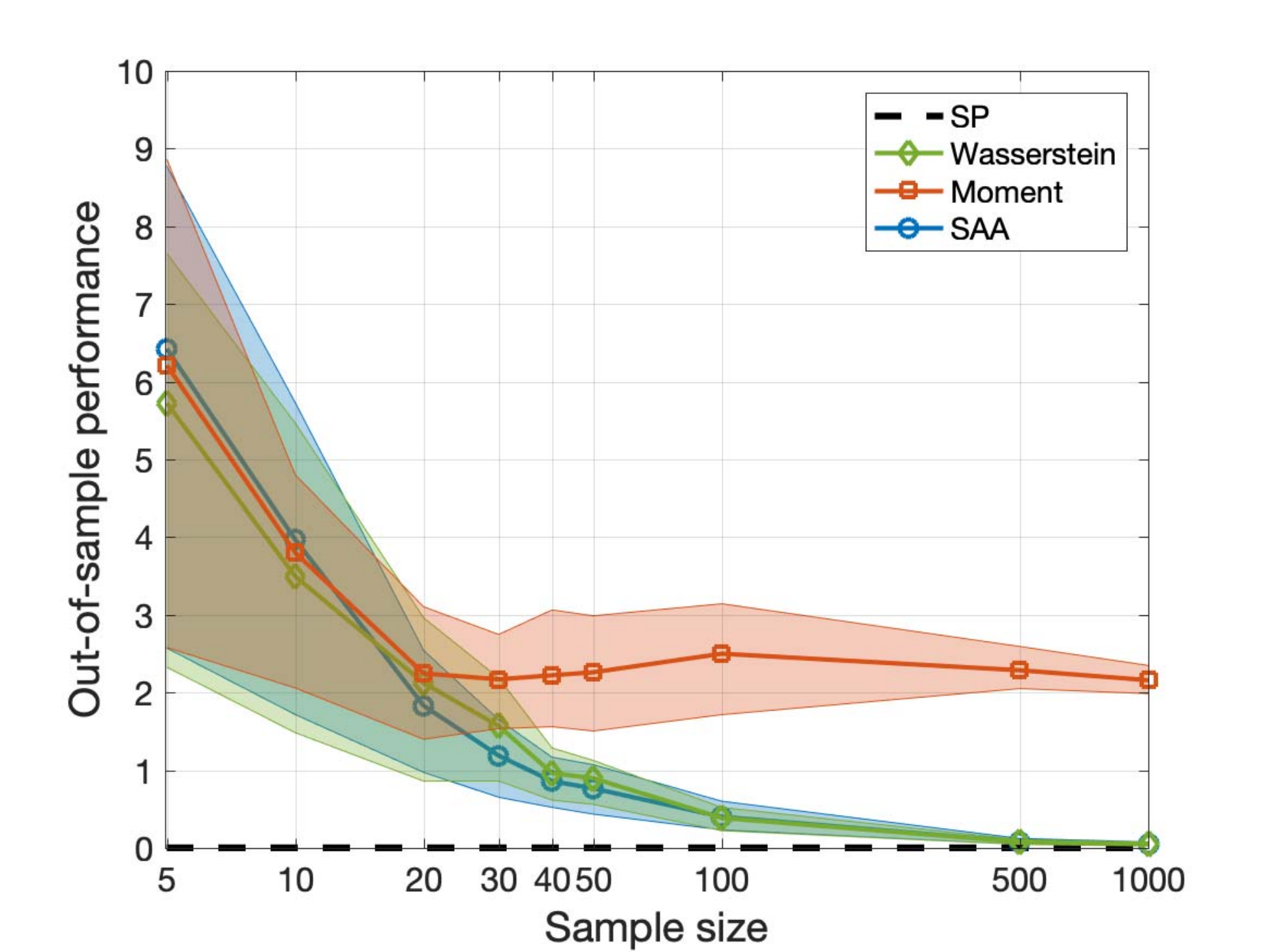}
			\caption{Misspecified LN}
		\end{subfigure}%
		\begin{subfigure}[b]{0.32\textwidth}
			\includegraphics[width=\linewidth]{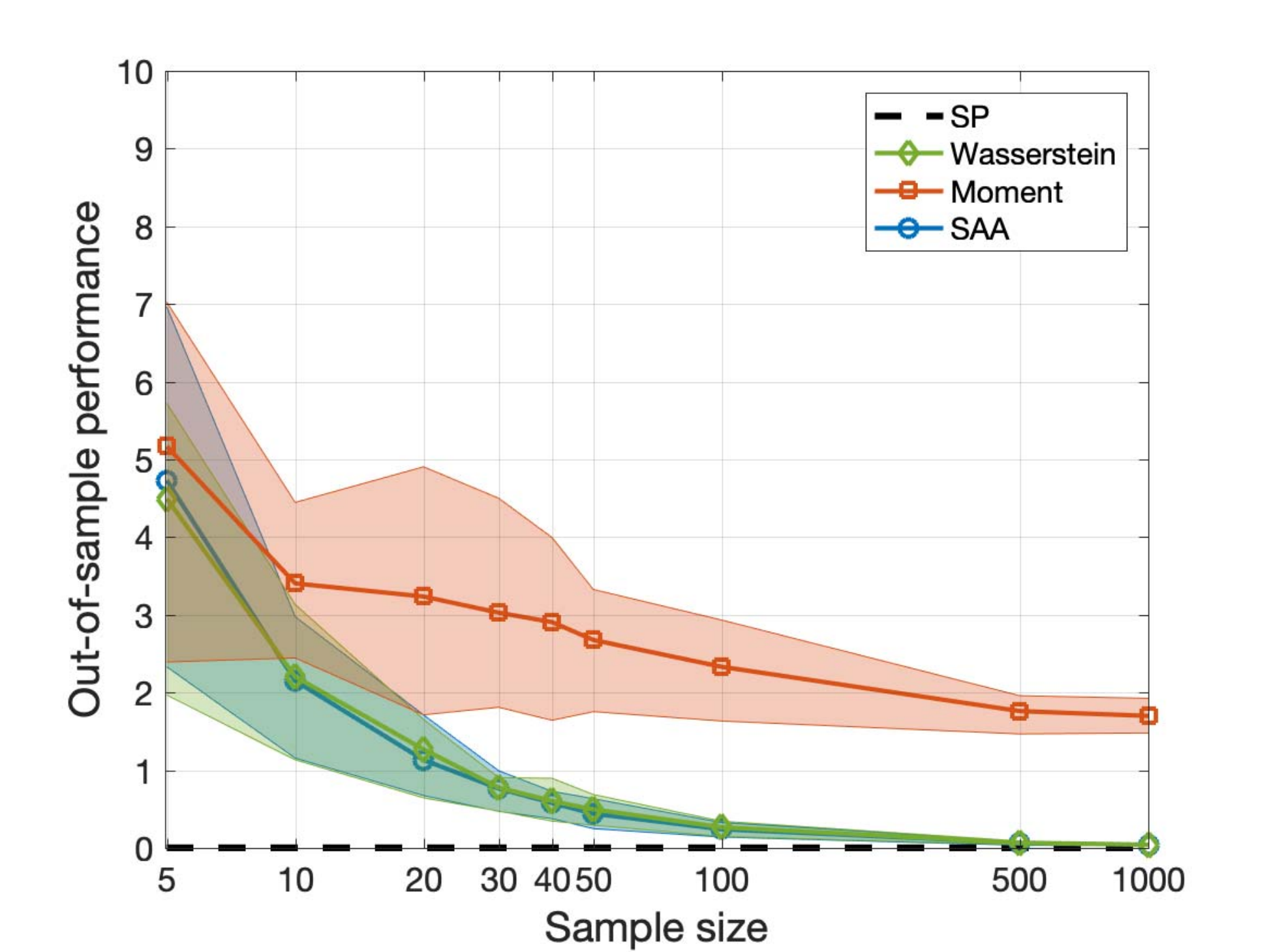}
			\caption{Misspecified UB}
		\end{subfigure}
		\quad
		\begin{subfigure}[b]{0.32\textwidth}
			\includegraphics[width=\linewidth]{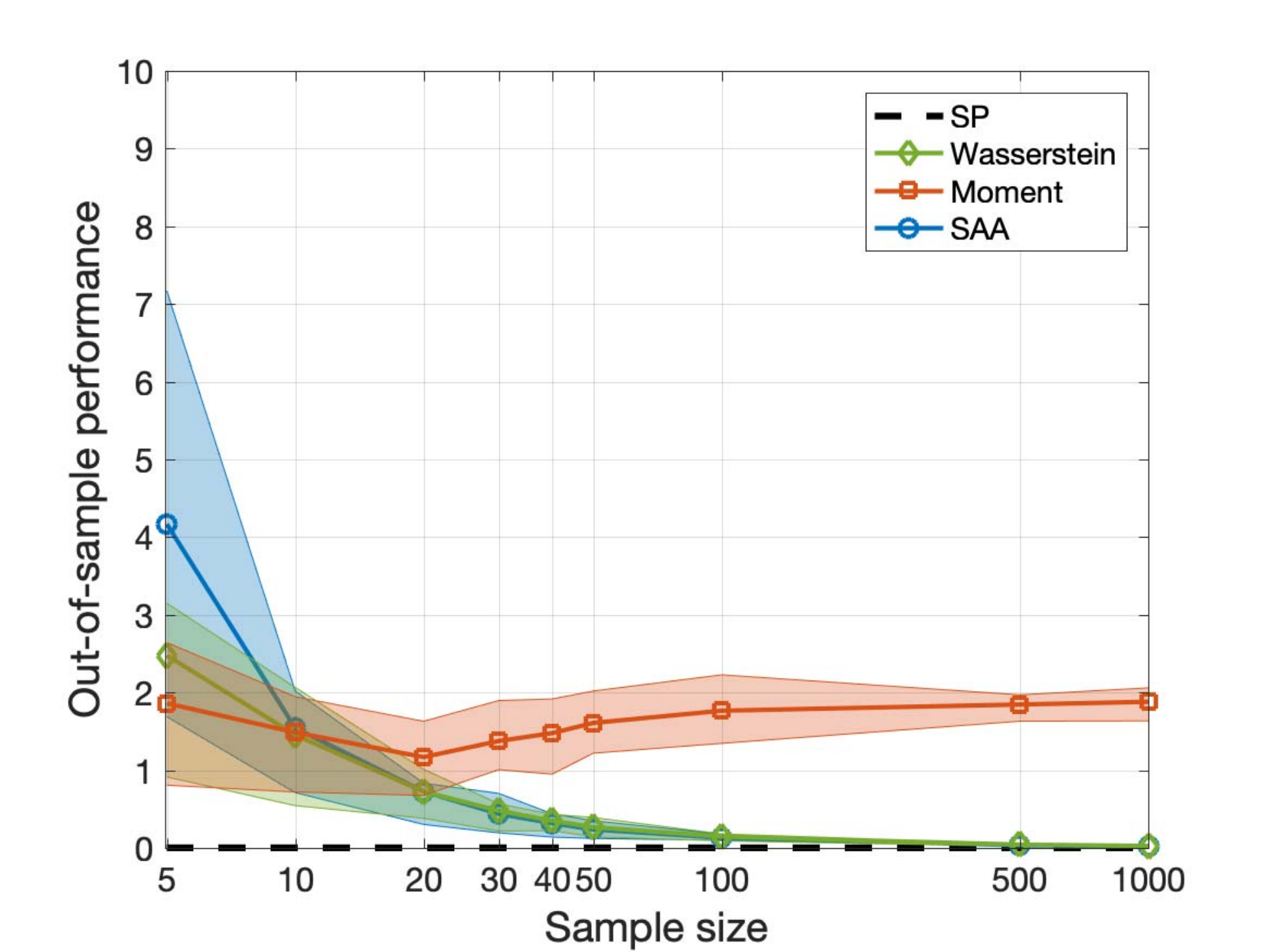}
			\caption{Misspecified NG}
		\end{subfigure}%
		\caption{Out-of-sample performance of the optimal W-DRAS, CM, and SAA appointment schedules under misspecified distributions}
		\label{fig:mis_osp}
	\end{figure}

\subsection{Random no-shows and service durations}
We conduct numerical experiments to test the \eqref{equ:dras-ns} model discussed in Section~\ref{sec:no_show}, where both no-shows and service durations are random. We consider the same distributions (LN, UB, and NG) for the service durations as in Section~\ref{sec:exp_rs}. Meanwhile, we employ a Bernoulli distribution with parameter $0.4$ for no-shows (i.e., each appointment does not show up with a probability of $0.4$). In addition, we implement the same cross-validation method to calibrate the Wasserstein ball radius.

We compare the out-of-sample performance of our W-NS approach with a marginal-moment (MM) distributionally robust approach, which characterizes the ambiguity set based on the mean and support information of the random no-shows and service durations (see~\cite{Jiang.Shen.Zhang.2017}). In addition, as in Section~\ref{sec:exp_rs}, we compare with the simple SAA approach.

    \begin{figure}
		\begin{subfigure}[t]{0.32\textwidth}
			\includegraphics[width=\linewidth]{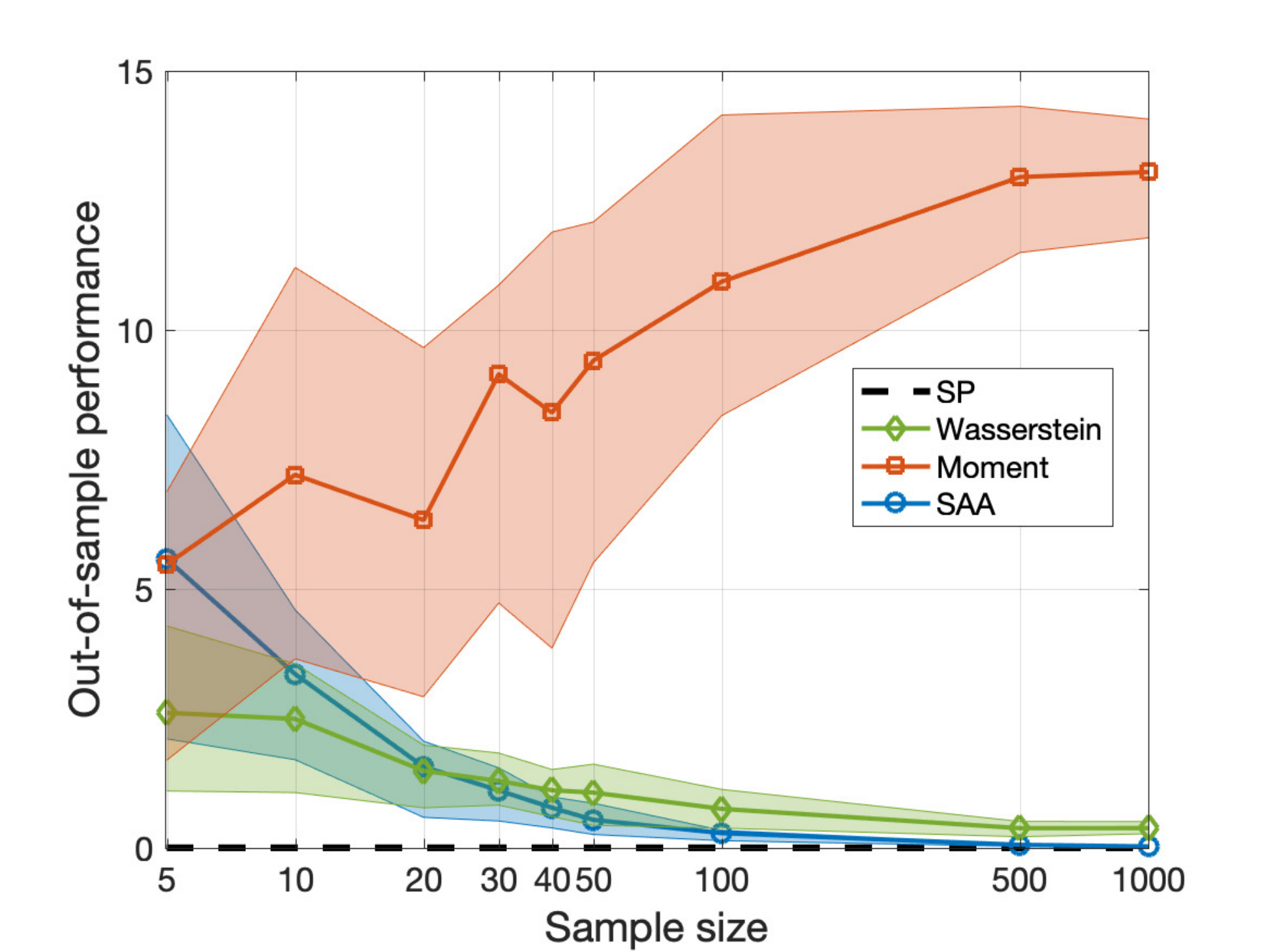}
			\caption{LN, Out-of-sample}
			\label{fig:lnosp}
		\end{subfigure}%
		\hfill
		\begin{subfigure}[t]{0.32\textwidth}
            \includegraphics[width=\linewidth]{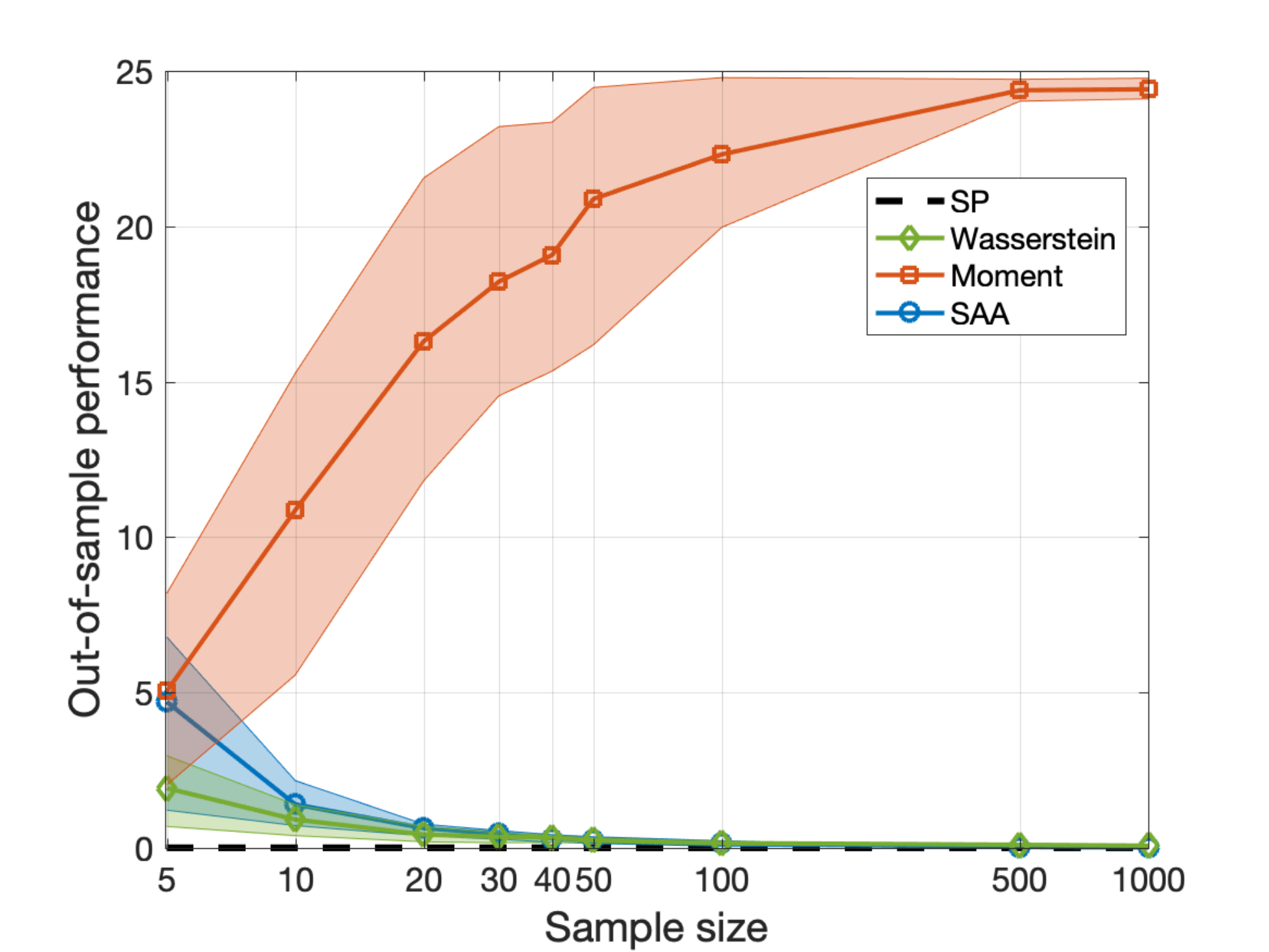}
			\caption{UB, Out-of-sample}
			\label{fig:ubosp}
		\end{subfigure} %
		\hfill
		\begin{subfigure}[t]{0.32\textwidth}
            \includegraphics[width=\linewidth]{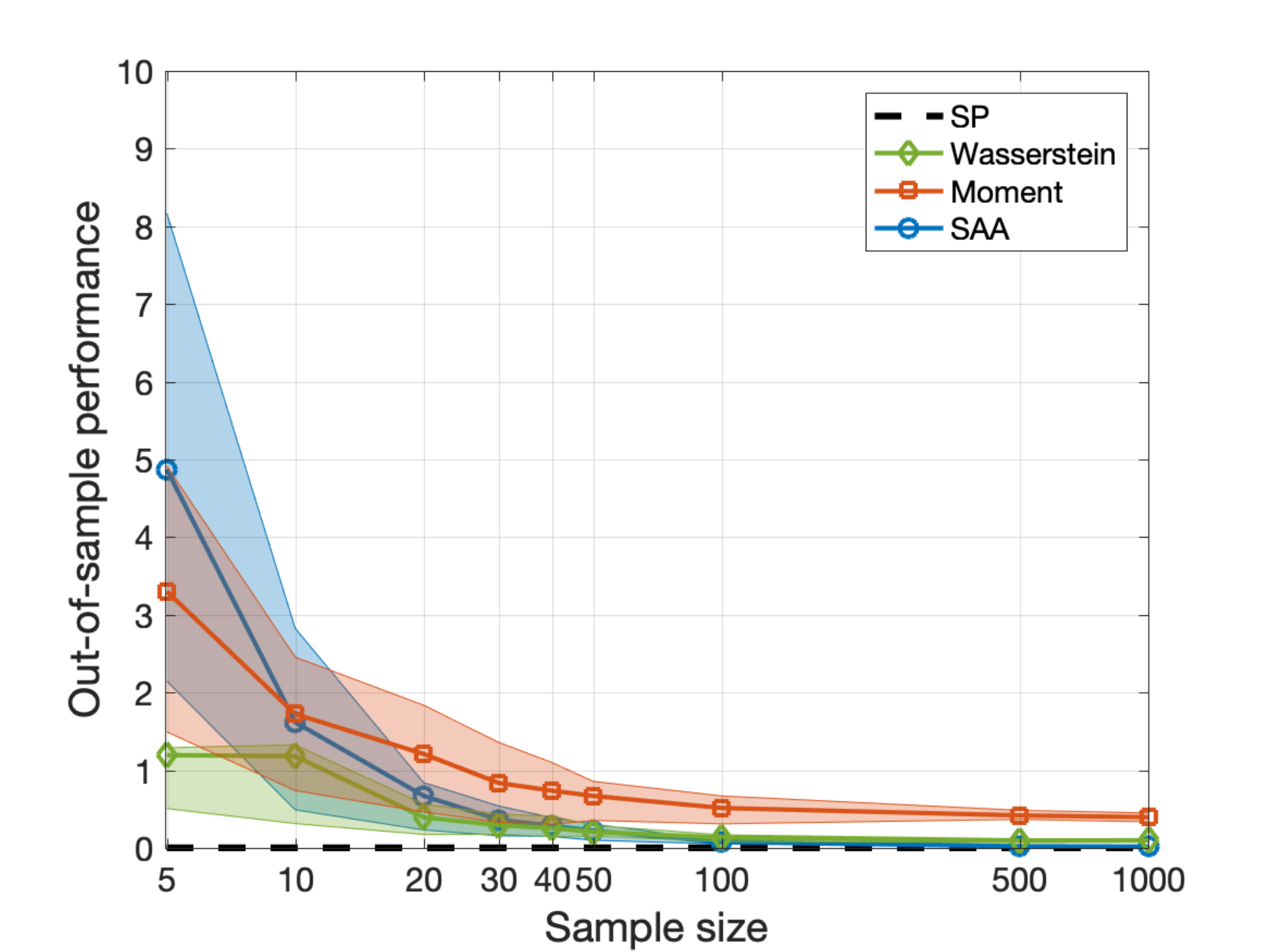}
			\caption{NG, Out-of-sample}
			\label{fig:ngosp}
		\end{subfigure}%
		\hfill
		\quad
		\begin{subfigure}[t]{0.32\textwidth}
            \includegraphics[width=\linewidth]{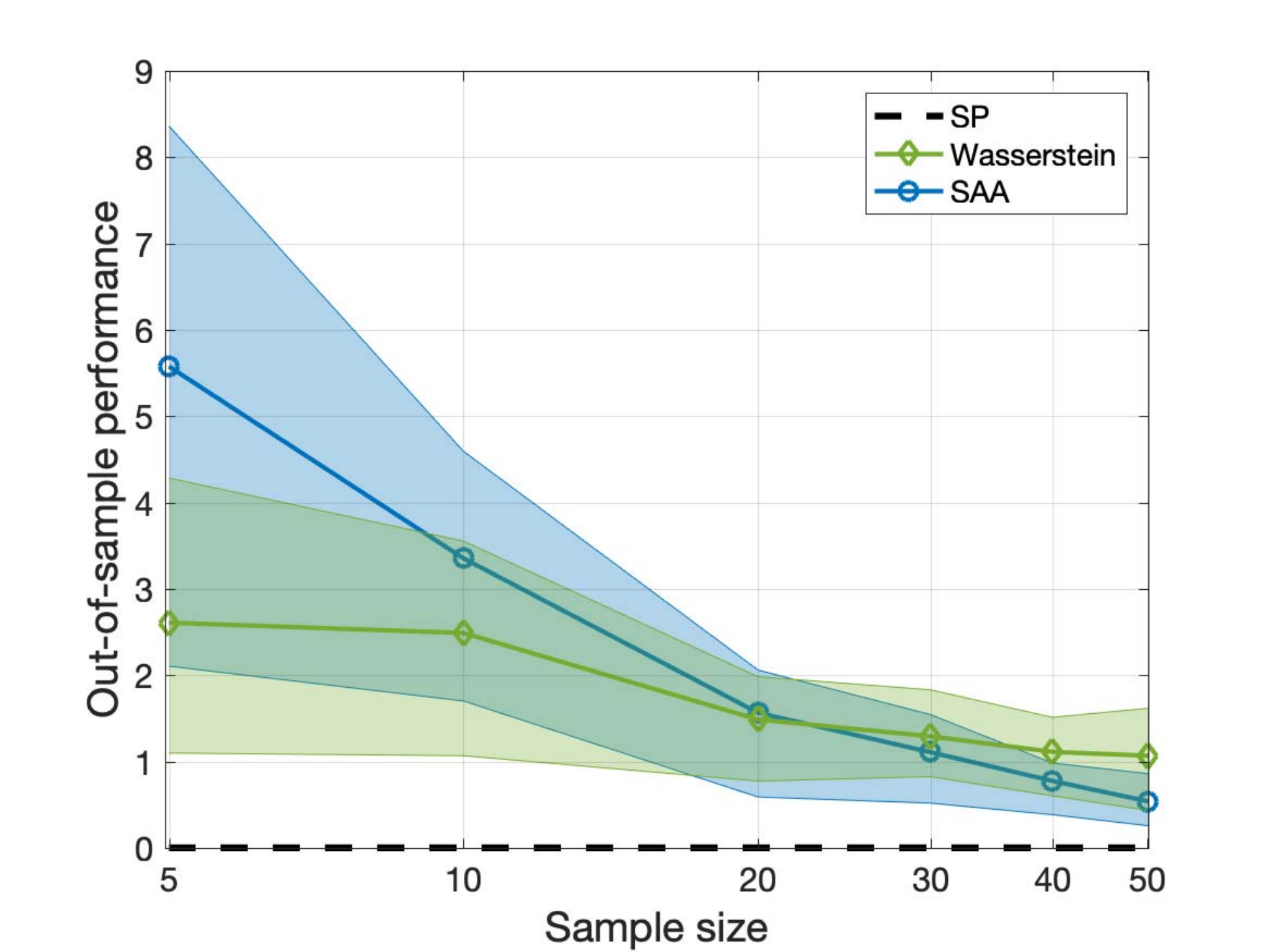}
			\caption{LN, small data}
			\label{fig:lnosp-small}
		\end{subfigure}%
		\hfill
		\begin{subfigure}[t]{0.32\textwidth}
            \includegraphics[width=\linewidth]{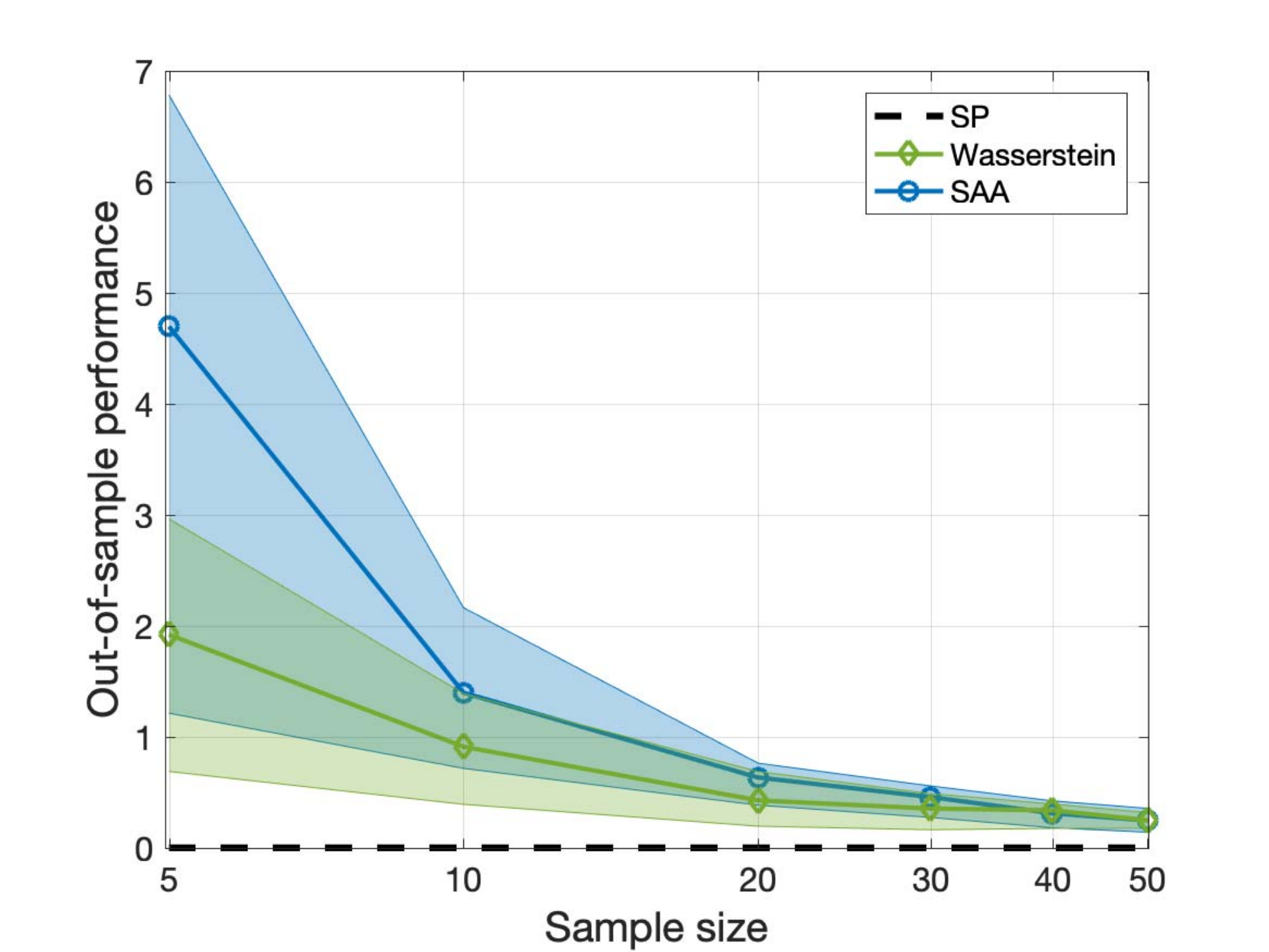}
			\caption{UB, small data}
			\label{fig:ubosp-small}
		\end{subfigure} %
		\hfill
		\begin{subfigure}[t]{0.32\textwidth}
            \includegraphics[width=\linewidth]{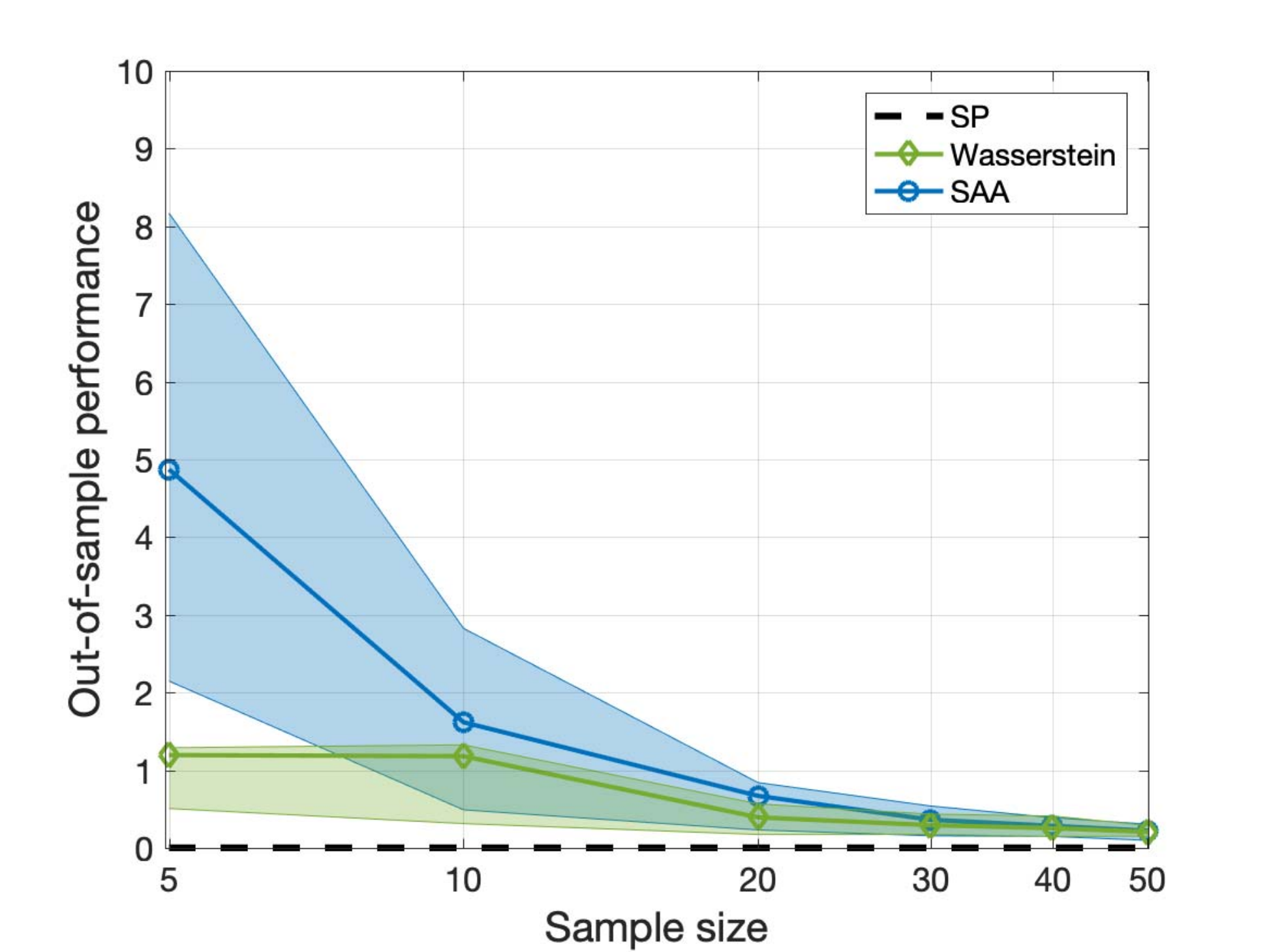}
			\caption{NG, small data}
			\label{fig:ngosp-small}
		\end{subfigure}%
		\hfill
		\quad
		\begin{subfigure}[t]{0.32\textwidth}
            \includegraphics[width=\linewidth]{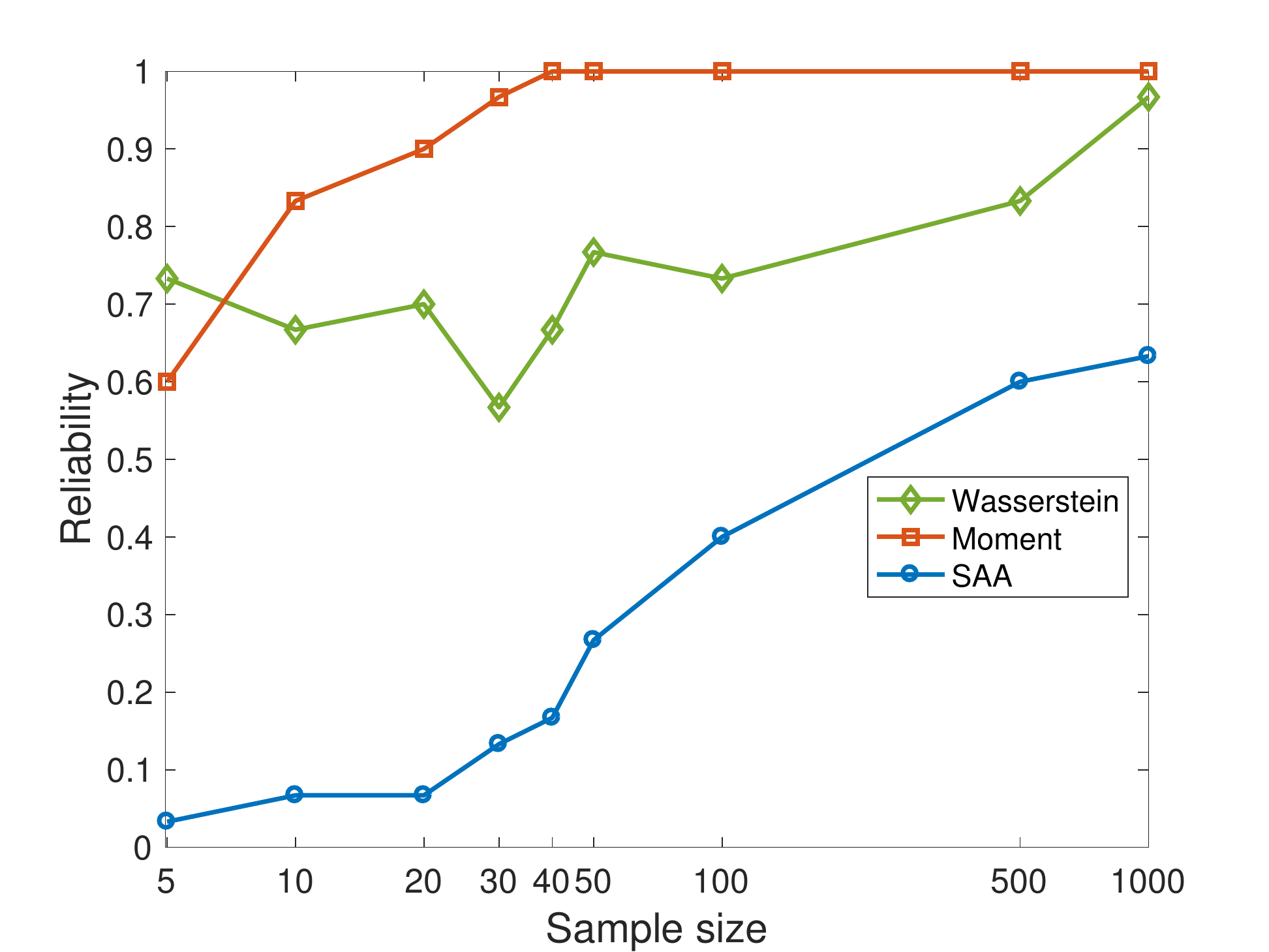}
			\caption{LN, reliability}
			\label{fig:lnrel}
		\end{subfigure}%
		\hfill
		\begin{subfigure}[t]{0.32\textwidth}
            \includegraphics[width=\linewidth]{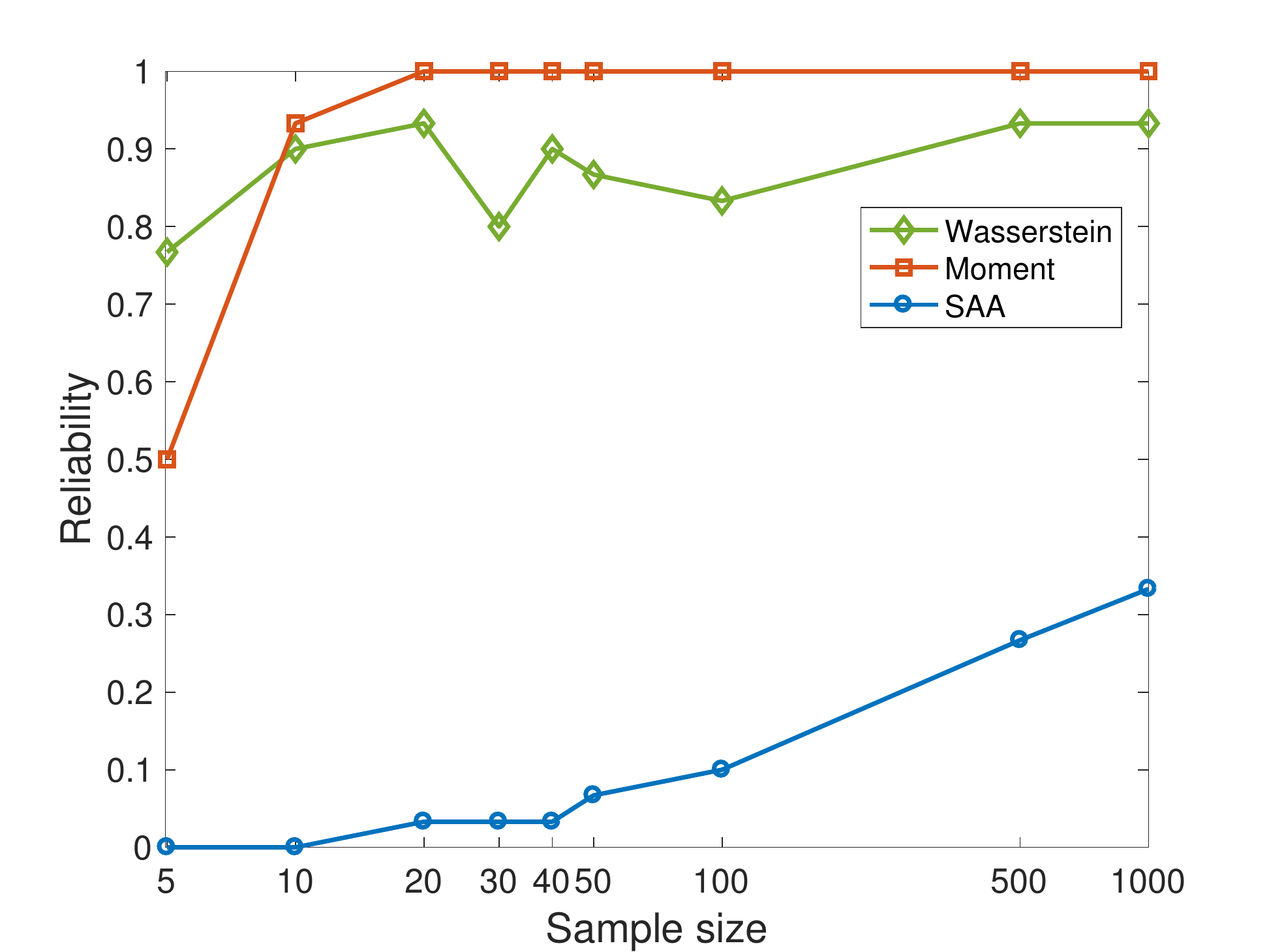}
			\caption{UB, reliability}
			\label{fig:ubrel}
		\end{subfigure}%
		\hfill
		\begin{subfigure}[t]{0.32\textwidth}
			\includegraphics[width=\linewidth]{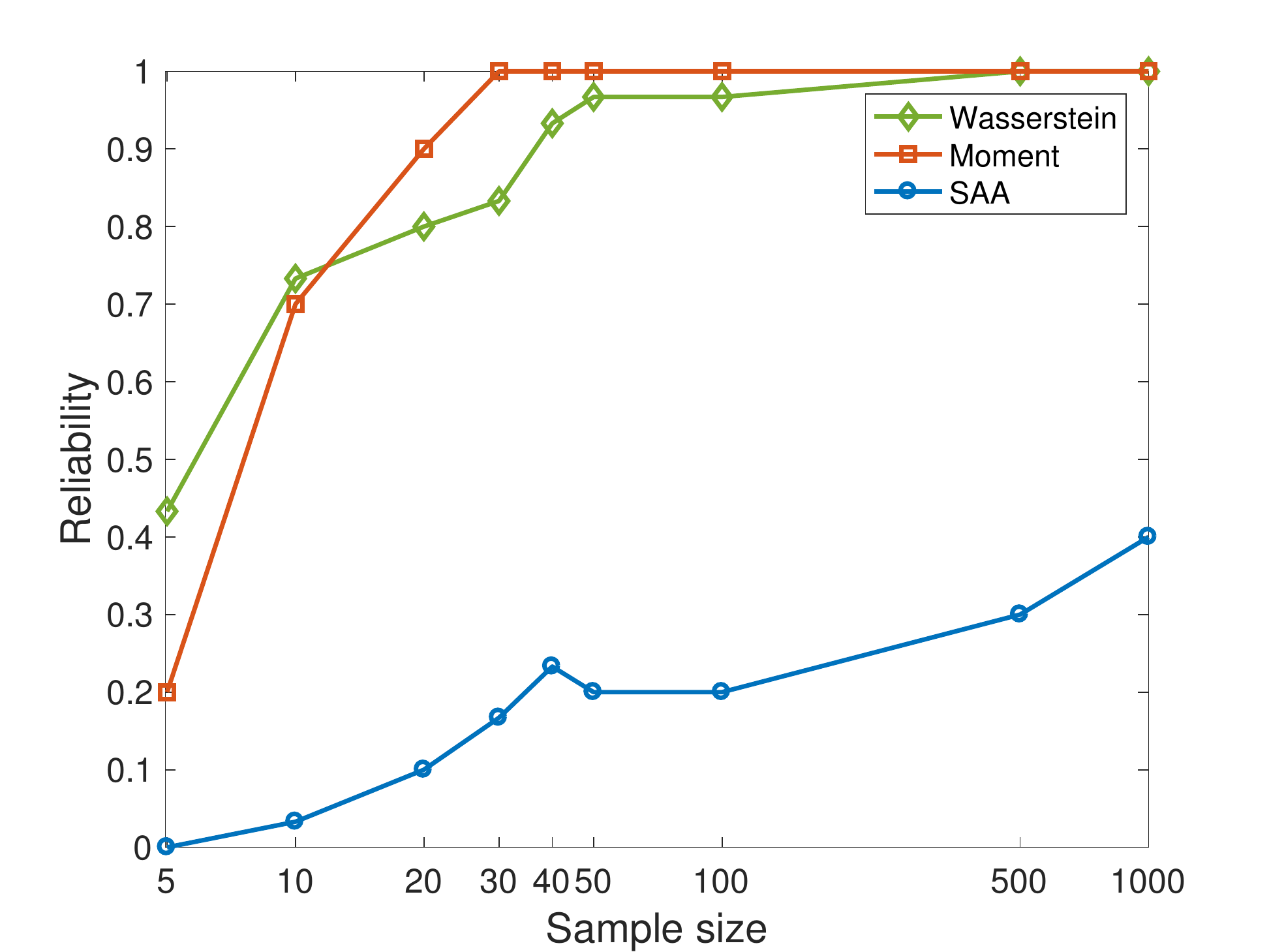}
			\caption{NG, reliability}
			\label{fig:ngrel}
		\end{subfigure}%
		\caption{Out-of-sample performance and reliability of the optimal W-NS, MM, and SAA appointment schedules as a function of data size $N$}
		\label{fig:ns_osp}
	\end{figure}

We report the experiment results in Figure~\ref{fig:ns_osp}. In particular, Figures~\ref{fig:lnosp}--\ref{fig:ngosp} visualize the out-of-sample performance of optimal W-NS, MM, and SAA appointment schedules. From these figures, we observe that the out-of-sample performance of W-NS and SAA converge to the optimal value of the stochastic schedule model, while that of MM does not. This confirms that the \eqref{equ:dras-ns} approach enjoys the asymptotic consistency. In contrast, the MM approach does not have such convergence guarantee because its ambiguity set relies only on the mean and support information. Figures~\ref{fig:lnosp-small}--\ref{fig:ngosp-small} report the out-of-sample performance of the W-NS and SAA approaches when the data size is small. From these figures, we observe that W-NS outperforms SAA. Intuitively, this demonstrates that W-NS is capable of learning the distributional information even from a limited amount of data (e.g., when $N \leq 20$). As a consequence, the proposed W-NS approach is particularly effective in AS systems with scarce no-show and service duration data. Figures~\ref{fig:lnrel}--\ref{fig:ngrel} report the reliability of the three approaches. These figures once again confirm our observations in Section \ref{sec:exp_rs} that the W-NS approach can provide a safe (upper bound) guarantee on the expected total cost even with a small data size.

	\begin{figure}
		\begin{subfigure}[b]{0.32\textwidth}
			\includegraphics[width=\linewidth]{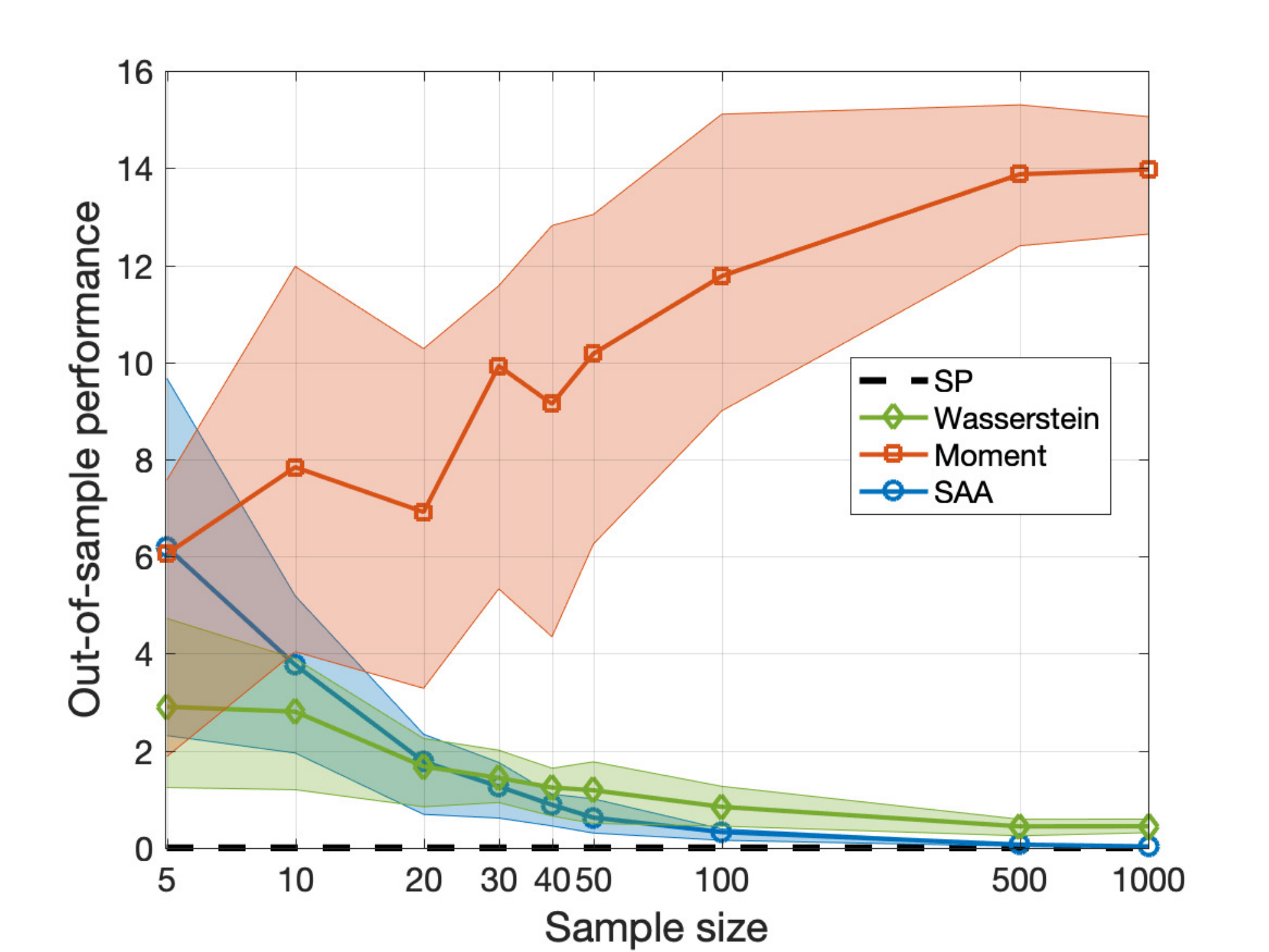}
			\caption{Misspecified LN}
		\end{subfigure}%
		\begin{subfigure}[b]{0.32\textwidth}
			\includegraphics[width=\linewidth]{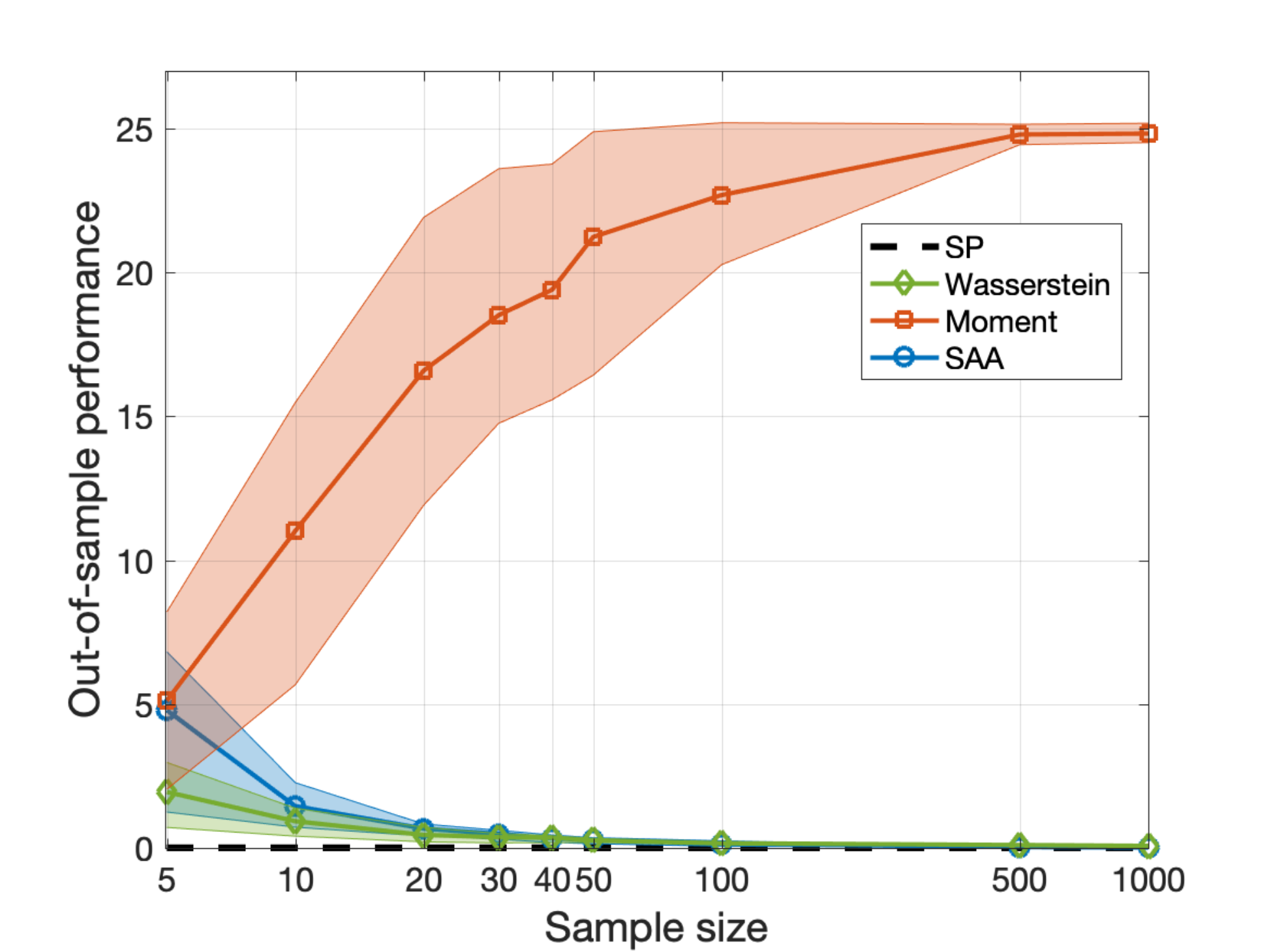}
			\caption{Misspecified UB}
		\end{subfigure}
		\quad
		\begin{subfigure}[b]{0.32\textwidth}
			\includegraphics[width=\linewidth]{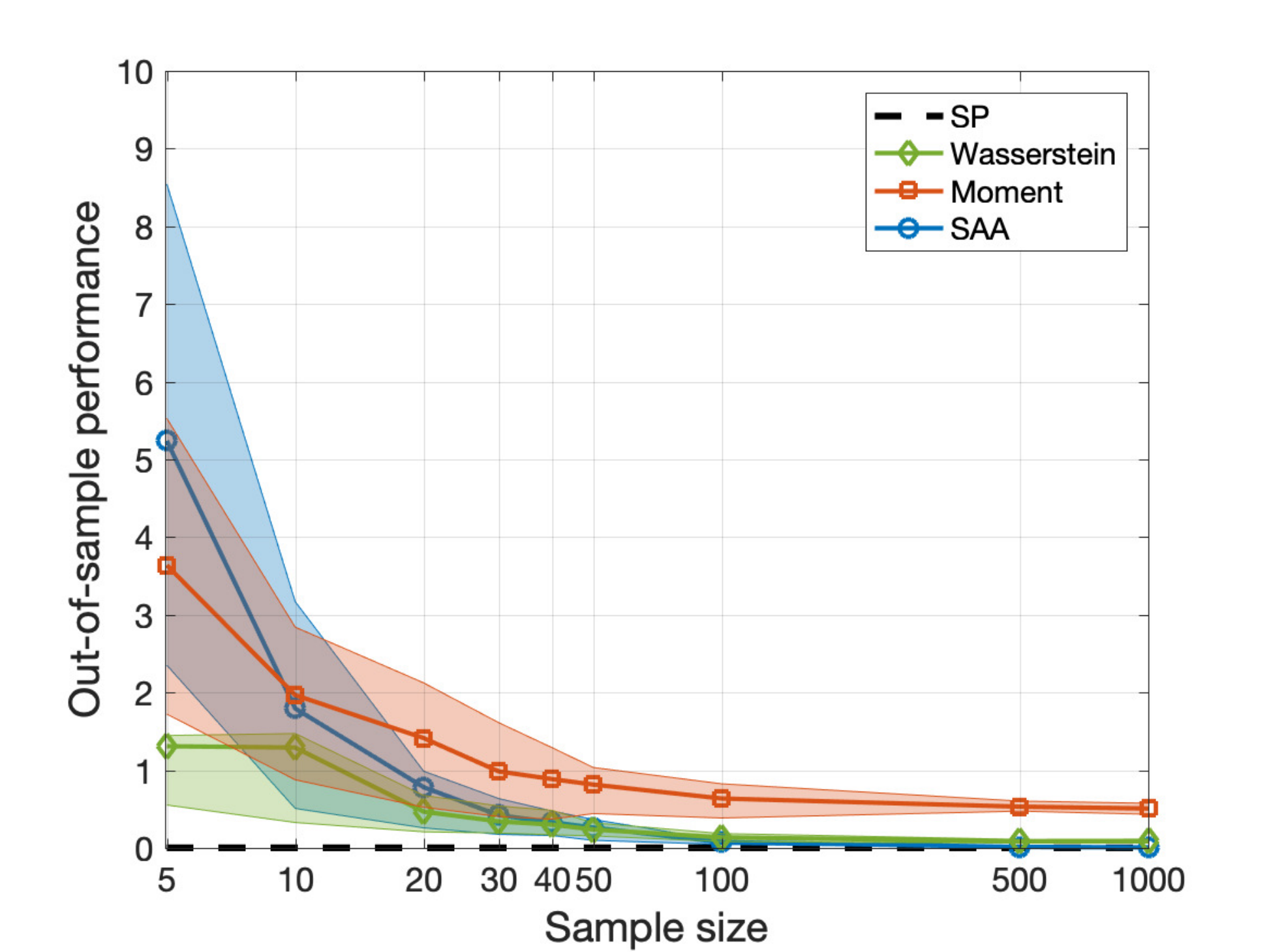}
			\caption{Misspecified NG}
		\end{subfigure}%
		\caption{Out-of-sample performance of the optimal W-NS, MM, and SAA appointment schedules under misspecified distributions}
		\label{fig:ns_mis_osp}
	\end{figure}

We also conduct an experiment to examine out-of-sample performance of the optimal W-NS, MM, and SAA appointment schedules under misspecified distributions. This is particularly motivated by a situation where the no-show behaviors depend on the appointment schedule (see~\cite{Kong.Li.Liu.Teo.Yan.2015}). In that case, if we ignore the impact of the schedule and model the random no-shows by using a schedule-\emph{independent} ambiguity set, then the true, schedule-\emph{dependent} distribution may not belong with the ambiguity set. In this experiment, we use the same types of distributions as those generating the $N$ in-sample data, but increase or decrease their parameters by $\sigma$\% with $\sigma$ uniformly sampled from [5, 10]. Figure~\ref{fig:ns_mis_osp} shows the performance of the W-NS, MM, and SAA appointment schedules under misspecified distributions. We observe that, once again, the W-NS and SAA approaches outperform the MM approach even under misspecified distributions. In addition, the W-NS approach still outperforms SAA when the data size is small. Finally, we observe that the out-of-sample performance of both W-NS and SAA quickly improve (i.e., tend to the optimal value of the true model) as the data size increases. This demonstrates that the proposed W-NS approach remains effective in AS systems under misspecified distributions, and the cost of ignoring the appointment dependency is limited.

\section{Conclusions} \label{sec:conclusions}
In this paper, we studied a distributionally robust appointment scheduling problem over Wasserstein ambiguity sets. We proposed two models, with the first considering random service durations only and the second considering both random no-shows and service durations. We showed that both models can be recast as tractable convex programs under mild conditions on (i) support set of the uncertainties and (ii) penalty costs of the waiting time and server idleness. Through extensive numerical experiments, we demonstrated that the proposed approaches enjoy both asymptotic consistency and finite-data guarantees, and are particularly suitable for the AS systems with scarce data or in quickly varying environments.

\newpage
\section*{Appendix}

\subsection*{Proof of Lemma \ref{lem:concentration}}
We review (a part of) Theorem 2 in~\cite{fournier2015rate} that is directly related to our discussion.
\begin{theorem}[Adapted from Theorem 2 in~\cite{fournier2015rate}] \label{thm:concentration}
Let $\mathbb{P} \in \mathcal{P}(\mathbb{R}^n)$ and $p > 0$, and assume that there exist $\alpha > p$ and $\gamma > 0$ such that $\int_{\mathbb{R}^n}\exp\{\gamma \|\bm u\|_p^{\alpha} \} d\mathbb{P}(\bm u) < \infty$. Then for all $N \geq 1$ and $\epsilon \in (0, \infty)$,
$$
\mathbb{P}^N\left\{ d_p(\mathbb{P}, \widehat{\mathbb{P}}^N) \geq \epsilon^{1/p} \right\} \ \leq \ a(N, \epsilon) \text{\bf 1}_{\{\epsilon \leq 1\}} + b(N, \epsilon),
$$
where
$$
a(N, \epsilon) = C \left\{\begin{array}{ll} \exp\{-c N \epsilon^2\} & \mbox{if $p > n/2$}\\
\exp\{-c N (\epsilon/\log(2 + 1/\epsilon))^2\} & \mbox{if $p = n/2$}\\
\exp\{-c N \epsilon^{n/p}\} & \mbox{if $p \in [1,n/2)$}
\end{array}\right. \mbox{and} \ \ b(N, \epsilon) = C \exp\{-c N \epsilon^{\alpha/p}\} \text{\bf 1}_{\{\epsilon > 1\}}.
$$
The positive constants $C$ and $c$ depend only on $p$, $n$, $\alpha$, and $\gamma$.
\end{theorem}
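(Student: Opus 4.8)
The plan is to follow the multiscale (dyadic partition) strategy that underlies the Fournier--Guillin rate estimate. The natural starting point is to work with the transport cost directly: since the event $\{d_p(\mathbb{P}, \widehat{\mathbb{P}}^N) \geq \epsilon^{1/p}\}$ coincides with $\{W_p^p(\mathbb{P}, \widehat{\mathbb{P}}^N) \geq \epsilon\}$, where $W_p^p := d_p^p$ denotes the optimal $p$-th transport cost, it suffices to produce a \emph{deterministic} upper bound on $W_p^p$ in terms of discrepancies between the empirical and true masses of a hierarchy of cubes, and then to show that these discrepancies concentrate sharply.

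First I would establish the multiscale transport inequality. Working on a large cube, for each level $\ell \geq 0$ I would partition it into $2^{n\ell}$ congruent sub-cubes of side $2^{-\ell}$, obtaining nested partitions $\mathcal{Q}_\ell$. A hierarchical coupling argument---transporting mass between the two measures cube-by-cube and refining the matching as $\ell$ increases---yields a bound of the form
\[
W_p^p(\mu, \nu) \ \leq \ C \sum_{\ell \geq 0} 2^{-\ell p} \sum_{F \in \mathcal{Q}_\ell} |\mu(F) - \nu(F)|
\]
for probability measures supported on the cube. Here the weight $2^{-\ell p}$ is the cost of relocating mass \emph{within} a level-$\ell$ cube, and the inner sum measures the total mass mismatch at that scale.

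Second, I would apply this with $\mu = \mathbb{P}$ and $\nu = \widehat{\mathbb{P}}^N$ and control the random quantity $|\widehat{\mathbb{P}}^N(F) - \mathbb{P}(F)|$. Since $N\,\widehat{\mathbb{P}}^N(F) \sim \mathrm{Binomial}(N, \mathbb{P}(F))$, a Bernstein/Chernoff bound controls each cube's deviation, and summing over the $2^{n\ell}$ cubes of level $\ell$ (via Cauchy--Schwarz, using $\sum_{F \in \mathcal{Q}_\ell} \sqrt{\mathbb{P}(F)} \leq 2^{n\ell/2}$) gives the expected mismatch at each scale. Balancing the geometric weight $2^{-\ell p}$ against the cube count $2^{n\ell/2}$ identifies the dominant scale; whether the series is governed by its low- or high-frequency tail depends on the sign of $p - n/2$, which is precisely the origin of the three regimes for $a(N,\epsilon)$ (namely $p > n/2$, $p = n/2$, and $p \in [1, n/2)$). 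A union bound over scales, combined with a McDiarmid bounded-difference argument for the deviation of $W_p^p$ from its mean (each sample perturbs $\widehat{\mathbb{P}}^N$ by $1/N$ within a controlled region), upgrades the mean estimate to the sub-Gaussian tail $\exp\{-cN\epsilon^2\}$ and its two variants.

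Third, to handle unbounded support and the tail term $b(N,\epsilon)$, I would truncate to a ball $B_R$ and invoke the exponential-moment hypothesis $\int \exp\{\gamma \|\bm u\|_p^{\alpha}\}\,d\mathbb{P}(\bm u) < \infty$. This hypothesis forces $\mathbb{P}(\|\bm u\|_p > R) \leq C \exp\{-\gamma R^{\alpha}\}$, and a Chernoff bound shows that the empirical tail mass outside $B_R$ exceeds the true tail by a large margin only with probability $\exp\{-cN\epsilon^{\alpha/p}\}$; bounding the cost of transporting this exceptional tail mass produces exactly $b(N,\epsilon)$ in the regime $\epsilon > 1$. The main obstacle is the second step: deriving the multiscale inequality with the correct constants and then executing the delicate scale-balancing and union bound so that the exponents $\epsilon^2$, $\epsilon^{n/p}$, and $\epsilon^{\alpha/p}$ emerge with the right dependence on $N$ in each regime. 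Everything else reduces to truncation and elementary Binomial concentration.
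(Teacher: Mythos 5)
You should first note what the paper actually does with this statement: it does \emph{not} prove it. Theorem~\ref{thm:concentration} is quoted (``adapted'') from Fournier and Guillin~\cite{fournier2015rate} and serves only as the input to the proof of Lemma~\ref{lem:concentration}, where the real in-paper work is the elementary massaging of the three regimes into the single exponent $\max\{3, n/p\}$ and the radius $\epsilon_N(\beta)$. So there is no in-paper proof to compare against; the relevant benchmark is the cited source, and your sketch does reconstruct its genuine architecture: the deterministic multiscale inequality $W_p^p(\mu,\nu) \leq C\sum_{\ell}2^{-\ell p}\sum_{F\in\mathcal{Q}_\ell}|\mu(F)-\nu(F)|$ over nested dyadic partitions, the binomial estimates $\mathbb{E}|\widehat{\mathbb{P}}^N(F)-\mathbb{P}(F)|\leq \sqrt{\mathbb{P}(F)/N}$ summed via Cauchy--Schwarz, the balancing of $2^{-\ell p}$ against $2^{n\ell/2}$ that is indeed the origin of the three regimes at $p\gtrless n/2$, and truncation driven by the exponential moment.

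That said, two steps in your plan are genuine gaps rather than routine details. First, the McDiarmid step: bounded differences for $W_p^p$ (each sample moves the cost by at most $\mathrm{diam}^p/N$) holds only on a bounded set, while the theorem assumes only $\int \exp\{\gamma\|u\|_p^{\alpha}\}\,d\mathbb{P} < \infty$; after truncating at radius $R$ the bounded-difference constant degrades like $R^{2p}$, and your sketch never couples $R$ to $\epsilon$ and $N$. (On a bounded support your mean-plus-McDiarmid route would in fact recover all three forms of $a(N,\epsilon)$, including the log-corrected case $p=n/2$, since the claimed bounds are weaker than sub-Gaussian above the mean scale and trivial below it --- so it is specifically the unbounded extension that fails as written.) Fournier--Guillin avoid bounded differences altogether: they allocate a deviation budget across scales and apply Bernstein/Chernoff bounds to each partition element directly. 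Second, the tail handling: because $b(N,\epsilon)$ carries the indicator $\mathbf{1}_{\{\epsilon>1\}}$, for $\epsilon\leq 1$ the bound consists of $a(N,\epsilon)$ \emph{alone}, so the unbounded part of $\mathbb{P}$ must be absorbed into $a$, whereas your plan routes all tail cost into $b$. This absorption is exactly where $\alpha>p$ is used: one decomposes $\mathbb{R}^n$ into dyadic annuli of radius $2^k$, weights the level-$\ell$ mismatch in annulus $k$ by $2^{kp}2^{-\ell p}$, and uses $\mathbb{P}(\|u\|_p>2^{k-1})\leq C\exp\{-\gamma 2^{\alpha(k-1)}\}$ to make the weighted series summable while the per-annulus concentration bounds still aggregate to $a(N,\epsilon)$. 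Without these two repairs the argument proves the theorem only for compactly supported $\mathbb{P}$ --- which, to be fair, is the only case the paper needs under Assumption~\ref{ass:u_set}, but it is not the statement as given.
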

\begin{proof}[Proof of Lemma \ref{lem:concentration}]
First, for any $\alpha > p$ and $\gamma > 0$, $\mathbb{E}_{\mathbb{P}_{\bm u}}[\exp\{\gamma \|\bm u\|_p^{\alpha} \}] < \infty$ because the support set $\mathcal{U}$ is bounded by Assumption \ref{ass:u_set}. It follows from Theorem \ref{thm:concentration} that $\mathbb{P}_{\bm u}^N\left\{ d_p(\mathbb{P}_{\bm u}, \widehat{\mathbb{P}}^N_{\bm u}) \geq \epsilon^{1/p} \right\} \leq a(N, \epsilon) \text{\bf 1}_{\{\epsilon \leq 1\}} + b(N, \epsilon)$ for all $N \geq 1$ and $\epsilon > 0$.

Second, we discuss the following two cases based on the value of $\epsilon$.
\begin{enumerate}[(1)]
\item If $0 < \epsilon \leq 1$, then $0 < \epsilon \left[\log(2 + 1/\epsilon)\right]^2 \geq \left[\log(3)\right]^2$. It follows that $(\epsilon/\log(2 + 1/\epsilon))^2 \geq \epsilon^3/[\log(3)]^2$ and $\exp\{-c N (\epsilon/\log(2 + 1/\epsilon))^2\} \leq \exp\{-cN\epsilon^3/[\log(3)]^2\}$. Then,
$$
a(N, \epsilon) \leq C \left\{\begin{array}{ll} \exp\{-c N \epsilon^2\} & \mbox{if $p > n/2$}\\
\exp\{-c N \epsilon^3/[\log(3)]^2\} & \mbox{if $p = n/2$}\\
\exp\{-c N \epsilon^{n/p}\} & \mbox{if $p \in [1,n/2)$}
\end{array}\right. \ \leq \ C \exp\left\{- \frac{c}{[\log(3)]^2} N \epsilon^{\max\{3, n/p\}} \right\},
$$
where the second inequality is because $\log(3) > 1$ and $\epsilon^x$ decreases as $x$ increases with $\epsilon \in (0, 1]$.
\item If $\epsilon > 1$, then we set $\alpha = \max\{3p, n\} > p$. It follows that $b(N, \epsilon) \leq C \exp\{-c N \epsilon^{\max\{3, n/p\}}\}$.
\end{enumerate}
Summarizing the above two cases and letting $c_1 = C$ and $c_2 = c/[\log(3)]^2$, we have
$$
\mathbb{P}_{\bm u}^N\left\{ d_p(\mathbb{P}_{\bm u}, \widehat{\mathbb{P}}^N_{\bm u}) \geq \epsilon^{1/p} \right\} \leq c_1 \exp\left\{-c_2 N \epsilon^{\max\{3, n/p\}}\right\}
$$
for all $N \geq 1$ and $\epsilon > 0$. Equating the right-hand side of the above inequality to $\beta$ and solving for $\epsilon^{1/p}$ yields $\mathbb{P}_{\bm u}^N\{ d_p(\mathbb{P}_{\bm u}, \widehat{\mathbb{P}}^N_{\bm u}) \geq \epsilon_N(\beta) \} \leq \beta$, where $\epsilon_N(\beta)$ is defined in the statement of Lemma \ref{lem:concentration}. This completes the proof.
\end{proof}

\subsection*{Proof of Theorem \ref{thm:a-c}}
We first prove the following three lemmas.

\begin{lemma} \label{lem:a-c-1}
The function $f(\bm s, \bm u)$ is bounded on $\mathcal{S}\times \mathcal{U}$. Additionally, $f(\bm s, \bm u)$ is jointly convex in $\bm s$ and $\bm u$.
\end{lemma}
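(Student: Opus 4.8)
The plan is to work with the dual (maximization) representation \eqref{equ:qx-max} of $f(\bm s, \bm u)$, namely $f(\bm s, \bm u) = \max_{\bm y \in \mathcal{Y}} \sum_{i=1}^n (u_i - s_i) y_i$, which is valid by the strong duality between \eqref{equ:qx} and \eqref{equ:qx-max} established through Lemma \ref{lem:y_set}. This representation makes both claims nearly immediate, so I would base the whole argument on it.

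First I would establish boundedness. By Lemma \ref{lem:y_set}, $\mathcal{Y}$ is nonempty and compact; by Assumption \ref{ass:u_set} together with the definition of $\mathcal{S}$, both $\mathcal{U}$ and $\mathcal{S}$ are compact. Hence there is a finite constant $R$ with $\|\bm y\|_2 \leq R$ for all $\bm y \in \mathcal{Y}$, and a finite constant $D$ with $\|\bm u - \bm s\|_2 \leq D$ for all $(\bm s, \bm u) \in \mathcal{S} \times \mathcal{U}$. Applying the Cauchy--Schwarz inequality inside the maximum yields $|f(\bm s, \bm u)| \leq \max_{\bm y \in \mathcal{Y}} \|\bm u - \bm s\|_2 \, \|\bm y\|_2 \leq D R < \infty$, uniformly over $\mathcal{S} \times \mathcal{U}$. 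To confirm well-definedness (that \eqref{equ:qx} is feasible with finite value in the first place), I would exhibit the feasible point obtained recursively via $w_i = \max\{0, u_{i-1} + w_{i-1} - s_{i-1}\}$ and $v_{i-1} = \max\{0, s_{i-1} - u_{i-1} - w_{i-1}\}$; since all costs and variables are nonnegative, the primal objective is bounded below by $0$, so $f$ is finite.

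Next I would prove joint convexity directly from the dual form. For each fixed $\bm y \in \mathcal{Y}$, the map $(\bm s, \bm u) \mapsto \sum_{i=1}^n (u_i - s_i) y_i$ is affine, hence convex, in $(\bm s, \bm u)$. Because $f(\bm s, \bm u)$ is the pointwise maximum of this family of affine functions indexed by $\bm y \in \mathcal{Y}$, and the pointwise supremum of any family of convex functions is convex, $f$ is jointly convex in $(\bm s, \bm u)$ on $\RR^n \times \RR^n$.

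There is essentially no serious obstacle: the only point needing care is that the dual representation \eqref{equ:qx-max} is legitimate, which rests on strong duality and therefore on the nonemptiness and compactness of $\mathcal{Y}$ furnished by Lemma \ref{lem:y_set}. An equally short alternative avoids duality entirely, observing that $f(\bm s, \bm u)$ is the optimal value of the linear program \eqref{equ:qx} whose right-hand side $u_{i-1} - s_{i-1}$ is affine in $(\bm s, \bm u)$, so that $f$ is a convex function of the right-hand-side data, with boundedness following from compactness of $\mathcal{S} \times \mathcal{U}$. I would nonetheless present the dual argument as the primary one, since it delivers both assertions in a single stroke.
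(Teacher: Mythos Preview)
Your proof is correct and follows essentially the same approach as the paper: both use the dual representation $f(\bm s,\bm u)=\max_{\bm y\in\mathcal{Y}}(\bm u-\bm s)^\top\bm y$, derive boundedness from the boundedness of $\mathcal{Y}$, $\mathcal{S}$, and $\mathcal{U}$ (via Lemma~\ref{lem:y_set} and Assumption~\ref{ass:u_set}), and obtain joint convexity from the pointwise-maximum-of-affine-functions argument. Your version simply adds a few more details (the Cauchy--Schwarz bound and the explicit primal feasibility check), but the substance is identical.
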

\begin{proof}
First, by the dual formulation of liner program \eqref{equ:qx}, we represent $f(\bm s, \bm u) = \max_{\bm{y} \in \mathcal{Y}} (\bm u - \bm s)^{\top} \bm{y}$, where $\mathcal{Y}$ is defined in \eqref{equ:y-def}. It follows that $f(\bm s, \bm u)$ is bounded on $\mathcal{S}\times \mathcal{U}$ because (i) $\mathcal{Y}$ is nonempty and bounded by Lemma \ref{lem:y_set}, (ii) $\mathcal{S}$ is bounded by definition, and (iii) $\mathcal{U}$ is bounded by Assumption \ref{ass:u_set}.

Second, $f(\bm s, \bm u)$ is jointly convex in $\bm s$ and $\bm u$ because it is represented as the maximum of linear functions of $\bm s$ and $\bm u$.
\end{proof}

\begin{lemma} \label{lem:a-c-2}
Let $p \geq 1$. Then, the function $f(\bm s, \bm u)$ is continuous on $\mathcal{S}\times \mathcal{U}$. Additionally, for any fixed $\bm s \in \mathcal{S}$, $f(\bm s, \bm u)$ is Lipschitz continuous in $\bm{u}$ with Lipschitz constant $L := \max_{\bm y \in \mathcal{Y}}\|\bm y\|_q < \infty$, where $q$ is such that $\frac{1}{p} + \frac{1}{q} = 1$.
\end{lemma}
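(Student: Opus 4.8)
The plan is to work entirely from the dual representation \eqref{equ:qx-max}, namely $f(\bm s, \bm u) = \max_{\bm y \in \mathcal{Y}} (\bm u - \bm s)^\top \bm y$, together with the fact (Lemma \ref{lem:y_set}) that $\mathcal{Y}$ is a nonempty compact polyhedron. For continuity on $\mathcal{S} \times \mathcal{U}$, I would first observe that $\mathcal{Y}$, being a bounded polyhedron, is the convex hull of its finitely many vertices, so the linear objective $(\bm u - \bm s)^\top \bm y$ is maximized at a vertex and $f$ equals the pointwise maximum of the finitely many affine functions $(\bm u - \bm s)^\top \bm y^{(v)}$ indexed by the vertices $\bm y^{(v)}$ of $\mathcal{Y}$. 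Since a pointwise maximum of finitely many affine functions is piecewise linear, it is continuous on all of $\mathbb{R}^n \times \mathbb{R}^n$, hence in particular on $\mathcal{S} \times \mathcal{U}$. Alternatively, I could simply invoke Lemma \ref{lem:a-c-1}: $f$ is finite-valued and convex on $\mathbb{R}^{2n}$, and a finite convex function on a finite-dimensional space is continuous on the interior of its (full) domain, which is everything.

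For the Lipschitz estimate I would fix $\bm s \in \mathcal{S}$ and take any $\bm u_1, \bm u_2 \in \mathcal{U}$. Let $\bm y^\star \in \arg\max_{\bm y \in \mathcal{Y}} (\bm u_1 - \bm s)^\top \bm y$, which exists by compactness of $\mathcal{Y}$. Then
\[
f(\bm s, \bm u_1) - f(\bm s, \bm u_2) \ = \ (\bm u_1 - \bm s)^\top \bm y^\star - \max_{\bm y \in \mathcal{Y}} (\bm u_2 - \bm s)^\top \bm y \ \leq \ (\bm u_1 - \bm s)^\top \bm y^\star - (\bm u_2 - \bm s)^\top \bm y^\star \ = \ (\bm u_1 - \bm u_2)^\top \bm y^\star,
\]
where the inequality uses that $\bm y^\star$ is feasible (but not necessarily optimal) for the second maximization. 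Applying H\"older's inequality with the conjugate exponents $p$ and $q$ gives $(\bm u_1 - \bm u_2)^\top \bm y^\star \leq \|\bm u_1 - \bm u_2\|_p \, \|\bm y^\star\|_q \leq L \|\bm u_1 - \bm u_2\|_p$, since $\|\bm y^\star\|_q \leq \max_{\bm y \in \mathcal{Y}} \|\bm y\|_q = L$. Swapping the roles of $\bm u_1$ and $\bm u_2$ yields the reverse bound, so $|f(\bm s, \bm u_1) - f(\bm s, \bm u_2)| \leq L \|\bm u_1 - \bm u_2\|_p$, which is exactly the claimed Lipschitz continuity in $\bm u$.

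Finally I would confirm that $L = \max_{\bm y \in \mathcal{Y}} \|\bm y\|_q$ is well defined and finite: the map $\bm y \mapsto \|\bm y\|_q$ is continuous, $\mathcal{Y}$ is compact by Lemma \ref{lem:y_set}, so the maximum is attained and finite by Weierstrass' theorem. Honestly there is no genuinely hard step here; the only point requiring a moment's care is lining up the conjugate exponents so that the $p$-norm (the one appearing in the Wasserstein distance) lands on $\bm u_1 - \bm u_2$ and the $q$-norm on $\bm y^\star$ in H\"older's inequality. Everything else is a direct consequence of the compactness of $\mathcal{Y}$ and the max-of-affine structure already established in \eqref{equ:qx-max} and Lemma \ref{lem:a-c-1}.
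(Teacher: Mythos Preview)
Your proposal is correct and follows essentially the same route as the paper's proof: both establish continuity via the finite max-of-affine representation over the vertices of the bounded polyhedron $\mathcal{Y}$, and both obtain the Lipschitz bound by comparing the two maxima through a common $\bm y \in \mathcal{Y}$ and applying H\"older's inequality. The only cosmetic difference is that you fix a specific maximizer $\bm y^\star$ for $\bm u_1$ whereas the paper bounds $\max(\cdot) - \max(\cdot)$ by $\max$ of the difference; these are equivalent arguments.
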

\begin{proof}
First, as $f(\bm s, \bm u) \equiv \max_{\bm{y} \in \mathcal{Y}} (\bm u - \bm s)^{\top} \bm{y}$ and $\mathcal{Y}$ is polyhedral, nonempty, and bounded (see \eqref{equ:y-def} and Lemma \ref{lem:y_set}), $f(\bm s, \bm u)$ can be represented as the maximum of a \emph{finite} number of linear functions of $\bm s$ and $\bm u$. It follows that $f(\bm s, \bm u)$ is continuous on $\mathcal{S}\times \mathcal{U}$.

Second, pick any $\bm{s} \in \mathcal{S}$ and $\bm{u}^1$, $\bm{u}^2 \in \mathcal{U}$. Then,
\begin{align*}
f(\bm s, \bm u^1) - f(\bm s, \bm u^2) \ = & \ \max_{\bm{y} \in \mathcal{Y}} (\bm u^1 - \bm s)^{\top} \bm{y} - \max_{\bm{y} \in \mathcal{Y}} (\bm u^2 - \bm s)^{\top} \bm{y} \\
\leq & \ \max_{\bm{y} \in \mathcal{Y}} \left\{(\bm u^1 - \bm s)^{\top} \bm{y} - (\bm u^2 - \bm s)^{\top} \bm{y}\right\} \\
= & \ \max_{\bm{y} \in \mathcal{Y}} \bm{y}^{\top} (\bm{u}^1 - \bm{u}^2) \\
\leq & \ \max_{\bm{y} \in \mathcal{Y}} \left\| \bm y \right\|_q \left\| \bm{u}^1 - \bm{u}^2 \right\|_p \\
= & \ L \left\| \bm{u}^1 - \bm{u}^2 \right\|_p,
\end{align*}
where the second inequality follows from the H{\"o}lder's inequality. We note that $L < \infty$ because $\mathcal{Y}$ is bounded and $\|\bm y\|_q$ is continuous in $\bm y$. Similarly, we can show that $f(\bm s, \bm u^2) - f(\bm s, \bm u^1) \leq L \left\| \bm{u}^1 - \bm{u}^2 \right\|_p$. Hence, $|f(\bm s, \bm u^1) - f(\bm s, \bm u^2)| \leq L \left\| \bm{u}^1 - \bm{u}^2 \right\|_p$ for any $\bm{s} \in \mathcal{S}$ and $\bm{u}^1$, $\bm{u}^2 \in \mathcal{U}$, which completes the proof.
\end{proof}

\begin{lemma}[Adapted from Lemma 3.7 in~\cite{Esfahani.Kuhn.2017}] \label{lem:a-c-3}
Let $p \geq 1$. Consider a sequence of confidence levels $\{\beta_N\}_{N \in \mathbb{N}}$ such that $\sum_{N=1}^{\infty} \beta_N < \infty$ and $\lim_{N\rightarrow \infty} \epsilon_N(\beta_N) = 0$. Additionally, let $\{\widehat{\mathbb Q}_{\bm u}^N\}_{N \in \mathbb{N}}$ represent a sequence of probability distributions with each $\widehat{\mathbb Q}_{\bm u}^N \in \mathcal{D}_p(\widehat{\mathbb{P}}^N_{\bm u}, \epsilon_N(\beta_N))$. Then, $\lim_{N\rightarrow \infty} d_1(\mathbb{P}_{\bm u}, \widehat{\mathbb Q}_{\bm u}^N) = 0$ $\mathbb{P}_{\bm u}^{\infty}$-almost surely.
\end{lemma}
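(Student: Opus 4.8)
The plan is to combine the triangle inequality for the $p$-Wasserstein metric, the concentration inequality of Lemma~\ref{lem:concentration}, and the Borel--Cantelli lemma to first establish $d_p$-convergence, and then to convert this into the claimed $d_1$-convergence by an elementary norm comparison. First I would record that $d_p$ is a genuine metric on $\mathcal{P}(\mathcal{U})$ (for $p \geq 1$ the map built from the cost $\|\cdot\|_p^p$ satisfies the triangle inequality). Since each $\widehat{\mathbb{Q}}_{\bm u}^N \in \mathcal{D}_p(\widehat{\mathbb{P}}^N_{\bm u}, \epsilon_N(\beta_N))$ means $d_p(\widehat{\mathbb{P}}^N_{\bm u}, \widehat{\mathbb{Q}}_{\bm u}^N) \leq \epsilon_N(\beta_N)$, the triangle inequality gives
\begin{equation*}
d_p(\mathbb{P}_{\bm u}, \widehat{\mathbb{Q}}_{\bm u}^N) \ \leq \ d_p(\mathbb{P}_{\bm u}, \widehat{\mathbb{P}}^N_{\bm u}) + d_p(\widehat{\mathbb{P}}^N_{\bm u}, \widehat{\mathbb{Q}}_{\bm u}^N) \ \leq \ d_p(\mathbb{P}_{\bm u}, \widehat{\mathbb{P}}^N_{\bm u}) + \epsilon_N(\beta_N),
\end{equation*}
so it suffices to control the single term $d_p(\mathbb{P}_{\bm u}, \widehat{\mathbb{P}}^N_{\bm u})$ along the tail.

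Next I would invoke Lemma~\ref{lem:concentration}, which yields $\mathbb{P}^N_{\bm u}\{d_p(\mathbb{P}_{\bm u}, \widehat{\mathbb{P}}^N_{\bm u}) > \epsilon_N(\beta_N)\} \leq \beta_N$ for all large $N$. Viewing each event $A_N := \{d_p(\mathbb{P}_{\bm u}, \widehat{\mathbb{P}}^N_{\bm u}) > \epsilon_N(\beta_N)\}$ on the infinite product space $(\mathcal{U}^\infty, \mathbb{P}_{\bm u}^\infty)$ (it depends only on the first $N$ draws, so $\mathbb{P}_{\bm u}^\infty(A_N) = \mathbb{P}^N_{\bm u}(A_N) \leq \beta_N$), the hypothesis $\sum_{N=1}^\infty \beta_N < \infty$ lets me apply the first Borel--Cantelli lemma to conclude $\mathbb{P}_{\bm u}^\infty(\limsup_N A_N) = 0$. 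Hence, $\mathbb{P}_{\bm u}^\infty$-almost surely there is a (sample-path-dependent) index $\bar{N}$ beyond which $d_p(\mathbb{P}_{\bm u}, \widehat{\mathbb{P}}^N_{\bm u}) \leq \epsilon_N(\beta_N)$. Plugging this into the display above gives $d_p(\mathbb{P}_{\bm u}, \widehat{\mathbb{Q}}_{\bm u}^N) \leq 2\,\epsilon_N(\beta_N)$ for all $N \geq \bar{N}$, and since $\epsilon_N(\beta_N) \to 0$ by assumption, I obtain $d_p(\mathbb{P}_{\bm u}, \widehat{\mathbb{Q}}_{\bm u}^N) \to 0$ $\mathbb{P}_{\bm u}^\infty$-almost surely.

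Finally I would upgrade this to the $d_1$ statement via an elementary comparison of the two Wasserstein distances. For any coupling $\Pi$ of two distributions on $\mathcal{U}$, the norm inequality $\|\bm x\|_1 \leq n^{1-1/p}\|\bm x\|_p$ on $\mathbb{R}^n$ (H\"older) together with Jensen's inequality applied to the concave map $t \mapsto t^{1/p}$ gives
\begin{equation*}
\mathbb{E}_\Pi\big[\|\bm u_1 - \bm u_2\|_1\big] \ \leq \ n^{1-1/p}\,\mathbb{E}_\Pi\big[\|\bm u_1 - \bm u_2\|_p\big] \ \leq \ n^{1-1/p}\Big(\mathbb{E}_\Pi\big[\|\bm u_1 - \bm u_2\|_p^p\big]\Big)^{1/p}.
\end{equation*}
Evaluating this chain at a $d_p$-optimal coupling $\Pi^\star$ (which exists since $\mathcal{U}$ is compact by Assumption~\ref{ass:u_set}) and using $d_1(\mathbb{Q}_1, \mathbb{Q}_2) \leq \mathbb{E}_{\Pi^\star}[\|\bm u_1 - \bm u_2\|_1]$ yields $d_1(\mathbb{Q}_1, \mathbb{Q}_2) \leq n^{1-1/p}\,d_p(\mathbb{Q}_1, \mathbb{Q}_2)$; hence $d_p(\mathbb{P}_{\bm u}, \widehat{\mathbb{Q}}_{\bm u}^N) \to 0$ forces $d_1(\mathbb{P}_{\bm u}, \widehat{\mathbb{Q}}_{\bm u}^N) \to 0$ almost surely, completing the argument. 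The hard part is less any single inequality than assembling them without a gap: I must verify that $d_p$ really obeys the triangle inequality so the first display is legitimate, that the $A_N$ are measurable events on the \emph{common} product space so the almost-sure threshold $\bar{N}$ is well defined, and that the norm/Jensen comparison is applied at a genuine (near-)optimal coupling rather than by illicitly interchanging an infimum with an expectation.
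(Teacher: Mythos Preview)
Your proposal is correct and follows essentially the same route as the paper: triangle inequality for $d_p$, then Lemma~\ref{lem:concentration} plus Borel--Cantelli to get $d_p(\mathbb{P}_{\bm u},\widehat{\mathbb Q}_{\bm u}^N)\leq 2\epsilon_N(\beta_N)$ eventually almost surely, and finally a Jensen-type comparison to pass from $d_p$ to $d_1$. The only difference is cosmetic: the paper obtains $d_1\leq d_p$ via Jensen alone (without the explicit $n^{1-1/p}$ factor), whereas you additionally track the $\ell_1$-to-$\ell_p$ norm conversion; your version is a bit more explicit but not materially different.
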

\begin{proof}
First, as $\widehat{\mathbb Q}_{\bm u}^N \in \mathcal{D}_p(\widehat{\mathbb{P}}^N_{\bm u}, \epsilon_N(\beta_N))$, by the triangular inequality we have $d_p(\mathbb{P}_{\bm u}, \widehat{\mathbb Q}_{\bm u}^N) \leq d_p(\mathbb{P}_{\bm u}, \widehat{\mathbb P}^N_{\bm u}) + d_p(\widehat{\mathbb P}^N_{\bm u}, \widehat{\mathbb Q}_{\bm u}^N) \leq d_p(\mathbb{P}_{\bm u}, \widehat{\mathbb P}^N_{\bm u}) + \epsilon_N(\beta_N)$. By Lemma \ref{lem:concentration}, we have $\mathbb{P}^N_{\bm u}\left\{d_p(\mathbb{P}_{\bm u}, \widehat \PP^N_{\bm u}) \leq \epsilon_N(\beta_N)\right\} \geq 1 - \beta_N$ and so
$$
\mathbb{P}^N_{\bm u}\left\{d_p(\mathbb{P}_{\bm u}, \widehat{\mathbb Q}_{\bm u}^N) \leq 2\epsilon_N(\beta_N)\right\} \geq 1 - \beta_N.
$$
But as $\sum_{N=1}^{\infty} \beta_N < \infty$, the Borel-Cantelli Lemma implies that $\mathbb{P}^{\infty}_{\bm u}\{d_p(\mathbb{P}_{\bm u}, \widehat{\mathbb Q}_{\bm u}^N) \leq 2\epsilon_N(\beta_N) \ \mbox{eventually}\} = 1$. In addition, as $\lim_{N\rightarrow \infty} \epsilon_N(\beta_N) = 0$, we have $\lim_{N\rightarrow \infty} d_p(\mathbb{P}_{\bm u}, \widehat{\mathbb Q}_{\bm u}^N) = 0$ $\mathbb{P}_{\bm u}^{\infty}$-almost surely.

Second, by the Jensen's inequality we have
\begin{align*}
d_p(\mathbb{P}_{\bm u}, \widehat{\mathbb Q}_{\bm u}^N) \ = & \ \Biggl(\inf_{\Pi \in \mathcal{P}(\mathbb{P}_{\bm u}, \widehat{\mathbb Q}_{\bm u}^N)} \mathbb{E}_{\Pi}\bigl[ \|\bm{u}_{\mathbb{P}} - \bm{u}_{\mathbb{Q}}\|_p^p \bigr] \Biggr)^{1/p} \\
\geq & \ \Biggl\{ \Biggl(\inf_{\Pi \in \mathcal{P}(\mathbb{P}_{\bm u}, \widehat{\mathbb Q}_{\bm u}^N)} \mathbb{E}_{\Pi}\bigl[ \|\bm{u}_{\mathbb{P}} - \bm{u}_{\mathbb{Q}}\|_p^1 \bigr] \Biggr)^p \Biggr\}^{1/p} \\
= & \ d_1(\mathbb{P}_{\bm u}, \widehat{\mathbb Q}^N).
\end{align*}
It follows that $\lim_{N\rightarrow \infty} d_1(\mathbb{P}_{\bm u}, \widehat{\mathbb Q}_{\bm u}^N) = 0$ $\mathbb{P}_{\bm u}^{\infty}$-almost surely.
\end{proof}

Theorem \ref{thm:a-c} follows from Lemmas \ref{lem:a-c-1}--\ref{lem:a-c-3}.
\begin{proof}[Proof of Theorem \ref{thm:a-c}]
By Assumption \ref{ass:u_set} and Lemmas \ref{lem:a-c-1}--\ref{lem:a-c-3}, all conditions of Theorem 3.6 in~\cite{Esfahani.Kuhn.2017} are satisfied. Therefore, the conclusions of Theorem \ref{thm:a-c} hold valid.
\end{proof}

\subsection*{Proof of Theorem \ref{thm:f-d}}
\begin{proof}
By Assumption \ref{ass:u_set} and Lemma \ref{lem:concentration}, all conditions of Theorem 3.5 in~\cite{Esfahani.Kuhn.2017} are satisfied. Therefore, the conclusion of Theorem \ref{thm:f-d} holds valid.
\end{proof}

\subsection*{Proof of Lemma \ref{lem:y_set}}
\begin{proof}
First, $\bm 0 \in \Y$ because $\bm c \geq \bm 0$, $\bm d \geq \bm 0$, and $C \geq 0$. It follows that $\Y$ is nonempty. Next, $\Y$ is closed as it is polyhedral. Finally, $-d_n \leq y_n \leq C$ implies that $y_{n-1} \leq y_n + c_n \leq C + c_n$ and so $-d_{n-1} \leq y_{n-1} \leq C + c_n$. Similarly, we have $-d_i \leq y_i \leq C + \sum_{j=i+1}^n c_j$ for all $i \in [n-2]$. This implies that $\Y$ is bounded and completes the proof.
\end{proof}

\subsection*{Proof of Proposition \ref{prop:ROFormat}}
\begin{proof}
First, by the definition of Wasserstein distance, $\mathbb{Q}_{\bm u} \in \mathcal{D}_p(\widehat{\mathbb{P}}^N_{\bm u}, \epsilon)$ implies that there exists a joint distribution $\Pi$ of $(\bm u, \widehat{\bm u})$ with marginals $\mathbb{Q}_{\bm u}$ and $\widehat{\mathbb{P}}^N_{\bm u}$, such that $\mathbb{E}_{\Pi}\left[\|\bm{u} - \widehat{\bm{u}}\|_p^p\right] \leq \epsilon^p$. As $\widehat{\mathbb{P}}^N_{\bm u} = \frac{1}{N}\sum_{j=1}^N \delta_{\widehat{\bm u}^j}$, there exist conditional distributions $\{\mathbb{Q}_{\bm u}^j\}_{j \in [N]}$ such that $\Pi = \frac{1}{N}\sum_{j=1}^N \mathbb{Q}_{\bm u}^j$, where each $\mathbb{Q}_{\bm u}^j$ represents the distribution of $\bm u$ conditional on that $\widehat{\bm u} = \widehat{\bm u}^j$. It follows that problem \eqref{equ:bound} can be recast as
\begin{equation} \label{equ:moment}
\begin{array}{rl}
                     \widehat{Z}(\bm s) \ := \ \sup  & \dfrac1N  \sum\limits_{j=1}^N  \displaystyle\int_{\mathcal{U}} f(\bm s, \, \bm u) \, \QQ^j_{\bm u}(d\bm u)  \\
                            \st &  \dfrac1N \sum\limits_{j=1}^N  \displaystyle\int_{\mathcal{U}} \|\bm u - \bm{\widehat u}^j\|_p^p \,\QQ^j_{\bm u}(d\bm u) \le \epsilon^p \\
                                 & \QQ^j_{\bm u} \in {\cal P}(\mathcal{U}) \ \ \ \forall \, j \in [N].
\end{array}
\end{equation}
Second, by a standard duality argument and letting $\rho \geq 0$ be the Lagrangian
dual multiplier, we write the dual of (\ref{equ:moment}) as:
\begin{subequations}
\begin{align}
  \widehat{Z}(\bm s) \ = \ & \inf \limits_{\rho \ge 0} \sup \limits_{\QQ^j_{\bm u} \in \mathcal{P}(\mathcal{U})}  \epsilon^p \rho + \frac{1}{N}\sum_{j=1}^N \int_{\cal U} \left(f(\bm s, \, \bm u) - \rho \|\bm u - \bm{\widehat u}^j\|_p^p \right) \,\QQ^j_{\bm u}(d\bm u) \label{equ:sub1+} \\
  = \ & \inf \limits_{\rho \ge 0}  \epsilon^p \rho + \frac{1}{N}\sum_{j=1}^N \sup \limits_{\bm u \in \mathcal{U}} \ \left\{f(\bm s, \, \bm u) - \rho \|\bm u - \bm{\widehat u}^j\|_p^p \right\}, \label{equ:sub2+}
\end{align}
\end{subequations}
where equality \eqref{equ:sub1+} follows from the strong duality
result for the moment problems (see Proposition 3.4 in \cite{Shapiro.2001}, Theorem 1 in \cite{Hanasusanto.Kuhn.2018}, and Lemma 7 in
\cite{Hanasusanto.Roitch.Kuhn.Wiesemann.2017}) and equality \eqref{equ:sub2+} follows from the
fact that ${\cal P}({\cal U})$ contains all the Dirac distributions supported on $\cal U$. Finally, reformulation \eqref{equ:vdp_nonconvex} is obtained by introducing an auxiliary variable $\theta_j$ to represent each supremum in \eqref{equ:sub2+}.
\end{proof}

\subsection*{Proof of Theorem \ref{thm:dro-cop}}

We first establish three technical lemmas. Consider the following quadratic program:
\begin{equation} \label{equ:qp_cons}
\begin{array}{ll}
\displaystyle\sup & \bm{x}^\top \bm{Q}_0 \bm{x} \\
\st    &  \bm{e}_1^\top \bm{x} = 1, \ \bm{x} \in \K  \\
         & \bm{x}^\top \bm{Q}_i \bm{x} = 0 \ \ \  \forall \, i \in [n],
\end{array}
\end{equation}
where $\bm{Q}_i \in \SYM^k \ \forall \, i = 0, \ldots, n$. Its copositive programming relaxation (see \cite{Burer.2009, Burer.2012}) reads:
\begin{equation} \label{equ:cpp_cons}
\begin{array}{ll}
\displaystyle\sup & \bm{Q}_0 \bullet  \bm X \\
\st    & \bm{e}_1\bm{e}_1^\top \bullet \bm X = 1, \ \bm X \in \CP(\K) \\
        & \bm{Q}_i \bullet \bm X = 0 \ \ \ \forall \, i \in [n],
\end{array}
\end{equation}
where $\K$ is a nonempty, closed and convex cone.
Defining ${\cal L} := \left\{ \bm x \in \K : \bm{e}_1^\top \bm{x} = 1 \right\}$,
we review the following result.

\begin{lemma} [\cite{Burer.2012}, Corollary 8.4, Theorem 8.3] \label{lem:burer}
Suppose that $\cal L$ is nonempty and bounded. Then, (\ref{equ:cpp_cons}) is equivalent to
(\ref{equ:qp_cons}), i.e., i) the optimal value of (\ref{equ:cpp_cons}) is equal to that of (\ref{equ:qp_cons});
ii) if $\bm{X}^\star$ is an optimal solution for (\ref{equ:cpp_cons}), then $\bm{X}^\star\mathbf{e}_1$
is in the convex hull of optimal solutions for (\ref{equ:qp_cons}).
\end{lemma}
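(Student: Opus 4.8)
The plan is to prove the value equality by sandwiching the quadratic program~(\ref{equ:qp_cons}) between itself and its completely positive relaxation~(\ref{equ:cpp_cons}), and then to read off the solution-recovery claim~(ii) from the exactness certificate. One inequality is immediate because~(\ref{equ:cpp_cons}) is a relaxation: for any feasible $\bm{x}$ of~(\ref{equ:qp_cons}) the rank-one lift $\bm{X} = \bm{x}\bm{x}^\top$ belongs to $\CP(\K)$ (since $\bm{x}\in\K$), satisfies $\bm{e}_1\bm{e}_1^\top\bullet\bm{X} = (\bm{e}_1^\top\bm{x})^2 = 1$ and $\bm{Q}_i\bullet\bm{X} = \bm{x}^\top\bm{Q}_i\bm{x} = 0$, and attains $\bm{Q}_0\bullet\bm{X} = \bm{x}^\top\bm{Q}_0\bm{x}$. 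Hence the optimal value of~(\ref{equ:cpp_cons}) is at least that of~(\ref{equ:qp_cons}).

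The substance is the reverse inequality. First I would take an optimal $\bm{X}^\star$ of~(\ref{equ:cpp_cons}) and, by the definition of complete positivity over a closed convex cone, write $\bm{X}^\star = \sum_k \bm{x}_k\bm{x}_k^\top$ as a finite sum with each $\bm{x}_k\in\K$. Setting $t_k := \bm{e}_1^\top\bm{x}_k$, the normalization constraint becomes $\sum_k t_k^2 = 1$. The only role of the hypothesis that $\mathcal{L} := \{\bm{x}\in\K: \bm{e}_1^\top\bm{x}=1\}$ be bounded is to dispose of the pieces with $t_k = 0$: if $t_k = 0$ and $\bar{\bm{x}}\in\mathcal{L}$, then $\bar{\bm{x}}+\alpha\bm{x}_k\in\mathcal{L}$ for all $\alpha\geq 0$, so boundedness forces $\bm{x}_k = \bm{0}$. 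Since the homogenizing first coordinate is nonnegative over the perspective cone $\K$, every nontrivial $\bm{x}_k$ therefore has $t_k > 0$, and $\bar{\bm{x}}_k := \bm{x}_k/t_k$ is a well-defined point of $\mathcal{L}$.

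It remains to verify that each $\bar{\bm{x}}_k$ satisfies the quadratic equalities and is thus feasible for~(\ref{equ:qp_cons}). Here I would exploit the sign-definiteness designed into the constraints: each $\bm{Q}_i$ is chosen so that $\bm{x}^\top\bm{Q}_i\bm{x}$ keeps a fixed sign over $\K$, so that $0 = \bm{Q}_i\bullet\bm{X}^\star = \sum_k t_k^2\,\bar{\bm{x}}_k^\top\bm{Q}_i\bar{\bm{x}}_k$ is a sum of same-signed terms and must vanish termwise, giving $\bar{\bm{x}}_k^\top\bm{Q}_i\bar{\bm{x}}_k = 0$. Consequently $\bm{X}^\star = \sum_k t_k^2\,\bar{\bm{x}}_k\bar{\bm{x}}_k^\top$ with $\sum_k t_k^2 = 1$ expresses the objective $\bm{Q}_0\bullet\bm{X}^\star = \sum_k t_k^2\,\bar{\bm{x}}_k^\top\bm{Q}_0\bar{\bm{x}}_k$ as a convex combination of objective values of feasible points of~(\ref{equ:qp_cons}); this value is at most the optimum of~(\ref{equ:qp_cons}), which yields the reverse inequality. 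The same decomposition gives $\bm{X}^\star\bm{e}_1 = \sum_k t_k^2\,\bar{\bm{x}}_k$ as a convex combination of optimal solutions of~(\ref{equ:qp_cons}), establishing~(ii).

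The main obstacle is precisely the passage from the aggregate equality $\sum_k t_k^2\,\bar{\bm{x}}_k^\top\bm{Q}_i\bar{\bm{x}}_k = 0$ to the termwise equalities, which relies on each $\bm{Q}_i$ being copositive with respect to $\K$; verifying this sign-definiteness for the specific matrices entering the perspective-cone construction of $\K_1^j$ and $\K_2$ (where Assumption~\ref{ass:u_set} guarantees these cones are closed and convex) is the delicate part. Since the statement is exactly the general exactness result for completely positive reformulations, I would in the end cite~\cite{Burer.2012} (Theorem~8.3 and Corollary~8.4) for the full argument, after checking that nonemptiness and boundedness of $\mathcal{L}$ are the only hypotheses it requires in our setting.
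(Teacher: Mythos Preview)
The paper does not prove this lemma at all; it is stated purely as a citation of \cite{Burer.2012} (Corollary~8.4 and Theorem~8.3) and is used as a black box in the proofs of Theorems~\ref{thm:dro-cop} and~\ref{thm:dras-ns}. So there is no ``paper's own proof'' to compare against.

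Your sketch is nonetheless a faithful outline of how Burer's argument actually proceeds, and the point you flag as ``the main obstacle'' is exactly the crux. Two remarks. First, your worry is well-placed: the passage from $\sum_k t_k^2\,\bar{\bm x}_k^\top\bm Q_i\bar{\bm x}_k=0$ to termwise vanishing does require each $\bm Q_i$ to be sign-definite over $\K$, and this is not assumed in the lemma as stated. In the paper's applications, however, it does hold: in Theorem~\ref{thm:dro-cop} there are no quadratic equality constraints beyond the normalization, and in Theorem~\ref{thm:dras-ns} the matrices $\bm M_i,\bm J_i,\bm N_i$ encode $\lambda_i^2-\lambda_i=0$ for variables already constrained to $[0,1]$ inside the cone, so each quadratic form is nonpositive on $\K$. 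Second, your use of boundedness of $\mathcal L$ to kill the $t_k=0$ pieces is correct and is precisely the role that hypothesis plays in Burer's proof. Given that you end by deferring to \cite{Burer.2012} anyway, your proposal is entirely adequate for how the lemma is used here.
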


By the conic programming duality, the dual of (\ref{equ:cpp_cons}) is the following linear program over the cone of copositive matrices with respect to $\K$:
\begin{equation} \label{equ:cop_cons}
\begin{array}{ll}
\inf & \beta_j \\
\st    & \beta_j \, \bm{e}_1\bm{e}_1^\top  + \sum\limits_{i=1}^n \psi_i \bm{Q}_i - \bm{Q}_0  \in \COP(\K) \\
        & \beta_j \in \RR, \, \bm \psi \in \RR^n.
\end{array}
\end{equation}

\noindent We now showcase that strong duality holds between~\eqref{equ:cpp_cons} and~\eqref{equ:cop_cons} in the lemma below.

\begin{lemma} \label{lem:cpp_cop}
Suppose that $\cal L$ is noempty and bounded. Then, strong duality holds between (\ref{equ:cpp_cons}) and (\ref{equ:cop_cons}),
i.e., the optimal value of~\eqref{equ:cpp_cons} equals that of~\eqref{equ:cop_cons}.
\end{lemma}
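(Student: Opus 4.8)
The plan is to establish strong duality in two moves: weak duality, which is immediate from the fact that $\CP(\K)$ and $\COP(\K)$ are dual cones, and the absence of a duality gap, which I would obtain by exhibiting a Slater (strictly feasible) point for the copositive program \eqref{equ:cop_cons} and then invoking the conic strong duality theorem. The first thing I would record are the geometric consequences of the hypothesis that $\mathcal{L}=\{\bm x\in\K:\bm e_1^\top\bm x=1\}$ is nonempty and bounded. If some nonzero $\bm x\in\K$ had $\bm e_1^\top\bm x\le 0$, then for a fixed $\bar{\bm x}\in\mathcal{L}$ the set $\{\lambda\bar{\bm x}+\mu\bm x:\lambda,\mu\ge 0\}\subseteq\K$ would contain points of $\mathcal{L}$ of arbitrarily large norm (solve $\bm e_1^\top(\lambda\bar{\bm x}+\mu\bm x)=1$ with $\mu\to\infty$), contradicting boundedness. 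Hence $\bm e_1^\top\bm x>0$ for every $\bm x\in\K\setminus\{\bm 0\}$; in particular $\K$ is pointed and $\mathcal{L}$ is a compact base, so each $\bm x\in\K\setminus\{\bm 0\}$ factors as $\bm x=(\bm e_1^\top\bm x)\,\bar{\bm x}$ with $\bar{\bm x}\in\mathcal{L}$.

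Weak duality is then quick: for any $\bm X$ feasible in \eqref{equ:cpp_cons} and any $(\beta,\bm\psi)$ feasible in \eqref{equ:cop_cons}, the equality constraints give $\beta-\bm Q_0\bullet\bm X=(\beta\,\bm e_1\bm e_1^\top+\sum_i\psi_i\bm Q_i-\bm Q_0)\bullet\bm X\ge 0$, because the first factor lies in $\COP(\K)$ and $\bm X\in\CP(\K)$. For the reverse inequality I would build a strictly feasible dual point by taking $\bm\psi=\bm 0$ and $\beta>\max_{\bm x\in\mathcal{L}}\bm x^\top\bm Q_0\bm x$, the maximum being finite since $\mathcal{L}$ is compact and $\bm x\mapsto\bm x^\top\bm Q_0\bm x$ is continuous. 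For any $\bm x\in\K\setminus\{\bm 0\}$, writing $\bm x=(\bm e_1^\top\bm x)\bar{\bm x}$ gives $\bm x^\top(\beta\,\bm e_1\bm e_1^\top-\bm Q_0)\bm x=(\bm e_1^\top\bm x)^2(\beta-\bar{\bm x}^\top\bm Q_0\bar{\bm x})>0$, so $\bm M:=\beta\,\bm e_1\bm e_1^\top-\bm Q_0$ is strictly copositive with respect to $\K$.

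To convert strict copositivity into membership in $\intp(\COP(\K))$, I would use the compact base once more: with $\delta:=\min_{\bm x\in\mathcal{L}}\bm x^\top\bm M\bm x>0$, any symmetric $\bm M'$ sufficiently close to $\bm M$ still satisfies $\bm x^\top\bm M'\bm x>0$ on the compact set $\mathcal{L}$, hence on all of $\K$; thus $\bm M\in\intp(\COP(\K))$ and \eqref{equ:cop_cons} satisfies Slater's condition. Since \eqref{equ:cpp_cons} is feasible (any $\bm x\in\mathcal{L}$ with $\bm x^\top\bm Q_i\bm x=0$ yields the rank-one feasible point $\bm x\bm x^\top$, which exists in our applications where the $\bm Q_i$ encode satisfiable complementarities), weak duality forces the common value to be finite, and the conic strong duality theorem---strict feasibility of the minimization side implies zero gap and attainment on the maximization side---gives that the optimal values of \eqref{equ:cpp_cons} and \eqref{equ:cop_cons} coincide. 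The step demanding the most care is precisely this last identification of $\intp(\COP(\K))$ with the strictly copositive matrices: it is here that compactness of the base $\mathcal{L}$ (equivalently, boundedness of $\mathcal{L}$) is indispensable, for without it a strictly copositive $\bm M$ need not lie in the interior of $\COP(\K)$ and the dual Slater point could fail to exist, which is the usual mechanism by which duality gaps appear in copositive programming.
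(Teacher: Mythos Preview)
Your approach is the same as the paper's: construct a Slater point for the copositive minimization \eqref{equ:cop_cons} by setting $\bm\psi=\bm 0$ and choosing $\beta$ strictly larger than $\max_{\bm x\in\mathcal L}\bm x^\top\bm Q_0\bm x$, using boundedness of $\mathcal L$ to make that maximum finite and to rule out nonzero $\bm x\in\K$ with $\bm e_1^\top\bm x=0$. The paper stops once strict copositivity of $\beta\,\bm e_1\bm e_1^\top-\bm Q_0$ is established and declares it a Slater point; you go further and argue, via the compact base, that strict copositivity actually places this matrix in $\intp(\COP(\K))$, and you spell out weak duality and the invocation of the conic strong duality theorem. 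These are useful additions that the paper leaves implicit.

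One small caution: your claim that $\bm e_1^\top\bm x>0$ for every nonzero $\bm x\in\K$ can fail in the degenerate case where $\K$ is a full line through the origin; your ray argument does not exclude $\bm x=-c\bar{\bm x}$. In the paper's applications the perspective construction forces the first coordinate to be nonnegative, so the issue does not arise there, and even in the line case your Slater matrix still lies in the interior of $\COP(\K)$ since $(-t\bar{\bm x})^\top M(-t\bar{\bm x})=t^2\bar{\bm x}^\top M\bar{\bm x}>0$. So the gap is cosmetic rather than substantive, but it is worth phrasing the base argument as ``$\bm e_1^\top\bm x\ge 0$ with equality only at $\bm 0$'' (which your ray construction does prove) together with a one-line remark handling a possible negative multiple, rather than asserting strict positivity outright.
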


\begin{proof}
We prove the statement by showing that the dual problem~\eqref{equ:cop_cons} admits a Slater point.
To this end, we seek for a scalar $\beta_j$ such that the following relation holds:
\begin{equation} \label{equ:copositive_expression}
\big (t, \bar{\bm x}^\top \big) \Big( \beta_j \bm e_1 \bm e_1^\top  - \bm{Q}_0 \Big) \big (t, \bar{\bm x}^\top \big)^\top =
\beta_j t^2 -\big (t, \bar{\bm x}^\top \big)   \bm{Q}_0  \big (t, \bar{\bm x}^\top \big)  ^\top > 0
\end{equation}
for all non-zero vector $(t, \bar{\bm x}) \in \K$.

We first show that $t = 0 \ \implies \ \bar{\bm x}  = 0$. We prove the statement by using the contradiction argument. Suppose that $t = 0$ but $\bar{\bm x} \neq 0$. Choose $(1,\bar{\bm x}') \in \K$. Then for any non-negative scalar
$\lambda \geq  0$, we have $\bm z(\lambda) := (1, \bar{\bm x}') + \lambda (0, \bar{\bm x}) \in \K$ because
$\K$ is a closed and convex cone. As $\lambda$ can be arbitrarily large while $\bar{\bm x} \neq 0$, we conclude that $\cal L$ is unbounded, contradicting the statement.

Therefore, it suffices to consider the case of $t > 0$. We then can divide the expression in~\eqref{equ:copositive_expression} by $t^2$, which requires us to show
the following equivalent relation:
\[
\beta_j - \big (1, (\bar{\bm x}/t)^\top \big)  \bm{Q}_0  \big (1, (\bar{\bm x}/t)^\top \big) ^\top > 0
\]
for all $(t, \bar{\bm x}) \in \K$ and $t > 0$. Then, the boundedness of $\cal L$ implies that there exists a constant $\beta_j^\star$
such that $\beta_j^\star > \big (1, (\bar{\bm x}/t)^\top \big)  \bm{Q}_0 \big (1, (\bar{\bm x}/t)^\top \big) ^\top$ for all $\big (1, (\bar{\bm x}/t)^\top \big)  \in \cal L$. The claim thus follows since the point $\beta_j = \beta_j^\star$ constitutes a Slater point for the problem~\eqref{equ:cop_cons}.
\end{proof}

\begin{proof}[Proof of Theorem \ref{thm:dro-cop}]
Using the result from Proposition~\ref{prop:ROFormat}, for any given $\bm s \in {\cal S}$,
we can compute the worst-case expectation value by solving a standard robust optimization
problem shown in~\eqref{equ:vdp_nonconvex}, where each of the semi-infinite constraints
entails a non-convex program. Then, for any fixed $(\bm s, \rho, \theta_j) \in \RR^n \times \RR \times \RR$,
we consider the $j$-th constraint separately:
\begin{equation} \label{equ:jth-cons}
\sup \limits_{\bm u \in \mathcal{U}} \ (f(\bm s, \, \bm u) - \rho \|\bm u - \bm{\widehat u}^j\|_p^p) \le \theta_j.
\end{equation}
First if $p=1$, then the maximization problem on the left-hand side of \eqref{equ:jth-cons} can be reformulated as
\begin{equation} \label{equ:qp_cons-p1-equiv}
\begin{array}{ll}
\sup & (\bm u^+ - \bm u^- + \bm{\widehat u}^j - \bm s)^\top \bm y - \rho \bm e^\top (\bm u^+ + \bm u^- ) \\
\st    &  (\bm u^+, \, \bm u^-, \, \bm y) \in \mathcal{F}_1^j.
\end{array}
\end{equation}

Letting $\bm x := [t, (\bm{u}^+)^\top, (\bm{u}^-)^\top, \bm{y}^\top]^\top \in \RR^{3n+1}$, we rewrite~\eqref{equ:qp_cons-p1-equiv} as
\begin{equation} \label{equ:nqp_cons-p1}
\begin{array}{ll}
\sup & \bm{H}_j^1(\rho, \, \bm s) \bullet \bm{x}\bm{x}^\top  \\
\st    &  \bm{e}_1\bm{e}_1^\top \bullet \bm{x}\bm{x}^\top = 1 \\
         & \bm{x} \in \K^j_1.
\end{array}
\end{equation}
By Lemma~\ref{lem:burer}, \eqref{equ:nqp_cons-p1}
can be reformulated as a linear program over the cone of completely positive matrices:
\begin{equation} \label{equ:cp_cons-p1}
\begin{array}{ll}
\sup & \bm{H}_j^1(\rho, \, \bm s)  \bullet \bm{X} \\
\st    &  \bm{e}_1\bm{e}_1^\top \bullet \bm{X} = 1 \\
        & \bm{X} \in \CP(\K^j_1).
\end{array}
\end{equation}
If Assumption~\ref{ass:u_set} holds, then the set
$\{ \bm{x}  \in \K_1^j : \bm{e}_1^\top\bm{x} =1  \}$ is nonempty and bounded.
Therefore by Lemma~\ref{lem:cpp_cop}, the optimal value of \eqref{equ:cp_cons-p1} is equal to that of its dual problem, shown as:
\begin{equation} \label{equ:cop_cons-p1}
\begin{array}{ll}
\inf & \beta_j \\
\st    &  \beta_j \in \RR, \ \beta_j \, \bm{e}_1 \bm{e}_1^\top  - \bm{H}_j^1(\rho,\, \bm s) \in \COP(\K_1^j).
\end{array}
\end{equation}
Then, the constraint~\eqref{equ:jth-cons}
is satisfied if and only if there exists $\theta_j$ such that
\begin{equation} \label{equ:jth-cons-ref-p1}
\begin{array} {l}
\beta_j \le \theta_j \\
 \beta_j \, \bm{e}_1 \bm{e}_1^\top  - \bm{H}_j^1(\rho,\, \bm s)  \in \COP(\K_1^j).
\end{array}
\end{equation}
Using the same argument for all $N$ constraints yields the finite constraint system
\begin{equation} \label{equ:cons_system-p1}
\begin{array} {l}
\beta_j \le \theta_j  \ \ \ \forall \, j \in [N]\\
  \beta_j \, \bm{e}_1 \bm{e}_1^\top  - \bm{H}_j^1(\rho,\, \bm s) \in \COP(\K_1^j) \ \ \ \forall \, j \in [N].
\end{array}
\end{equation}
Replacing the semi-finite constraints in~\eqref{equ:vdp_nonconvex} with the constraint system
in~\eqref{equ:cons_system-p1}, removing the variables $\theta_j$, and making $\bm s$ as the decision
variables, we end up with a copositive programming reformulation
\eqref{equ:dro-cop-p1} for~\eqref{equ:wdro}.

Second if $p=2$, then the maximization problem on the left-hand side of \eqref{equ:jth-cons} can be written as
\begin{equation} \label{equ:qp_cons-p2-equiv}
\begin{array}{ll}
\sup & (\bm u - \bm s)^\top \bm y - \rho \| \bm u - \bm{\widehat u}^j \|_2^2 \\
\st    &  (\bm u, \,  \bm y) \in \mathcal{F}_2.
\end{array}
\end{equation}
Letting $\bm{x} := (t, \, \bm{u}^\top, \,  \bm{y}^\top)^\top$, we rewrite~\eqref{equ:qp_cons-p2-equiv} as
\begin{equation} \label{equ:nqp_cons-p2}
\begin{array}{ll}
\sup & \bm{H}_j^2(\rho, \, \bm s) \bullet \bm{x}\bm{x}^\top  \\
\st    &  \bm{e}_1\bm{e}_1^\top \bullet \bm{x}\bm{x}^\top = 1 \\
         & \bm{x} \in \K^j_2.
\end{array}
\end{equation}
Using a similar argument, we can reformulate~\eqref{equ:wdro} to the copositive program
in \eqref{equ:dro-cop-p2} for the case of $p=2$.
\end{proof}

\subsection*{Proof of Proposition \ref{prop:tractable}}
\begin{proof}
Given $\rho \geq 0$ and $\bm s \in \mathcal{S}$, the problem for computing $\omega_j(\rho, \bm s)$ can be rewritten as follows:
\begin{subequations}
\begin{align}
\omega_j(\rho, \bm s) \ = & \ \sup \limits_{\bm u \in \mathcal{U}} \ \left\{\sup_{\bm y \in \mathcal{Y}} \Bigl\{\sum_{i=1}^n (u_i - s_i)y_i\Bigr\} - \rho \|\bm u - \bm{\widehat u}^j\|^p_p\right\} \\
= & \ \sup \limits_{\bm u \in \mathcal{U}} \ \sup_{\bm y \in \mathcal{Y}} \ \left\{\sum_{i=1}^n (u_i - s_i)y_i - \rho \sum_{i=1}^n|u_i - \widehat{u}^j_i|^p\right\} \\
= & \ \sup_{\bm y \in \mathcal{Y}} \ \sup_{\bm u \in \U} \ \left\{\sum_{i=1}^n \Bigl[(u_i - s_i)y_i - \rho |u_i - \hat{u}^j_i|^p\Bigr] \right\} \\
= & \ \sup_{\bm y \in \mathcal{Y}} \ \left\{\sum_{i=1}^n \biggl[ - s_iy_i + \sup_{u^{\mbox{\tiny L}}_i \leq u_i \leq u^{\mbox{\tiny U}}_i} \bigl\{y_i u_i - \rho |u_i - \hat{u}^j_i|^p \bigr\} \biggr] \right\}. \label{equ:y-convex}
\end{align}
\end{subequations}
In the maximization problem~\eqref{equ:y-convex}, the objective function is convex in variables $\bm y$ because $p \geq 1$. Hence, it suffices to consider the extreme points of
polytope $\mathcal{Y}$ to obtain $\omega_j(\rho, \bm s)$. In what follows,
we apply a similar exposition to the proof of Proposition 2 in~\cite{Mak.Rong.Zhang.2015}.
In particular, we show that each extreme point of $\mathcal{Y}$ corresponds to
a partition of the set $\{1, \ldots, n+1\}$ into intervals in the form of
$[k, \ell]_{\mathbb{Z}}$ for some $k, \ell \in [n+1]$. To this end, we remark that for any extreme point $\bm y$ of $\Y$, either
$y_n = -d_n$ or $y_n= C$ should hold (see~\cite{Zangwill.1966, Zangwill.1969}).
Moreover, for $i=2,\ldots,n$, we have $(y_{i-1} + d_{i-1})(y_{i} - y_{i-1} + c_i) = 0$,
which indicates that either $y_{i-1} = -d_{i-1}$ or $y_{i-1} = y_i + c_i$ for all $i=2,\ldots,n$.
In the latter case, the value of $y_{i-1}$ is uniquely determined by $y_i$.
Recursively applying this fact, we obtain the following findings. For any $i \in [n]$, let $\ell \in [i, n+1]_{\mathbb{Z}}$ represent the smallest index such that $y_{\ell} = -d_{\ell}$ (for notation convenience, we let $c_{n+1} := 0$, $d_{n+1} := - C$, and $y_{n+1} := C$, so that $y_{n+1} = -d_{n+1}$), then $y_i = y_{\ell} + \sum_{q=i+1}^{\ell} c_q$, i.e., $y_i = \pi_{i\ell}$, which is defined in~\eqref{equ:h-pi}.

This gives rise to a one-to-one correspondence between an extreme point $\bm y$ and a partition of $\{1, \ldots, n+1\}$ into intervals in the form of $[k, \ell]_{\mathbb{Z}}$, and $y_i = \pi_{i\ell}$ if $i \in [k, \ell]_{\mathbb{Z}}$ for some $k, \ell \in [n+1]$. Therefore, formulation \eqref{equ:y-convex} is equivalent to finding an optimal partition of the set $\{1,\ldots,n+1\}$.
To this end, for any $k \leq \ell$, we define a binary variable $t_{k\ell}$ such that $t_{k\ell} = 1$ if any only if $[k, \ell]_{\Bbb Z}$ is a
component of the partition of $\{1,\ldots,n+1\}$. The set $\big \{ t_{k\ell} \in \{0,1\}, \forall \, k \in [n+1], \forall \, \ell \in [k,n+1]_{\Bbb Z} \big \}$ represents a partition of
$\{1,\ldots,n+1\}$ if and only if
\[
\sum\limits_{k=1}^i \sum\limits_{\ell = i}^{n+1} t_{k \ell}= 1 \ \ \ \forall \, i \in [n+1].
\]
Recall that $z_{(n+1)(n+1)j} = 0$ and $z_{i\ell j} = - s_i\pi_{i\ell} + \sup_{u^{\mbox{\tiny L}}_i \leq u_i \leq u^{\mbox{\tiny U}}_i} \{\pi_{i\ell} u_i - \rho |u_i - \hat{u}^j_i|^p \}$. Then, for all $k \in [n+1]$ and $\ell \in [k, n+1]_{\Bbb Z}$, we have when $t_{k\ell} = 1$
\[
\sum\limits_{i=k}^\ell \Bigl[ - s_iy_i + \sup_{u^{\mbox{\tiny L}}_i \leq u_i \leq u^{\mbox{\tiny U}}_i} \bigl\{y_i u_i - \rho |u_i - \hat{u}^j_i|^p \bigr\} \Bigr] = \sum\limits_{i=k}^\ell z_{i\ell j}.
\]

Hence, we can obtain $\omega_j(\rho, \bm s)$ by solving the following binary integer program:
\begin{align*}
\omega_j(\rho, \bm s) \ = \ \max_t \ & \ \sum_{k=1}^{n+1} \sum_{\ell=k}^{n+1} \left( \sum_{i=k}^{\ell} z_{i\ell j} \right) t_{k\ell} \\
\mbox{s.t.} \ & \ \sum_{k=1}^{i} \sum_{\ell=i}^{n+1} t_{k\ell} = 1 \ \ \forall \, i \in [n+1] \\
                   \ & \  \ t_{k\ell} \in \{0, 1\} \ \  \forall \, k  \in [n+1], \ \forall \, \ell \in [k, n+1]_{\Bbb Z}.
\end{align*}
Furthermore, we note that the constraint matrix of this formulation is totally unimodular (see, e.g.,~\cite{Faigle.Kern.2000}). Hence, without loss of optimality we relax the binary restrictions to $t_{k\ell} \geq 0$ for all $k  \in [n+1]$ and $\ell \in [k, n+1]_{\Bbb Z}$. In addition, as $z_{(n+1)(n+1)j} = 0$, we can drop decision variable $t_{(n+1)(n+1)j}$ to obtain
\begin{subequations}
\begin{align}
\omega_j(\rho, \bm s) \ = \ \max_t \ & \ \sum_{k=1}^{n} \sum_{\ell=k}^{n+1} \left( \sum_{i=k}^{\min\{\ell, n\}} z_{i\ell j} \right) t_{k\ell} \nonumber \\
\mbox{s.t.} \ & \ \sum_{k=1}^{i} \sum_{\ell=i}^{n+1} t_{k\ell} = 1 \ \ \forall \, i \in [n] \label{tractable-note-1} \\
& \ \sum_{k=1}^{n} t_{k(n+1)} \leq 1 \label{tractable-note-2} \\
                   \ & \  \ t_{k\ell} \geq 0 \ \  \forall \, k  \in [n], \ \forall \, \ell \in [k, n+1]_{\Bbb Z}. \nonumber
\end{align}
\end{subequations}
But letting $i = n$ in constraint \eqref{tractable-note-1} yields $1 = \sum_{k=1}^{n} \sum_{\ell=n}^{n+1} t_{k\ell} = \sum_{k=1}^n t_{kn} + \sum_{k=1}^nt_{k(n+1)} \geq \sum_{k=1}^n t_{k(n+1)}$, which implies constraint \eqref{tractable-note-2}. Dropping the redundant constraint \eqref{tractable-note-2} leads to the linear programming reformulation \eqref{equ:lp-obj}--\eqref{equ:lp-con-nonnegative} of \eqref{equ:wdro}.
\end{proof}

\subsection*{Proof of Theorem \ref{thm:tractable} and Extension to the Rational $p > 1$}

\begin{proof}
First, if $p = 1$ then
\begin{align*}
z_{i\ell j} \ = & \ - s_i\pi_{i\ell} + \sup_{u^{\mbox{\tiny L}}_i \leq u_i \leq u^{\mbox{\tiny U}}_i} \{\pi_{i\ell} u_i - \rho |u_i - \widehat{u}^j_i| \} \\
= & \ \max\{(u^{\mbox{\tiny L}}_i - s_i)\pi_{i\ell} - \rho|u^{\mbox{\tiny L}}_i - \widehat{u}^j_i|, \ (\widehat{u}^j_i - s_i) \pi_{i\ell}, \ (u^{\mbox{\tiny U}}_i - s_i)\pi_{i\ell} - \rho|u^{\mbox{\tiny U}}_i - \widehat{u}^j_i|\}
\end{align*}
for all $j \in [N]$, $i \in [n]$, and $\ell \in [i,n+1]_{\mathbb{Z}}$.
Taking the dual of the linear program \eqref{equ:lp-obj}--\eqref{equ:lp-con-nonnegative} in Proposition \ref{prop:tractable} yields
\begin{equation*}
\begin{array}{rll}
\omega_j(\rho, \bm s) \ = \ \displaystyle\min_{\bm{\gamma}, \bm{z}} \ & \ \sum\limits_{i=1}^{n} \gamma_{ij} &  \\
\mbox{s.t.} \ & \ \sum\limits_{k=i}^{\min\{\ell,n\}} \gamma_{kj} \geq \sum\limits_{k=i}^{\min\{\ell,n\}} z_{k\ell j}  &  \forall \,  i \in [n], \,  \forall \, \ell \in [i, n+1]_{\Bbb Z} \\
& \ z_{i\ell j} \geq (u^{\mbox{\tiny L}}_i - s_i)\pi_{i\ell} - |u^{\mbox{\tiny L}}_i - \widehat{u}^j_i| \rho &  \forall \,  i \in [n], \,  \forall \, \ell \in [i, n+1]_{\Bbb Z} \\
& \ z_{i\ell j} \geq (\widehat{u}^j_i - s_i) \pi_{i\ell} &  \forall \,  i \in [n], \,  \forall \, \ell \in [i, n+1]_{\Bbb Z} \\
& \ z_{i\ell j} \geq (u^{\mbox{\tiny U}}_i - s_i)\pi_{i\ell} - |u^{\mbox{\tiny U}}_i - \widehat{u}^j_i| \rho &  \forall \,  i \in [n], \,  \forall \, \ell \in [i, n+1]_{\Bbb Z},
\end{array}
\end{equation*}
where dual variables $\gamma_{ij}$ are associated with primal constraints \eqref{equ:lp-con}. The proof is completed by substituting $\omega_j(\rho, \bm s) = \sup \limits_{\bm u \in \U} \ \left\{f ( \bm s, \, \bm u) - \rho \|\bm u - \bm{\widehat u}^j\|^1_1\right\}$ in formulation \eqref{equ:tractable-ref}.

Second, if $p = 2$ then
\begin{align*}
z_{i\ell j} \ = & \ - s_i\pi_{i\ell} + \sup_{u^{\mbox{\tiny L}}_i \leq u_i \leq u^{\mbox{\tiny U}}_i} \left\{\pi_{i\ell} u_i - \rho (u_i - \widehat{u}^j_i)^2 \right\} \\
= & \ - s_i\pi_{i\ell} + \inf_{\beta^{\mbox{\tiny L}}_{i\ell j}, \beta^{\mbox{\tiny U}}_{i\ell j} \geq 0} \sup_{u_i} \left\{\pi_{i\ell} u_i - \rho (u_i - \widehat{u}^j_i)^2 + \beta^{\mbox{\tiny L}}_{i\ell j}(u_i - u^{\mbox{\tiny L}}_i) + \beta^{\mbox{\tiny U}}_{i\ell j}(u^{\mbox{\tiny U}}_i - u_i) \right\} \\
= & \ - s_i\pi_{i\ell} + \inf_{\beta^{\mbox{\tiny L}}_{i\ell j}, \beta^{\mbox{\tiny U}}_{i\ell j} \geq 0} \sup_{u_i} \Bigl\{u^{\mbox{\tiny U}}_i\beta^{\mbox{\tiny U}}_{i\ell j} - u^{\mbox{\tiny L}}_i\beta^{\mbox{\tiny L}}_{i\ell j} + (\pi_{i\ell} + \beta^{\mbox{\tiny L}}_{i\ell j} - \beta^{\mbox{\tiny U}}_{i\ell j}) \widehat{u}^j_i \\
& \ + (\pi_{i\ell} + \beta^{\mbox{\tiny L}}_{i\ell j} - \beta^{\mbox{\tiny U}}_{i\ell j}) (u_i - \widehat{u}^j_i) - \rho (u_i - \widehat{u}^j_i)^2 \Bigr\} \\
= & \ - s_i\pi_{i\ell} + \inf_{\beta^{\mbox{\tiny L}}_{i\ell j}, \beta^{\mbox{\tiny U}}_{i\ell j} \geq 0} \left\{u^{\mbox{\tiny U}}_i\beta^{\mbox{\tiny U}}_{i\ell j} - u^{\mbox{\tiny L}}_i\beta^{\mbox{\tiny L}}_{i\ell j} + (\pi_{i\ell} + \beta^{\mbox{\tiny L}}_{i\ell j} - \beta^{\mbox{\tiny U}}_{i\ell j}) \widehat{u}^j_i + \frac{(\pi_{i\ell} + \beta^{\mbox{\tiny L}}_{i\ell j} - \beta^{\mbox{\tiny U}}_{i\ell j})^2}{4\rho} \right\}
\end{align*}
for all $j \in [N]$, $i \in [n]$, and $\ell \in [i,n+1]_{\mathbb{Z}}$. Taking the dual of the linear program \eqref{equ:lp-obj}--\eqref{equ:lp-con-nonnegative} in Proposition \ref{prop:tractable} yields
\begin{equation*}
\begin{array}{rll}
\omega_j(\rho, \bm s) \ = \ \displaystyle\min_{\bm{\gamma}, \bm{z}, \bm{\beta}, \bm{r}} \ & \ \sum\limits_{i=1}^{n} \gamma_{ij} &  \\
\mbox{s.t.} \ & \ \sum\limits_{k=i}^{\min\{\ell, n\}} \gamma_{kj} \geq \sum\limits_{k=i}^{\min\{\ell, n\}} z_{k\ell j}  &  \forall \,  i \in [n], \,  \forall \, \ell \in [i, n+1]_{\Bbb Z} \\
& \ z_{i\ell j} + \pi_{i\ell} s_i - (u^{\mbox{\tiny U}}_i - \widehat{u}^j_i)\beta^{\mbox{\tiny U}}_{i\ell j} & \\
& \ - (\widehat{u}^j_i - u^{\mbox{\tiny L}}_i)\beta^{\mbox{\tiny L}}_{i\ell j} - r_{i\ell j} \geq \pi_{i\ell} \widehat{u}^j_i &  \forall \,  i \in [n], \,  \forall \, \ell \in [i, n+1]_{\Bbb Z} \\[0.25cm]
& \ r_{i\ell j} \geq \frac{\left(\pi_{i\ell} + \beta^{\mbox{\tiny L}}_{i\ell j} - \beta^{\mbox{\tiny U}}_{i\ell j}\right)^2}{4\rho} & \forall \,  i \in [n], \,  \forall \, \ell \in [i, n+1]_{\Bbb Z}.
\end{array}
\end{equation*}
We represent the last constraint in the following second-order conic form:
$$
\left\| \begin{bmatrix} \pi_{i\ell} + \beta^{\mbox{\tiny L}}_{i\ell j} - \beta^{\mbox{\tiny U}}_{i\ell j}\\[0.1cm] r_{i\ell j} - \rho \end{bmatrix} \right\|_2 \leq r_{i\ell j} + \rho \ \ \forall \,  i \in [n], \,  \forall \, \ell \in [i, n+1]_{\Bbb Z}.
$$
The proof is completed by substituting $\omega_j(\rho, \bm s) = \sup \limits_{\bm u \in \U} \ \left\{f ( \bm s, \, \bm u) - \rho \|\bm u - \bm{\widehat u}^j\|^2_2\right\}$ in formulation \eqref{equ:tractable-ref}.
\end{proof}

We extend Theorem \ref{thm:tractable} and derive a second-order cone reformulation of \eqref{equ:wdro} for any rational $p > 1$. To this end, we define $q := p/(p-1)$, $q_1, q_2 \in \mathbb{N}$ such that $q = q_1/q_2$ and $q_1 > q_2$, $M := \lceil \log_2 q_1 \rceil$, and $q_3:= 2^M - q_1$. We summarize the reformulation in the following theorem and note that it involves $\mathcal{O}(N^2nq_1)$ variables and $\mathcal{O}(N^2nq_1)$ constraints.
\begin{theorem} \label{thm:p_rational}
Suppose that Assumption \ref{ass:tractable-support} holds and $p > 1$ is rational. Then, \eqref{equ:wdro} yields the same optimal value and the same set of optimal solutions as the following second-order cone program:
\begin{equation*}
\begin{array} {lll}
\min \ & \ \epsilon^p \rho + \dfrac{1}{N} \sum\limits_{j=1}^N \sum\limits_{i=1}^{n} \gamma_{ij} & \\
\mbox{s.t.} \ & \ \sum\limits_{k=i}^{\min\{\ell,n\}} \gamma_{kj} \geq \sum\limits_{k=i}^{\min\{\ell,n\}} z_{k\ell j} &   \forall \, i \in [n], \ \forall \, \ell \in [i, n+1]_{\Bbb Z}, \ \forall \, j \in [N] \\
& \ z_{i\ell j} + \pi_{i\ell} s_i - u^{\mbox{\tiny U}}_i \beta^{\mbox{\tiny U}}_{i\ell j} + u^{\mbox{\tiny L}}_i \beta^{\mbox{\tiny L}}_{i\ell j} & \\
& \ - \widehat{u}^j_i(\alpha^{\mbox{\tiny U}}_{i\ell j} - ^{\mbox{\tiny L}}_{i\ell j}) - (p^{1-q} - p^{-q}) r_{i\ell j} \geq 0 & \forall  \,  i \in [n],  \ \forall \,  \ell \in [i, n+1]_{\Bbb Z}, \  \forall \, j \in [N] \\
& \ \varphi^{i\ell j}_{1k} = r_{i\ell j} & \forall \, k \in [q_2], \ \forall  \,  i \in [n],  \ \forall \,  \ell \in [i, n+1]_{\Bbb Z}, \  \forall \, j \in [N] \\
& \ \varphi^{i\ell j}_{1k} = \rho & \forall \, k \in [q_2+1, q_1]_{\mathbb{Z}}, \ \forall  \,  i \in [n],  \ \forall \,  \ell \in [i, n+1]_{\Bbb Z}, \\
& & \forall \, j \in [N] \\
& \ \varphi^{i\ell j}_{1k} = \alpha^{\mbox{\tiny U}}_{i\ell j} + \alpha^{\mbox{\tiny L}}_{i\ell j} & \forall \, k \in [q_1+1, 2^M]_{\mathbb{Z}}, \ \forall  \,  i \in [n],  \ \forall \,  \ell \in [i, n+1]_{\Bbb Z}, \\
& & \forall \, j \in [N] \\
& \ \Biggl\|\begin{bmatrix} 2(\alpha^{\mbox{\tiny U}}_{i\ell j} + \alpha^{\mbox{\tiny L}}_{i\ell j}) \\[0.1cm] \varphi^{i\ell j}_{M1} - \varphi^{i\ell j}_{M2} \end{bmatrix}\Biggr\|_2 \leq \varphi^{i\ell j}_{M1} + \varphi^{i\ell j}_{M2} & \forall  \, i \in [n], \ \forall \,  \ell \in [i, n+1]_{\Bbb Z}, \  \forall \, j \in [N] \\[0.8cm]
& \ \left\|\begin{bmatrix} 2\varphi^{i\ell j}_{(m+1)k} \\[0.1cm] \varphi^{i\ell j}_{m(2k-1)} - \varphi^{i\ell j}_{m(2k)} \end{bmatrix}\right\|_2 \leq \varphi^{i\ell j}_{m(2k-1)} + \varphi^{i\ell j}_{m(2k)} & \forall \, m \in [M-1], \ \forall \, k \in [2^{M-m}], \ \forall  \, i \in [n], \\
& & \forall \,  \ell \in [i, n+1]_{\Bbb Z}, \  \forall \, j \in [N] \\
& \ \rho \geq 0, \ \bm s \in {\cal S}, \ \varphi^{i\ell j}_{mk} \geq 0 & \forall \, m \in [M], \ \forall \, k \in [2^{M-m+1}], \ \forall  \, i \in [n], \\
& & \forall \,  \ell \in [i, n+1]_{\Bbb Z}, \ \forall \,  j \in [N].
\end{array}
\end{equation*}
\end{theorem}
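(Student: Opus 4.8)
The plan is to reuse the two-layer strategy from the proof of Theorem~\ref{thm:tractable} for $p\in\{1,2\}$, changing only the treatment of the one-dimensional inner supremum. Starting from \eqref{equ:tractable-ref}, it suffices to give a second-order conic description of $\omega_j(\rho,\bm s)=\sup_{\bm u\in\mathcal U}\{f(\bm s,\bm u)-\rho\|\bm u-\widehat{\bm u}^j\|_p^p\}$ that is jointly convex in $(\rho,\bm s)$. By Proposition~\ref{prop:tractable}, $\omega_j(\rho,\bm s)$ equals the optimal value of the totally unimodular partition LP \eqref{equ:lp-obj}--\eqref{equ:lp-con-nonnegative}, whose objective coefficients are the scalars $z_{i\ell j}$. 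First I would dualize this LP over $\bm t$ exactly as in the $p=2$ case, obtaining $\min_{\bm\gamma}\sum_i\gamma_{ij}$ subject to $\sum_{k=i}^{\min\{\ell,n\}}\gamma_{kj}\ge\sum_{k=i}^{\min\{\ell,n\}}z_{k\ell j}$ for all $i,\ell$, where the dual variables $\gamma_{ij}$ are attached to the constraints \eqref{equ:lp-con}.

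The new ingredient is a conic representation of each coefficient $z_{i\ell j}=-s_i\pi_{i\ell}+\sup_{u_i^{\mbox{\tiny L}}\le u_i\le u_i^{\mbox{\tiny U}}}\{\pi_{i\ell}u_i-\rho|u_i-\widehat u_i^j|^p\}$. Since Assumption~\ref{ass:tractable-support} gives $u_i^{\mbox{\tiny L}}<u_i^{\mbox{\tiny U}}$, the box has nonempty interior and Slater's condition holds, so I would Lagrangian-dualize the two box constraints with multipliers $\beta_{i\ell j}^{\mbox{\tiny L}},\beta_{i\ell j}^{\mbox{\tiny U}}\ge0$. After the shift $u_i=v+\widehat u_i^j$ this isolates the conjugate $\sup_v\{a\,v-\rho|v|^p\}$ of $\rho|\cdot|^p$, where $a:=\pi_{i\ell}+\beta_{i\ell j}^{\mbox{\tiny L}}-\beta_{i\ell j}^{\mbox{\tiny U}}$; a first-order computation gives $\sup_v\{av-\rho|v|^p\}=(p^{1-q}-p^{-q})\rho^{1-q}|a|^q$ with $q=p/(p-1)$, using $(p-1)p^{-q}=p^{1-q}-p^{-q}$. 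I would then introduce $\alpha_{i\ell j}^{\mbox{\tiny L}},\alpha_{i\ell j}^{\mbox{\tiny U}}\ge0$ linked by $\alpha_{i\ell j}^{\mbox{\tiny U}}-\alpha_{i\ell j}^{\mbox{\tiny L}}=a$, so that $|a|\le\alpha_{i\ell j}^{\mbox{\tiny U}}+\alpha_{i\ell j}^{\mbox{\tiny L}}$, together with an epigraph variable $r_{i\ell j}\ge\rho^{1-q}|a|^q$; this turns the defining relation for $z_{i\ell j}$ into the single linear inequality displayed in the theorem.

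The heart of the argument is representing $r_{i\ell j}\ge\rho^{1-q}(\alpha^{\mbox{\tiny U}}+\alpha^{\mbox{\tiny L}})^q$, equivalently the power-cone inequality $(\alpha^{\mbox{\tiny U}}+\alpha^{\mbox{\tiny L}})^{q_1}\le r_{i\ell j}^{q_2}\rho^{\,q_1-q_2}$, as a tower of rotated second-order cones (in the spirit of the classical tower-of-cones representation of power cones). I would pad to $2^M$ leaves with $M=\lceil\log_2 q_1\rceil$, assigning $q_2$ leaves the value $r_{i\ell j}$, $q_1-q_2$ leaves the value $\rho$, and the remaining $q_3=2^M-q_1$ leaves the value $\alpha^{\mbox{\tiny U}}+\alpha^{\mbox{\tiny L}}$, then impose the $M-1$ layers of rotated-cone constraints $\varphi_{(m+1)k}^2\le\varphi_{m(2k-1)}\varphi_{m(2k)}$ and the top relation $(\alpha^{\mbox{\tiny U}}+\alpha^{\mbox{\tiny L}})^2\le\varphi_{M1}\varphi_{M2}$. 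Telescoping the geometric means down the binary tree yields $(\alpha^{\mbox{\tiny U}}+\alpha^{\mbox{\tiny L}})^{2^M}\le r_{i\ell j}^{q_2}\rho^{\,q_1-q_2}(\alpha^{\mbox{\tiny U}}+\alpha^{\mbox{\tiny L}})^{q_3}$, which after cancelling $(\alpha^{\mbox{\tiny U}}+\alpha^{\mbox{\tiny L}})^{q_3}$ is exactly the stated power cone, since $2^M-q_3=q_1=q\,q_2$.

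Finally I would assemble the pieces: substitute the min-representation of each $z_{i\ell j}$ into the dualized outer LP and thence into \eqref{equ:tractable-ref}, and observe that every inner infimum (over $\beta,\alpha,r,\varphi$) merges with the outer infimum over $\bm\gamma,\rho,\bm s$ into one joint minimization, because each $z_{i\ell j}$ appears only on the right-hand side of the $\gamma$-constraints and is therefore uniformly pushed downward with no cross-constraint conflict; this preserves both the optimal value and the set of optimal schedules. The hard part will not be the conjugate computation but the exactness of the SOC-tower representation for an arbitrary rational $q$: one must verify that the $q_3$-fold padding and the two strong-duality invocations (Slater for the box subproblem and the moment/LP duality underlying Proposition~\ref{prop:tractable}) keep the reformulation tight rather than a relaxation, and handle the apparent singularity of $\rho^{1-q}$ as $\rho\downarrow0$ through the conic closure, noting that any point with finite objective has $\rho>0$.
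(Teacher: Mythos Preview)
Your proposal is correct and follows essentially the same route as the paper. The only cosmetic difference is in how the one-dimensional inner supremum is handled: the paper introduces an epigraph variable $v_i\ge |u_i-\widehat u_i^j|$ and dualizes the two inequalities $v_i\ge \pm(u_i-\widehat u_i^j)$ with multipliers $\alpha^{\mbox{\tiny U}},\alpha^{\mbox{\tiny L}}\ge 0$, which forces $\pi_{i\ell}+\beta^{\mbox{\tiny L}}-\beta^{\mbox{\tiny U}}=\alpha^{\mbox{\tiny U}}-\alpha^{\mbox{\tiny L}}$ and yields the term $(p^{1-q}-p^{-q})\rho^{1-q}(\alpha^{\mbox{\tiny U}}+\alpha^{\mbox{\tiny L}})^q$ directly from the $v_i$-supremum; you instead compute the Fenchel conjugate $(p^{1-q}-p^{-q})\rho^{1-q}|a|^q$ first and then lift $|a|$ via $\alpha^{\mbox{\tiny U}}-\alpha^{\mbox{\tiny L}}=a$. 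Both paths produce identical variables, the identical linking equality, and the identical power-cone constraint $r_{i\ell j}\ge \rho^{1-q}(\alpha^{\mbox{\tiny U}}+\alpha^{\mbox{\tiny L}})^q$, and both invoke the Ben-Tal--Nemirovski tower-of-cones representation (the paper cites Example~11, Section~3.3 of \cite{ben2001lectures}) in exactly the way you describe.
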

\begin{proof}
As $p > 1$, we have
\begin{align*}
z_{i\ell j} \ = & \ - s_i\pi_{i\ell} + \sup_{u^{\mbox{\tiny L}}_i \leq u_i \leq u^{\mbox{\tiny U}}_i} \left\{\pi_{i\ell} u_i - \rho |u_i - \widehat{u}^j_i|^p \right\} \\
= & \ - s_i\pi_{i\ell} + \inf_{\substack{\beta^{\mbox{\tiny L}}_{i\ell j}, \beta^{\mbox{\tiny U}}_{i\ell j}\\ \alpha^{\mbox{\tiny L}}_{i\ell j}, \alpha^{\mbox{\tiny U}}_{i\ell j} \geq 0}} \sup_{u_i, v_i} \Bigl\{\pi_{i\ell} u_i - \rho v_i^p + \beta^{\mbox{\tiny U}}_{i\ell j}(u^{\mbox{\tiny U}}_i - u_i) + \beta^{\mbox{\tiny L}}_{i\ell j}(u_i - u^{\mbox{\tiny L}}_i) \\
& \ + \alpha^{\mbox{\tiny U}}_{i\ell j}(v_i - u_i + \widehat{u}^j_i) + \alpha^{\mbox{\tiny L}}_{i\ell j}(v_i + u_i - \widehat{u}^j_i) \Bigr\} \\
= & \ - s_i\pi_{i\ell} + \inf_{\substack{\beta^{\mbox{\tiny L}}_{i\ell j}, \beta^{\mbox{\tiny U}}_{i\ell j}\\ \alpha^{\mbox{\tiny L}}_{i\ell j}, \alpha^{\mbox{\tiny U}}_{i\ell j} \geq 0}} \sup_{u_i, v_i} \Bigl\{u^{\mbox{\tiny U}}_i\beta^{\mbox{\tiny U}}_{i\ell j} - u^{\mbox{\tiny L}}_i\beta^{\mbox{\tiny L}}_{i\ell j} + \widehat{u}^j_i(\alpha^{\mbox{\tiny U}}_{i\ell j} - \alpha^{\mbox{\tiny L}}_{i\ell j}) - \rho v_i^p + (\alpha^{\mbox{\tiny U}}_{i\ell j} + \alpha^{\mbox{\tiny L}}_{i\ell j}) v_i \\
& \ + (\pi_{i\ell} + \beta^{\mbox{\tiny L}}_{i\ell j} - \beta^{\mbox{\tiny U}}_{i\ell j} - \alpha^{\mbox{\tiny U}}_{i\ell j} + \alpha^{\mbox{\tiny L}}_{i\ell j}) u_i \Bigr\} \\
= & \ - s_i\pi_{i\ell} + \inf_{\substack{\beta^{\mbox{\tiny L}}_{i\ell j}, \beta^{\mbox{\tiny U}}_{i\ell j}\\ \alpha^{\mbox{\tiny L}}_{i\ell j}, \alpha^{\mbox{\tiny U}}_{i\ell j} \geq 0}} \Bigl\{u^{\mbox{\tiny U}}_i\beta^{\mbox{\tiny U}}_{i\ell j} - u^{\mbox{\tiny L}}_i\beta^{\mbox{\tiny L}}_{i\ell j} + \widehat{u}^j_i(\alpha^{\mbox{\tiny U}}_{i\ell j} - \alpha^{\mbox{\tiny L}}_{i\ell j}) + (p^{1-q} - p^{-q})\rho^{1-q}(\alpha^{\mbox{\tiny U}}_{i\ell j} + \alpha^{\mbox{\tiny L}}_{i\ell j})^{q} \Bigr\} \\
\mbox{s.t.} \ & \ \pi_{i\ell} + \beta^{\mbox{\tiny L}}_{i\ell j} - \alpha^{\mbox{\tiny U}}_{i\ell j} - \alpha^{\mbox{\tiny U}}_{i\ell j} + \alpha^{\mbox{\tiny L}}_{i\ell j} = 0
\end{align*}
for all $j \in [N]$, $i \in [n]$, and $\ell \in [i,n+1]_{\mathbb{Z}}$. Taking the dual of the linear program \eqref{equ:lp-obj}--\eqref{equ:lp-con-nonnegative} in Proposition \ref{prop:tractable} yields
\begin{equation*}
\begin{array}{rll}
\omega_j(\rho, \bm s) \ = \ \min \ & \ \sum\limits_{i=1}^{n} \gamma_{ij} &  \\
\mbox{s.t.} \ & \ \sum\limits_{k=i}^{\min\{\ell,n\}} \gamma_{kj} \geq \sum\limits_{k=i}^{\min\{\ell,n\}} z_{k\ell j}  &  \forall \,  i \in [n], \,  \forall \, \ell \in [i, n+1]_{\Bbb Z} \\
& \ z_{i\ell j} + \pi_{i\ell} s_i - u^{\mbox{\tiny U}}_i \beta^{\mbox{\tiny U}}_{i\ell j} + u^{\mbox{\tiny L}}_i \beta^{\mbox{\tiny L}}_{i\ell j} & \\
& \ - \widehat{u}^j_i(\alpha^{\mbox{\tiny U}}_{i\ell j} - ^{\mbox{\tiny L}}_{i\ell j}) - (p^{1-q} - p^{-q}) r_{i\ell j} \geq 0 & \forall  \,  i \in [n],  \ \forall \,  \ell \in [i, n+1]_{\Bbb Z}, \  \forall \, j \in [N] \\
& \ r_{i\ell j} \geq \rho^{1-q}(\alpha^{\mbox{\tiny U}}_{i\ell j} + \alpha^{\mbox{\tiny L}}_{i\ell j})^{q} & \forall \,  i \in [n], \,  \forall \, \ell \in [i, n+1]_{\Bbb Z},
\end{array}
\end{equation*}
where dual variables $\gamma_{ij}$ are associated with primal constraints \eqref{equ:lp-con}. In addition, we rewrite the last constraint in the above formulation as
\begin{align*}
& \ \alpha^{\mbox{\tiny U}}_{i\ell j} + \alpha^{\mbox{\tiny L}}_{i\ell j} \leq r_{i\ell j}^{1/q} \rho^{(q-1)/q} \\
\Leftrightarrow \ & \ \alpha^{\mbox{\tiny U}}_{i\ell j} + \alpha^{\mbox{\tiny L}}_{i\ell j} \leq \Biggl[\underbrace{r_{i\ell j} \cdots r_{i\ell j}}_{q_2} \underbrace{\rho \cdots \rho}_{q_1 - q_2} \underbrace{(\alpha^{\mbox{\tiny U}}_{i\ell j} + \alpha^{\mbox{\tiny L}}_{i\ell j}) \cdots (\alpha^{\mbox{\tiny U}}_{i\ell j} + \alpha^{\mbox{\tiny L}}_{i\ell j})}_{q_3} \Biggr]^{\frac{1}{2^M}} \\
\Leftrightarrow \ & \ \alpha^{\mbox{\tiny U}}_{i\ell j} + \alpha^{\mbox{\tiny L}}_{i\ell j} \leq \left( \varphi^{i\ell j}_{11} \cdots \varphi^{i\ell j}_{1(2^M)} \right)^{\frac{1}{2^M}},
\end{align*}
where $\varphi^{i\ell j}_{mk}$ are defined by $\varphi^{i\ell j}_{1k} := r_{i\ell j}$ for all $k \in [q_2]$, $\varphi^{i\ell j}_{1k} := \rho$ for all $k \in [q_2+1, q_1]_{\mathbb{Z}}$, and $\varphi^{i\ell j}_{1k} := \alpha^{\mbox{\tiny U}}_{i\ell j} + \alpha^{\mbox{\tiny L}}_{i\ell j}$ for all $k \in [q_1+1, 2^M]_{\mathbb{Z}}$. Based on a seminal work on second-order conic representability (see Example 11 in Section 3.3 of~\cite{ben2001lectures}), the last inequality holds if and only if there exist $\{\varphi^{i\ell j}_{mk} \geq 0: m \in [2, M]_{\mathbb{Z}}, k \in [2^{M+1-m}]\}$ such that $\varphi^{i\ell j}_{(m+1)k} \leq \sqrt{\varphi^{i\ell j}_{m(2k-1)}\varphi^{i\ell j}_{m(2k)}}$ for all $m \in [M-1]$ and $k \in [2^{M-m}]$. This requirement can be represented in the following second-order conic form:
$$
\left\|\begin{bmatrix} 2\varphi^{i\ell j}_{(m+1)k} \\[0.1cm] \varphi^{i\ell j}_{m(2k-1)} - \varphi^{i\ell j}_{m(2k)} \end{bmatrix}\right\|_2 \leq \varphi^{i\ell j}_{m(2k-1)} + \varphi^{i\ell j}_{m(2k)}.
$$
The proof is completed by substituting $\omega_j(\rho, \bm s) = \sup \limits_{\bm u \in \U} \ \left\{f ( \bm s, \, \bm u) - \rho \|\bm u - \bm{\widehat u}^j\|^p_p\right\}$ in formulation \eqref{equ:tractable-ref}.
\end{proof}

\subsection*{Proof of Theorem \ref{thm:wc_dist}}

\begin{proof}
First, for fixed $\bar{\bm s} \in \mathcal{S}$ and $\epsilon \geq 0$, the dual problem of formulation \eqref{equ:lpdro} is
\begin{equation*} 
\begin{array}{ll}
\sup\limits_{\QQ \in \D_1(\widehat \PP^N_{\bm u}, \, \epsilon)} & \Ep_{\QQ} [f(\bar{\bm s}, \bm u)] \\
= \ \max\limits_{\bm p, \bm q, \bm r, \bm w} &  \sum\limits_{j=1}^N \sum\limits_{i=1}^{n}\sum\limits_{\ell=i}^{n+1} \pi_{i\ell} \left [ (u_i^{\mbox{\tiny L}} - \bar s_i) q_{i\ell j} + (u_i^{\mbox{\tiny U}} - \bar s_i)r_{i\ell j} + (\widehat u_i^j - \bar s_i)w_{i\ell j} \right] \\
\ \ \ \ \ \ \ \ \st & \sum\limits_{j=1}^N \sum\limits_{i=1}^{n}\sum\limits_{\ell=i}^{n+1}  \left[  (\widehat u_i^j - u_i^{\mbox{\tiny L}}) q_{i\ell j} + (u_i^{\mbox{\tiny U}} - \widehat u_i^j) r_{i\ell j}\right] \leq \epsilon  \\
 & \sum \limits_{\ell = i}^{n+1}\sum\limits_{k=1}^i p_{k\ell j} = \frac1N  \ \ \  \forall \, i \in [n], \  \forall \, j \in [N]  \\
 & \sum\limits_{k=1}^i p_{k\ell j} = q_{i\ell j} + w_{i\ell j} + r_{i\ell j} \ \ \ \forall \, i \in [n], \ \forall \, \ell \in [i, \, n+1]_{\Bbb Z}, \ \forall \,  j \in [N]  \\
& p_{i\ell j} \geq 0, \, q_{i\ell j} \geq 0, \, r_{i\ell j} \geq 0, \, w_{i\ell j} \geq 0 \ \ \ \forall \, i \in [n], \  \forall \, \ell \in [i, \, n+1]_{\Bbb Z}, \ \forall \, j \in [N],
\end{array}
\end{equation*}
where dual variables $p_{k\ell j}$, $q_{i\ell j}$, $w_{i\ell j}$, and $r_{i\ell j}$ are associated with the constraints in \eqref{equ:lpdro}. The strong duality holds because the dual feasible region is non-empty (e.g., we can set $p_{1(n+1)j} = 1/N$, $\forall j \in [N]$, all other $p_{k\ell j}$ to be zero, all $q_{i\ell j}$ and $r_{i\ell j}$ to be zero, and all $w_{i\ell j}$ to be $\sum_{k=1}^i p_{k\ell j}$). Replacing $\{p_{i\ell j}, q_{i\ell j}, r_{i\ell j}, w_{i\ell j}\}$ with $\frac1N\{p_{i\ell j}, q_{i\ell j}, r_{i\ell j}, w_{i\ell j}\}$ and substituting variables $w_{i\ell j}$ with $\sum_{k=1}^i p_{k\ell j} - q_{i\ell j} - r_{i\ell j}$ yields formulation \eqref{equ:lpdro-x-wc}.

Second, to see the existence of $\mathbb{P}^j_{\bm t}$ for all $j \in [N]$, we note that the constraint matrix defining set $\mathcal{T}$ is totally unimodular. Hence, $\mbox{conv}(\mathcal{T}) = \{\bm{t}\in [0, 1]^{(n+1)(n+2)/2}: \ \sum_{k=1}^i \sum_{\ell=i}^{n+1} t_{k\ell} = 1, \ \forall i \in [n+1]\}$ and so $\bm{p}^\star_j := [p^\star_{k\ell j}: k \in [n+1], \ell \in [k, n+1]_{\mathbb{Z}}] \in \mbox{conv}(\mathcal{T})$. It follows that there exists a finite number of points $\bm{t}^1, \ldots, \bm{t}^I \in \mathcal{T}$ and weights $\lambda^1, \ldots, \lambda^I \in [0, 1]$ such that
$$
\bm{p}^\star_j \ = \ \sum_{i=1}^I \lambda^i \bm{t}^i, \ \ \sum_{i=1}^I \lambda^i = 1.
$$
Hence, the distribution $\mathbb{P}^j_{\bm t}$ constructed by setting $\mathbb{P}^j_{\bm t}\{\bm{t} = \bm{t}^i\} = \lambda^i$ for all $i \in [I]$ fulfills the claim.

Third, to prove that $\mathbb{Q}^\star_{\bm u} \in \mathcal{D}_1(\widehat{\mathbb{P}}^N_{\bm u}, \epsilon)$, we note that $u_{i\ell j} \in [u_i^{\mbox{\tiny L}}, u_i^{\mbox{\tiny U}}]$. Indeed, on the one hand, if $\sum_{k=1}^i p^\star_{k\ell j} = 0$ then $q^\star_{i\ell j} = r^\star_{i\ell j} = 0$ by formulation \eqref{equ:lpdro-x-wc}. Thus, by the extended arithmetics $0/0=0$ we have $u_{i\ell j}^\star = \widehat u_i^j \in [u_i^{\mbox{\tiny L}}, u_i^{\mbox{\tiny U}}]$. On the other hand, if $\sum_{k=1}^i p^\star_{k\ell j} > 0$ then
$$
u_{i\ell j} = \left(\frac{\sum_{k=1}^i p^\star_{k\ell j} - q^\star_{i\ell j} - r^\star_{i\ell j}}{\sum_{k=1}^i p^\star_{k\ell j}}\right) \widehat{u}^j_i + \left(\frac{q^\star_{i\ell j}}{\sum_{k=1}^i p^\star_{k\ell j}}\right)u_i^{\mbox{\tiny L}} + \left(\frac{r^\star_{i\ell j}}{\sum_{k=1}^i p^\star_{k\ell j}}\right)u_i^{\mbox{\tiny U}}.
$$
It follows that $u_{i\ell j}$ is a convex combination of $\widehat{u}^j_i$, $u_i^{\mbox{\tiny L}}$, and $u_i^{\mbox{\tiny U}}$ and so $u_{i\ell j}^\star \in [u_i^{\mbox{\tiny L}}, u_i^{\mbox{\tiny U}}]$. In addition, we define a joint probability distribution $\mathbb{Q}_{\widehat{\bm u}, \bm u, \bm t}$ of $(\hat{\bm u}, \bm u, \bm t)$ through $\mathbb{Q}_{\widehat{\bm u}, \bm u, \bm t} = \frac{1}{N}\sum_{j=1}^N \sum_{\bm{\tau} \in \mathcal{T}} \mathbb{P}^j_{\bm t}\{\bm t = \bm \tau\} \delta_{\hat{\bm{u}}^j, \bm{u}^j(\bm \tau), \bm \tau}$. Note that the projection of $\mathbb{Q}_{\widehat{\bm u}, \bm u, \bm t}$ on $\bm u$ is $\mathbb{Q}^\star_{\bm u}$. It follows that
\begin{subequations}
\begin{align}
d_1(\mathbb{Q}^\star_{\bm u}, \widehat{\mathbb{P}}^N_{\bm u}) \ = & \ \inf \limits_{\Pi \in {\cal P}(\mathbb{Q}^\star_{\bm u}, \widehat{\mathbb{P}}^N_{\bm u})} \Ep_{\Pi} \Big[ \|\bm u - \widehat{\bm u}\|_1\Big] \nonumber \\
\leq & \ \mathbb{E}_{\mathbb{Q}_{\widehat{\bm u}, \bm u, \bm t}} \bigl[ \|\bm u - \widehat{\bm u}\|_1\bigr] \label{equ:wc-dist-note-1} \\
= & \ \sum_{i=1}^n \mathbb{E}_{\mathbb{Q}_{\widehat{\bm u}, \bm u, \bm t}} \bigl[ |u_i - \widehat{u}_i|\bigr] \nonumber \\
= & \ \sum_{i=1}^n \left(\frac{1}{N}\right) \sum_{j=1}^N \sum_{\bm \tau \in \mathcal{T}} \mathbb{P}^j_{\bm t}\{\bm t = \bm \tau\} \left| \sum_{\ell=i}^{n+1} \left( \sum_{k=1}^i \tau_{k\ell} \right) u_{i\ell j} - \widehat{u}^j_i \right| \label{equ:wc-dist-note-2} \\
= & \ \sum_{i=1}^n \left(\frac{1}{N}\right) \sum_{j=1}^N \sum_{\ell = i}^{n+1} \sum_{k=1}^i \sum_{\substack{\bm \tau \in \mathcal{T}:\\ \tau_{k\ell} = 1}} \mathbb{P}^j_{\bm t}\{\bm t = \bm \tau\} \left| u_{i\ell j} - \widehat{u}^j_i \right| \label{equ:wc-dist-note-3} \\
= & \ \sum_{i=1}^n \left(\frac{1}{N}\right) \sum_{j=1}^N \sum_{\ell = i}^{n+1} \left( \sum_{k=1}^i p^\star_{k\ell j} \right) \left| u_{i\ell j} - \widehat{u}^j_i \right| \label{equ:wc-dist-note-4} \\
= & \ \sum_{i=1}^n \left(\frac{1}{N}\right) \sum_{j=1}^N \sum_{\ell = i}^{n+1} \left( \sum_{k=1}^i p^\star_{k\ell j} \right) \left| \frac{q^\star_{i\ell j}(u_i^{\mbox{\tiny L}} - \widehat u^j_i )} {\sum_{k=1}^i p^\star_{k\ell j}} + \frac{r^\star_{i\ell j}(u_i^{\mbox{\tiny U}} - \widehat u^j_i )} {\sum_{k=1}^ip^\star_{k\ell j}} \right| \nonumber \\
\leq & \ \sum_{i=1}^n \left(\frac{1}{N}\right) \sum_{j=1}^N \sum_{\ell = i}^{n+1} \left[ q^\star_{i\ell j}\left|u_i^{\mbox{\tiny L}} - \widehat u^j_i\right| + r^\star_{i\ell j}\left|u_i^{\mbox{\tiny U}} - \widehat u^j_i\right| \right] \nonumber \\
= & \ \frac{1}{N} \sum_{j=1}^N \sum_{i=1}^n \sum_{\ell = i}^{n+1} \left[ q^\star_{i\ell j}\left(\widehat u^j_i - u_i^{\mbox{\tiny L}}\right) + r^\star_{i\ell j}\left(u_i^{\mbox{\tiny U}} - \widehat u^j_i\right) \right] \ \leq \ \epsilon, \label{equ:wc-dist-note-5}
\end{align}
\end{subequations}
where inequality \eqref{equ:wc-dist-note-1} is because the projection of $\mathbb{Q}_{\widehat{\bm u}, \bm u, \bm t}$ on $\bm u$ and $\bm \hat{\bm u}$ are $\mathbb{Q}^\star_{\bm u}$ and $\widehat{\mathbb{P}}^N_{\bm u}$, respectively, equality \eqref{equ:wc-dist-note-2} follows from the definition of $\mathbb{Q}_{\widehat{\bm u}, \bm u, \bm t}$, equality \eqref{equ:wc-dist-note-3} is because, for each $\bm \tau \in \mathcal{T}$ and $i \in [n]$, there is one and only one pair of indices $(k, \ell) \in [i] \times [i, n+1]_{\mathbb{Z}}$ such that $\tau_{k\ell} = 1$, equality \eqref{equ:wc-dist-note-4} is because $\sum_{\substack{\bm \tau \in \mathcal{T}:\\ \tau_{k\ell} = 1}} \mathbb{P}^j_{\bm t}\{\bm t = \bm \tau\} = \mathbb{P}^j_{\bm t}\{t_{k\ell} = 1\} = p^\star_{k\ell j}$, and the inequality in \eqref{equ:wc-dist-note-5} follows from the first constraint in formulation \eqref{equ:lpdro-x-wc}.

Finally, we have
\begin{subequations}
\begin{align}
\sup\limits_{\mathbb{Q}_{\bm u} \in \D_1(\widehat \PP^N_{\bm u}, \, \epsilon)} \Ep_{\QQ_{\bm u}} [f(\bar{\bm s}, \bm u)] & \geq \Ep_{\QQ^\star_{\bm u}}[f(\bar{\bm s},\bm u)] \label{equ:wc-dist-note-6} \\
& = \Ep_{\QQ^\star_{\bm u}}\left[ \max_{\bm y \in \mathcal{Y}} \left\{\sum_{i=1}^n y_i(u_i - \bar{s}_i)\right\} \right] \nonumber \\
& = \Ep_{\QQ^\star_{\bm u}}\left[ \max_{\bm t \in \mathcal{T}} \left\{\sum_{i=1}^n \sum_{\ell=i}^{n+1} \sum_{k=1}^i t_{k\ell} \pi_{i\ell} (u_i - \bar{s}_i)\right\} \right] \label{equ:wc-dist-note-7} \\
& \geq \Ep_{\QQ^\star_{\hat{\bm u}, \bm u, \bm t}}\left[ \sum_{i=1}^n \sum_{\ell=i}^{n+1} \sum_{k=1}^i t_{k\ell} \pi_{i\ell} (u_i - \bar{s}_i) \right] \label{equ:wc-dist-note-8} \\
& = \dfrac1N \sum\limits_{j=1}^N \sum_{\bm \tau \in \mathcal{T}} \mathbb{P}^j_{\bm t}\{\bm t = \bm \tau\} \sum\limits_{i=1}^{n} \sum\limits_{\ell=i}^{n+1} \sum_{k=1}^i t_{k\ell} \pi_{i\ell} ( u_{i\ell j} - \bar{s}_i) \nonumber \\
& = \dfrac1N \sum\limits_{j=1}^N \sum\limits_{i=1}^{n} \sum\limits_{\ell=i}^{n+1} \sum_{k=1}^i \sum_{\bm \tau \in \mathcal{T}} \mathbb{P}^j_{\bm t}\{\bm t = \bm \tau\} t_{k\ell} \pi_{i\ell} ( u_{i\ell j} - \bar{s}_i) \nonumber \\
& = \dfrac1N \sum\limits_{j=1}^N \sum\limits_{i=1}^{n} \sum\limits_{\ell=i}^{n+1} \sum_{k=1}^i \sum_{\substack{\bm \tau \in \mathcal{T}:\\ \tau_{k\ell} = 1}} \mathbb{P}^j_{\bm t}\{\bm t = \bm \tau\} \pi_{i\ell} ( u_{i\ell j} - \bar{s}_i) \nonumber \\
& = \dfrac1N \sum\limits_{j=1}^N \sum\limits_{i=1}^{n} \sum\limits_{\ell=i}^{n+1} \left(\sum_{k=1}^i p^\star_{k\ell j} \right) \pi_{i\ell} ( u_{i\ell j} - \bar{s}_i) \label{equ:wc-dist-note-10} \\
& = \dfrac1N \sum\limits_{j=1}^N \sum\limits_{i=1}^{n} \sum\limits_{\ell=i}^{n+1} \left(\sum_{k=1}^i p^\star_{k\ell j} \right) \pi_{i\ell} \left( \widehat{u}^j_i - \bar{s}_i + \frac{q^\star_{i\ell j}(u_i^{\mbox{\tiny L}} - \widehat u^j_i )} {\sum_{k=1}^i p^\star_{k\ell j}} + \frac{r^\star_{i\ell j}(u_i^{\mbox{\tiny U}} - \widehat u^j_i )} {\sum_{k=1}^ip^\star_{k\ell j}}\right) \nonumber \\
& = \sup\limits_{\QQ_{\bm u} \in \D_1(\widehat \PP^N_{\bm u}, \, \epsilon)} \Ep_{\QQ_{\bm u}} [f( \bar{\bm s}, \, \bm u)], \label{equ:wc-dist-note-11}
\end{align}
\end{subequations}
where inequality \eqref{equ:wc-dist-note-6} is because $\QQ_{\bm u} \in \D_1(\widehat{\PP}^N_{\bm u}, \epsilon)$, equality \eqref{equ:wc-dist-note-7} is because $\mathcal{T}$ consists of all extreme points of polytope $\mathcal{Y}$, inequality \eqref{equ:wc-dist-note-8} is because we replace the maximization over variables $\bm t$ with a distribution of $\bm t$, equality \eqref{equ:wc-dist-note-10} is because $\sum_{\substack{\bm \tau \in \mathcal{T}:\\ \tau_{k\ell} = 1}} \mathbb{P}^j_{\bm t}\{\bm t = \bm \tau\} = \mathbb{P}^j_{\bm t}\{t_{k\ell} = 1\} = p^\star_{k\ell j}$, and equality \eqref{equ:wc-dist-note-11} is because $\sup_{\mathbb{Q}_{\bm u} \in \mathcal{D}_1(\widehat{\mathbb{P}}^N_{\bm u}, \epsilon)} \mathbb{E}_{\mathbb{Q}_{\bm u}}[f(\bar{\bm s}, \bm u)]$ equals the optimal value of formulation \eqref{equ:lpdro-x-wc}. This completes the proof.
\end{proof}

\subsection*{Proof of Theorem \ref{thm:dras-ns}}
\begin{proof}
If $p=1$, we rewrite the maximization problem embedded in
 \eqref{equ:wc-dras-ns-nonconvex} for any $j \in [N]$ as
\begin{equation} \label{equ:qp_cons_noshow-0}
\begin{array}{rl}
\sup & (\bm \mu - \bm s)^\top \bm y - \rho \|\bm \mu - \bm{\widehat \mu}^j\|^1_1 - \rho \|\bm \lambda - \bm{\widehat \lambda}^j\|^1_1 \\
\st    &  (\bm \mu, \bm \lambda) \in \Xi, \, \bm y \in \Y(\bm \lambda),
\end{array}
\end{equation}
where we use the dual formulation \eqref{equ:qxsl-max} of $g(\bm s, \bm \xi)$ and the fact that $\bm \xi = [\bm \mu^\top, \bm \lambda^\top]^\top$.

Then,~\eqref{equ:qp_cons_noshow-0} can be reformulated to
\begin{equation} \label{equ:qp_cons_noshow-1}
\begin{array}{rl}
\sup & (\bm{\mu}^+ - \bm{\mu}^- + \bm{\widehat \mu}^j - \bm s)^\top \bm y - \rho \bm e^\top(\bm \mu^+ + \bm \mu^-) - \rho \bm e^\top (\bm \lambda^+ + \bm \lambda^-) \\
\st    &  (\bm \mu^+, \bm \mu^-, \bm \lambda^+, \bm \lambda^-, \bm y) \in {\cal F}_{\NS,1}^j \\
       & \bm \lambda^+ \in \{0,1\}^n, \  \bm \lambda^- \in \{0,1\}^n.
\end{array}
\end{equation}
Note that $\lambda_i^+ \in \{0,1\} \Leftrightarrow (\lambda_i^+)^2 = \lambda_i^+$. Therefore, these binary constraints can be enforced by
using $2n$ quadratic equality constraints.
Letting $\bm x := [t, (\bm \mu^+)^\top, (\bm \mu^-)^\top, (\bm \lambda^+)^\top, (\bm \lambda^-)^\top, (\bm y)^\top]^\top$, we rewrite~\eqref{equ:qp_cons_noshow-1} as
\begin{equation} \label{equ:nqp_cons_noshow-p1}
\begin{array}{ll}
\sup & \bm{G}_j^1(\rho, \, \bm s) \bullet \bm{x}\bm{x}^\top  \\
\st    &  \bm{e_1}\bm{e_1}^\top \bullet \bm{x}\bm{x}^\top = 1 \\
        &  {\bm M}_i \bullet \bm{x}\bm{x}^\top = 0  \ \ \ \forall \, i \in [n]  \\
        & {\bm J}_i \bullet \bm{x}\bm{x}^\top = 0 \ \ \ \forall \, i \in [n] \\
        &  \bm{x} \in \K_{\NS,1}^j ,
\end{array}
\end{equation}
which, by Lemma~\ref{lem:burer}, can be reformulated as the following copositive program
\begin{equation} \label{equ:cpp_cons_noshow-p1}
\begin{array}{ll}
\sup & \bm{G}_j^1(\rho, \, \bm s) \bullet \bm{X}  \\
\st    &  \bm{e}_1\bm{e}_1^\top \bullet \bm{X} = 1 \\
        &  {\bm M}_i \bullet \bm{X} = 0  \ \ \ \forall \, i \in [n] \\
        &  {\bm J}_i \bullet \bm{X} = 0  \ \ \ \forall \, i \in [n] \\
        & \bm{X} \in \CP(\K_{\NS,1}^j).
\end{array}
\end{equation}
It can be easily verified that $\{\bm x \in \K_{\NS,1}^j : \bm e^\top \bm x = 1\}$ is nonempty and bounded. Therefore, by Lemma~\ref{lem:cpp_cop}, the optimal value of~\eqref{equ:cpp_cons_noshow-p1} is equal to that of its dual problem:
\begin{equation} \label{equ:cop_cons_noshow-p1}
\begin{array}{ll}
\inf & \beta_j \\
\st    &  \beta_j \in \RR, \ \psi_{ij} \in \RR, \ \phi_{ij} \in \RR \ \forall \, i \in [n] \\
    & \beta_j \, \bm{e}_1 \bm{e}_1^\top + \sum\limits_{i=1}^n \psi_{ij} {\bm M}_i + \sum\limits_{i=1}^n \phi_{ij} {\bm J}_i - \bm{G}_j^1(\rho,\, \bm s) \in \COP(\K_{\NS,1}^j).
\end{array}
\end{equation}
Using the same argument for all $N$ maximization problems yields the copositive programming reformulation in~\eqref{equ:dro_cop2_p1} for~\eqref{equ:dras-ns}.

Second, if $p=2$, then the maximization problem embedded in
 \eqref{equ:wc-dras-ns-nonconvex} for any $j \in [N]$ as
\begin{equation} \label{equ:qp_cons_noshow-p2}
\begin{array}{rl}
\sup & (\bm \mu - \bm s)^\top \bm y - \rho \|\bm \mu - \bm{\hat \mu}^j\|^2_2 - \rho \|\bm \lambda - \bm{\hat \lambda}^j\|^2_2 \\
\st    &  (\bm \mu, \bm \lambda) \in \Xi, \, \bm y \in \Y(\bm \lambda),
\end{array}
\end{equation}
which can be reformulated to
\begin{equation} \label{equ:nqp_cons_noshow-p2}
\begin{array}{ll}
\sup & \bm{G}_j^2(\rho, \, \bm s) \bullet \bm{x}\bm{x}^\top  \\
\st    &  \bm{e_1}\bm{e_1}^\top \bullet \bm{x}\bm{x}^\top = 1, \  \bm{x} \in \K_{\NS,2}^j \\
        &  {\bm N}_i \bullet \bm{x}\bm{x}^\top = 0 \ \ \ \forall \, i \in [n],
\end{array}
\end{equation}
where $\bm x := (t, \bm \mu^\top, \bm \lambda^\top, \bm y^\top)^\top$.
Using a similar argument, we can reformulate~\eqref{equ:dras-ns} to the copositive program in~\eqref{equ:dro_cop2} for the case of $p = 2$.
\end{proof}

\subsection*{Proof of Proposition \ref{prop:ns-oc}}
\begin{proof}
First, we rewrite $\omega'_j(\rho, \bm s)$ as follows:
\begin{subequations}
\begin{align}
\omega'_j(\rho, \bm s) \ = & \ \sup \limits_{(\bm \mu, \bm \lambda) \in \Xi} \ \left\{\sup_{\bm y \in \mathcal{Y}(\bm \lambda)} \Bigl\{\sum_{i=1}^n (\mu_i - s_i)y_i\Bigr\} - \rho \| (\bm \mu^\top, \bm \lambda^\top)^{\top} - \widehat{\bm \xi}^j\|^p_p\right\} \nonumber \\
= & \ \sup \limits_{(\bm \mu, \bm \lambda) \in \Xi} \ \sup_{\bm y \in \mathcal{Y}(\bm \lambda)} \ \left\{ \sum_{i=1}^n \left(y_i(\mu_i - s_i) - \rho|\mu_i - \widehat{\mu}^j_i|^p - \rho|\lambda_i - \widehat{\lambda}^j_i|^p\right) \right\} \label{equ:tractable-no-show-note-1} \\
= & \ \sup \limits_{\bm \lambda \in \Lambda, \ \bm y \in \mathcal{Y}(\bm \lambda)} \ \left\{\sum_{i=1}^n \sup\limits_{u^{\mbox{\tiny L}}_i \lambda_i \leq \mu_i \leq u^{\mbox{\tiny U}}_i \lambda_i} \Bigl\{ \sum_{i=1}^n y_i(\mu_i - s_i) - \rho|\mu_i - \widehat{\mu}^j_i|^p - \rho|\lambda_i - \widehat{\lambda}^j_i|^p \Bigr\} \right\} \label{equ:tractable-no-show-note-2} \\
= & \ \sup \limits_{\bm \lambda \in \Lambda, \ \bm y \in \mathcal{Y}(\bm \lambda)} \ \left\{\sum_{i=1}^n f_{ij}(\lambda_i, y_i)\right\}, \label{equ:tractable-no-show-note-3}
\end{align}
\end{subequations}
where equality \eqref{equ:tractable-no-show-note-2} is because, for fixed $\bm \lambda$ and $\bm y$, the objective function in \eqref{equ:tractable-no-show-note-1} is separable in the index $i$ and in each $\mu_i$.

Second, for any fixed $\bm \lambda \in \Lambda$, $\sum_{i=1}^n f_{ij}(\lambda_i, y_i)$ is convex in $\bm y$. It follows that there exists an optimal $\bm y^*$ to problem \eqref{equ:tractable-no-show-note-3} such that $\bm y^*$ lies in an extreme point of the polytope $\mathcal{Y}(\bm \lambda)$. Hence, without loss of optimality, variables $\bm y$ satisfy the first two conditions in \eqref{equ:oc}, i.e., $y_n = C$ or $y_n = -d_0$, and $y_i = -d_0$ or $y_i = y_{i+1} + \lambda_{i+1}$ for all $i \in [n-1]$. As a result, $y_n \in \{-d_0\}\cup\{C\} \equiv \mathcal{Y}_n$. As $y_{n-1} = -d_0$ or $y_{n-1} = y_n + \lambda_n$, we have $y_{n-1} \in \{-d_0, -d_0 + 1\}\cup\{C, C+1\}$ because $\lambda_n \in \{0, 1\}$. Backward recursion of this analysis yields that $y_i \in \mathcal{Y}_i$ for all $i \in [n]$, which is the final condition in \eqref{equ:oc}. This completes the proof.
\end{proof}

\subsection*{Proof of Proposition \ref{prop:tractable-no-show-bounded-support}}
\begin{proof}
By construction, there exists a one-to-one mapping between a $(\bm \lambda, \bm y) \in \Lambda \times \mathcal{Y}(\bm \lambda)$ and an \texttt{S}--\texttt{E} path of the network $(\mathcal{G}, \mathcal{E})$. In addition, the length of the \texttt{S}--\texttt{E} path generating $(\bm \lambda, \bm y)$ coincides with $\sum_{i=1}^n f_{ij}(\lambda_i, y_i)$ by definition of $g_{k\ell j}$. Therefore, the optimal value of problem~\eqref{equ:tractable-no-show-note-3}, and so $\omega'_j(\rho, \bm s)$, equals the length of the longest path in the network $(\mathcal{G}, \mathcal{E})$, i.e., the optimal value of problem~\eqref{equ:lp-no-show-bounded-obj}--\eqref{equ:lp-no-show-bounded-con-2}.
\end{proof}

\subsection*{Proof of Theorem \ref{thm:tractable-ns-bounded}}
\begin{proof}
First, when $p = 1$, we have $f_{ij}(\lambda_i, y_i) = \displaystyle\sup_{u^{\mbox{\tiny L}}_i \lambda_i \leq \mu_i \leq u^{\mbox{\tiny U}}_i \lambda_i} \left\{y_i(\mu_i - s_i) - \rho|\mu_i - \widehat{\mu}^j_i| - \rho|\lambda_i - \widehat{\lambda}^j_i|\right\}$ for all $i \in [n]$ and $j \in [N]$. As $\lambda_i \in \{0, 1\}$,
$$
f_{ij}(\lambda_i, y_i) = \left\{ \begin{array}{ll} -y_is_i - \rho\widehat{\mu}^j_i - \rho\widehat{\lambda}^j_i & \mbox{if $\lambda_i = 0$} \\[0.5cm]
\begin{array}{l} \max\Bigl\{y_i(u^{\mbox{\tiny L}}_i - s_i) - \rho(\widehat{\mu}^j_i - u^{\mbox{\tiny L}}_i) - \rho(1 - \widehat{\lambda}^j_i), \ y_i(\widehat{\mu}^j_i - s_i) - \rho(1 - \widehat{\lambda}^j_i), \\
\ \ \ \ \ \ \ y_i(u^{\mbox{\tiny U}}_i - s_i) - \rho(u^{\mbox{\tiny U}}_i - \widehat{\mu}^j_i) - \rho(1 - \widehat{\lambda}^j_i)\Bigr\}
\end{array} & \mbox{if $\lambda_i = 1$}.
\end{array} \right.
$$

Second, taking the dual of the linear program \eqref{equ:lp-no-show-bounded-obj}--\eqref{equ:lp-no-show-bounded-con-2} yields
\begin{align*}
\omega'_j(\rho, \, \bm s) \ = \ \min_{\bm{\alpha}^j} \ & \ \alpha^j_{\texttt{S}} - \alpha^j_{\texttt{E}} \\
\mbox{s.t.} \ & \ \alpha^j_k - \alpha^j_{\ell} \geq g_{k\ell j} \ \ \forall \, (k, \ell) \in \mathcal{E},
\end{align*}
where dual variables $\alpha^j_k$, $k \in \mathcal{G}$ are associated with primal constraints~\eqref{equ:lp-no-show-bounded-con-1}. By definition, $g_{k\ell j} = 0$ if $(k, \ell) \in \mathcal{E}_{\texttt{E}}$. In addition, for all $i \in [n]$ and $j \in [N]$, if $(k,\ell) \in \mathcal{E}^0_i$ then $g_{k\ell j} = f_{ij}(0, y_i)$; and if $(k,\ell) \in \mathcal{E}^1_i$ then $g_{k\ell j} = f_{ij}(1, y_i)$. It follows that
\begin{align*}
\omega'_j(\rho, \, \bm s) \ = \ \min_{\bm{\alpha}^j} \ & \ \alpha^j_{\texttt{S}} - \alpha^j_{\texttt{E}} \\
\mbox{s.t.} \ & \ \alpha^j_k - \alpha^j_{\ell} \geq 0 \ \ \forall (k, \ell) \in \mathcal{E}_{\texttt{E}} \\
& \ \alpha^j_k - \alpha^j_{\ell} \geq - y_i s_i - (\widehat{\mu}^j_i + \widehat{\lambda}^j_i) \rho \ \ \forall \, i \in [n], \ \forall \, (k, \ell) \in \mathcal{E}^0_i \\
& \hspace{-0.15cm} \left.\begin{array}{l}
\alpha^j_k - \alpha^j_{\ell} \geq - y_i s_i - (1 - \widehat{\lambda}^j_i + \widehat{\mu}^j_i - u^{\mbox{\tiny L}}_i) \rho + u^{\mbox{\tiny L}}_i y_i \\[0.2cm]
\alpha^j_k - \alpha^j_{\ell} \geq - y_i s_i - (1 - \widehat{\lambda}^j_i) \rho + \widehat{\mu}^j_i y_i \\[0.2cm]
\alpha^j_k - \alpha^j_{\ell} \geq - y_i s_i - (1 - \widehat{\lambda}^j_i + u^{\mbox{\tiny U}}_i - \widehat{\mu}^j_i) \rho + u^{\mbox{\tiny U}}_i y_i
\end{array}\right\}
\forall \, i \in [n], \ \forall \, (k, \ell) \in \mathcal{E}^1_i.
\end{align*}
The claimed reformulation follows from substituting $\omega'_j(\rho, \bm s)$ back into formulation~\eqref{equ:wc-dras-ns-nonconvex} with the above linear program representation.

Third, when $p = 2$, we have $f_{ij}(\lambda_i, y_i) = \displaystyle\sup_{u^{\mbox{\tiny L}}_i \lambda_i \leq \mu_i \leq u^{\mbox{\tiny U}}_i \lambda_i} \left\{y_i(\mu_i - s_i) - \rho(\mu_i - \widehat{\mu}^j_i)^2 - \rho(\lambda_i - \widehat{\lambda}^j_i)^2\right\}$ for all $i \in [n]$ and $j \in [N]$. Hence, if $\lambda_i = 0$ then $f_{ij}(0, y_i) = -y_is_i - \rho \bigl(\widehat{\mu}^j_i\bigr)^2 - \rho\widehat{\lambda}^j_i$; and if $\lambda_i = 1$ then
\begin{align}
& \ f_{ij}(1, y_i) \nonumber \\
= & \ \sup_{u^{\mbox{\tiny L}}_i \leq \mu_i \leq u^{\mbox{\tiny U}}_i} \left\{y_i(\mu_i - s_i) - \rho(\mu_i - \widehat{\mu}^j_i)^2 - \rho(1 - \widehat{\lambda}^j_i)^2\right\} \nonumber \\
= & \ \inf_{\beta^{\mbox{\tiny L}} \geq 0, \ \beta^{\mbox{\tiny U}} \geq 0} \ \sup_{\mu_i \in \mathbb{R}} \left\{y_i(\mu_i - s_i) - \rho(\mu_i - \widehat{\mu}^j_i)^2 - \rho(1 - \widehat{\lambda}^j_i) + \beta^{\mbox{\tiny L}} (\mu_i - u^{\mbox{\tiny L}}_i) + \beta^{\mbox{\tiny U}} (u^{\mbox{\tiny U}}_i - \mu_i)\right\} \label{equ:ns-ref-note-1} \\
= & \ \inf_{\beta^{\mbox{\tiny L}} \geq 0, \ \beta^{\mbox{\tiny U}} \geq 0} \left\{\frac{(y_i + \beta^{\mbox{\tiny L}} - \beta^{\mbox{\tiny U}})^2}{4\rho} + y_i(\widehat{\mu}^j_i - s_i) + \beta^{\mbox{\tiny L}} (\widehat{\mu}^j_i - u^{\mbox{\tiny L}}_i) + \beta^{\mbox{\tiny U}} (u^{\mbox{\tiny U}}_i - \widehat{\mu}^j_i) - \rho(1 - \widehat{\lambda}^j_i)\right\} \nonumber \\
= & \ \inf_{\beta^{\mbox{\tiny L}} \geq 0, \ \beta^{\mbox{\tiny U}} \geq 0, \ \varphi} \left\{\varphi + y_i(\widehat{\mu}^j_i - s_i) + \beta^{\mbox{\tiny L}} (\widehat{\mu}^j_i - u^{\mbox{\tiny L}}_i) + \beta^{\mbox{\tiny U}} (u^{\mbox{\tiny U}}_i - \widehat{\mu}^j_i) - \rho(1 - \widehat{\lambda}^j_i)\right\} \nonumber \\
& \ \ \ \ \ \ \ \ \mbox{s.t.} \ \ \ \ \ \ \left\|\begin{bmatrix} \beta^{\mbox{\tiny L}} - \beta^{\mbox{\tiny U}} + y_i \\ \varphi - \rho \end{bmatrix}\right\|_2 \leq \varphi + \rho, \nonumber
\end{align}
where the strong (Lagrangian) duality holds in equality \eqref{equ:ns-ref-note-1} because the primal formulation is strictly feasible. It follows that
\begin{align*}
\omega'_j(\rho, \, \bm s) \ = \ \min_{\bm{\alpha}^j, \bm{\beta}^{\mbox{\tiny L}}, \bm{\beta}^{\mbox{\tiny U}}, \bm{\varphi}} \ & \ \alpha^j_{\texttt{S}} - \alpha^j_{\texttt{E}} \\
\mbox{s.t.} \ & \ \alpha^j_k - \alpha^j_{\ell} \geq 0 \ \ \forall (k, \ell) \in \mathcal{E}_{\texttt{E}} \\
& \ \alpha^j_k - \alpha^j_{\ell} \geq - y_i s_i - \left(\bigl(\widehat{\mu}^j_i\bigr)^2 + \widehat{\lambda}^j_i\right) \rho \ \ \forall \, i \in [n], \ \forall \, (k, \ell) \in \mathcal{E}^0_i \\
& \ \alpha^j_k - \alpha^j_{\ell} \geq - y_i s_i - (1 - \widehat{\lambda}^j_i)\rho + \varphi_{k\ell j} + (\widehat{\mu}^j_i - u^{\mbox{\tiny L}}_i) \beta^{\mbox{\tiny L}}_{k\ell j} + (u^{\mbox{\tiny U}}_i - \widehat{\mu}^j_i) \beta^{\mbox{\tiny U}}_{k\ell j} + y_i \widehat{\mu}^j_i \\
& \ \forall \, i \in [n], \ \forall \, (k, \ell) \in \mathcal{E}^1_i \\
& \ \left\|\begin{bmatrix} \beta^{\mbox{\tiny L}}_{k\ell j} - \beta^{\mbox{\tiny U}}_{k\ell j} + y_i \\ \varphi_{k\ell j} - \rho \end{bmatrix}\right\|_2 \leq r_{k\ell j} + \rho \ \ \forall \, i \in [n], \ \forall \, (k, \ell) \in \mathcal{E}^1_i.
\end{align*}
The claimed reformulation follows from substituting $\omega'_j(\rho, \bm s)$ back into formulation~\eqref{equ:wc-dras-ns-nonconvex} with the above second-order conic program representation. The proof is completed.
\end{proof}

\subsection*{Proof of Theorem \ref{thm:no-show-wc-dist}}
\begin{proof}
First, for fixed $\bar{\bm s} \in \mathcal{S}$ and $\epsilon \geq 0$, the dual problem of formulation \eqref{equ:ns-ref-1} is
\begin{align*}
\max\limits_{\bm o, \bm p, \bm q, \bm w, \bm r} \ & \ \sum\limits_{j=1}^N \sum\limits_{i=1}^{n} y_i \left[ \sum\limits_{(k, \ell) \in \mathcal{E}^1_i} \left( (u_i^{\tiny L} - \bar{s}_i) q_{k\ell j} + (\widehat{\mu}^j_i - \bar{s}_i) w_{k\ell j} + (u_i^{\tiny U} - \bar{s}_i) r_{i\ell j} \right) - \sum\limits_{(k, \ell) \in \mathcal{E}^0_i} \bar{s}_i p_{k\ell j} \right] \nonumber \\
\st \ & \ \sum\limits_{j=1}^N \sum\limits_{i=1}^{n} \Biggl[ \sum\limits_{(k, \ell) \in \mathcal{E}^0_i} (\widehat{\mu}^j_i + \widehat{\lambda}^j_i) p_{k\ell j} + \sum\limits_{(k, \ell) \in \mathcal{E}^1_i} \Biggl( \bigl(1 - \widehat{\lambda}^j_i + \widehat{\mu}^j_i - u^{\mbox{\tiny L}}_i \bigr) q_{k\ell j} \nonumber \\
& + \bigl(1 - \widehat{\lambda}^j_i\bigr) w_{k\ell j} + \bigl(1 - \widehat{\lambda}^j_i + u_i^{\tiny U} - \widehat{\mu}^j_i\bigr) r_{k\ell j} \Biggr) \Biggr] \leq \epsilon \nonumber \\
& \sum\limits_{\ell:(k, \ell) \in \mathcal{E}^0} p_{k\ell j} + \sum\limits_{\ell:(k, \ell) \in \mathcal{E}^1} (q_{k\ell j} + w_{k\ell j} + r_{k\ell j}) - \sum\limits_{\ell:(\ell, k) \in \mathcal{E}^0} p_{\ell kj} \nonumber \\
& - \sum\limits_{\ell:(\ell, k) \in \mathcal{E}^1} (q_{\ell kj} + w_{\ell kj} + r_{\ell kj}) = \left\{ \begin{array}{ll} \frac{1}{N} & \mbox{if $k = \texttt{S}$}\\ 0 & \mbox{if $k \neq \texttt{S}$} \end{array} \right. \ \ \forall \, k \in \mathcal{E}\setminus\bigl(\mathcal{L}(n)\cup\texttt{E}\bigr), \ \forall \, j \in [N] \nonumber \\
& o_{k\texttt{E} j} - \sum\limits_{\ell:(\ell, k) \in \mathcal{E}^0} p_{\ell kj} - \sum\limits_{\ell:(\ell, k) \in \mathcal{E}^1} (q_{\ell kj} + w_{\ell kj} + r_{\ell kj}) = 0 \ \ \forall \, k \in \mathcal{L}(n), \ \forall \, j \in [N] \nonumber \\
& \sum\limits_{k:(k, \texttt{E}) \in \mathcal{E}_{\texttt{E}}} o_{k\texttt{E} j} = \frac{1}{N} \ \ \forall \, j \in [N] \nonumber \\
& o_{k\ell j}, \ p_{k\ell j}, \ q_{k\ell j}, \ w_{k\ell j}, \ r_{k\ell j} \geq 0 \ \ \forall \, (k, \ell) \in \mathcal{E}, \ \forall \, j \in [N], \nonumber
\end{align*}
where dual variables $o_{k\ell j}$, $p_{k\ell j}$, $q_{k\ell j}$, $w_{k\ell j}$, and $r_{k\ell j}$ are associated with the constraints in \eqref{equ:ns-ref-1}. The strong duality holds because the primal feasible region is non-empty. Indeed, if needed, we can always increase the values of $\alpha^j_k$ for some $j \in [N]$ and $k \in \mathcal{G}$ to satisfy the constraints in \eqref{equ:ns-ref-1} (note that this can be done arbitrarily because there are no directed cycles in the network $(\mathcal{G}, \mathcal{E})$). Replacing $\{o_{k\ell j}, p_{k\ell j}, q_{k\ell j}, w_{k\ell j}, r_{k\ell j}\}$ with $\frac{1}{N}\{o_{k\ell j}, p_{k\ell j}, q_{k\ell j}, w_{k\ell j}, r_{k\ell j}\}$ yields formulation \eqref{equ:no-show-wc-lp-obj}--\eqref{equ:no-show-wc-lp-con-5}.

Second, to see the existence of $\mathbb{P}^j_{\bm z}$ for all $j \in [N]$, we note that the set $\mathcal{P}$ is defined through constraints \eqref{equ:lp-no-show-bounded-con-1}, which are network balance constraints and so yield a totally unimodular constraint matrix. Hence, $\mbox{conv}(\mathcal{P}) = \{\bm{z}\in [0, 1]^{|\mathcal{E}|}: \ \mbox{\eqref{equ:lp-no-show-bounded-con-1}}\}$. As constraints \eqref{equ:no-show-wc-lp-con-2}--\eqref{equ:no-show-wc-lp-con-5} possess the structure of network balance constraints with respect to $[\bm{p}^\star_j, \bm{\eta}^\star_j, \bm{o}^\star_j] := [p^\star_{k\ell j}: (k, \ell) \in \mathcal{E}^0; \ (q^\star_{k\ell j} + w^\star_{k\ell j} + r^\star_{k\ell j}): (k, \ell) \in \mathcal{E}^1; \ o^\star_{k\ell j}: (k, \ell) \in \mathcal{E}_{\texttt{E}}]$, we have $[\bm{p}^\star_j, \bm{\eta}^\star_j, \bm{o}^\star_j] \in \mbox{conv}(\mathcal{P})$. It follows that there exists a finite number of points $\bm{z}^1, \ldots, \bm{z}^I \in \mathcal{P}$ and weights $\theta^1, \ldots, \theta^I \in [0, 1]$ such that
$$
[\bm{p}^\star_j, \bm{\eta}^\star_j, \bm{o}^\star_j] \ = \ \sum_{i=1}^I \theta^i \bm{z}^i, \ \ \sum_{i=1}^I \theta^i = 1.
$$
Hence, the distribution $\mathbb{P}^j_{\bm z}$ constructed by setting $\mathbb{P}^j_{\bm z}\{\bm{z} = \bm{z}^i\} = \theta^i$ for all $i \in [I]$ fulfills the claim.

Third, to prove that $\mathbb{Q}^\star_{\bm \xi} \in \mathcal{D}_1(\widehat{\mathbb{P}}^N_{\bm \xi}, \epsilon)$, we note that $\mu_{k\ell j} \in [u_i^{\mbox{\tiny L}}, u_i^{\mbox{\tiny U}}]$ whenever $(k, \ell) \in \mathcal{E}^1_i$. Indeed, on the one hand, if $q^\star_{k\ell j} + w^\star_{k\ell j} + r^\star_{k\ell j} = 0$ then $q^\star_{k\ell j} = r^\star_{k\ell j} = 0$ because $q^\star_{k\ell j}, w^\star_{k\ell j}, r^\star_{k\ell j} \geq 0$. Thus, by the extended arithmetics $0/0=0$ we have $\mu_{k\ell j}^\star = \widehat{\mu}_i^j \in [u_i^{\mbox{\tiny L}}, u_i^{\mbox{\tiny U}}]$. On the other hand, if $q^\star_{k\ell j} + w^\star_{k\ell j} + r^\star_{k\ell j} > 0$ then
$$
\mu_{k\ell j} = \left(\frac{w^\star_{k\ell j}}{q^\star_{k\ell j} + w^\star_{k\ell j} + r^\star_{k\ell j}}\right) \widehat{\mu}^j_i + \left(\frac{q^\star_{k\ell j}}{q^\star_{k\ell j} + w^\star_{k\ell j} + r^\star_{k\ell j}}\right)u_i^{\mbox{\tiny L}} + \left(\frac{r^\star_{k\ell j}}{q^\star_{k\ell j} + w^\star_{k\ell j} + r^\star_{k\ell j}}\right)u_i^{\mbox{\tiny U}}.
$$
It follows that $\mu_{k\ell j}$ is a convex combination of $\widehat{\mu}^j_i$, $u_i^{\mbox{\tiny L}}$, and $u_i^{\mbox{\tiny U}}$ and so $\mu_{k\ell j}^\star \in [u_i^{\mbox{\tiny L}}, u_i^{\mbox{\tiny U}}]$. In addition, we define a joint probability distribution $\mathbb{Q}_{\widehat{\bm \xi}, \bm \xi, \bm z}$ of $(\widehat{\bm \xi}, \bm \xi, \bm z)$ through $\mathbb{Q}_{\widehat{\bm \xi}, \bm \xi, \bm z} = \frac{1}{N}\sum_{j=1}^N \sum_{\bm{\zeta} \in \mathcal{P}} \mathbb{P}^j_{\bm z}\{\bm z = \bm \zeta\} \delta_{\widehat{\bm{\xi}}^j, \bm{\xi}^j(\bm \zeta), \bm \zeta}$. Note that the projection of $\mathbb{Q}_{\widehat{\bm \xi}, \bm \xi, \bm z}$ on $\bm \xi$ is $\mathbb{Q}^\star_{\bm \xi}$. It follows that
\begin{subequations}
\begin{align}
& \ d_1(\mathbb{Q}^\star_{\bm \xi}, \widehat{\mathbb{P}}^N_{\bm \xi}) \nonumber \\
= \ & \ \inf \limits_{\Pi \in {\cal P}(\mathbb{Q}^\star_{\bm \xi}, \widehat{\mathbb{P}}^N_{\bm \xi})} \Ep_{\Pi} \bigl[ \|\bm \xi - \widehat{\bm \xi}\|_1\bigr] \nonumber \\
\leq \ & \ \mathbb{E}_{\mathbb{Q}_{\widehat{\bm \xi}, \bm \xi, \bm z}} \bigl[ \|\bm \xi - \widehat{\bm \xi}\|_1\bigr] \label{equ:ns-wc-dist-note-1} \\
= \ & \ \sum_{i=1}^n \mathbb{E}_{\mathbb{Q}_{\widehat{\bm \xi}, \bm \xi, \bm z}} \bigl[ |\mu_i - \widehat{\mu}_i| + |\lambda_i - \hat{\lambda}_i|\bigr] \nonumber \\
= \ & \ \sum_{i=1}^n \left(\frac{1}{N}\right) \sum_{j=1}^N \sum_{\bm \zeta \in \mathcal{P}} \mathbb{P}^j_{\bm z}\{\bm z = \bm \zeta\} \Biggl[\sum_{(k, \ell) \in \mathcal{E}^0_i} \zeta_{k\ell} (\widehat{\mu}^j_i + \widehat{\lambda}^j_i) + \sum_{(k, \ell) \in \mathcal{E}^1_i} \zeta_{k\ell} \bigl(|\mu_{k\ell j} - \widehat{\mu}^j_i| + 1 - \widehat{\lambda}^j_i\bigr) \Biggr] \label{equ:ns-wc-dist-note-2} \\
= \ & \ \sum_{i=1}^n \left(\frac{1}{N}\right) \sum_{j=1}^N \Biggl[ \sum_{(k, \ell) \in \mathcal{E}^0_i} \sum_{\zeta \in \mathcal{P}: \zeta_{k\ell}=1} \mathbb{P}^j_{\bm z}\{\bm z = \bm \zeta\} (\widehat{\mu}^j_i + \widehat{\lambda}^j_i) \nonumber \\
& \ + \sum_{(k, \ell) \in \mathcal{E}^1_i} \sum_{\zeta \in \mathcal{P}: \zeta_{k\ell}=1} \mathbb{P}^j_{\bm z}\{\bm z = \bm \zeta\} \bigl(|\mu_{k\ell j} - \widehat{\mu}^j_i| + 1 - \widehat{\lambda}^j_i\bigr) \Biggr] \nonumber \\
= \ & \ \sum_{i=1}^n \left(\frac{1}{N}\right) \sum_{j=1}^N \Biggl[ \sum_{(k, \ell) \in \mathcal{E}^0_i} p^\star_{k\ell j} (\widehat{\mu}^j_i + \widehat{\lambda}^j_i) \nonumber \\
& \ + \sum_{(k, \ell) \in \mathcal{E}^1_i} (q^\star_{k\ell j} + w^\star_{k\ell j} + r^\star_{k\ell j}) \left(\frac{|q^\star_{k\ell j}(u_i^{\tiny L} - \widehat{\mu}^j_i) + r^\star_{k\ell j} (u_i^{\tiny U} - \widehat{\mu}^j_i)|}{q^\star_{k\ell j} + w^\star_{k\ell j} + r^\star_{k\ell j}} + 1 - \widehat{\lambda}^j_i \right) \Biggr] \label{equ:ns-wc-dist-note-3} \\
\leq \ & \ \sum_{i=1}^n \left(\frac{1}{N}\right) \sum_{j=1}^N \Biggl[ \sum_{(k, \ell) \in \mathcal{E}^0_i} p^\star_{k\ell j} (\widehat{\mu}^j_i + \widehat{\lambda}^j_i) \nonumber \\
& \ + \sum_{(k, \ell) \in \mathcal{E}^1_i} (q^\star_{k\ell j} + w^\star_{k\ell j} + r^\star_{k\ell j}) \left(\frac{q^\star_{k\ell j}(\widehat{\mu}^j_i - u_i^{\tiny L}) + r^\star_{k\ell j} (u_i^{\tiny U} - \widehat{\mu}^j_i)}{q^\star_{k\ell j} + w^\star_{k\ell j} + r^\star_{k\ell j}} + 1 - \widehat{\lambda}^j_i \right) \Biggr] \nonumber \\
= \ & \ \frac{1}{N} \sum_{i=1}^n \sum_{j=1}^N \Biggl[ \sum_{(k, \ell) \in \mathcal{E}^0_i} p^\star_{k\ell j} (\widehat{\mu}^j_i + \widehat{\lambda}^j_i) + \sum_{(k, \ell) \in \mathcal{E}^1_i} \Bigl(q^\star_{k\ell j}(\widehat{\mu}^j_i - u_i^{\tiny L} + 1 - \widehat{\lambda}^j_i) \nonumber \\
& \ + w^\star_{k\ell j}(1 - \widehat{\lambda}^j_i) + r^\star_{k\ell j} (u_i^{\tiny U} - \widehat{\mu}^j_i + 1 - \widehat{\lambda}^j_i) \Bigr) \Biggr] \ \leq \ \epsilon, \label{equ:ns-wc-dist-note-4}
\end{align}
\end{subequations}
where inequality \eqref{equ:ns-wc-dist-note-1} is because the projections of $\mathbb{Q}_{\widehat{\bm \xi}, \bm \xi, \bm z}$ on $\bm \xi$ and $\widehat{\bm \xi}$ are $\mathbb{Q}^\star_{\bm \xi}$ and $\widehat{\mathbb{P}}^N_{\bm \xi}$, respectively, equality \eqref{equ:ns-wc-dist-note-2} follows from the definition of $\mathbb{Q}_{\widehat{\bm \xi}, \bm \xi, \bm z}$, equality \eqref{equ:ns-wc-dist-note-3} follows from the facts that, for all $j \in [N]$, $\sum_{\zeta \in \mathcal{P}: \zeta_{k\ell}=1} \mathbb{P}^j_{\bm z}\{\bm z = \bm \zeta\} = p^\star_{k\ell j}$ whenever $(k, \ell) \in \mathcal{E}^0$ and $\sum_{\zeta \in \mathcal{P}: \zeta_{k\ell}=1} \mathbb{P}^j_{\bm z}\{\bm z = \bm \zeta\} = q^\star_{k\ell j} + w^\star_{k\ell j} + r^\star_{k\ell j}$ whenever $(k, \ell) \in \mathcal{E}^1$, and the inequality in \eqref{equ:ns-wc-dist-note-4} follows from constraint \eqref{equ:no-show-wc-lp-con-1}.

Finally, we have
\begin{subequations}
\begin{align}
& \ \sup\limits_{\mathbb{Q}_{\bm \xi} \in \D_1(\widehat \PP^N_{\bm \xi}, \, \epsilon)} \Ep_{\QQ_{\bm \xi}} [g(\bar{\bm s}, \bm \xi)] \nonumber \\
\geq \ & \ \Ep_{\QQ^\star_{\bm \xi}}[g(\bar{\bm s},\bm \xi)] \label{equ:ns-wc-dist-note-5} \\
= \ & \ \Ep_{\QQ^\star_{\bm \xi}}\left[ \max_{\bm y \in \mathcal{Y}(\bm \lambda)} \left\{\sum_{i=1}^n y_i(\mu_i - \bar{s}_i)\right\} \right] \nonumber \\
\geq \ & \ \Ep_{\QQ^\star_{\widehat{\bm \xi}, \bm \xi, \bm z}}\left[ \sum_{i=1}^n y_i(\mu_i - \bar{s}_i) \right] \label{equ:ns-wc-dist-note-6} \\
= \ & \ \dfrac1N \sum\limits_{j=1}^N \sum_{\bm \zeta \in \mathcal{P}} \mathbb{P}^j_{\bm z}\{\bm z = \bm \zeta\} \sum\limits_{i=1}^{n} \Biggl[ \sum_{(k, \ell) \in \mathcal{E}^0_i} \zeta_{k\ell} \ y_i \ (0 - \bar{s}_i) + \sum_{(k, \ell) \in \mathcal{E}^1_i} \zeta_{k\ell} \ y_i \ (\mu_{k\ell j} - \bar{s}_i) \Biggr] \label{equ:ns-wc-dist-note-7} \\
= \ & \ \dfrac1N \sum\limits_{j=1}^N \sum\limits_{i=1}^{n} \Biggl[ \sum_{(k, \ell) \in \mathcal{E}^0_i} \sum_{\bm \zeta \in \mathcal{P}:\zeta_{k\ell}=1} \mathbb{P}^j_{\bm z}\{\bm z = \bm \zeta\} (- y_i \bar{s}_i) + \sum_{(k, \ell) \in \mathcal{E}^1_i} \sum_{\bm \zeta \in \mathcal{P}:\zeta_{k\ell}=1} \mathbb{P}^j_{\bm z}\{\bm z = \bm \zeta\} y_i(\mu_{k\ell j} - \bar{s}_i) \Biggr] \nonumber \\
= \ & \ \dfrac1N \sum\limits_{j=1}^N \sum\limits_{i=1}^{n} \Biggl[ \sum_{(k, \ell) \in \mathcal{E}^0_i} p^\star_{k\ell j} (- y_i \bar{s}_i) \nonumber \\
& \ + \sum_{(k, \ell) \in \mathcal{E}^1_i} (q^\star_{k\ell j} + w^\star_{k\ell j} + r^\star_{k\ell j}) y_i \Biggl(\widehat{\mu}^j_i + \frac{q^\star_{k\ell j}(u_i^{\tiny L} - \widehat{\mu}^j_i )} {q^\star_{k\ell j} + w^\star_{k\ell j} + r^\star_{k\ell j}} + \frac{r^\star_{k\ell j}(u_i^{\tiny U} - \widehat{\mu}^j_i )} {q^\star_{k\ell j} + w^\star_{k\ell j} + r^\star_{k\ell j}} - \bar{s}_i\Biggr) \Biggr] \label{equ:ns-wc-dist-note-8} \\
= \ & \ \dfrac1N \sum\limits_{j=1}^N \sum\limits_{i=1}^{n} y_i \Biggl[ - \sum_{(k, \ell) \in \mathcal{E}^0_i} p^\star_{k\ell j} \bar{s}_i + \sum_{(k, \ell) \in \mathcal{E}^1_i} \Bigl( q^\star_{k\ell j}(u_i^{\tiny L} - \bar{s}_i) + w^\star_{k\ell j}(\widehat{\mu}^j_i - \bar{s}_i) + r^\star_{k\ell j}(u_i^{\tiny U} - \bar{s}_i) \Bigr) \Biggr] \nonumber \\
= \ & \ \sup_{\mathbb{Q}_{\bm \xi} \in \mathcal{D}_1(\widehat{\mathbb{P}}^N_{\bm \xi}, \epsilon)} \mathbb{E}_{\mathbb{Q}_{\bm \xi}}[g(\bar{\bm s}, \bm{\xi})], \label{equ:ns-wc-dist-note-9}
\end{align}
\end{subequations}
where inequality \eqref{equ:ns-wc-dist-note-5} is because $\QQ^\star_{\bm \xi} \in \D_1(\widehat{\PP}^N_{\bm \xi}, \epsilon)$, inequality \eqref{equ:ns-wc-dist-note-6} is because we replace the maximization over variables $\bm y$ with a distribution of $\bm y$ pushed forward by the random path $\bm z$, equality \eqref{equ:ns-wc-dist-note-7} follows from the definition of $\QQ^\star_{\widehat{\bm \xi}, \bm \xi, \bm z}$, equality \eqref{equ:ns-wc-dist-note-8} follows from the facts that, for all $j \in [N]$, $\sum_{\zeta \in \mathcal{P}: \zeta_{k\ell}=1} \mathbb{P}^j_{\bm z}\{\bm z = \bm \zeta\} = p^\star_{k\ell j}$ whenever $(k, \ell) \in \mathcal{E}^0$ and $\sum_{\zeta \in \mathcal{P}: \zeta_{k\ell}=1} \mathbb{P}^j_{\bm z}\{\bm z = \bm \zeta\} = q^\star_{k\ell j} + w^\star_{k\ell j} + r^\star_{k\ell j}$ whenever $(k, \ell) \in \mathcal{E}^1$, and equality \eqref{equ:ns-wc-dist-note-9} is because $\sup_{\mathbb{Q}_{\bm \xi} \in \mathcal{D}_1(\widehat{\mathbb{P}}^N_{\bm \xi}, \epsilon)} \mathbb{E}_{\mathbb{Q}_{\bm \xi}}[g(\bar{\bm s}, \bm \xi)]$ equals the optimal value of formulation \eqref{equ:no-show-wc-lp-obj}--\eqref{equ:no-show-wc-lp-con-5}. This completes the proof.
\end{proof}

\bibliographystyle{plain}

\bibliography{scheduling}

\end{onehalfspace}
\end{document}